\documentclass[a4paper,11pt]{article}
\usepackage{amsmath,amssymb,amsthm}
\usepackage{mathrsfs}
\usepackage{cases}
\usepackage{a4wide}
\usepackage{color,graphicx}
\usepackage[export]{adjustbox}
\usepackage[hang,small,bf]{caption}
\usepackage[subrefformat=parens]{subcaption}
\usepackage{here}
\usepackage{comment}
\PassOptionsToPackage{normalem}{ulem}
\usepackage{ulem}

\numberwithin{equation}{section}
\newtheorem{thm}{Theorem}[section]
\newtheorem{prop}[thm]{Proposition}

\newtheorem{rem}[thm]{Remark}

\newtheorem{asm}{Assumption}

\newcommand{\Section}[1]{Section~\ref{#1}}
\newcommand{\Figure}[1]{Figure~\ref{#1}}

\newcommand{\Proposition}[1]{Proposition~\ref{#1}}

\newcommand{\Assumption}[1]{Assumption~\ref{#1}}

\newcommand{\dL}[1]{\;\mathrm{d}\mathscr{L}^{#1}}
\newcommand{\dH}[1]{\;\mathrm{d}\mathscr{H}^{#1}}

\newcommand{\dd}[1]{\frac{\rm d}{{\rm d}#1}}
\newcommand{\ddt}{\dd{t}}

\newcommand{\bR}{{\mathbb R}}

\newcommand{\pOmega}[0]{\partial\Omega}
\newcommand{\closure}[1]{\overline{#1}}

\newcommand{\Fdd}[1]{#1} 
\newcommand{\Ddd}[1]{#1} 

\newcommand{\curve}[2]{\Gamma^{#1}_{#2}}

\newcommand{\chemicalVec}[2]{\bv{w}^{#1}_{#2}}

\newcommand{\chemicalDiscVec}[2]{\Ddd{\bv{W}^{#1}_{#2}}}
\newcommand{\identity}[0]{\vec{\operatorname{id}}}
\newcommand{\X}[2]{\vec{X}^{#1}_{#2}}
\newcommand{\innerproduct}[4]{\left<#1,#2\right>^{#3}_{#4}}
\newcommand{\innerproductround}[4]{\left(#1,#2\right)^{#3}_{#4}}

\newcommand{\normal}[2]{\Vec{\nu}^{#1}_{#2}}
\newcommand{\Normal}[1]{\Vec{A}\{#1\}}
\newcommand{\normalnoindex}[2]{\vec{\nu}^{#1}_{#2}}
\newcommand{\normalcontainer}[0]{\vec{\nu}_\Omega}

\newcommand{\conormal}[1]{\vec{\mu}_{#1}}

\newcommand{\weightnormal}[3]{\vec{\omega}^{#1}_{#2,#3}}
\newcommand{\weightnormalnoindex}[2]{\vec{\omega}^{#1}_{#2}}
\newcommand{\spacemeshpointsnum}[0]{K^m_{\Omega}}
\newcommand{\sgrad}[0]{\nabla_s} 

\newcommand{\Glinear}[2]{\pi^{h}_{#2}\left[\frac{\X{#1+1}{#2} - \identity}{\tau_{#1}}\cdot\weightnormalnoindex{#1}{#2}\right]}
\newcommand{\Glinearhomo}[2]{\pi^{h}_{#2}\left[\frac{\X{}{#2}}{\tau_{#1}}\cdot\weightnormalnoindex{#1}{#2}\right]}

\newcommand{\basisbulk}[2]{\Psi^{#1}_{#2}}
\newcommand{\metric}[1]{\uuline{#1}}
\newcommand{\zerovec}[0]{\bvzero}

\newcommand{\zerosumspace}[0]{T\Sigma}

\newcommand{\femspacebulkvector}[0]{\bv {S}^m}
\newcommand{\femspacebulkvectorsum}[0]{\bv {S}^m_{\Sigma}}

\newcommand{\femspacegamma}[0]{V^h(\Gamma^m)}
\newcommand{\femspacegammavector}[0]{\underline{V}^h_{\mathcal{T}}(\Gamma^m)}

\newcommand{\basiscurve}[2]{\Phi_{{#1},{#2}}^m}
\newcommand{\unitvector}[1]{\vec{e}_{#1}}

\newcommand{\curveindex}[2]{s^{#1}_{#2}}

\newcommand{\naturalset}[1]{\mathbb{N}_{\leq {#1}}}
\newcommand{\naturalindex}[1]{\mathbb{N}_{\leq {#1}}}
\newcommand{\phasecharacter}[0]{\bv \chi}

\newcommand{\dcmap}[0]{\mathcal O}

\newcommand{\normall}[2]{\Vec{\nu}^{#1}_{#2}}
\newcommand{\mT}{\mathcal{T}}
\newcommand{\mR}{\mathcal{R}}

\newcommand{\jumpPhase}[3]{\left[{#1}\right]^{#3}_{#2}}
\newcommand{\curvature}[1]{\varkappa_{#1}}
\newcommand{\curvatureDisc}[1]{\kappa_{#1}}
\newcommand{\anisotropy}[1]{\gamma_{#1}}
\newcommand{\tension}[1]{\varsigma_{#1}}
\newcommand{\tpindex}[2]{s^{#1}_{#2}}
\newcommand{\subdomains}[1]{\mR_{#1}[\Gamma(t)]}

\newcommand{\mobility}[1]{\beta_{#1}}
\newcommand{\kinetic}[1]{\rho_{#1}}
\newcommand{\tG}[0]{\widetilde{G}}
\newcommand{\volume}[1]{\operatorname{vol}\left(#1\right)}
\newcommand{\vVertex}[2]{\Vec{X}^{#1}_{#2}}
\newcommand{\vVertexSynonym}[2]{\Vec{q}^{#1}_{#2}}

\newcommand{\numSurfVertex}[2]{K^{#1}_{#2}}

\newcommand{\numSimplex}[2]{J^{#1}_{#2}}
\newcommand{\numTjVertex}[1]{Z_{#1}}
\newcommand{\polyhedral}[2]{\vec{\mathfrak{X}}^{#1}_{#2}}
\newcommand{\orderedSeq}[2]{\vec{\varrho}^{#1}_{#2}}
\newcommand{\refsurface}[1]{\Upsilon_{#1}}
\newcommand{\simplex}[2]{\sigma^{#1}_{#2}}
\newcommand{\inprodMassLamped}[4]{\left<#1,\,#2\right>^{#3}_{#4}}
\newcommand{\polygonCurve}[2]{\Gamma^{#1}_{#2}}
\newcommand{\femNormalVertex}[2]{\Vec{\omega}^{#1}_{#2}}
\newcommand{\mH}{\mathscr{H}}
\newcommand{\triangulation}[1]{\mathscr{T}^{#1}}
\newcommand{\femspaceVertex}[2]{V^{#1}_{#2}}
\newcommand{\femspaceVertexVector}[2]{\underline{V}^{#1}_{#2}}

\newcommand{\femspaceChemical}[1]{S^{#1}}
\newcommand{\femspaceBasisCurve}[2]{\Phi^{#1}_{#2}}

\def\bv#1{\mbox{\boldmath{$#1$}}}    

\def\bvzero{{\bf 0}}
\def\bvone{{\bf 1}}

\newcommand{\diag}{\operatorname{diag}}

\newcommand{\fix}[1]{{#1}}

\title{
A Parametric Finite Element Approach for \\ 
an Anisotropic Multi-Phase Mullins--Sekerka Problem \\
with Kinetic Undercooling}
\author{Tokuhiro Eto\thanks{Universit\'{e} Claude Bernard Lyon 1, CNRS, Centrale Lyon, INSA Lyon, Universit\'{e} Jean Monnet, ICJ UMR5208, 69622 Villeurbanne, France. E-mail: eto@math.univ-lyon1.fr}\and Harald Garcke\thanks{Fakult\"{a}t f\"{u}r Mathematik, Universit\"{a}t Regensburg, 93040 Regensburg, Germany. E-mail: \mbox{harald.garcke@ur.de}}\and Robert N\"{u}rnberg\thanks{Dipartimento di Mathematica, Universit\`{a} di Trento, 38123 Trento, Italy. E-mail: robert.nurnberg@unitn.it}}
\date{}

\begin{document}

\maketitle

\begin{abstract}
We consider a sharp interface formulation for an anisotropic multi-phase 
Mullins--Sekerka problem with kinetic undercooling. 
The flow is characterized by a cluster of surfaces 
evolving such that the total surface energy plus a weighted sum of the volumes 
of the enclosed phases  decreases in time. 
Upon deriving a suitable variational formulation, we introduce a fully
discrete unfitted finite element method. In this approach,
the approximations of the moving interfaces are independent of the
triangulations  used for the equations in the bulk.
Our method can be shown to be unconditionally stable.
Several numerical examples demonstrate the capabilities of the 
\fix{proposed} method. In particular, it is demonstrated that the evolution of multiple ice crystals with junctions can be modeled 
using the proposed approach.
\end{abstract}

\noindent~{\small
\textbf{Keywords:}
Mullins--Sekerka problem,\ multi-phase,\ parametric finite element method,\ unconditional stability,\ 
anisotropy, ice crystal growth.
}

\noindent~{\small
	\textbf{AMS Subject Classification:}
{Primary: 65M12; Secondary:
	35R35,
	65M50,
	65M60, 74N10,
	80A22.
}
}

\section{Introduction}
Crystal growth driven by diffusion and  anisotropic surface energy leads to fascinating pattern formation phenomena in nature.
In addition, the understanding of such crystallization processes is fundamental for many applications in engineering and in particular in the foundry industry and we refer
to \cite{Herlach2008} for more details on phase transformations 
in multi-phase systems.
In this paper, we consider a sharp interface model for an anisotropic 
multi-phase Mullins--Sekerka problem,
which describes the evolution of a system that can exhibit  more than two phases and is governed by a quasi-static diffusion equation.
More precisely, we study the evolution of a 
curve network/surface cluster in a bounded Lipschitz domain $\Omega$ in 
$\mathbb{R}^d$, $d = 2,3$.
The cluster $\Gamma$ is made up of several interfaces $\Gamma_i$\fix{, $i\in\naturalset{I_S}$, $I_S\geq1$}, which can
meet at triple junctions $\mathcal{T}_k$\fix{, $k\in\naturalset{I_T}$, $I_T\geq 0$}, and which separate the 
bounded domain $\Omega\subset\mathbb{R}^d$ into regions $\mathcal{R}_\ell$
belonging to different phases, $\ell \in \naturalset{I_R}$,
$I_R\geq2$.
\fix{Here for $K\in\mathbb N$, we let $\naturalset{K}$ := $\{1,\ldots,K\}$, with the convention that $\naturalset{0} = \emptyset$.}
The model  describes the evolution of the surface cluster such that the
anisotropic interfacial energy is decreased over time. The flow is principally driven
by fluxes across interfaces that are derived from chemical potentials
$\bv w = (w_1,\ldots,w_{I_R})^\top$, which satisfy diffusion equations in the bulk.
We remark that in some applications the vector $\bv w$ also describes concentrations of chemical substances.
For example, in ice crystal growth $\bv w$ is related to vapor number densities. 
In addition, the evolving cluster satisfies an anisotropic Gibbs--Thomson law
featuring effects due to kinetic undercooling. Overall, the model seeks a vector of chemical potentials
$\bv w : \Omega \to \bR^{I_R}$ and an evolving surface cluster $(\curve{}{}(t))_{t\in[0,T]}$ such that
\begin{subequations}\label{eq:AMMS}
\begin{align}
    &\Delta\bv w = \zerovec
        && \mbox{in}\quad\Omega\backslash\curve{}{}(t),\ t\in(0,T], \label{eq:AMMS:a}\\
    &\jumpPhase{\bv w}{\curve{}{i}}{} = \bv 0
        && \mbox{on}\quad\curve{}{i}(t),\ t\in(0,T],\ i\in\naturalset{I_S}, \label{eq:AMMS:a2}\\
    &\bv w\cdot\jumpPhase{\phasecharacter}{\curve{}{i}}{}
        = \curvature{\anisotropy{},i} - \frac{V_i\kinetic{i}}{\mobility{i}(\normall{}{i})}
        && \mbox{on}\quad\curve{}{i}(t),\ t\in(0,T],\ i\in\naturalset{I_S}, \label{eq:AMMS:b}\\
    &\jumpPhase{\nabla\bv w}{\curve{}{i}}{}\,\normall{}{i}
        = -V_i\jumpPhase{\phasecharacter}{\curve{}{i}}{}
        && \mbox{on}\quad\curve{}{i}(t),\ t\in(0,T],\ i\in\naturalset{I_S}, \label{eq:AMMS:c}\\
    &\sum_{\ell=1}^3\left\{\anisotropy{\tpindex{k}{\ell}}(\normall{}{\tpindex{k}{\ell}})\conormal{\tpindex{k}{\ell}} - \left(\anisotropy{\tpindex{k}{\ell}}'(\normall{}{\tpindex{k}{\ell}})\cdot\conormal{\tpindex{k}{\ell}}\right)\normall{}{\tpindex{k}{\ell}}\right\} = \vec 0
        && \mbox{on}\quad\mT_k(t),\ t\in(0,T],\ k\in\naturalset{I_T}, \label{eq:AMMS:f}\\
    &\nabla_{\normalcontainer}\bv w = O
        && \mbox{on}\quad\pOmega_N, \label{eq:AMMS:d}\\
    &\chemicalVec{}{} = {\chemicalVec{}{}}_D
        && \mbox{on}\quad\pOmega_D, \label{eq:AMMS:e}\\
    &\curve{}{}(0) = \Gamma_0, \label{eq:AMMS:g}
\end{align}
\end{subequations}
where $T > 0$ is a final time. Here $\vec\nu_i$, $V_i$ and 
$\curvature{\anisotropy{},i}$ denote a unit normal, the associated normal velocity and the
anisotropic mean curvature of the surfaces making up the cluster
$\curve{}{}(t) = \bigcup_{i=1}^{I_S}\curve{}{i}(t)$.
For a quantity $q$, we define the jump of $q$ across $\Gamma_{i}(t)$ 
in the direction of the unit normal $\vec\nu_i$ by
$\jumpPhase{q}{\curve{}{i}}{}:= \lim_{\varepsilon\downarrow 0}\{q(\cdot + \varepsilon\normall{}{i}) - q(\cdot - \varepsilon\normall{}{i})\}$.
Moreover, $\rho_i \geq0$ are kinetic coefficients and
$\mobility{i}:\mathbb{S}^{d-1}\to\mathbb{R}_{>0}$ describe 
orientation-dependent mobility functions which  are assumed to be smooth, even and
positive functions defined on the unit sphere.
In addition, $\phasecharacter = (\chi_1,\ldots,\chi_{I_R})^\top$ denotes the vector  of the characteristic functions 
$\chi_\ell $
 of the regions
$\mathcal{R}_\ell$.
Hence \eqref{eq:AMMS:b} describes an anisotropic Gibbs--Thomson law with 
kinetic undercooling. 
The equation \eqref{eq:AMMS:c} describes an interfacial mass balance, see \cite{Davis01,GarckeNSt04}.  On  triple junctions the force balance condition 
\eqref{eq:AMMS:f} must hold, which involves the normals and
conormals of the three surfaces meeting at a junction $\mathcal{T}_k$. 
For a geometric and physical interpretation of this force balance condition we refer to \cite{CahnHoffman1972, GarckeNS98} and
\cite[p.~199]{clust3d}. 
The precise definitions and formulations will be stated in 
\Section{sec:MathematicalProperties}.
To close the system, we impose the boundary conditions
\eqref{eq:AMMS:d}, \eqref{eq:AMMS:e} and the initial condition 
\eqref{eq:AMMS:g}. For the former two, we have split the 
boundary $\pOmega$, with outer normal $\vec\nu_\Omega$ into 
the relatively open subsets $\pOmega_D$ and $\pOmega_N$, such that
$\pOmega = \closure{\pOmega_D} \cup \closure{\pOmega_N}$ and
$\pOmega_D \cap \pOmega_N = \emptyset$, 
for the Dirichlet and Neumann boundary conditions that
are imposed on the chemical potential, respectively. By $\nabla_{\normalcontainer}$ we denote the spatial derivative in the direction of the normal $\normalcontainer$. With the help of the Dirichlet data $ {\chemicalVec{}{}}_D$, we can model undercooling and supersaturation at an outer boundary, see, e.g. \cite{jcg} for the two-phase case. Throughout this work, we assume for simplicity that
$ {\chemicalVec{}{}}_D$ is constant. The model \eqref{eq:AMMS} is an anisotropic version of the model studied in \cite{BronsardGarckeStoth1998,EtoGarckeNurnberg2024} which also takes kinetic undercooling into account.  For
the modifications needed to include anisotropy, we refer to \cite{GarckeNS98} and for a discussion  how to model   kinetic undercooling, we refer to \cite{Davis01}.

Other approaches to multi-phase crystal growth use multi-phase field models and we refer to
\cite{GarckeNS98,GarckeNSt04, NestlerGSt05,DANILOV2005e177, Steinbach2009} for a detailed discussion of such models. In \cite{GarckeNS98,GarckeNSt04} formally matched asymptotic expansions are used to relate sharp interface models of the form
\eqref{eq:AMMS} to multi-phase field models.
Let us briefly review the literature on numerical methods for
multi-phase Mullins--Sekerka problems.
To the best of our knowledge, the only front tracking methods for
Mullins--Sekerka flows involving several phases and interfaces meeting at
triple junctions can be found in \cite{EtoGarckeNurnberg2024} 
and \cite{EtoGarckeNurnberg2025}, both by the present authors.
The model approximated in \cite{EtoGarckeNurnberg2024} is based on 
\cite{BronsardGarckeStoth1998} and is closely related to \eqref{eq:AMMS},
whereas the model considered in \cite{EtoGarckeNurnberg2025} 
is based on the formulation proposed 
in
\cite{GarckeSturzenhecker1998}. 
Via formally matched asymptotics, the problem \eqref{eq:AMMS}, as well as the
models studied in \cite{EtoGarckeNurnberg2024,EtoGarckeNurnberg2025}, can be
recovered as sharp interface limits of certain multi-component viscous Cahn--Hilliard 
equations. We refer to \cite{BronsardGarckeStoth1998, GarckeNS98}  for details on how to use the method of formally matched asymptotic expansions for Cahn--Hilliard systems. In particular, in \cite{GarckeNS98} the anisotropic force balance condition \eqref{eq:AMMS:f} has been derived.  For numerical methods for the approximation of Cahn--Hilliard systems 
 we refer to \cite{Eyre1993,BloweyCopettiElliott1996,
BarrettBloweyGarcke2001,Nurnberg09,LiChoiKim2016}.
Later, we will  also discuss some application to snow crystal growth and refer to \cite{Libbrecht05} for an introduction to the physics of snow crystal growth, to 
 \cite{jcg} for sharp interface computations for snow crystal growth and to \cite{DemangeZPB17} for phase field computations of growing snow crystals. 

The rest of the paper is organized as follows.
In \Section{sec:MathematicalProperties}, we give the precise mathematical 
definitions needed to formulate \eqref{eq:AMMS} and prove a dissipation property of the anisotropic surface energy for solutions of \eqref{eq:AMMS}.
In \Section{sec:WeakFormulation}, we introduce a weak formulation of the system and show the energy bound using this formulation.
In \Section{sec:FEM_approx}, we propose a fully discrete finite element scheme for the weak formulation introduced in \Section{sec:WeakFormulation}.
We also prove the existence and uniqueness of solutions, as well as
an unconditional stability bound that mimics the dissipation property 
presented in \Section{sec:MathematicalProperties} on the discrete level.
\Section{sec:MatrixForm} is devoted to discussing solution methods for
the linear systems that arise at each time level.
Finally, we show several numerical simulations in 
\Section{sec:NumericalResults}.

\section{Mathematical formulation}\label{sec:MathematicalProperties}
In this section, we give the precise definitions needed to formulate the moving boundary problem 
\eqref{eq:AMMS}. In addition, we present an important property
of strong solutions to \eqref{eq:AMMS}.

We begin with the description of the surface cluster $\Gamma(t)$, 
following the representation of evolving surface clusters from 
\cite{fluidfbptj} (see also \cite{EtoGarckeNurnberg2024}).
The domain $\Omega$ is split into subdomains $\subdomains{\ell}\ (\ell\in\naturalset{I_R})$, with $I_R\geq 2$,
by the surface cluster $\Gamma(t)= \bigcup_{i=1}^{I_S}\curve{}{i}(t)$,
i.e., $\Omega = \Gamma(t)\cup\bigcup_{\ell=1}^{I_R}\subdomains{\ell}$ with
$\subdomains{\ell_1}\cap\subdomains{\ell_2}=\emptyset$ if $\ell_1\neq \ell_2$.
Each $\Gamma_i(t)\ (i\in\naturalset{I_S})$ is either 
a closed hypersurface without boundary or a hypersurface with boundary in $\Omega$. 
For $k\in\naturalset{I_T}$, $I_T\geq0$, let
$\mT_k(t)$ denote the triple junction at which the three hypersurfaces
$\Gamma_{\tpindex{k}{1}}(t)$, $\Gamma_{\tpindex{k}{2}}(t)$ and $\Gamma_{\tpindex{k}{3}}(t)$
with $1\leq \tpindex{k}{1} < \tpindex{k}{2} < \tpindex{k}{3} \leq I_S$ meet, that is $\mT_k(t) \subset \bigcap_{j=1}^3\partial\Gamma_{\tpindex{k}{j}}(t)$.
In addition, $\conormal{i}$ denotes the conormal, i.e.\ the intrinsic outer 
unit normal to $\partial\Gamma_i$, the boundary of $\Gamma_i$, 
that lies within the tangent plane of $\Gamma_i$. 
See \Figure{fig:3p} for an example of a curve network in the three-phase case.
\begin{figure}[H]
    \centering
    \includegraphics[keepaspectratio, scale=0.25]{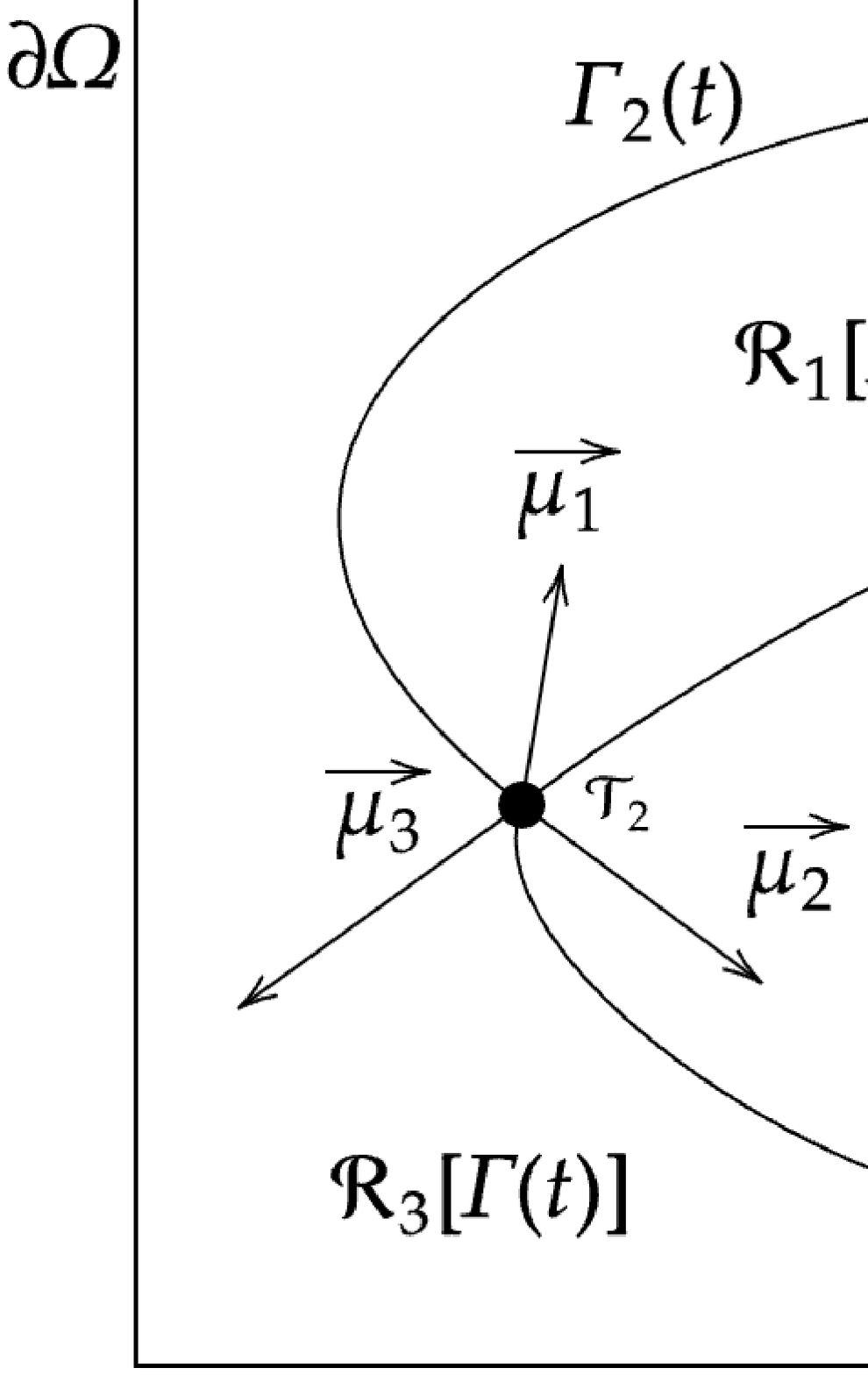}
    \caption{A surface cluster in 2D made up of three open curves and 
two triple junctions \\$(d = 2,\, I_S = 3,\, I_R = 3,\, I_T = 2)$.}\label{fig:3p}
\end{figure}

The surface cluster is endowed with the anisotropic surface energy
\begin{equation} \label{eq:en}
    |\Gamma(t)|_{\anisotropy{}} := \sum_{i=1}^{I_S}|\Gamma_i(t)|_{\anisotropy{i}}\quad\mbox{with}\quad|\Gamma_i(t)|_{\anisotropy{i}} := \int_{\Gamma_i(t)}\anisotropy{i}(\vec{\nu}_i)\dH{d-1},
\end{equation}
where $\mH^{d-1}$ is the $(d-1)$-dimensional Hausdorff measure.
Moreover, $\anisotropy{i}:\mathbb{R}^d\to \mathbb{R}_{\geq0}$ $(i\in\naturalset{I_S})$ denotes the anisotropy function of the surface $\Gamma_i(t)$,
which is assumed to be convex and  absolutely  homogeneous of degree one, i.e.,\ $\anisotropy{i}(\lambda\vec p) = |\lambda|\anisotropy{i}(\vec p)$ for all $\lambda \in\mathbb{R}$ and $\vec p\in\mathbb{R}^d$. 
We assume, that $\anisotropy{i} \in C^1(\mathbb{R}^d\setminus \{0\} , \mathbb{R}_{>0} )$, $i\in\naturalset{I_S}$. Clearly, in the isotropic case
$\anisotropy{i}(\vec p) = |\vec p|$, $i\in\naturalset{I_S}$,
the energy \eqref{eq:en} reduces to the total surface area of the cluster
$\Gamma(t)$.

The anisotropic mean curvature $\curvature{\anisotropy{},i}\,(i\in\naturalset{I_S})$ of $\Gamma_i(t)$ in the direction of $\normall{}{i}$
is then defined through the Cahn--Hoffman vector field (see \cite{CahnHoffman1972}) as follows:
\begin{equation} \label{eq:curvature}
    \curvature{\anisotropy{},i} := - \sgrad\cdot\left(\anisotropy{i}'(\normall{}{i})\right)\qquad\mbox{for}\quad i\in\naturalset{I_S},
\end{equation}
where $\sgrad\cdot$ denotes the surface divergence on $\curve{}{i}$,
and $\gamma_i'$ denotes the spatial gradient of the function 
$\gamma_i:\mathbb{R}^d\to\mathbb{R}$. As the $\gamma_i$ and the $\beta_i$ are even, we observe that the system \eqref{eq:AMMS} does not depend on the choice of the \fix{orientation for the} normals \fix{$\Vec{\nu}_i$} on $\Gamma_i$.

Similarly to \cite{BronsardGarckeStoth1998,EtoGarckeNurnberg2024},
from now on we assume that the chemical potential 
$\bv w$ takes values on the hyperplane 
$T\Sigma := \{\bv u\in\bR^{I_R} \mid \bv u \cdot \bv 1 
= \sum_{\ell=1}^{I_R}u_\ell = 0\}$,
where $\bv 1 = (1,\ldots,1)^\top$. This can be motivated as follows.
Taking the inner products of \eqref{eq:AMMS:a}, \eqref{eq:AMMS:a2}, 
\eqref{eq:AMMS:c}, 
\eqref{eq:AMMS:d} and \eqref{eq:AMMS:e} with $\bv 1$, and noting
that $\bv \chi \cdot \bv 1 \equiv 0$, we obtain, in a suitable weak sense, that
\begin{equation*}
\Delta (\bv w \cdot \bv 1) = 0 \quad \text{in } \Omega,\qquad
\nabla_{\normalcontainer}(\bv w \cdot \bv 1) = 0 \quad \text{on }\pOmega_N, 
\qquad
\bv w \cdot \bv 1 = \bv w_D \cdot \bv 1 \quad \text{on }\pOmega_D.
\end{equation*}
Hence $\bv w \cdot \bv 1$ is equal to a constant in $\Omega$, which equals 
$\bv w_D \cdot \bv 1$ if $\pOmega_D\not=\emptyset$. 
From now on we make the assumption that
$\bv w_D \cdot \bv 1 = 0$, i.e.\ that
$\chemicalVec{}{D} = (w_{D,1},\ldots,w_{D,I_R})^\top \in \zerosumspace$,
which implies that $\bv w(\cdot, t) \in \zerosumspace$ for $t\in[0,T]$.
Furthermore, without loss of generality, we set $\chemicalVec{}{D} = \bv 0$ if
$\pOmega_D = \emptyset$.

We note that in the case of the curve network displayed in Figure~\ref{fig:3p},
and for the isotropic surface energy densities
$\gamma_i(\vec{p}) = \tension{i}|\vec{p}|$ with $\tension{i} > 0$, 
together with $\kinetic{i} = 0$, for $i=1,2,3$, and $\pOmega_N = \pOmega$,
the system \eqref{eq:AMMS} collapses to the model (1.2) in
\cite{EtoGarckeNurnberg2024} which was introduced in
\cite{BronsardGarckeStoth1998}. In that paper, the present authors 
introduced a numerical method based on the
parametric finite element method (PFEM) for the numerical approximation
of \eqref{eq:AMMS} in 2D in this simplified setting. 
Hence the present contribution can be viewed as a generalization of
that numerical method, and its analysis, to general surface clusters in 3D
with anisotropic surface energies and the presence of kinetic undercooling as
well as undercooling imposed at the boundary.

\begin{prop}\label{prop:dissp}
    Assume that $(\bv w(\cdot, t),\,\curve{}{}(t))_{t\in[0,T]}$
    is a classical solution to $\eqref{eq:AMMS}$.
    Then, we have that
    \begin{equation}\label{eq:dissipation}
        \ddt \left\{|\Gamma(t)|_{\anisotropy{}} - \sum_{\ell = 1}^{I_R}w_{D,\ell}\volume{\subdomains{\ell}}\right\} + \|\nabla\chemicalVec{}{}\|^2_{L^2(\Omega)} + \left\lVert \sqrt{\frac{\kinetic{}}{\mobility{}(\normall{}{})}}V\right\rVert_{L^2(\curve{}{}(t))}^2 = 0,
    \end{equation}
recall \eqref{eq:en}. 
    In addition, it holds that
\begin{equation} \label{eq:dtvol}
\ddt\volume{\subdomains{\ell}}= \int_{\pOmega_D}\nabla w_\ell\cdot\normall{}{\Omega}\dH{d-1} \qquad\mbox{for}\quad \ell\in\naturalset{I_R}.
\end{equation}
\end{prop}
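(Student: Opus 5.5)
The plan is to obtain \eqref{eq:dissipation} by combining two identities: one from the transport theorem for the anisotropic surface energy, and one from testing the bulk equation with $\bv w$.

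\textbf{Step 1 (energy variation).} For the anisotropic energy we use the first variation formula for clusters with triple junctions:
\[
\ddt|\Gamma(t)|_{\anisotropy{}} = -\sum_{i=1}^{I_S}\int_{\Gamma_i(t)}\curvature{\anisotropy{},i}V_i\dH{d-1} + \sum_{k=1}^{I_T}\int_{\mT_k(t)}\vec{\mathcal V}_k\cdot\sum_{\ell=1}^3\Big\{\anisotropy{\tpindex{k}{\ell}}(\normall{}{\tpindex{k}{\ell}})\conormal{\tpindex{k}{\ell}} - \big(\anisotropy{\tpindex{k}{\ell}}'(\normall{}{\tpindex{k}{\ell}})\cdot\conormal{\tpindex{k}{\ell}}\big)\normall{}{\tpindex{k}{\ell}}\Big\}\dH{d-2}.
\]
Here $\vec{\mathcal V}_k$ is the common velocity of the junction. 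This formula is obtained by differentiating $\int_{\Gamma_i}\anisotropy{i}(\normall{}{i})$ along a parametrization. We use $\partial_t\normall{}{i}=-\sgrad V_i$ plus tangential terms, the one-homogeneity $\anisotropy{i}'(\vec p)\cdot\vec p=\anisotropy{i}(\vec p)$, and integration by parts on $\Gamma_i$. This integration by parts produces $\curvature{\anisotropy{},i}$ via \eqref{eq:curvature} and the boundary terms on $\partial\Gamma_i$. The tangential part of the velocity contributes only boundary terms, which combine into the displayed junction expression. By \eqref{eq:AMMS:f} the junction terms vanish. This step is the main technical obstacle: the tangential motion at the junctions must be tracked carefully, using that all three surfaces share the junction velocity. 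If needed, I would cite the standard computation from \cite{GarckeNS98} or \cite{clust3d}.

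\textbf{Step 2 (bulk identity).} Test \eqref{eq:AMMS:a} with $\bv w$ on each region $\subdomains{\ell}$ and integrate by parts. Using \eqref{eq:AMMS:a2}, the interface contributions combine into jumps of normal derivatives, giving
\[
\|\nabla\bv w\|_{L^2(\Omega)}^2 = -\sum_{i}\int_{\Gamma_i}\bv w\cdot\jumpPhase{\nabla\bv w}{\curve{}{i}}{}\normall{}{i}\dH{d-1} + \int_{\pOmega_D}\bv w_D\cdot\nabla_{\normalcontainer}\bv w\dH{d-1}.
\]
The Neumann part drops by \eqref{eq:AMMS:d}. The sign convention of the jump must be checked against the orientation. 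By \eqref{eq:AMMS:c} and then \eqref{eq:AMMS:b}, the interface term equals
\[
\sum_i\int_{\Gamma_i}V_i\,\bv w\cdot\jumpPhase{\phasecharacter}{\curve{}{i}}{} = \sum_i\int_{\Gamma_i}\Big(\curvature{\anisotropy{},i}V_i-\frac{\kinetic{i}}{\mobility{i}(\normall{}{i})}V_i^2\Big).
\]

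\textbf{Step 3 (volumes).} For \eqref{eq:dtvol}, note that
\[
\ddt\volume{\subdomains{\ell}}=\sum_i\int_{\Gamma_i}V_i\,\jumpPhase{\chi_\ell}{\curve{}{i}}{}\dH{d-1},
\]
up to the sign fixed by the normal convention. By \eqref{eq:AMMS:c} this equals $-\sum_i\int_{\Gamma_i}\jumpPhase{\nabla w_\ell}{\curve{}{i}}{}\normall{}{i}$. By the divergence theorem applied to the harmonic $w_\ell$ on each subregion, together with \eqref{eq:AMMS:d}, this is the flux $\int_{\pOmega_D}\nabla w_\ell\cdot\normall{}{\Omega}$.

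\textbf{Conclusion.} Since $\bv w_D$ is constant, the Dirichlet term in Step 2 equals $\sum_\ell w_{D,\ell}\ddt\volume{\subdomains{\ell}}$. Inserting Step 1 and rearranging then yields \eqref{eq:dissipation}. The only delicate bookkeeping outside Step 1 is the consistent sign of jumps and normals; I expect it to close automatically once fixed once.
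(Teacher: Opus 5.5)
Your proposal is correct and follows essentially the same route as the paper: the anisotropic first-variation identity from \cite{clust3d}, with the junction terms removed by \eqref{eq:AMMS:f}; testing the bulk equation with $\bv w$ and using \eqref{eq:AMMS:c}, \eqref{eq:AMMS:b}; and then $\int_{\Omega\setminus\Gamma(t)}\Delta w_\ell=0$ to identify the Dirichlet flux with $\frac{\mathrm{d}}{\mathrm{d}t}\operatorname{vol}(\mathcal{R}_\ell[\Gamma(t)])$. The one sign you left open is fixed by the convention that $[\chi_\ell]_{\Gamma_i}=1$ when $\vec\nu_i$ points into the region; this gives $\frac{\mathrm{d}}{\mathrm{d}t}\operatorname{vol}(\mathcal{R}_\ell[\Gamma(t)])=-\sum_i\int_{\Gamma_i(t)}[\chi_\ell]_{\Gamma_i}V_i=\sum_i\int_{\Gamma_i(t)}[\nabla w_\ell]_{\Gamma_i}\cdot\vec\nu_i$, the opposite of your displayed formula, and together with $0=\int_{\partial\Omega_D}\nabla w_\ell\cdot\vec\nu_\Omega-\sum_i\int_{\Gamma_i(t)}[\nabla w_\ell]_{\Gamma_i}\cdot\vec\nu_i$ your Step~3 then closes with the correct sign.
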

\begin{proof}
First, we recall from \cite[Lemma 3.1 and 
 Eq.(3.4)]{clust3d} that
the balance law \eqref{eq:AMMS:f} on the triple junctions implies
\begin{equation}\label{eq:ts}
\ddt \sum_{i=1}^{I_S} |\curve{}{i}(t)|_{\anisotropy{i}} = -\sum_{i=1}^{I_S} \int_{\curve{}{i}(t)} \curvature{\anisotropy{},i}V_i\dH{d-1},
\end{equation}
see also \cite[p.~298]{BarrettGarckeNurnberg2008IMA}.
Using \eqref{eq:AMMS:b} and \eqref{eq:AMMS:c}, we then obtain that
\begin{align}\label{eq:dissp2}
\ddt \sum_{i=1}^{I_S} |\curve{}{i}(t)|_{\anisotropy{i}} &= -\sum_{i=1}^{I_S} \int_{\curve{}{i}(t)}\left( \chemicalVec{}{}\cdot\jumpPhase{\phasecharacter}{\curve{}{i}}{} + \frac{\kinetic{i}V_i}{\mobility{i}(\normall{}{i})}\right)V_i\dH{d-1}\nonumber\\
&= - \sum_{i=1}^{I_S}\int_{\curve{}{i}(t)} \chemicalVec{}{}\cdot (V_i\jumpPhase{\phasecharacter}{\curve{}{i}}{})\dH{d-1} - \sum_{i=1}^{I_S}\int_{\curve{}{i}(t)}\frac{\kinetic{i}}{\mobility{i}(\normall{}{i})}V_i^2\dH{d-1}\nonumber\\
&= \sum_{i=1}^{I_S}\int_{\curve{}{i}(t)} \chemicalVec{}{}\cdot(\jumpPhase{\nabla\chemicalVec{}{}}{\curve{}{i}}{}\normall{}{i})\dH{d-1} - \sum_{i=1}^{I_S}\int_{\curve{}{i}(t)}\frac{\kinetic{i}}{\mobility{i}(\normall{}{i})}V_i^2\dH{d-1}.
\end{align}
Meanwhile, for each $\ell\in \naturalset{I_R}$,
it follows from integration by parts, \eqref{eq:AMMS:a}, \eqref{eq:AMMS:d},
and \eqref{eq:AMMS:e} that
\begin{align}\label{eq:dissp3}
\int_{\Omega} |\nabla w_\ell|^2\,\dL{d} &= 
\int_{\Omega\setminus\Gamma(t)} |\nabla w_\ell|^2\,\dL{d}
\nonumber \\
&    =  \int_{\pOmega_D} w_{D,\ell}\nabla w_\ell\cdot\normall{}{\Omega}\dH{d-1} - \sum_{i=1}^{I_S}\int_{\curve{}{i}(t)} w_\ell\, \jumpPhase{\nabla w_\ell}{\curve{}{i}}{}\cdot\normall{}{i}\dH{d-1}.
\end{align}
Summing \eqref{eq:dissp3} for $\ell\in\naturalset{I_R}$, we have
\begin{equation}\label{eq:dissp4}
\|\nabla\chemicalVec{}{}\|_{L^2(\Omega)}^2 = \int_{\pOmega_D} \chemicalVec{}{D}\cdot(\nabla \chemicalVec{}{}\normall{}{\Omega})\dH{d-1}  - \sum_{i=1}^{I_S}\int_{\curve{}{i}(t)} \chemicalVec{}{}\cdot (\jumpPhase{\nabla \chemicalVec{}{}}{\curve{}{i}}{}\normall{}{i})\dH{d-1}.
\end{equation}
Similarly to \eqref{eq:dissp3}, we obtain from \eqref{eq:AMMS:a},
integration by parts and \eqref{eq:AMMS:c}, \eqref{eq:AMMS:e} that
\begin{align}\label{eq:dissp9}
0 &= \int_{\Omega\setminus\Gamma(t)}\Delta w_\ell\,\dL{d} 
= \int_{\pOmega_D}\nabla w_\ell\cdot\normall{}{\Omega}\dH{d-1}
- \sum_{i=1}^{I_S} \int_{\curve{}{i}(t)} \jumpPhase{\nabla w_\ell}{\curve{}{i}}{}\cdot\normall{}{i}\dH{d-1} \nonumber\\
&= \int_{\pOmega_D}\nabla w_\ell\cdot\normall{}{\Omega}\dH{d-1} 
+ \sum_{i=1}^{I_S} \int_{\curve{}{i}(t)} [\chi_\ell]_{\Gamma_i} V_i \dH{d-1}
\nonumber\\
&= \int_{\pOmega_D}\nabla w_\ell\cdot\normall{}{\Omega}\dH{d-1} - \ddt\volume{\subdomains{\ell}},
\end{align}
where we recall that 
$\chi_\ell$
 denotes the characteristic function
of the region $\subdomains\ell$, so that $[\chi_\ell]_{\Gamma_i} = 1$
if $\Gamma_i(t) \subset \partial\subdomains\ell$ and $\vec\nu_i$ points into
the region.
This proves \eqref{eq:dtvol}. Moreover, 
multiplying \eqref{eq:dissp9} with $w_{D,\ell}$ and summing over 
$\ell\in\naturalset{I_R}$ gives
\begin{equation}\label{eq:dissp5}
\ddt\sum_{\ell=1}^{I_R} w_{D,\ell}\volume{\subdomains{\ell}} = \int_{\pOmega_D}\chemicalVec{}{D}\cdot(\nabla \chemicalVec{}{}\normall{}{\Omega})\dH{d-1}.
\end{equation}
Combining \eqref{eq:dissp2}, \eqref{eq:dissp4}, and \eqref{eq:dissp5} yields the desired result \eqref{eq:dissipation}. 
\end{proof}

\begin{rem}
We note that \eqref{eq:dissipation} gives a dissipation result for the quantity
\begin{equation} \label{eq:quantity}
|\Gamma(t)|_{\anisotropy{}} 
- \sum_{\ell = 1}^{I_R}w_{D,\ell}\volume{\subdomains{\ell}},
\end{equation}
\fix{which can be viewed as a free energy for the system \eqref{eq:AMMS}.}
In the case $\pOmega_D = \emptyset$ this reduces to the surface energy \eqref{eq:en}. Moreover, if $\pOmega_D = \emptyset$, then \eqref{eq:dtvol} 
ensures that the volume of each region is preserved.
\fix{In the case $\pOmega_D\neq \emptyset$, on the other hand, \eqref{eq:dtvol} describes the mass in-flux from the boundary $\pOmega$.}
Compare also with Propositions~2.1 and 2.2 in \cite{EtoGarckeNurnberg2024}.
\end{rem}

\section{Weak formulation}\label{sec:WeakFormulation}
Let us derive a weak formulation for $\eqref{eq:AMMS}$.
First, we introduce the following function spaces for the bulk trial and test
functions:
\begin{align*}
    &S_0(\Omega) := \left\{u\in H^1(\Omega) ~|~ u = 0\quad\mbox{on}\quad \pOmega_D\right\},\qquad \bv S_0(\Omega) := [S_0(\Omega)]^{I_R},\\
    &\bv S_D(\Omega) := \left\{\bv u\in [H^1(\Omega)]^{I_R} ~|~ \bv u = \chemicalVec{}{D}\quad\mbox{on}\quad \pOmega_D\right\},\\
    &\bv S_\Sigma(\Omega) := \left\{\bv u\in [H^1(\Omega)]^{I_R} ~|~ \bv u(x) \in \zerosumspace\quad\forall x\in\Omega\right\}.
\end{align*}
We suppose that $(\chemicalVec{}{}(\cdot, t), \curve{}{}(t))_{t\in[0,T]}$
is a solution to $\eqref{eq:AMMS}$ and
$\chemicalVec{}{}(\cdot, t)$ belongs to $\mathbf{S}_D(\Omega)\cap \mathbf{S}_\Sigma(\Omega)$.
Then, similarly to \eqref{eq:dissp9}, we obtain on  
testing $\eqref{eq:AMMS:a}$ with $\bv \varphi\in\bv S_0(\Omega)$ 
and performing integration by parts, for
$\ell\in\naturalset{I_R}$ that
\begin{align*}
    0 &=
\int_{\Omega\backslash\Gamma(t)}\Delta w_\ell\varphi_\ell\dL{d} 
     = -\sum_{i=1}^{I_S} \int_{\curve{}{i}(t)}\jumpPhase{\nabla w_\ell}{\curve{}{i}}{}\cdot\normall{}{i}\varphi_\ell\dH{d-1} - \int_\Omega \nabla w_\ell\cdot\nabla\varphi_\ell\dL{d}\\&
     = \sum_{i=1}^{I_S} \int_{\curve{}{i}(t)} \jumpPhase{\chi_\ell}{\curve{}{i}}{} V_i\varphi_\ell\dH{d-1} - \int_\Omega \nabla w_\ell\cdot\nabla\varphi_\ell\dL{d},
\end{align*}
where we have used that $\varphi_\ell\in S_0(\Omega)$ and the conditions \eqref{eq:AMMS:d} and \eqref{eq:AMMS:c}.
Summing over $\ell=1,\ldots,I_R$ gives
\begin{equation}\label{eq:WeakForm1}
    \int_\Omega \nabla \bv w : \nabla \bv \varphi\,\dL{d} -
\sum_{\ell=1}^{I_R}
\sum_{i=1}^{I_S} \int_{\curve{}{i}(t)} \jumpPhase{\chi_\ell}{\curve{}{i}}{} V_i\varphi_\ell\dH{d-1} = 0
 \qquad
    \forall \bv \varphi \in \bv S_0(\Omega).
\end{equation}
Multiplying \eqref{eq:AMMS:b} with a test function
$\xi \in L^2(\Gamma(t))$ and integrating over $\Gamma(t)$ yields
\begin{equation}\label{eq:WeakForm2}
\sum_{i=1}^{I_S} \int_{\curve{}{i}(t)} \curvature{\anisotropy{},i}\xi_i\dH{d-1} 
-
\sum_{\ell=1}^{I_R}
\sum_{i=1}^{I_S} \int_{\curve{}{i}(t)} \jumpPhase{\chi_\ell}{\curve{}{i}}{} w_\ell\xi_i \dH{d-1}
- \int_{\curve{}{i}}\frac{\kinetic{i}}{\mobility{i}(\normall{}{i})}V_i\xi_i\dH{d-1} = 0.
\end{equation}

For general anisotropies it is not straightforward to come up with a weak
formulation of the anisotropic mean curvature vector 
that is suitable for a parametric finite element approximation
based on linear elements. Hence from now on we follow
\cite{BarrettGarckeNurnberg2008IMA,BarrettGarckeNurnberg2008NM,clust3d,ejam3d}
and choose a special class of anisotropies $\anisotropy{i}$ that are of the 
form
\begin{equation*}
    \anisotropy{i}(\vec p) = \sum_{\ell = 1}^{L_i}\anisotropy{i}^{(\ell)}(\vec p),\qquad\mbox{with}\quad \anisotropy{i}^{(\ell)}(\vec p) := \sqrt{\vec{p}\cdot G_i^{(\ell)}\vec{p}},
\end{equation*}
where $G_i^{(\ell)}\in\mathbb{R}^{d\times d}$ is a positive definite matrix for each $\ell\in\naturalset{L_i}$, and $L_i\geq 1$.
\fix{We remark that most energies of relevance, including crystalline 
surface energies, can be approximated either by 
the above class of energies, or by a nonlinear generalization of it, where the
sum is replaced by an $\ell^r$ norm, for $r \in [1, \infty)$.
We refer to Chapter~6 in the review article \cite{BarrettGarckeRobertBook2020}
for further details.}

We now present a weak formulation of the anisotropic mean curvature 
vector, which goes back to \cite{BarrettGarckeNurnberg2008NM}, see
also \cite{clust3d} for the extension to surface clusters. We also refer to \cite{LiBao21} for an alternative weak formulation.
For a symmetric positive matrix $G$, we set $\tG  = [{\rm det}\,G]^{\frac{1}{d-1}}\,[G]^{-1}$ and define the $\tG$-inner product
\[
\bigl(\vec\eta,~\vec\zeta\bigr)_{\tG}= \vec\eta\cdot\tG\vec\zeta,\qquad\forall \vec\eta,~\vec\zeta\in\bR^d.
\]
For a smooth scalar field $g$ over $\Gamma_i(t)$, we define the anisotropic surface gradient
\begin{align*}
\nabla_s^{\tG}g = \sum_{j=1}^{d-1}\partial_{\vec t_{j}}g\,\vec t_{j}=\sum_{j=1}^{d-1}(\nabla_sg\cdot\vec t_{j})\,\vec t_{j},
\end{align*}
where $\partial_{\vec t_{j}}g = \nabla_sg\cdot\vec t_{j}$ is the directional derivative, $\nabla_s$ is the usual surface gradient operator, and $\{\vec t_{j}\}_{j=1}^{d-1}$ forms an orthonormal basis with respect to the $\tG$-inner product for the tangent plane of $\Gamma_i(t)$ at the point of interest, i.e., 
\begin{equation*}
\vec t_{j}\cdot\vec\nu_i = 0,\qquad \left(\vec t_{j}, ~\vec t_{k}\right)_{\tG} = \delta_{jk}, \quad 1\leq j,k\leq d-1.
\end{equation*}
Moreover, the anisotropic surface divergence and gradient of a smooth vector field $\vec g$ are given by
\begin{align*}
\nabla_s^{\tG}\cdot\vec g = \sum_{j=1}^{d-1} (\partial_{\vec t_{j}}\vec g)\cdot(\tG\vec t_{j}),\qquad 
\nabla_s^{\tG}\vec g =  \sum_{j=1}^{d-1}(\partial_{\vec t_{j}}\vec g)\otimes(\tG\vec t_{j}),
\end{align*}
where $\otimes$ is the standard tensor product for two vectors in $\bR^d$.
We also define the inner product
\begin{equation} \label{eq:Gtildeprod}
	(\nabla_s^{\tG}\,\vec{u}, \nabla_s^{\tG}\,\vec{v})_{\tG }:=
	\sum_{i=1}^{d-1} (\partial_{\vec{t}_i}\,\vec{u} ,
	\partial_{\vec{t}_i}\,\vec{v})_{\tG}\,
\end{equation}
for smooth $\vec{u},\vec{v}:\Gamma \to {\mathbb{R}}^d$.
Then, we define for smooth functions $\vec\eta$ and $\vec\zeta$ 
the inner product
\begin{equation} \label{eq:aniip}
\big<\nabla_s^{\tG}\vec \eta, ~\nabla_s^{\tG}\vec\zeta\big>_{\gamma, \Gamma(t)}
=\sum_{i=1}^{I_S}\sum_{\ell=1}^{L_i} \int_{\Gamma_i(t)}
\left(\nabla_s^{\tG_i^{(\ell)}}\vec\eta,~\nabla_s^{\tG_i^{(\ell)}}\vec\zeta\right)_{\tG_i^{(\ell)}}\gamma_\ell(\vec \nu_i)\dH{d-1}.
\end{equation}
In \cite{clust3d} it was shown that for a surface cluster satisfying
\eqref{eq:AMMS:f} the anisotropic mean curvature 
\eqref{eq:curvature} satisfies the identity
\begin{equation*}
   \sum_{i=1}^{I_S} \int_{\curve{}{i}(t)}{\curvature{\anisotropy{},i}\normalnoindex{}{i}}\cdot{\Vec{\eta}_i}{} \dH{d-1}
+ \innerproduct{\sgrad^{\tG}\identity}{\sgrad^{\tG}\Vec{\eta}}{}{\anisotropy{},\curve{}{}(t)} = 0
\end{equation*}
for every $\vec{\eta}\in [H^1(\curve{}{}(t))]^d$ with
$\vec{\eta}_{\tpindex{k}{1}} = \vec{\eta}_{\tpindex{k}{2}} = \vec{\eta}_{\tpindex{k}{3}}$ 
on $\mT_k$, for all $k\in\naturalset{I_T}$.

Let us summarize the weak formulation of the system $\eqref{eq:AMMS}$ as 
follows. Here for convenience we let $\langle\cdot,\cdot\rangle_\Omega$ 
denote the $L^2$--inner product over $\Omega$, and similarly 
for $\Gamma_i(t)$ as well as 
$\langle\cdot,\cdot\rangle_{\Gamma(t)} = \sum_{i=1}^{I_S} 
\langle\cdot,\cdot\rangle_{\Gamma_i(t)}$.
Find $(\bv{w},\Gamma(t))_{t\in[0,T]}$ such that 
$\Gamma(0) = \Gamma_0$ and for all $t \in (0,T]$ it holds that
$\bv w(t) \in \bv S_D(\Omega)\cap \bv S_\Sigma(\Omega)$ such that the following
identities hold.
\newline\newline\noindent
\begin{subequations} \label{eq:WFM}
\textbf{[Motion law]} 
For all $\bv\varphi\in \bv S_0(\Omega)\cap \bv S_\Sigma(\Omega)$,
\begin{equation}\label{eq:WFM-ML}
    \innerproduct{\nabla\bv w}{\nabla\bv \varphi}{}{\Omega} 
-
\sum_{\ell=1}^{I_R}
\sum_{i=1}^{I_S} \innerproduct{\jumpPhase{\chi_\ell}{\curve{}{i}}{} V_i}{\varphi_\ell}{}{\curve{}{i}(t)}
=0 .
\end{equation}
\textbf{[Gibbs--Thomson law with kinetic undercooling]} For all $\xi\in L^2(\curve{}{}(t))$,
\begin{equation}\label{eq:WFM-GTL}
    \innerproduct{\curvature{\anisotropy{}}}{\xi}{}{\curve{}{}(t)} 
-
\sum_{\ell=1}^{I_R}
\sum_{i=1}^{I_S} \innerproduct{\jumpPhase{\chi_\ell}{\curve{}{i}}{} w_\ell}{\xi_i}{}{\curve{}{i}(t)}
    -\innerproduct{\frac{\kinetic{}}{\mobility{}(\normall{}{})}V}{\xi}{}{\curve{}{}(t)}= 0.
\end{equation}
\textbf{[Curvature vector]} For all $\vec{\eta}\in [H^1(\curve{}{}(t))]^d$ such that
$\vec{\eta}_{\tpindex{k}{1}} = \vec{\eta}_{\tpindex{k}{2}} = \vec{\eta}_{\tpindex{k}{3}}$ 
on $\mT_k(t)$ for every $k\in\naturalset{I_T}$,
\begin{equation}\label{eq:WFM-CV}
    \innerproduct{\curvature{\anisotropy{}}\normalnoindex{}{}}{\Vec{\eta}}{}{\curve{}{}} + \innerproduct{\sgrad^{\tG}\identity}{\sgrad^{\tG}\Vec{\eta}}{}{\anisotropy{},\curve{}{}(t)} = 0.
\end{equation}
\end{subequations}

\fix{
\begin{rem}
We note that it is enough to choose test functions $\bv \varphi \in \bv S_0(\Omega) \cap \bv S_\Sigma(\Omega)$ in \eqref{eq:WFM-ML}.
In fact, choosing in \eqref{eq:WeakForm1} a test function $\bv \varphi = \zeta \bv 1$ 
yields for the first term
\begin{equation*}
\int_\Omega \sum_{\ell=1}^{I_R} \nabla  w_\ell  \cdot \nabla  \zeta \,\dL{d}= 	\int_\Omega  \nabla \left(\sum_{\ell=1}^{I_R} w_\ell  \right)\cdot \nabla  \zeta \,\dL{d}=0,
\end{equation*}
where the last identity holds because 
$\chemicalVec{}{}(\cdot, t)$ belongs to $ \mathbf{S}_\Sigma(\Omega)$.
In addition, we also have
\begin{equation*}
\sum_{\ell=1}^{I_R}
\sum_{i=1}^{I_S} \int_{\curve{}{i}(t)} \jumpPhase{\chi_\ell}{\curve{}{i}}{} V_i\zeta\dH{d-1}= \sum_{i=1}^{I_S} \int_{\curve{}{i}(t)} \jumpPhase{\sum_{\ell=1}^{I_R}\chi_\ell}{\curve{}{i}}{} V_i\zeta\dH{d-1}=0,
\end{equation*}
where the last identity holds because $\sum_{\ell=1}^{I_R}\chi_\ell=1$.
This shows that it  is enough  to require \eqref{eq:WFM-ML} just for test functions 
$\bv \varphi \in \bv S_0(\Omega) \cap \bv S_\Sigma(\Omega)$.
\end{rem}
}

\section{Finite element approximation}\label{sec:FEM_approx}
Let the time interval $[0,T]$ be split into $M$ sub-intervals $[t_{m-1}, t_m]$ 
for each $m = 1,\ldots,M$, whose length are equal to $\tau_m$. 
Then, given a multiplet of polygonal surfaces $\curve{0}{} = (\curve{0}{1},\cdots,\curve{0}{I_S})$, our aim is to find time discrete multiplets $\curve{1}{},\cdots,\curve{M}{}$ governed by discrete analogues of \eqref{eq:WFM}. 

Let us first define the finite element spaces from where we will seek the approximate solutions.
Let $\triangulation{m}$ be a triangulation of $\closure{\Omega}$. 
\begin{align*}
    &\femspaceChemical{m} := \left\{v\in C(\closure{\Omega})~|~ v\!\mid_o\ \mbox{is affine}\ \forall o\in\triangulation{m}\right\},\qquad
    \femspacebulkvector{} := [\femspaceChemical{m}]^{I_R},\\
    &\femspacebulkvector_0 := \left\{\bv v \in \femspacebulkvector{}~|~ \bv v = \zerovec\ \mbox{on}\ \pOmega_D\right\},\qquad
    \femspacebulkvector_D := \left\{\bv v \in \femspacebulkvector{}~|~ \bv v = \chemicalVec{}{D}\ \mbox{on}\ \pOmega_D\right\},\\
    &\femspacebulkvectorsum{} := \left\{\bv v\in \femspacebulkvector{}~|~ \bv v(x)\in \zerosumspace\quad\forall x\in\closure{\Omega}\right\}.
\end{align*}
Using these notations, the discrete chemical potential $\chemicalDiscVec{m+1}{}$ will be sought in $\femspacebulkvector_D\cap\femspacebulkvectorsum{}$.

For the following presentation of the discrete surface clusters we closely
follow the presentation in \cite{clust3d}, see also 
\cite{BaoGarckeNuernbergZhao2022}
and \cite{EtoGarckeNurnberg2025}.
In order to describe $\Gamma^m$ for $m\geq 0$, and the discrete matching 
conditions that have to hold on the triple junction, we 
let $\refsurface{i}^h\,(i\in \naturalindex{I_S})$ be polyhedral reference surfaces with $\closure{\refsurface{i}^h} = \bigcup_{j=1}^{\numSimplex{}{i}}\closure{\simplex{}{i,j}}$, where $\{\simplex{}{i,j}\}_{j=1}^{J_i}$ is a family of
 mutually disjoint open $(d-1)$-simplices with vertices
$\{\vVertexSynonym{}{i,k}\}_{k=1}^{K_i}$ vertices.

Moreover, we assume that each boundary $\partial\refsurface{i}^h$ is split into $I_P^i$ sub-boundaries $\partial_p\refsurface{i}^h\,(p\in\naturalindex{I_P^i})$,
and each sub-boundary $\partial_p\refsurface{i}^h$ corresponds to the 
parameterization of a triple junction.
In particular, we will let 
$\Gamma^m_i = \Vec{\mathfrak{X}}^m_i(\refsurface{i}^h)$, so that the
triple junction $\mathcal{T}_k$, where the surfaces
$\Gamma_{s^k_1}(t)$, $\Gamma_{s^k_2}(t)$, and $\Gamma_{s^k_3}(t)$ meet, 
is approximated by the images of $\Vec{\mathfrak{X}}^m$ on
$\partial_{p_1^k} \refsurface{s_1^k}^h$, 
$\partial_{p_2^k} \refsurface{s_2^k}^h$,
and $\partial_{p_3^k} \refsurface{s_3^k}^h$.
To this end, we have to ensure that these sub-boundaries perfectly match up on 
the triple junctions, and in particular contain the same number of vertices.
Hence, we assume that for every $k\in\naturalindex{I_T}$, it holds that
\begin{equation}\label{eq:q}
    \numTjVertex{k} := \#Q_{s_1^k,p^k_1} = \#Q_{s_2^k,p^k_2} = 
\#Q_{s_3^k, p^k_3},
\end{equation}
where
$Q_{s,p} := \left\{\vVertexSynonym{}{s,\ell}\right\}_{\ell=1}^{\numSurfVertex{}{s}}\cap \partial_p\refsurface{s}^h$
denotes the set of vertices belonging to the boundary patch
$\partial_p\refsurface{s}^h$.
Then we assume in addition that there exist bijections $\orderedSeq{k}{r}:\,\naturalindex{\numTjVertex{k}}\to Q_{s_r^k,p^k_r}\,(r=1,2,3)$
such that $(\orderedSeq{k}{r}(1),\ldots,\orderedSeq{k}{r}(\numTjVertex{k}))\,(r=1,2,3)$ are ordered sequences of the vertices.

Let
\begin{multline*}
    \underline{V}^h(\refsurface{}^h) := \bigg\{(\polyhedral{}{1},\cdots,\polyhedral{}{I_S})\in\bigotimes_{i=1}^{I_S}[C(\closure{\refsurface{i}^h})]^d~|~\polyhedral{}{i}\!\mid_{\simplex{}{i,j}}\mbox{ is affine}\quad\forall j\in \naturalindex{\numSimplex{}{i}},\,\forall i\in\naturalindex{I_S},\\
    \mbox{and}\qquad\polyhedral{}{\tpindex{k}{1}}(\orderedSeq{k}{1}(z)) = \polyhedral{}{\tpindex{k}{2}}(\orderedSeq{k}{2}(z)) = \polyhedral{}{\tpindex{k}{3}}(\orderedSeq{k}{3}(z))\qquad\forall z\in\naturalindex{\numTjVertex{k}},\,\forall  k\in\naturalindex{I_T}\bigg\}.
\end{multline*}
Then, for each $m\geq 0$ and $\Vec{\mathfrak{X}}^m\in V^h(\refsurface{}^h)$,
we define $\Gamma^m := \Vec{\mathfrak{X}}^m(\refsurface{}^h)$ with
$\curve{m}{i} = \Vec{\mathfrak{X}}^m_i(\refsurface{i}^h)$,
$\simplex{m}{i,j} := \polyhedral{m}{i}(\simplex{}{i,j})$ and
$\vVertexSynonym{m}{i,k} := \polyhedral{m}{i}(\vVertexSynonym{}{i,k})$.
The discrete triple junctions $\mT^m_k\,(k\in\naturalindex{I_T})$
are defined by
$\mT^m_k := \left\{ \polyhedral{m}{\tpindex{k}{1}}(\orderedSeq{k}{1}(z)) ~|~ z\in \naturalindex{\numTjVertex{k}}\right\}$.
On the polyhedral surface $\Gamma^m$, we introduce finite element spaces defined by
\begin{align*}
    V^h(\polygonCurve{m}{i}) &:= \left\{v\in C(\polygonCurve{m}{i})~|~v\!\mid_{\simplex{m}{i,j}}\mbox{ is affine}\quad \forall j\in\naturalindex{\numSimplex{}{i}}\right\},\quad
    \underline{V}^h(\polygonCurve{m}{i}) := [V^h(\polygonCurve{m}{i})]^d,
\quad i\in\naturalindex{I_S}.
\end{align*}
For later use, we also define 
\begin{equation} \label{eq:defV0}
    V^h_0(\polygonCurve{m}{i}) := \left\{v\in V^h(\polygonCurve{m}{i})~|~\, v = 0\quad\mbox{on}\quad \partial\polygonCurve{m}{i}\right\},\quad
    \underline{V}^h_0(\polygonCurve{m}{i}) := [V^h_0(\polygonCurve{m}{i})]^d,
\quad i\in\naturalindex{I_S}
\end{equation}
and let $\{\femspaceBasisCurve{m,0}{i,k}\}_{k=1}^{K^0_i}$ be the standard basis of $V^h_0(\polygonCurve{m}{i})$, so that 
$\femspaceBasisCurve{m,0}{i,k}(\vec q^m_{i,\ell}) = \delta_{k\ell}$, 
$k,\ell \in \naturalindex{K^0_i}$.

Then, the approximate solutions $\vVertex{m+1}{}$ and $\kappa_{\anisotropy{}}^{m+1}$ are respectively sought in the finite element spaces defined by
\begin{align*}
    \femspaceVertexVector{h}{\mathcal{T}}(\Gamma^m) &:= \left\{(\vVertex{}{1},\cdots,\vVertex{}{I_S})\in\bigotimes_{i=1}^{I_S}\underline{V}^h(\polygonCurve{m}{i})~|~\vVertex{}{\tpindex{k}{1}} = \vVertex{}{\tpindex{k}{2}} = \vVertex{}{\tpindex{k}{3}}\quad\mbox{on}\quad\mT_k^m,\ \forall k\in\naturalindex{I_T}\right\},\\
    \femspaceVertex{h}{}(\Gamma^m) &:= \bigotimes_{i=1}^{I_S}V^h(\polygonCurve{m}{i}).
\end{align*}
We now define the normal vector of each simplex $\simplex{m}{i,j}$.
To this end, let
$\left\{\vec q_{i,j,\ell}^{m}\right\}_{\ell=1}^{d}$ be the vertices of 
$\sigma_{i,j}^{m}$, and ordered with the same orientation for all 
$\sigma_{i,j}^{m}$, $j\in\naturalindex{J_i}$. Then we define
\begin{equation*}
    \normal{m}{i,j} := \frac{\Normal{\simplex{m}{i,j}}}{|\Normal{\simplex{m}{i,j}}|}\quad\mbox{with}\quad\Normal{\simplex{m}{i,j}} := \begin{cases}
       (\vVertexSynonym{m}{i,j,2} - \vVertexSynonym{m}{i,j,1})^\perp\quad\mbox{if}\quad d = 2,\\
       (\vVertexSynonym{m}{i,j,2} - \vVertexSynonym{m}{i,j,1})\wedge(\vVertexSynonym{m}{i,j,3} - \vVertexSynonym{m}{i,j,1})\quad\mbox{if}\quad d = 3,
    \end{cases}
\end{equation*}
where $|\cdot| = \mH^{d-1}(\cdot)$;
the symbol $\wedge$ denotes the wedge product, and $v^\perp := (-v_2,v_1)^\top$ for $v = (v_1,v_2)^\top\in\mathbb{R}^2$.
Let $\normal{m}{i}$ be the normal vector on $\polygonCurve{m}{i}$ which equals $\normal{m}{i,j}$ on $\simplex{m}{i,j}$.

Let us define the mass lumped inner product of two piecewise continuous functions $u$ and $v$ on $\polygonCurve{m}{i}$ by
\begin{equation*}
    \inprodMassLamped{u}{v}{h}{\polygonCurve{m}{i}} := \frac{1}{d}\sum_{j=1}^{\numSimplex{}{i}}|\simplex{m}{i,j}|\sum_{k=1}^d\lim_{\substack{\simplex{m}{i,j}}\ni\vec{q}\to\vVertexSynonym{m}{i,j,k}}(uv)(\vec{q}), \qquad i\in\naturalindex{I_S}.
\end{equation*}
Using this, we define the mass lumped inner product on $\Gamma^m$ by
\begin{equation*}
    \inprodMassLamped{u}{v}{h}{\Gamma^m} := \sum_{i=1}^{I_S}\inprodMassLamped{u_i}{v_i}{h}{\polygonCurve{m}{i}}.
\end{equation*}
Meanwhile, we will write the natural $L^2$--inner product as follows:
\begin{equation*}
    \inprodMassLamped{u}{v}{}{\polygonCurve{m}{}} = \sum_{i=1}^{I_S}\inprodMassLamped{u_i}{v_i}{}{\polygonCurve{m}{i}} =\sum_{i=1}^{I_S} \int_{\polygonCurve{m}{i}} u_i v_i\dH{d-1}.
\end{equation*}
The notion of these inner products can be extended
for two vector- and tensor-valued functions.
The vertex normal $\femNormalVertex{m}{i}\in \underline V^h(\polygonCurve{m}{i})$ on $\polygonCurve{m}{i}$ 
is defined in terms of the $L^2$--projection as follows (see \cite{BarrettGarckeRobertBook2020}):
\begin{equation*}
    \inprodMassLamped{\femNormalVertex{m}{i}}{\Vec{\xi}}{h}{\polygonCurve{m}{i}} = \inprodMassLamped{\normal{m}{i}}{\Vec{\xi}}{}{\polygonCurve{m}{i}},\quad\forall\Vec{\xi}\in \underline V^h(\polygonCurve{m}{i}),\qquad i\in\naturalindex{I_S}.
\end{equation*}

Our finite element approximation of \eqref{eq:WFM} is now given as follows. 
Let $\Gamma^0$ be given. Then, for $m\geq0$, find
$(\chemicalDiscVec{m+1}{}, \curvatureDisc{\anisotropy{}}^{m+1}, \X{m+1}{})\in (\femspacebulkvector_D\cap\femspacebulkvectorsum)\times \femspacegamma \times \femspacegammavector$, and set $\Gamma^{m+1} = \vec X^{m+1}(\Gamma^m)$,
such that the following conditions hold:
\newline\newline\noindent
\begin{subequations} \label{eq:FEA}
\textbf{[Motion law]} For all $\bv\varphi\in\femspacebulkvector_0\cap\femspacebulkvectorsum{}$,
\begin{equation}\label{eq:FE-ML-linear}
\innerproduct{\nabla \chemicalDiscVec{m+1}{}}{\nabla\bv\varphi}{}{\Omega} -
\sum_{\ell=1}^{I_R}
\sum_{i=1}^{I_S} \innerproduct{\jumpPhase{\chi_\ell}{\curve{m}{i}}{}
{\Glinear{m}{i}}}{\varphi_\ell}{(h)}{\curve{m}{i}}
= 0.
\end{equation}
\textbf{[Gibbs--Thomson law with kinetic undercooling]} For all $\xi\in \femspacegamma$,
\begin{multline}\label{eq:FE-GTL-linear}
\innerproduct{\curvatureDisc{\anisotropy{}}^{m+1}}{\xi}{h}{\curve{m}{}} -
\sum_{\ell=1}^{I_R}
\sum_{i=1}^{I_S} \innerproduct{\jumpPhase{\chi_\ell}{\curve{m}{i}}{}
W^{m+1}_\ell}{\xi_i}{(h)}{\curve{m}{i}}
- \innerproduct{\frac{\kinetic{}}{\mobility{}(\vec\nu^m)}\frac{\vec X^{m+1}-\identity}{\tau_m}}{\xi\vec\omega^m}{h}{\curve{m}{}}= 0.
\end{multline}
\textbf{[Curvature vector]} For all $\vec\eta\in\femspacegammavector$,
\begin{equation}\label{eq:FE-CV-linear}
    \innerproduct{\curvatureDisc{\anisotropy{}}^{m+1}\weightnormalnoindex{m}{}}{\vec{\eta}}{h}{\curve{m}{}} + \innerproduct{\sgrad^{\tG}\X{m+1}{}}{\sgrad^{\tG}\vec{\eta}}{}{\anisotropy{},\curve{m}{}} = 0,
\end{equation}
\end{subequations}
where, analogously to \eqref{eq:aniip}, we have defined the discrete inner 
product 
\begin{equation*}
\innerproduct{\sgrad^{\tG}\vec\zeta}{\sgrad^{\tG}\vec{\eta}}{}{\anisotropy{},\curve{m}{}}
:= \sum_{i=1}^{I_S} \sum_{\ell=1}^{L_i} \int_{\curve{m}{i}}\innerproductround{\sgrad^{\tG_i^{(\ell)}}\vec\zeta_i}{\sgrad^{\tG_i^{(\ell)}}\vec{\eta}_i}{}{\tG_i^{(\ell)}}\,\anisotropy{i}^{(\ell)}(\normal{m}{i})\dH{d-1}.
\end{equation*}
Observe that here and throughout, 
the notation $\cdot^{(h)}$ means an expression with or
without the superscript $h$.
That is, the scheme \eqref{eq:FEA} represents two different numerical methods:
one with mass lumping in the bulk-interface cross terms in
\eqref{eq:FE-ML-linear} and \eqref{eq:FE-GTL-linear}, 
and one with exact integration.
This follows similar approaches in 
\cite{BarrettGarckeRobert2010,Nurnberg202203,EtoGarckeNurnberg2024,EtoGarckeNurnberg2025}.

Our aim is to prove the well-posedness and unconditional stability 
of the introduced scheme \eqref{eq:FEA}. For the former we make a mild
assumption on the cluster $\Gamma^m$, as well as on the compatibility between
the bulk triangulation $\triangulation{m}$ and $\Gamma^m$.
This follows analogous assumptions being made in 
\cite[Assumptions 64 and 108]{BarrettGarckeRobertBook2020},
see also \cite{EtoGarckeNurnberg2024,EtoGarckeNurnberg2025}.
\begin{asm}\label{asm:1}
    For every $i\in\naturalindex{I_S}$, it holds that
    \begin{equation*}
        \operatorname{span}\left\{\femNormalVertex{m}{i}(\vec q^m_{i,k}) ~|~ k\in\naturalindex{K^0_i}\right\} \neq \{\vec 0\}.
    \end{equation*}
    Moreover, we assume that
    \begin{equation}\label{eq:AssumptionVertexNormal}
        \operatorname{span}\left\{
\sum_{\ell=1}^{I_R}
\sum_{i=1}^{I_S} \innerproduct{\jumpPhase{\chi_\ell}{\curve{m}{i}}{}
{\vec\omega^m_i}}{\varphi_\ell}{(h)}{\curve{m}{i}}
 ~|~\ \bv \varphi\in\femspacebulkvector_0\cap\femspacebulkvectorsum{}\right\} = \bR^d.
    \end{equation}
\end{asm}
\fix{%
We remark that the first condition in \Assumption{asm:1} basically means that each of the $I_S$ surfaces has at least one nonzero inner vertex normal, something that can only be
violated in very pathological cases.
The condition \eqref{eq:AssumptionVertexNormal}, on the other hand,
is a very mild constraint on the interaction between bulk and interface meshes.
In fact, it can only be violated if all the vectors in the set are linearly
dependent, which happens, for example, if all the surfaces are flat and 
lie on top of each other. In all our numerical simulations \Assumption{asm:1} was never violated.}

We now establish the well-posedness of the finite element approximation \eqref{eq:FEA}.
\begin{thm}\label{thm:eu}
Let $\triangulation{m}$ and $\Gamma^m$ satisfy Assumption~\ref{asm:1}.
Then there exists a unique solution 
$$(\chemicalDiscVec{m+1}{},\curvatureDisc{\anisotropy{}}^{m+1},\X{m+1}{})
\in(\femspacebulkvector_D\cap\femspacebulkvectorsum)\times \femspacegamma\times \femspacegammavector$$
to \eqref{eq:FEA}.
\end{thm}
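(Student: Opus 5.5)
Since \eqref{eq:FEA} is a square linear system for the unknowns $(\chemicalDiscVec{m+1}{},\curvatureDisc{\anisotropy{}}^{m+1},\X{m+1}{})$, existence follows from uniqueness. Because $\chemicalVec{}{D}\in\zerosumspace$ is constant, the affine space $\femspacebulkvector_D\cap\femspacebulkvectorsum$ is a translate of the linear space $\femspacebulkvector_0\cap\femspacebulkvectorsum$. The numbers of unknowns and of test functions therefore coincide: bulk test functions range over $\femspacebulkvector_0\cap\femspacebulkvectorsum$, curvature test functions over $\femspacegamma$, and position test functions over $\femspacegammavector$. It thus suffices to show that the homogeneous system has only the trivial solution. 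This is the system with $\chemicalVec{}{D}$ replaced by $\bv 0$ and $\identity$ removed from the time difference. Its unknowns are $(\bv W,\kappa,\vec X)\in(\femspacebulkvector_0\cap\femspacebulkvectorsum)\times\femspacegamma\times\femspacegammavector$, and it consists of \eqref{eq:FE-ML-linear} with $\Glinearhomo{m}{i}$ in place of $\Glinear{m}{i}$, \eqref{eq:FE-GTL-linear} with $\vec X/\tau_m$ in place of the difference quotient, and \eqref{eq:FE-CV-linear} unchanged.

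\textbf{Step 1: energy testing.} In the homogeneous system I choose $\bv\varphi=\bv W$ in \eqref{eq:FE-ML-linear}, $\xi=\pi^h[\vec X\cdot\vec\omega^m]/\tau_m$ in \eqref{eq:FE-GTL-linear}, and $\vec\eta=\vec X/\tau_m$ in \eqref{eq:FE-CV-linear}. These are admissible: $\vec X\in\femspacegammavector$, and $\bv W$ lies in the test space. Because the mass-lumped products only see nodal values, $\langle\kappa,\pi^h[\vec X\cdot\vec\omega^m]\rangle^h=\langle\kappa\vec\omega^m,\vec X\rangle^h$. For the same reason, the cross terms $\langle[\chi_\ell]\,\cdot,\cdot\rangle^{(h)}$ in the motion law and in the Gibbs--Thomson law coincide, for both the lumped and the exact variants. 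Summing the three equations cancels the cross terms and the curvature terms, and leaves
\begin{equation*}
\langle\nabla\bv W,\nabla\bv W\rangle_\Omega+\frac{1}{\tau_m}\Big\langle \sgrad^{\tG}\vec X,\sgrad^{\tG}\vec X\Big\rangle_{\anisotropy{},\curve{m}{}}+\frac{1}{\tau_m^2}\Big\langle\frac{\kinetic{}}{\mobility{}(\vec\nu^m)}\vec X\cdot\vec\omega^m,\vec X\cdot\vec\omega^m\Big\rangle^h_{\curve{m}{}}=0.
\end{equation*}
Since $\tG_i^{(\ell)}$ is positive definite and $\anisotropy{i}^{(\ell)}>0$, each term is nonnegative.

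\textbf{Step 2: consequences.} From the identity, $\nabla\bv W=0$, so $\bv W$ is constant on each connected component of $\Omega$. Also $\sgrad\vec X_i=0$ on every $\curve{m}{i}$, so $\vec X_i$ is constant on each connected $\curve{m}{i}$; I would assume connectedness of each $\curve{m}{i}$, as is implicit in the setup. The triple junction matching conditions built into $\femspacegammavector$ then make these constants agree across junctions. The kinetic term gives no information when $\kinetic{}=0$, so I do not use it. Next I use \eqref{eq:FE-ML-linear} for a general $\bv\varphi$. Since $\nabla\bv W=0$, it reduces to
\begin{equation*}
\vec X_c\cdot\sum_{\ell}\sum_i\langle[\chi_\ell]\vec\omega^m_i,\varphi_\ell\rangle^{(h)}=0\quad\text{for all }\bv\varphi\in\femspacebulkvector_0\cap\femspacebulkvectorsum
\end{equation*}
(for exact integration I would interpolate, and the lumped/exact consistency noted above makes this legitimate). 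By \eqref{eq:AssumptionVertexNormal}, every constant $\vec X_c$ obtained this way is zero. If different surfaces carry different constants, I would instead test with $\bv\varphi$ localized to each surface, or rely on connectivity of the cluster, which I expect is the intended reading. So $\vec X=\vec 0$.

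\textbf{Step 3: $\kappa$ and $\bv W$.} With $\vec X=\vec 0$, \eqref{eq:FE-CV-linear} becomes $\langle\kappa\vec\omega^m,\vec\eta\rangle^h=0$ for all $\vec\eta$. Choosing $\vec\eta=\kappa_i\,\vec\omega^m_i\,\Phi^{m,0}_{i,k}$ supported at an interior vertex $k$ of surface $i$ gives $\kappa_i\,|\vec\omega^m_i|^2=0$ at that vertex, so $\kappa_i=0$ wherever $\vec\omega^m_i\neq0$. At the remaining vertices, and at junction vertices, the vertex normal gives no direct control. This is the main obstacle. My plan is to use \eqref{eq:FE-GTL-linear} with $\vec X=0$ and $\kappa$ partially known: it gives $\langle\kappa,\xi\rangle^h=\sum\langle[\chi_\ell]W_\ell,\xi\rangle^{(h)}$. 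Since $\bv W$ is constant, the right-hand side is a fixed $\bv c\cdot[\bv\chi]$ on each surface, so $\kappa_i$ is constant, equal to $\bv c\cdot[\bv\chi]_{\Gamma_i}$ in the lumped case. The first part of \Assumption{asm:1} supplies a vertex with $\vec\omega^m_i\neq0$, where $\kappa_i=0$; hence $\kappa_i\equiv0$ and $\bv c\cdot[\bv\chi]_{\Gamma_i}=0$ for every $i$. If $\pOmega_D\neq\emptyset$, then $\bv W=0$ directly. Otherwise, because $\bv c\in\zerosumspace$ and the interfaces connect all phases, I expect $\bv c\cdot(\unitvector{\ell_1}-\unitvector{\ell_2})=0$ for all adjacent pairs to force $\bv c=0$. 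This last connectivity step is where I would need to be most careful.
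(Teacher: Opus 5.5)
Your proposal is correct and follows the paper's proof essentially step for step: the same energy test (your identity is the paper's divided by $\tau_m$), constancy of $\bv W$ and $\vec X$, $\vec X=\vec 0$ via \eqref{eq:AssumptionVertexNormal}, constancy of each $\kappa_i$ from the Gibbs--Thomson equation, a vertex-normal test function in the curvature equation together with the first part of Assumption~\ref{asm:1} to get $\kappa\equiv 0$, and finally $\bv c=\bv 0$ from connectivity of the phases. The paper handles the points you hedge on no more carefully: it simply asserts $\vec X\equiv\vec X_c$ (tacitly assuming a connected cluster) and closes with ``since $\Omega$ is connected''; also your qualifier ``in the lumped case'' is unnecessary, because $\langle 1,\xi_i\rangle_{\Gamma^m_i}=\langle 1,\xi_i\rangle^h_{\Gamma^m_i}$ for piecewise linear $\xi_i$, so $\kappa_i=\bv c\cdot[\bv\chi]_{\Gamma_i}$ also holds with exact integration.
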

\begin{proof}
Since the system \eqref{eq:FEA} is linear in the unknowns, with as many
unknowns as equations, it suffices to show that
$(\chemicalDiscVec{m+1}{}, \curvatureDisc{\anisotropy{}}^{m+1}, \X{m+1}{})\equiv (\bv 0, 0, \vec{0})$ is the only solution to the homogeneous system of \eqref{eq:FEA}.
Therefore, we assume that
$(\chemicalDiscVec{}{}, \curvatureDisc{\anisotropy{}}, \X{}{})\in 
(\femspacebulkvector_0\cap\femspacebulkvectorsum)\times \femspacegamma{}\times \femspacegammavector$
is a solution to 
\begin{subequations} \label{eq:homo}
\begin{align}\label{eq:homo1}
\innerproduct{\nabla \chemicalDiscVec{}{}}{\nabla\bv\varphi}{}{\Omega} -
\sum_{\ell=1}^{I_R}
\sum_{i=1}^{I_S} \innerproduct{\jumpPhase{\chi_\ell}{\curve{m}{i}}{}
{\Glinearhomo{m}{i}}}{\varphi_\ell}{(h)}{\curve{m}{i}}
= 0
\qquad \forall\bv\varphi\in\femspacebulkvector_0\cap\femspacebulkvectorsum{},
\\
\innerproduct{\curvatureDisc{\anisotropy{}}}{\xi}{h}{\curve{m}{}} -
\sum_{\ell=1}^{I_R}
\sum_{i=1}^{I_S} \innerproduct{\jumpPhase{\chi_\ell}{\curve{m}{i}}{}
W_\ell}{\xi_i}{(h)}{\curve{m}{i}}
- \innerproduct{\frac{\kinetic{}}{\mobility{}(\vec\nu^m)}\frac{\vec X}{\tau_m}}{\xi\vec\omega^m}{h}{\curve{m}{}}= 0
\qquad \forall\xi\in \femspacegamma,
\label{eq:homo2}\\
    \innerproduct{\curvatureDisc{\anisotropy{}}\weightnormalnoindex{m}{}}{\vec{\eta}}{h}{\curve{m}{}} + \innerproduct{\sgrad^{\tG}\X{}{}}{\sgrad^{\tG}\vec{\eta}}{}{\anisotropy{},\curve{m}{}} = 0
\qquad \forall\vec\eta\in\femspacegammavector.
\label{eq:homo3}
\end{align}
\end{subequations}
Choosing $\bv\varphi = \chemicalDiscVec{}{}$ in \eqref{eq:homo1},
$\xi = \pi^h\left[\X{}{}\cdot \weightnormalnoindex{m}{}\right]$ in 
\eqref{eq:homo2},
and $\vec{\eta} = \X{}{}$ in \eqref{eq:homo3}, we obtain
\begin{equation*}
\innerproduct{\sgrad^{\tG}\X{}{}}{\sgrad^{\tG}\X{}{}}{}{\gamma,\curve{m}{}} 
+ \tau_m\|\nabla\chemicalDiscVec{}{}\|^2_{L^2(\Omega)} 
+ \frac1{\tau_m}\innerproduct{\frac{\kinetic{}}{\mobility{}(\normal{m}{})}\X{}{}\cdot\weightnormalnoindex{m}{}}{\X{}{}\cdot\weightnormalnoindex{m}{}}{h}{\curve{m}{}} = 0.
\end{equation*}
Since all the terms on the left-hand side of the above equation are 
non-negative, we can use the fact that all $\tG_i^{(\ell)}$ are positive definite and the definition \eqref{eq:Gtildeprod} to conclude that 
$\chemicalDiscVec{}{}\equiv \bv C \in \zerosumspace$
and $\X{}{}\equiv \X{}{c}$ are constant functions.
If $\pOmega_D \not=\emptyset$ we immediately get $\bv C=\bv 0$.
Meanwhile, we deduce from \eqref{eq:homo1} that for any $\bv\varphi\in \femspacebulkvector_0\cap\femspacebulkvectorsum$,
\begin{equation*}
    \X{}{c}\cdot
\sum_{\ell=1}^{I_R}
\sum_{i=1}^{I_S} \innerproduct{\jumpPhase{\chi_\ell}{\curve{m}{i}}{}
{\vec\omega^m_i}}{\varphi_\ell}{(h)}{\curve{m}{i}}
=0.
\end{equation*}
We see from \Assumption{asm:1} that the above equation holds if and only if $\X{}{c} = \vec{0}$.
Hence it follows from $\eqref{eq:homo2}$ that 
\begin{equation} \label{eq:kappaconstant}
\kappa_{\gamma,i} = 
\sum_{\ell=1}^{I_R} \jumpPhase{\chi_\ell}{\curve{m}{i}}{} C_\ell
\end{equation}
is also equal to a constant, for $i \in\naturalset{I_S}$. 
With these constants, we now define
\begin{equation} \label{eq:vecz}
    \vec\eta_i := \curvatureDisc{\anisotropy{},i} \sum_{j=1}^{K_i^0}
  \weightnormalnoindex{m}{i}(\vec q^m_{i,\ell})  {\basiscurve{i}{j}}\in [V^h_0(\polygonCurve{m}{i})]^d \qquad\mbox{for}\ i \in\naturalset{I_S}.
\end{equation}
On recalling \eqref{eq:defV0} we observe
that $\vec\eta=(\vec{\eta}_1,\ldots,\vec{\eta}_{I_S})\in\femspacegammavector$.
Thus, on choosing this $\vec\eta$ in \eqref{eq:homo3} we obtain 
\begin{equation*}
    0 = \innerproduct{\curvatureDisc{\anisotropy{}}\weightnormalnoindex{m}{}}{\vec{\eta}}{h}{\curve{m}{}} = \sum_{i=1}^{I_S}\curvatureDisc{\anisotropy{},i}\innerproduct{\weightnormalnoindex{m}{i}}{\vec{\eta}_i}{h}{\curve{m}{i}}
    = \sum_{i=1}^{I_S}(\curvatureDisc{\anisotropy{},i})^2\sum_{j=1}^{K_i^0} |\weightnormalnoindex{m}{i}(\vec q^m_{i,\ell})|^2 \innerproduct
    	{\basiscurve{i}{j}} {\basiscurve{i}{j}}{h}{\curve{m}{i}}
  .
\end{equation*}
\Assumption{asm:1} now immediately implies
that $\curvatureDisc{\anisotropy{},i}= 0$ for 
$i \in\naturalset{I_S}$, and hence $\curvatureDisc{\anisotropy{}}\equiv 0$.

Finally, \fix{%
    for every $i\in\naturalset{I_S}$, there exist $\ell_i^+$ and $\ell_i^-$ with $\ell_i^+ \neq \ell_i^-$ such that $\jumpPhase{\chi_{\ell_i^\pm}}{\curve{m}{i}}{} = \pm 1$, and hence \eqref{eq:kappaconstant} yields
    \[
    0 = \sum_{\ell=1}^{I_R} \jumpPhase{\chi_\ell}{\curve{m}{i}}{} C_\ell = C_{\ell_i^+} - C_{\ell_i^-}.
    \]
    Since $\Omega$ is connected, we see that $C_1 = \ldots = C_{I_R}$, and so
    on recalling $\bv C \in \zerosumspace$, we obtain that $\bv C = \bv 0$.}
This concludes the proof.
\end{proof}

We now show a discrete analogue of the energy dissipation law in
\Proposition{prop:dissp}.

\begin{thm}[Discrete energy dissipation]\label{thm:ded}
Let $(\chemicalDiscVec{m+1}{},\curvatureDisc{\anisotropy{}}^{m+1},\X{m+1}{})\in(\femspacebulkvector_D\cap\femspacebulkvectorsum)\times\femspacegamma{}\times\femspacegammavector{}$
be the solution to \eqref{eq:FEA}. Then, the following inequality holds:
\begin{align} \label{eq:ded}
& |\curve{m+1}{}|_{\anisotropy{}} +
\tau_m \sum_{\ell=1}^{I_R} w_{D,\ell}
\sum_{i=1}^{I_S} \innerproduct{\jumpPhase{\chi_\ell}{\curve{m}{i}}{}
\frac{\X{m+1}{} - \identity}{\tau_m}}{\weightnormalnoindex{m}{i}}{(h)}{\curve{m}{i}}
\nonumber \\ & \quad
+ \tau_m\|\nabla\chemicalDiscVec{m+1}{}\|^2_{L^2(\Omega)} 
+ \tau_m\innerproduct{\frac{\kinetic{}}{\mobility{}(\normal{m}{})}}{\left|\frac{\X{m+1}{} - \identity}{\tau_m}\cdot\weightnormalnoindex{m}{}\right|^2}{h}{\curve{m}{}}
\leq |\curve{m}{}|_{\anisotropy{}}.
\end{align}
\end{thm}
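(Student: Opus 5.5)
The plan is to mimic the continuous argument of \Proposition{prop:dissp} by testing the three equations of \eqref{eq:FEA} with the discrete analogues of $\bv w$, $V$ and the velocity. The one genuinely discrete ingredient is the anisotropic energy inequality from \cite{BarrettGarckeNurnberg2008NM,clust3d}: for $\vec X^{m+1}\in\femspacegammavector$,
\begin{equation*}
\innerproduct{\sgrad^{\tG}\X{m+1}{}}{\sgrad^{\tG}(\X{m+1}{}-\identity)}{}{\anisotropy{},\curve{m}{}} \geq |\curve{m+1}{}|_{\anisotropy{}} - |\curve{m}{}|_{\anisotropy{}}.
\end{equation*}
This holds simplex by simplex for each $\anisotropy{i}^{(\ell)}(\vec p)=\sqrt{\vec p\cdot G_i^{(\ell)}\vec p}$. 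It follows from the identity $\int_{\sigma^m}|\sgrad^{\tG}\vec X|^2_{\tG}\,\anisotropy{}^{(\ell)}(\vec\nu^m)\dH{d-1}\geq \int_{\vec X(\sigma^m)}\anisotropy{}^{(\ell)}(\vec\nu^{m+1})\dH{d-1}$ (an AM--GM/Hadamard type bound in the $\tG$-metric, Lemma~3.1 of \cite{BarrettGarckeNurnberg2008NM}), combined with $a(a-b)\geq\frac12(a^2-b^2)$. Since this is stated in the cited works, I will invoke it rather than reprove it. It is the main obstacle; everything else is algebra.

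For the testing, choose $\vec\eta=\X{m+1}{}-\identity\in\femspacegammavector$ in \eqref{eq:FE-CV-linear}. This is admissible because $\identity$ restricted to $\Gamma^m$ lies in $\femspacegammavector$, the triple junction vertices coinciding. It gives
\[
\innerproduct{\sgrad^{\tG}\X{m+1}{}}{\sgrad^{\tG}(\X{m+1}{}-\identity)}{}{\anisotropy{},\curve{m}{}} = -\innerproduct{\kappa^{m+1}_\gamma\vec\omega^m}{\X{m+1}{}-\identity}{h}{\curve{m}{}}.
\]
Next, choose $\xi=\pi^h[(\X{m+1}{}-\identity)\cdot\vec\omega^m]$ in \eqref{eq:FE-GTL-linear}. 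Because of mass lumping, the right-hand side above equals $-\innerproduct{\kappa^{m+1}_\gamma}{\xi}{h}{\curve{m}{}}$, and hence equals
\[
-\sum_{\ell}\sum_i\innerproduct{[\chi_\ell]W^{m+1}_\ell}{\xi_i}{(h)}{\curve{m}{i}} - \tau_m\innerproduct{\frac{\kinetic{}}{\mobility{}(\normal{m}{})}}{\Big|\frac{\X{m+1}{}-\identity}{\tau_m}\cdot\vec\omega^m\Big|^2}{h}{\curve{m}{}}.
\]
The kinetic term comes out exactly as written, because nodal values of the product are the products of nodal values.

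For the bulk term, choose $\bv\varphi=\chemicalDiscVec{m+1}{}-\chemicalVec{}{D}$ in \eqref{eq:FE-ML-linear}. Since $\chemicalVec{}{D}$ is a constant in $\zerosumspace$, this function lies in $\femspacebulkvector_0\cap\femspacebulkvectorsum$, and $\nabla\bv\varphi=\nabla\chemicalDiscVec{m+1}{}$. Multiplying by $\tau_m$ gives
\[
\tau_m\|\nabla\chemicalDiscVec{m+1}{}\|^2 = \sum_\ell\sum_i\innerproduct{[\chi_\ell]\xi_i}{W^{m+1}_\ell - w_{D,\ell}}{(h)}{\curve{m}{i}},
\]
using $\tau_m\pi^h[\frac{\X{m+1}{}-\identity}{\tau_m}\cdot\vec\omega^m]=\xi$. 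Hence the cross term $\sum\innerproduct{[\chi_\ell]W_\ell^{m+1}}{\xi_i}{(h)}{}$ equals $\tau_m\|\nabla\chemicalDiscVec{m+1}{}\|^2+\sum_\ell w_{D,\ell}\sum_i\innerproduct{[\chi_\ell]}{\xi_i}{(h)}{}$. In the last term, $\innerproduct{[\chi_\ell]}{\pi^h[\cdot]}{(h)}{}$ coincides with the stated $\innerproduct{[\chi_\ell](\X{m+1}{}-\identity)}{\vec\omega^m_i}{(h)}{}$: this is immediate for the lumped version, and in the exact-integration case one uses that the piecewise constant jump makes $\int\pi^h[f]=\int^h f$ on each simplex.

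Finally, combine the three identities with the energy inequality. This yields $|\curve{m+1}{}|_\gamma-|\curve{m}{}|_\gamma\leq$ minus the sum of the gradient term, the kinetic term and the $w_D$ term, which is exactly \eqref{eq:ded} after rearranging.
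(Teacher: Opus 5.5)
Your proposal is correct and is essentially the paper's proof: you use the same test functions $\bv\varphi=\bv W^{m+1}-\bv w_D$, $\xi=\pi^h[(\vec X^{m+1}-\identity)\cdot\vec\omega^m]$ and $\vec\eta=\vec X^{m+1}-\identity$, followed by the anisotropic energy inequality, which the paper likewise just quotes (as Lemma~102 of \cite{BarrettGarckeRobertBook2020}).

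Two small caveats. First, your parenthetical justification of that inequality is not right as written: in $d=3$ the needed per-simplex bound carries the factor $\frac1{d-1}=\frac12$ on the left, and in $d=2$ your version fails for shrinking segments, where one uses $a^2/b\geq 2a-b$ for the new and old anisotropic lengths $a,b$. This is harmless because you rely on the citation. Second, in the exact-integration variant your own observation $\int_\sigma\pi^h[f]=\int^h_\sigma f$ shows that the $\bv w_D$-term you obtain is the mass-lumped $[\chi_\ell]_{\Gamma^m_i}\langle\vec X^{m+1}-\identity,\vec\omega^m_i\rangle^h_{\Gamma^m_i}$, not its exactly integrated counterpart. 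So that term of \eqref{eq:ded} should really carry the superscript $h$; the paper's proof leaves this identification implicit as well.
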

\begin{proof}
Choosing $\bv\varphi = \chemicalDiscVec{m+1}{} - \chemicalVec{}{D}\in \femspacebulkvector_0\cap\femspacebulkvectorsum$ in \eqref{eq:FE-ML-linear}, 
$\xi = \pi^h\left[(\X{m+1}{} - \identity\!\mid_{\Gamma^m})\cdot\weightnormalnoindex{m}{}\right]$
in \eqref{eq:FE-GTL-linear} 
and $\vec{\eta} = \X{m+1}{} - \identity\!\mid_{\Gamma^m}$
in \eqref{eq:FE-CV-linear} yields that
\begin{subequations} 
\begin{align}\label{eq:ded-1}
&\|\nabla\chemicalDiscVec{m+1}{}\|^2_{L^2(\Omega)} -
\sum_{\ell=1}^{I_R}
\sum_{i=1}^{I_S} \innerproduct{\jumpPhase{\chi_\ell}{\curve{m}{i}}{}
{\Glinear{m}{i}}}{W^{m+1}_\ell - w_{D,\ell}}{(h)}{\curve{m}{i}}
= 0,\\
\label{eq:ded-2}
&\innerproduct{\curvatureDisc{\anisotropy{}}^{m+1}\weightnormalnoindex{m}{}}{\X{m+1}{} - \identity}{h}{\curve{m}{}} -
\sum_{\ell=1}^{I_R}
\sum_{i=1}^{I_S} \innerproduct{\jumpPhase{\chi_\ell}{\curve{m}{i}}{}
W^{m+1}_\ell}{\pi^h_i\left[(\X{m+1}{} - \identity)\cdot\weightnormalnoindex{m}{i}\right]}{(h)}{\curve{m}{i}} \nonumber\\&\qquad\qquad
- \innerproduct{\frac{\kinetic{}}{\mobility{}(\normal{m}{})}\frac{\X{m+1}{} - \identity}{\tau_m}\cdot\weightnormalnoindex{m}{}}{(\X{m+1}{} - \identity)\cdot\weightnormalnoindex{m}{}}{h}{\curve{m}{}} = 0, \\
\label{eq:ded-3}
&\innerproduct{\curvatureDisc{\anisotropy{}}^{m+1}\weightnormalnoindex{m}{}}{\X{m+1}{} - \identity}{h}{\curve{m}{}}
+ \innerproduct{\sgrad^{\tG}\X{m+1}{}}{\sgrad^{\tG}(\X{m+1}{} - \identity)}{}{\anisotropy{},\curve{m}{}} = 0.
\end{align}
\end{subequations}
Combining \eqref{eq:ded-1}, \eqref{eq:ded-2} and \eqref{eq:ded-3} yields that
\begin{align} \label{eq:ded-4}
&\tau_m \|\nabla\chemicalDiscVec{m+1}{}\|^2_{L^2(\Omega)} +
\sum_{\ell=1}^{I_R}
\sum_{i=1}^{I_S} \innerproduct{\jumpPhase{\chi_\ell}{\curve{m}{i}}{}
{\pi^h_i\left[(\X{m+1}{} - \identity)\cdot\weightnormalnoindex{m}{i}\right]}}{w_{D,\ell}}{(h)}{\curve{m}{i}} \nonumber\\ & \quad
+ \frac1{\tau_m}
\innerproduct{\frac{\kinetic{}}{\mobility{}(\normal{m}{})}}{|(\X{m+1}{} - \identity)\cdot\weightnormalnoindex{m}{}|^2}{h}{\curve{m}{}}
+
\innerproduct{\sgrad^{\tG}\X{m+1}{}}{\sgrad^{\tG}(\X{m+1}{} - \identity)}{}{\anisotropy{},\curve{m}{}} =0.
\end{align} 
We now recall the following discrete anisotropic energy estimate from 
\cite[Lemma~102]{BarrettGarckeRobertBook2020}:
\begin{equation} \label{eq:ani3d}
\innerproduct{\sgrad^{\tG}\X{m+1}{}}{\sgrad^{\tG}(\X{m+1}{} - \identity)}{}{\anisotropy{},\curve{m}{}}
\geq |\curve{m+1}{}|_{\anisotropy{}} - |\curve{m}{}|_{\anisotropy{}},
\end{equation}
see also \cite{BarrettGarckeNurnberg2008IMA,BarrettGarckeNurnberg2008NM}. 
The desired result \eqref{eq:ded} directly follows from \eqref{eq:ded-4} and
\eqref{eq:ani3d}.
\end{proof}

\begin{rem}
It is not difficult to see that \eqref{eq:ded} is a discrete analogue of the
dissipation property \eqref{eq:dissipation}. In fact, we only need to
recall from \eqref{eq:dissp9} that
\begin{align*}
-\ddt \sum_{\ell = 1}^{I_R}w_{D,\ell}\volume{\subdomains{\ell}}
= \sum_{\ell = 1}^{I_R}w_{D,\ell}\sum_{i=1}^{I_S} \int_{\curve{}{i}(t)} [\chi_\ell]_{\Gamma_i} V_i \dH{d-1} .
\end{align*}
Moreover, we note that \eqref{eq:ded} is the natural generalization of
the discrete energy estimate \cite[Theorem~3.1]{BarrettGarckeRobert2010} 
to the multi-phase problem considered here.
\end{rem}

\section{Solution methods}\label{sec:MatrixForm}
In this section, we discuss solution methods for the system of
linear equations arising from \eqref{eq:FEA} at each time level.
To this end, we make use of ideas from 
\cite{BarrettGarckeRobert2007,BarrettGarckeRobert2010}, see also
\cite{EtoGarckeNurnberg2024,EtoGarckeNurnberg2025}.
Here the crucial idea is to avoid having to work with the 
trial and test spaces $\femspacegammavector$ and
$\femspacebulkvectorsum$ directly, and rather employ
a technique that is similar to a standard treatment of 
periodic boundary conditions for ODEs and PDEs.

We introduce the orthogonal projections 
$\mathcal{P}:[V^h(\curve{m}{})]^d \to \femspacegammavector$ 
and $\mathcal{Q}:\femspacebulkvector\to \femspacebulkvectorsum$,
where as inner product in each case we consider the mass lumped 
$L^2$--inner product. Firstly, 
it is easy to see that for 
$\bv W \in\femspacebulkvector$ it holds that
$\mathcal{Q}\chemicalDiscVec{}{} = \chemicalDiscVec{}{} -
\dfrac{\chemicalDiscVec{}{}\cdot\bvone}{\bvone\cdot\bvone}\, \bvone$
point-wise in $\overline\Omega$.

Now, given $\X{m}{}:=\identity\!\mid_{\Gamma^m} \in \femspacegammavector$, 
let $(\chemicalDiscVec{m+1}{},\kappa^{m+1}_\gamma,\X{m}{}+\delta\X{m+1}{})\in(\femspacebulkvector_D\cap\femspacebulkvectorsum)\times V^h(\curve{m}{})\times \femspacegammavector$ be the unique solution to \eqref{eq:FEA} whose existence has been proven in Theorem~\ref{thm:eu}. 
{From} now on, as no confusion can arise, we identify
$(\chemicalDiscVec{m+1}{},\kappa^{m+1}_\gamma,\delta\X{m+1}{})$ with their
vectors of coefficients with respect to the bases $\{\basisbulk{m}{i}\}_{1\leq i\leq\spacemeshpointsnum}$ and $\{\{\basiscurve{i}{j}\}_{1\leq j\leq N_i}\}_{i=1}^{I_S}$ of the unconstrained spaces $\femspacebulkvector$ and 
$V^h(\curve{m}{})$. 
Let $N=\sum_{i=1}^{I_S} N_i$ and $K= I_R K_\Omega^m$. Then, in addition, we let 
$\metric{P} : (\bR^d)^N \to \mathbb X \subset (\bR^d)^N$ be the Euclidean
space equivalent of $\mathcal{P}$, and similarly for the Euclidean equivalent
$Q : \bR^K \to \mathbb{W} \subset \bR^K$ of $\mathcal{Q}$.

To simplify the next part of the presentation, we first consider the 
three-phase case in two space dimensions, whose 
setting is shown in \Figure{fig:3p}, i.e.\ $d=2$, $I_S=I_R=3$, $I_T=2$.
In addition, we assume that $\partial\Omega_D = \emptyset$.
Then the solution to \eqref{eq:FEA} can be written as $(Q\chemicalDiscVec{m+1}{},\curvatureDisc{\anisotropy{}}^{m+1},\X{m}{} + \metric{P}\delta\X{m+1}{})$
for any solution of the linear system
\begin{equation}\label{eq:MatrixRep}
\begin{pmatrix}
QA_\Omega Q & O & Q \vec{N}_{\Omega,\Gamma}^\top\metric{P} \\
B_{\Omega,\Gamma} Q & C_\Gamma & -\vec{D}_\Gamma^{(\mobility{})}\metric{P} \\
O & \metric{P}\vec{D}_{\Gamma} & \metric{P}\metric{E_\Gamma^{(\anisotropy{})}}\metric{P}
\end{pmatrix}
\begin{pmatrix}
\chemicalDiscVec{m+1}{} \\
\curvatureDisc{\anisotropy{}}^{m+1} \\
\delta\X{m+1}{}
\end{pmatrix} = \begin{pmatrix}
O \\
O \\
-\metric{P}\metric{E_\Gamma^{(\anisotropy{})}}\metric{P}\X{m}{}
\end{pmatrix},
\end{equation}
where $A_\Omega\in\mathbb{R}^{K\times K},\vec{N}_{\Omega,\Gamma}\in(\mathbb{R}^d)^{N\times K}$,
$B_{\Omega,\Gamma}\in\mathbb{R}^{N\times K}$,
$C_\Gamma\in\mathbb{R}^{N\times N}$,
$\vec{D}_\Gamma\in(\mathbb{R}^d)^{N\times N}$,
$\vec{D}_\Gamma^{(\beta)}\in(\mathbb{R}^d)^{N\times N}$, and
$\metric{{E}_\Gamma^{(\anisotropy{})}}\in(\mathbb{R}^{d\times d})^{N\times N}$ are defined by
\begin{align*}
&A_\Omega := 
\begin{pmatrix}
A & O & O  \\
O & A & O \\
O & O & A
\end{pmatrix},\ 
\vec{N}_{\Omega,\Gamma} :=
\begin{pmatrix}
O & \vec{N}_1 & -\vec{N}_1 \\
-\vec{N}_2 & O & \vec{N}_2 \\
\vec{N}_3 & -\vec{N}_3 & O
\end{pmatrix},\ 
B_{\Omega,\Gamma} := 
\begin{pmatrix}
O & B_1 & -B_1 \\
-B_2 & O & B_2 \\
B_3 & -B_3 & O \\
\end{pmatrix},\\ 
&C_\Gamma := 
\begin{pmatrix}
C_1 & O & O \\
O & C_2 & O \\
O & O & C_3 \\
\end{pmatrix},\ 
\vec{D}_\Gamma :=
\begin{pmatrix}
\vec{D}_1 & O & O \\
O & \vec{D}_2 & O \\
O & O & \vec{D}_3
\end{pmatrix},\\ 
&\vec{D}_\Gamma^{(\beta)} :=
\begin{pmatrix}
\vec{D}_1^{(\beta)} & O & O \\
O & \vec{D}_2^{(\beta)} & O \\
O & O & \vec{D}_3^{(\beta)}
\end{pmatrix},\ 
\metric{{E}_\Gamma^{(\anisotropy{})}} :=
\begin{pmatrix}
\metric{E_1^{(\anisotropy{})}} & O & O \\
O & \metric{E_2^{(\anisotropy{})}} & O \\
O & O & \metric{E_3^{(\anisotropy{})}} \\
\end{pmatrix},
\end{align*}
with
\begin{equation*}
\begin{array}{ll}
\left[A\right]_{i,j} := \innerproduct{\nabla\Psi^m_j}{\nabla\Psi^m_i}{}{\Omega},  & \left[\vec{N}_c\right]_{l,i} := \frac1{\tau_m}\innerproduct{\Phi^m_{c,l}}{\Psi^m_i}{\Fdd{(h)}}{\curve{m}{c}}\weightnormal{m}{c}{l},\\
\left[B_c\right]_{k,j} := \innerproduct{\Psi^m_j}{\Phi^m_{c,k}}{\Fdd{(h)}}{\curve{m}{c}}, & \left[C_c\right]_{k,l} := \innerproduct{\Phi^m_{c,l}}{\Phi^m_{c,k}}{h}{\curve{m}{c}}, \\
\left[\vec{D}_c\right]_{k,l} := \innerproduct{\Phi^m_{c,l}}{\Phi^m_{c,k}}{h}{\curve{m}{c}}\weightnormal{m}{c}{l}, & \left[\vec{D}_c^{(\beta)}\right]_{k,l} := \frac1{\tau_m}\innerproduct{\frac{\kinetic{c}}{\beta_c(\normal{m}{c}{})}\Phi^m_{c,l}}{\Phi^m_{c,k}}{h}{\curve{m}{c}}\weightnormal{m}{c}{l}, \\
\left[\metric{{E}_c^{(\anisotropy{})}}\right]_{k,l} := \left(\innerproduct{\sgrad^{\tG}(\Phi^m_{c,l}\unitvector{j})}{\sgrad^{\tG}(\Phi^m_{c,k}\unitvector{i})}{}{\anisotropy{},\curve{m}{c}}
\right)_{i,j=1}^d, & \\
\end{array}
\end{equation*}
for each $c\in\naturalset{I_S}$, where $\{\unitvector{j}\}_{j\in\naturalset{d}}$
denotes the Euclidean standard basis in $\bR^d$.

The advantage of the system \eqref{eq:MatrixRep} over a naive implementation
of \eqref{eq:FEA} is that complications due to nonstandard finite element
spaces are completely avoided. A disadvantage is, however, that the system
\eqref{eq:MatrixRep} is highly singular, in that due to the presence 
of the projections the dimension of its kernel is larger than the dimension of
the scalar bulk finite element space $S^m$.
This
makes it difficult to solve \eqref{eq:MatrixRep} in practice. A more practical
formulation can be obtained by eliminating one of the components of
$\chemicalDiscVec{m+1}{}$. 
In particular, on recalling that $\chemicalDiscVec{m+1}{} \cdot \bvone = 0$, we can reduce the unknown variables $\chemicalDiscVec{m+1}{}\in\mathbb{R}^K$ to $(W^{m+1}_1,W^{m+1}_2)\in\mathbb{R}^{K-\spacemeshpointsnum}$ 
by introducing the linear map $\widehat{Q}:\mathbb{R}^{K-\spacemeshpointsnum}\to \mathbb{W}\subset\mathbb{R}^K$ defined by
\begin{equation*}
\widehat{Q} := \begin{pmatrix}
I_{\spacemeshpointsnum} & O \\
O & I_{\spacemeshpointsnum}\\
-I_{\spacemeshpointsnum} & -I_{\spacemeshpointsnum}\\
\end{pmatrix},
\end{equation*}
where $I_M$ denotes the identity matrix of size $M$ for $M\in\mathbb{N}$. 
Then the solution to \eqref{eq:FEA} can be written as
$(\widehat Q \widehat{\bv W}^{m+1}, \kappa^{m+1}_\gamma,
 \X{m}{} + \metric{P}\delta\X{m+1}{})$ for any solution of the reduced
linear system 
\begin{equation}\label{eq:MatrixRepReduced}
\begin{pmatrix}
 \widehat{A}_\Omega & O & \widehat{N}_{\Omega,\Gamma}^\top\metric{P} \\
 \widehat{B}_{\Omega,\Gamma} & C_\Gamma & -\vec{D}_\Gamma^{(\mobility{})}\metric{P} \\
O & \metric{P}\vec{D}_{\Gamma} & \metric{P}\metric{E_\Gamma^{(\anisotropy{})}}\metric{P}
\end{pmatrix}
\begin{pmatrix}
\widehat{\chemicalDiscVec{}{}}^{m+1} \\
\curvatureDisc{\anisotropy{}}^{m+1} \\
\delta\X{m+1}{}
\end{pmatrix} = \begin{pmatrix}
O \\
O \\
-\metric{P}\metric{E_\Gamma^{(\anisotropy{})}}\metric{P}\X{m}{}
\end{pmatrix},
\end{equation}
where
\begin{equation*}
\widehat{A}_\Omega = 
\begin{pmatrix}
A & O  \\
O & A 
\end{pmatrix},\quad
\widehat{B}_{\Omega,\Gamma} :=  B_{\Omega,\Gamma}\widehat{Q} =
\begin{pmatrix}
B_1 & 2B_1 \\
-2B_2 & -B_2 \\
B_3 & -B_3 \\
\end{pmatrix},\quad
\widehat{N}_{\Omega,\Gamma} :=
\begin{pmatrix}
    O & \vec{N}_1 \\
    -\vec{N}_2 & O \\
    \vec{N}_3 & -\vec{N}_3
\end{pmatrix}.
\end{equation*}
In contrast to \eqref{eq:MatrixRep}, the kernel of 
\eqref{eq:MatrixRepReduced} is small, as it only involves the projections
concerning the triple junction attachment conditions.
Hence, iterative solution methods, combined with good
preconditioners, work very well to solve \eqref{eq:MatrixRepReduced} in
practice.

\begin{rem}
For illustrative purposes, we presented the matrix formulations 
\eqref{eq:MatrixRep} and \eqref{eq:MatrixRepReduced} for the simple
curve network shown in Figure~\ref{fig:3p}, and for $\partial\Omega_D = \emptyset$.
Extending these matrix formulations to general surface clusters and situations with $\partial\Omega_D \not= \emptyset$ is straightforward.
For example, we obtain for the block matrices
in \eqref{eq:MatrixRep} that
$A_\Omega := \operatorname{diag}(A)_{\ell=1,\ldots,I_R}$, 
$\vec{N}_{\Omega,\Gamma} := (\jumpPhase{\chi_\ell}{\curve{}{c}}{}\vec{N}_c)_{c=1,\ldots,I_S,\ell=1,\ldots,I_R}$,
$B_{\Omega,\Gamma} := (\jumpPhase{\chi_\ell}{\curve{}{c}}{}B_c)_{c=1,\ldots,I_S,\ell=1,\ldots,I_R}$, 
$C_{\curve{}{}} := \operatorname{diag}(C_i)_{i=1,\ldots,I_S}$,
$\vec{D}_{\curve{}{}} := \operatorname{diag}(\vec{D}_i)_{i=1,\ldots,I_S}$, 
$\vec{D}^{(\mobility{})}_{\curve{}{}} := \operatorname{diag}(\vec{D}_i^{(\mobility{})})_{i=1,\ldots,I_S}$ and
$\metric{E_{\curve{}{}}^{(\anisotropy{})}} := \operatorname{diag}(\metric{E_i^{(\anisotropy{})}})_{i=1,\ldots,I_S}$.
Once again, the generalized system corresponding to \eqref{eq:MatrixRep} can 
be reduced by eliminating the final component $W^{m+1}_{I_R}$ from 
$\chemicalDiscVec{m+1}{}$.
We obtain the same block structure as in \eqref{eq:MatrixRepReduced},
with the new entries now given by
$\widehat A_\Omega = \operatorname{diag}{(A)_{\ell=1,\ldots,I_R-1}}$,
$\widehat {B}_{\Omega,\Gamma} = 
(([\chi_\ell]_{\Gamma_i} - [\chi_{I_R-1}]_{\Gamma_i})
B_i)_{i=1,\ldots,I_S,\ell=1,\ldots,I_R-1}$ and
$\widehat {N}_{\Omega,\Gamma} =
(-[\chi_\ell]_{\Gamma_i}\vec{N}_i)_{i=1,\ldots,I_S,\ell=1,\ldots,I_R-1}$.
\end{rem}

\section{Numerical results}\label{sec:NumericalResults}

We implemented the fully discrete finite element approximation
\eqref{eq:FEA} within the finite element toolbox ALBERTA, see \cite{Alberta}. 
The arising linear systems of the form \eqref{eq:MatrixRepReduced} 
are solved with a GMRes iterative solver with the following preconditioners.
In 2D, we take as preconditioner a least squares solution of the block matrix 
in \eqref{eq:MatrixRepReduced} without $\metric{P}$. Due to the large memory
requirements of this preconditioner in 3D, as an alternative in 3D we take
as preconditioner a least squares solution of the lower triangular block 
of the matrix in \eqref{eq:MatrixRepReduced} without $\metric{P}$.
For the computation of the least squares solution we employ
the sparse factorization package SPQR, see \cite{Davis11},
while for the inversion of nonsingular blocks we use the sparse
factorization package UMFPACK, see \cite{Davis04}.

We employ an unfitted finite element discretization. The precise description
of the adaptively refined and coarsened bulk triangulations $\triangulation{m}$ 
can be found in \cite[\S5.1]{BarrettGarckeRobert2010}, see also the recent
works \cite{EtoGarckeNurnberg2024,EtoGarckeNurnberg2025,fluidfbptj}.
We stress that due to the unfitted nature of our finite element approximations,
special quadrature rules need to be employed in order to assemble terms that
feature both bulk and surface finite element functions. For all the
computations presented in this section, we use true integration for these
terms, and we refer to
\cite{BarrettGarckeRobert2010,Nurnberg202203} for details on the practical
implementation.
We note that our theoretical framework does not allow for changes of 
topology, e.g., the vanishing of an interface. Hence, following our
previous work in \cite{EtoGarckeNurnberg2024}, in our 2D computations we 
perform heuristic surgeries whenever a curve becomes too short.
Here a closed curve is simply discarded, while a curve that was part of a 
network is removed. This will leave two triple junctions, where only two 
curves meet, and the involved curves can be glued together so that the
simulation can continue.
Throughout this section we use uniform time steps, in that
$\tau_m=\tau$ for $m=0,\ldots, M-1$.
Unless otherwise stated we choose $\rho_i=1$ and $\beta_i \equiv 1$
for $i\in\naturalset{I_S}$.
We always choose $\Omega=(-4,4)^d$.

In analogue to \eqref{eq:quantity}, we define the discrete energy as
\[
\mathcal{E}^m = |\Gamma^m|_\gamma - \sum_{\ell=1}^{I_R} w_{D,\ell} 
\volume{\mathcal{R}_\ell^m},
\]
where $\mathcal{R}_\ell^m$ are the natural discrete analogues of the subdomains
$\mathcal{R}_\ell[\Gamma(t_m)]$, $\ell=1,\ldots,I_R$.
Moreover, we recall the convention that $\bv w_D = \bv 0$ when
$\pOmega_N=\partial\Omega$.

In order to describe the orientations of the surface clusters with respect to
the bulk regions they enclose, we follow the notation from
\cite{EtoGarckeNurnberg2024} and define the matrices
$\mathcal{O} \in \{-1,0,1\}^{I_R \times I_S}$ with entries
\[
\mathcal{O}_{\ell i} = - [\chi_\ell]_{\Gamma_i}.
\]

\subsection{Simulations in 2D with $\pOmega_N = \pOmega$}

In this subsection we consider some numerical simulations for $d=2$ and
$\pOmega_N = \pOmega$. We also let $\rho=0$. For the anisotropy we define
\begin{equation} \label{eq:hex2d}
\gamma_{\rm hex}(\vec p)
:= \sum_{\ell=1}^3 \sqrt{ [(R(\tfrac\pi{3})^\ell]^\top D(\delta) 
(R(\tfrac\pi{3}))^\ell \vec p \cdot \vec p}, \quad \delta = 0.1,
\end{equation}
where 
$R(\theta)=\binom{\phantom{-}\cos\theta\ \sin\theta}{-\sin\theta\ \cos\theta}$
and $D(\delta) = \diag(1,\delta^2)$. Observe that the Wulff shape of
\eqref{eq:hex2d} is given by a smoothed hexagon, see
\cite{BarrettGarckeNurnberg2008IMA} for details.
Unless otherwise stated, we choose $\gamma_i = \gamma_{\rm hex}$, 
$i\in\naturalset{I_S}$.

\vspace{0.3cm}
\noindent
{\bf Example 1}:
We investigate how a standard double bubble and
a disk evolve, when the first  phase is made up of the left part of the double  bubble, and the second 
phase is made up of the right part of the double  bubble and the disk.
We have $I_S = 4$, $I_R = 3$, $I_T = 2$, 
$(\curveindex{1}{1},\curveindex{1}{2},\curveindex{1}{3}) = (\curveindex{2}{1},\curveindex{2}{2},\curveindex{2}{3}) = (1,2,3)$
and
$\dcmap = \begin{pmatrix}0 &-1&1&0 \\ 1 & 0 &-1&-1 \\ -1 &1&0&1 \end{pmatrix}$.
The two bubbles of the double bubble enclose an area of about $3.139$ each,
while the disk has an initial radius of $\frac58$, meaning it initially
encloses an area of $\frac{25\pi}{64} \approx 1.227$. During the evolution the
disk vanishes, and the right bubble grows correspondingly, see 
Figure~\ref{fig:2dhex_db_plus_one}.
\begin{figure}
\center
\includegraphics[angle=-90,width=0.18\textwidth]{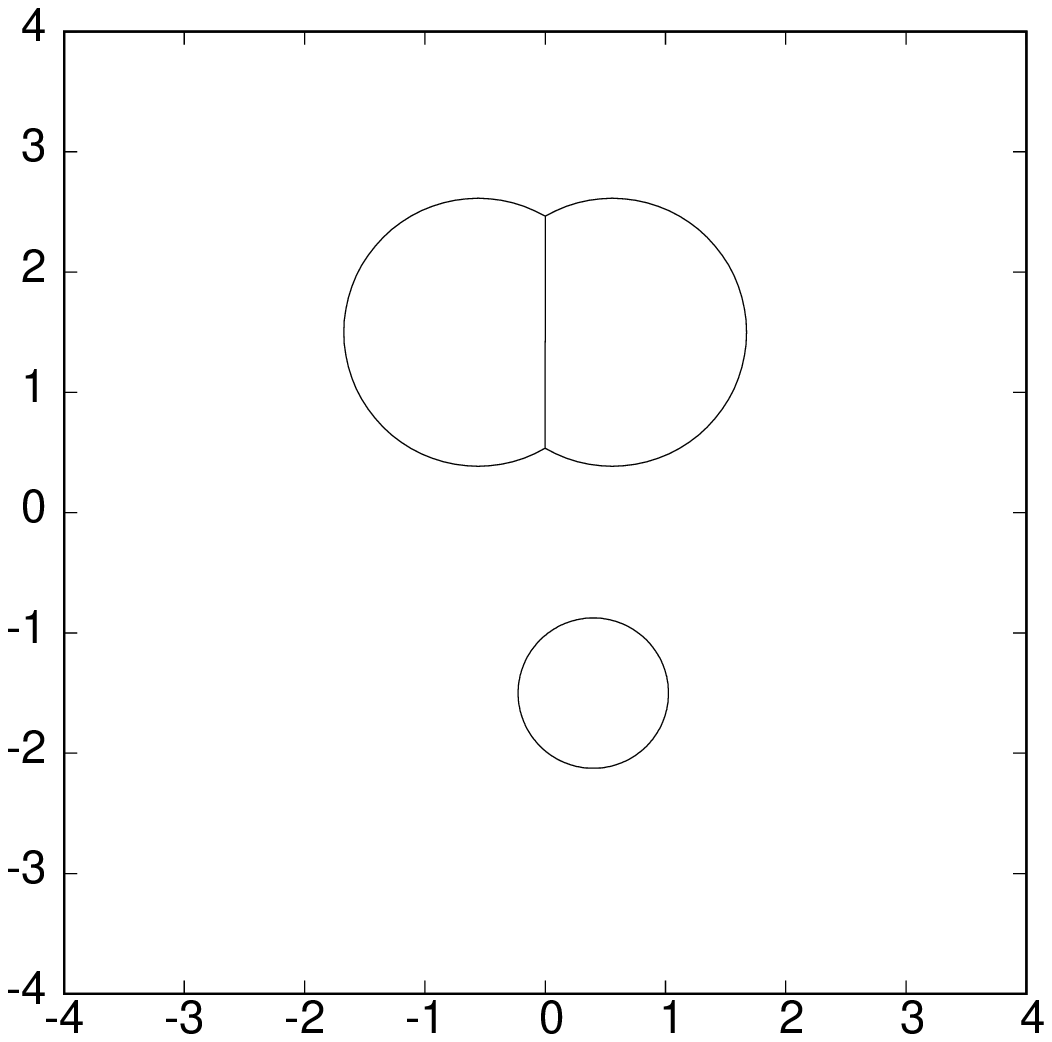}
\includegraphics[angle=-90,width=0.18\textwidth]{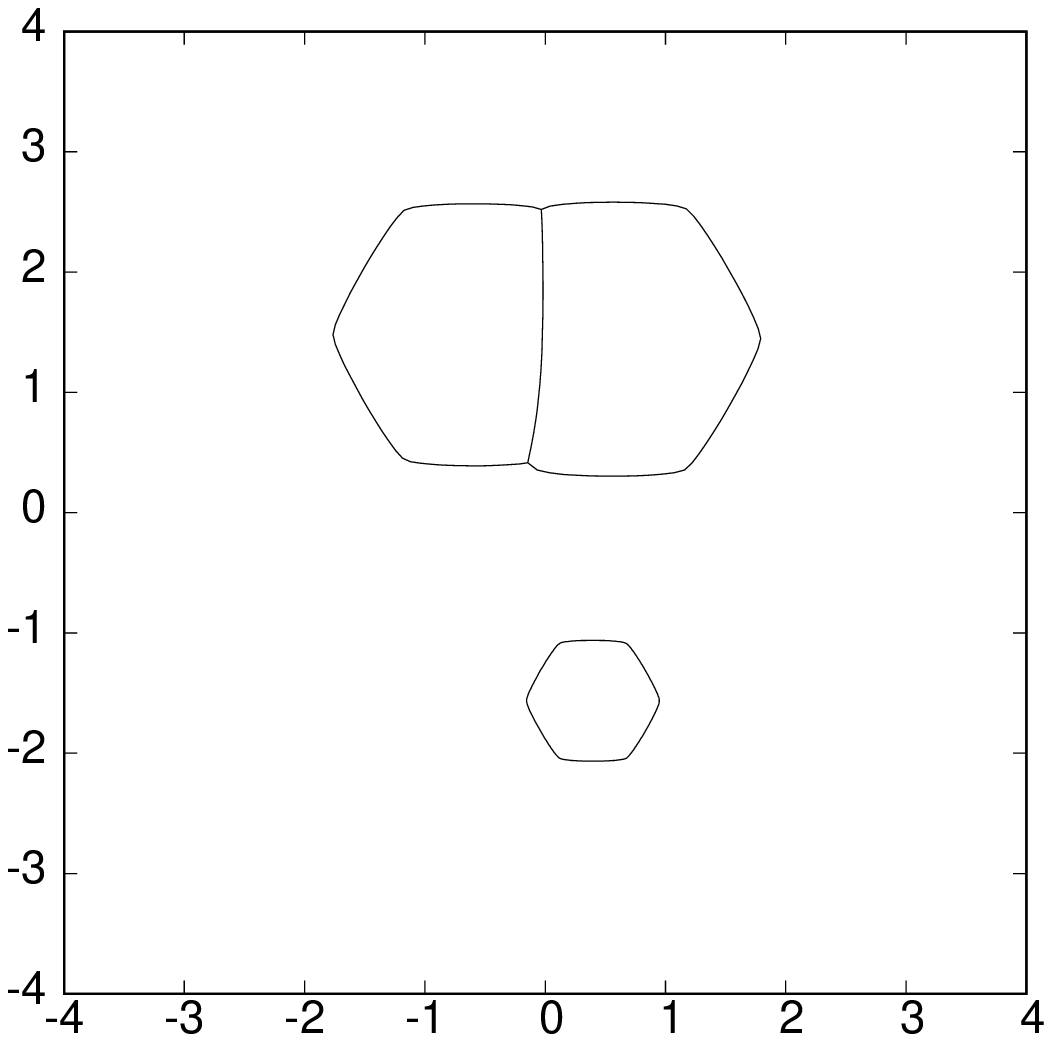}
\includegraphics[angle=-90,width=0.18\textwidth]{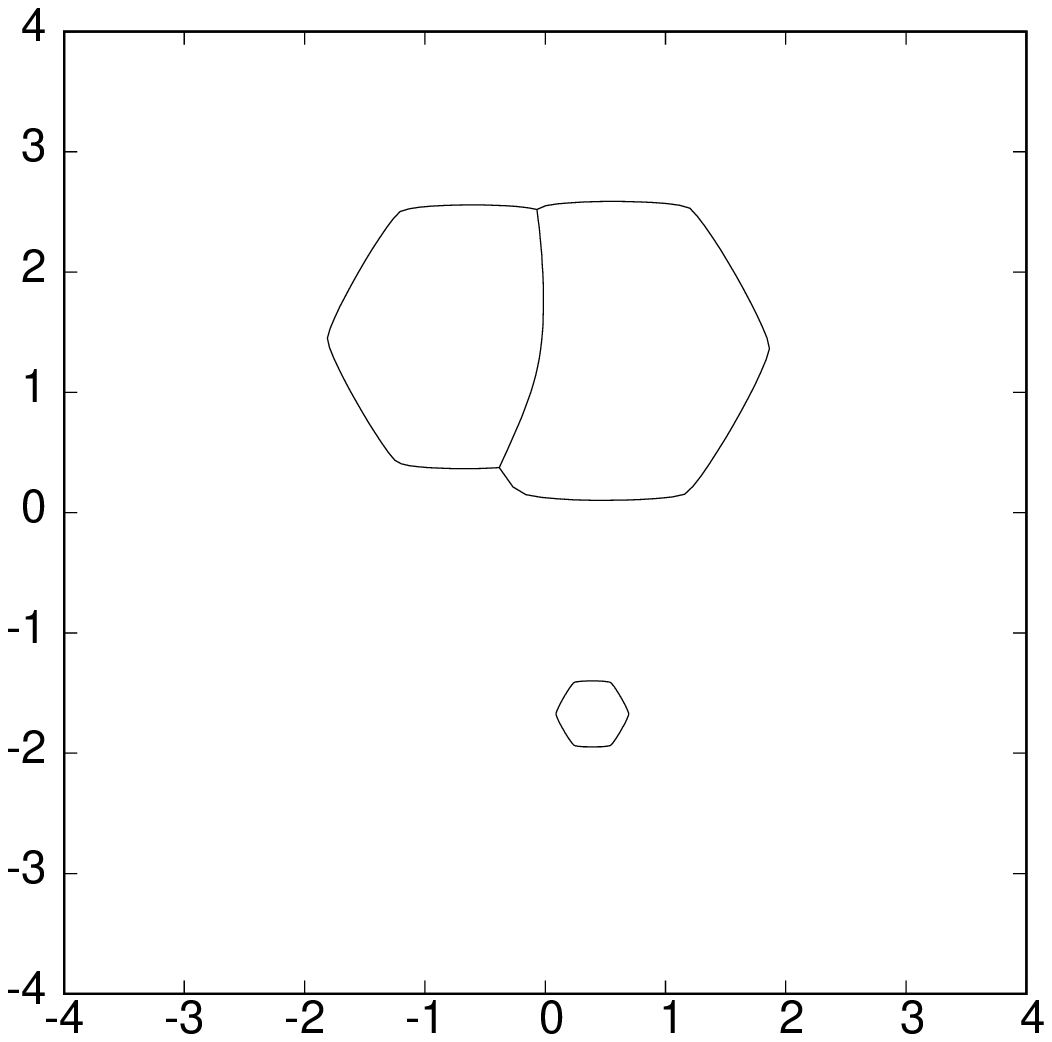}
\includegraphics[angle=-90,width=0.18\textwidth]{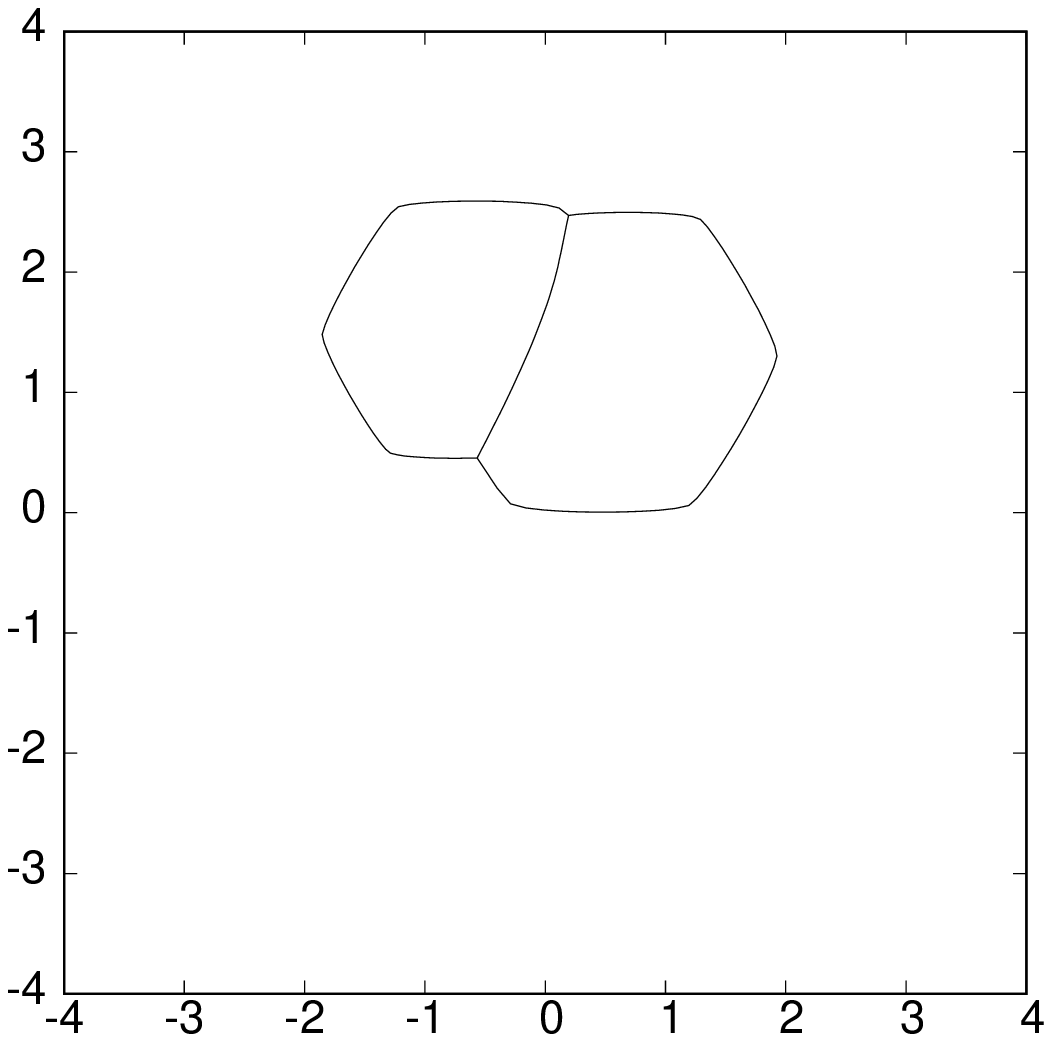}
\includegraphics[angle=-90,width=0.25\textwidth]{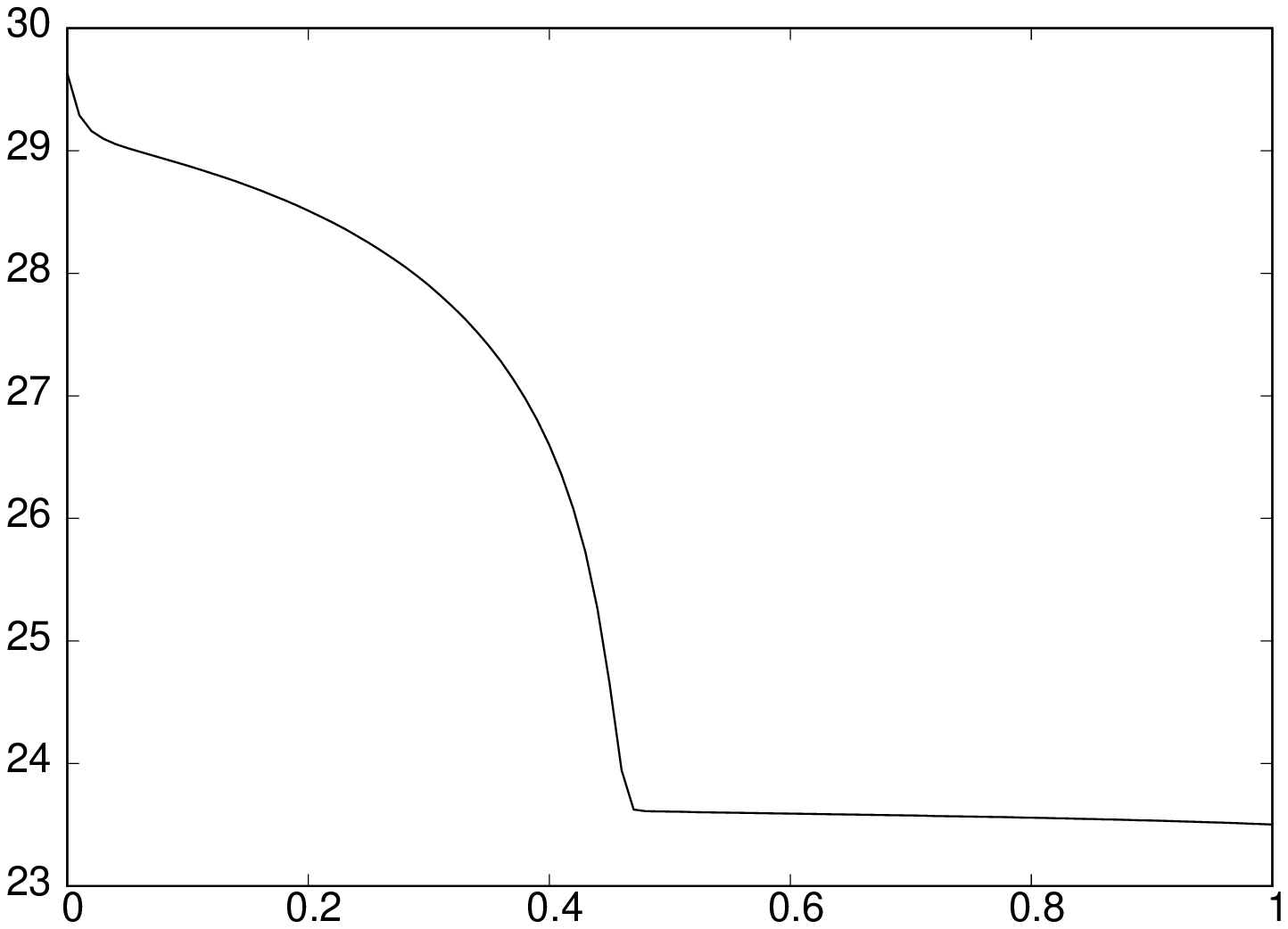} 
\\[5mm]
\includegraphics[angle=-0,width=0.18\textwidth]{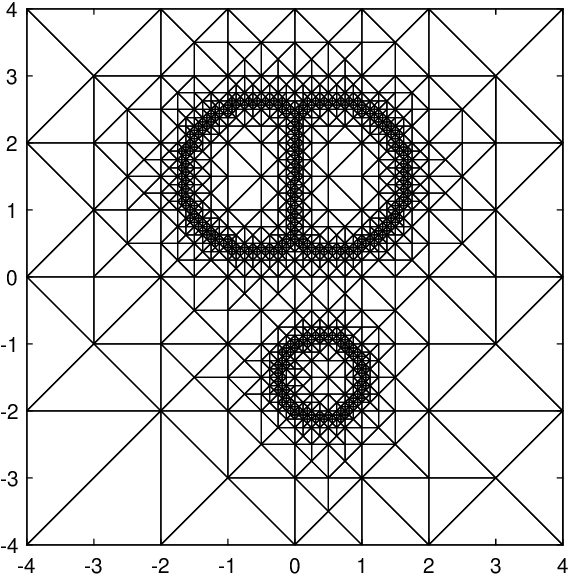}
\includegraphics[angle=-0,width=0.18\textwidth]{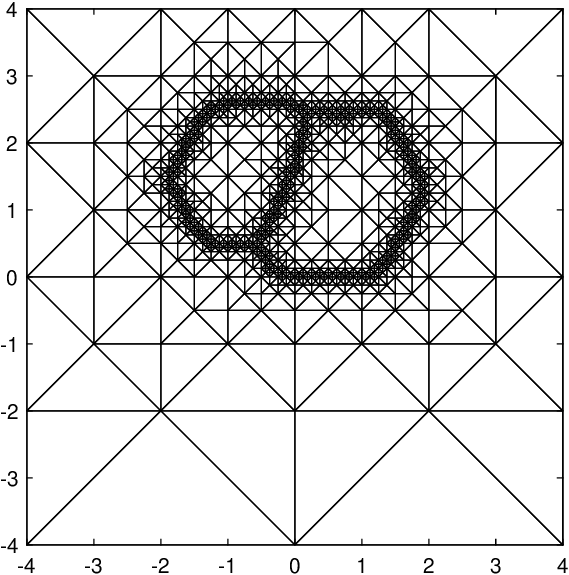}
\caption{The solution at times $t=0, 0.2, 0.4, 1$, and a plot of the discrete 
energy over time. Below we show the adaptive bulk mesh at times $t=0$ and
$t=1$.
}
\label{fig:2dhex_db_plus_one}
\end{figure}%
Repeating the simulation with a bigger initial disk gives the results in
Figure~\ref{fig:2dhex_db_plus_bigone}. Here the radius is $\frac54$, so that the
enclosed area is $4.909$. Now the disk grows at the expense of the right
bubble, so that eventually two separate phases remain.
\begin{figure}
\center
\includegraphics[angle=-90,width=0.18\textwidth]{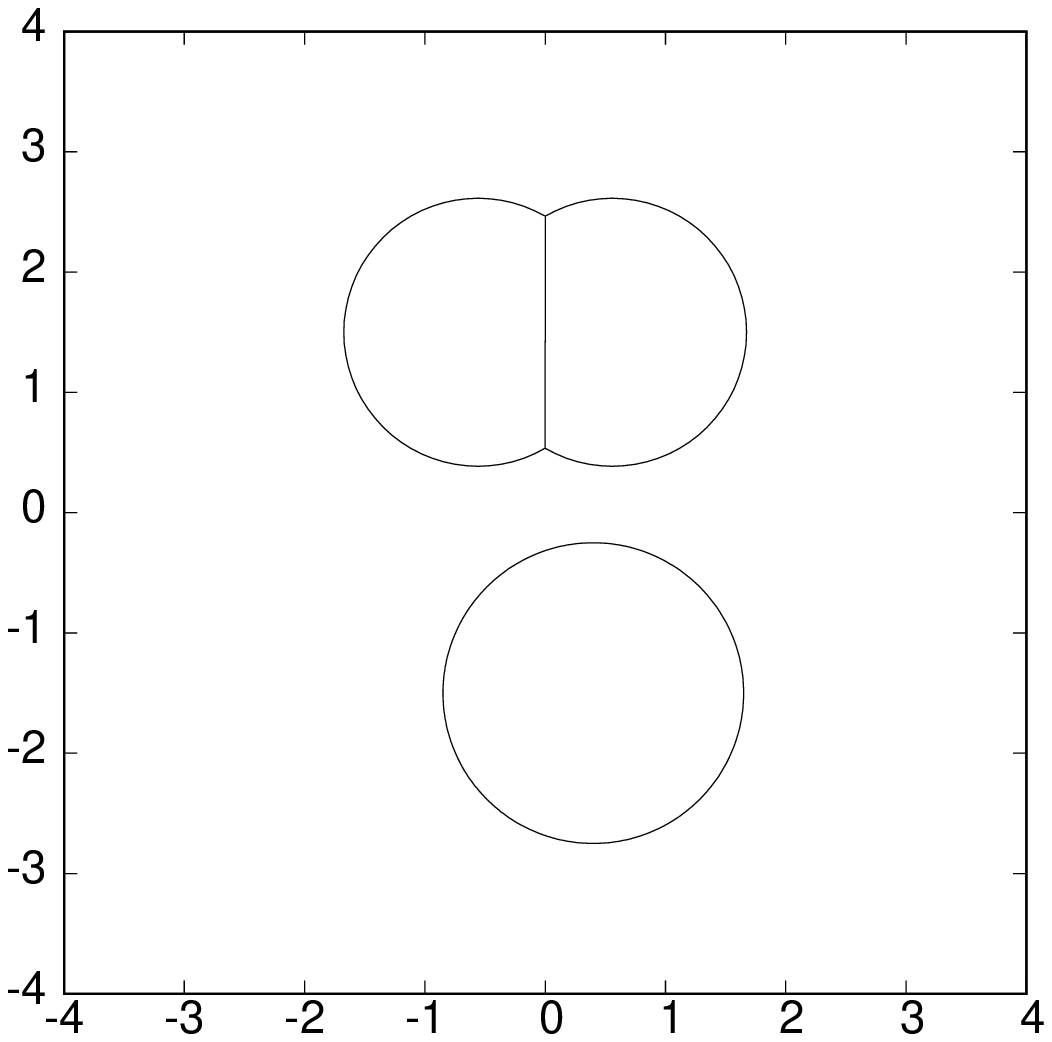}
\includegraphics[angle=-90,width=0.18\textwidth]{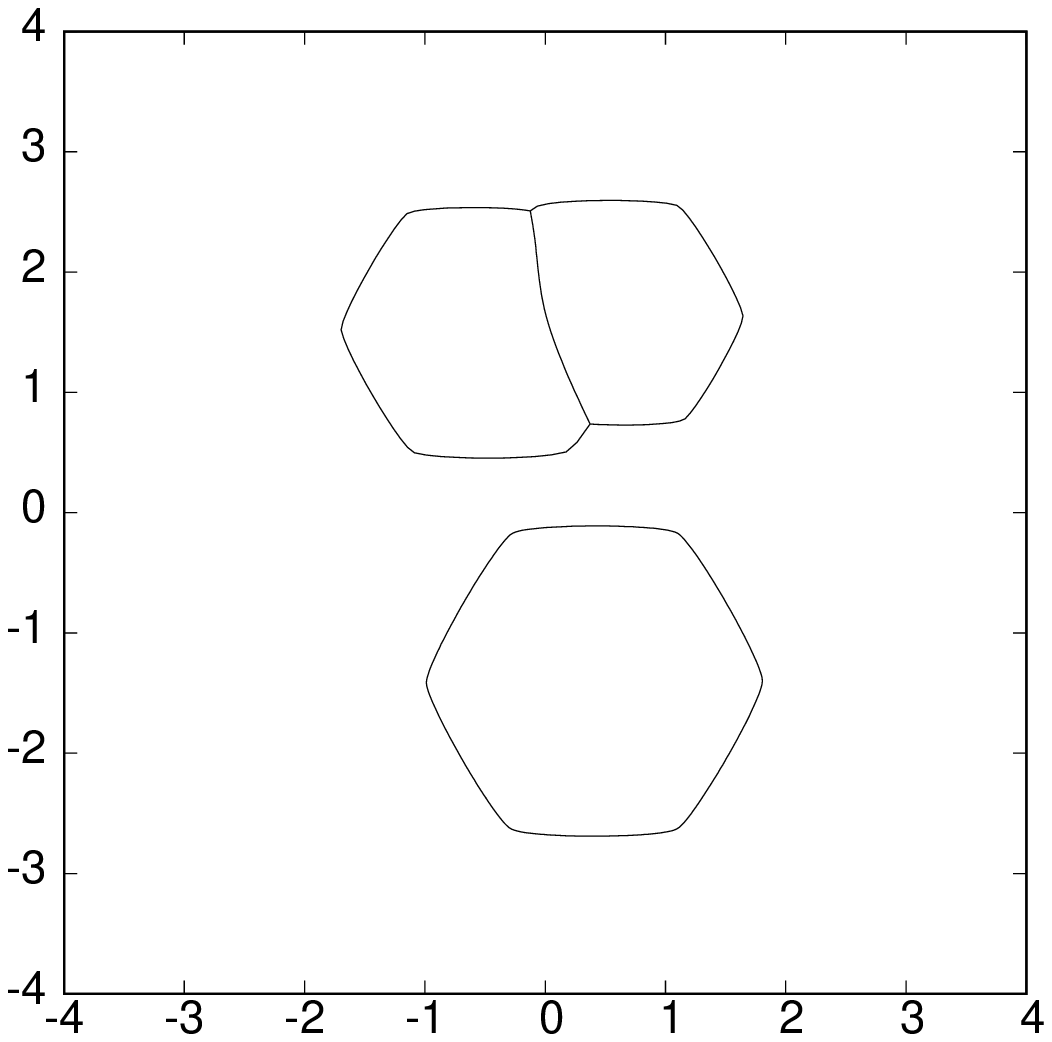}
\includegraphics[angle=-90,width=0.18\textwidth]{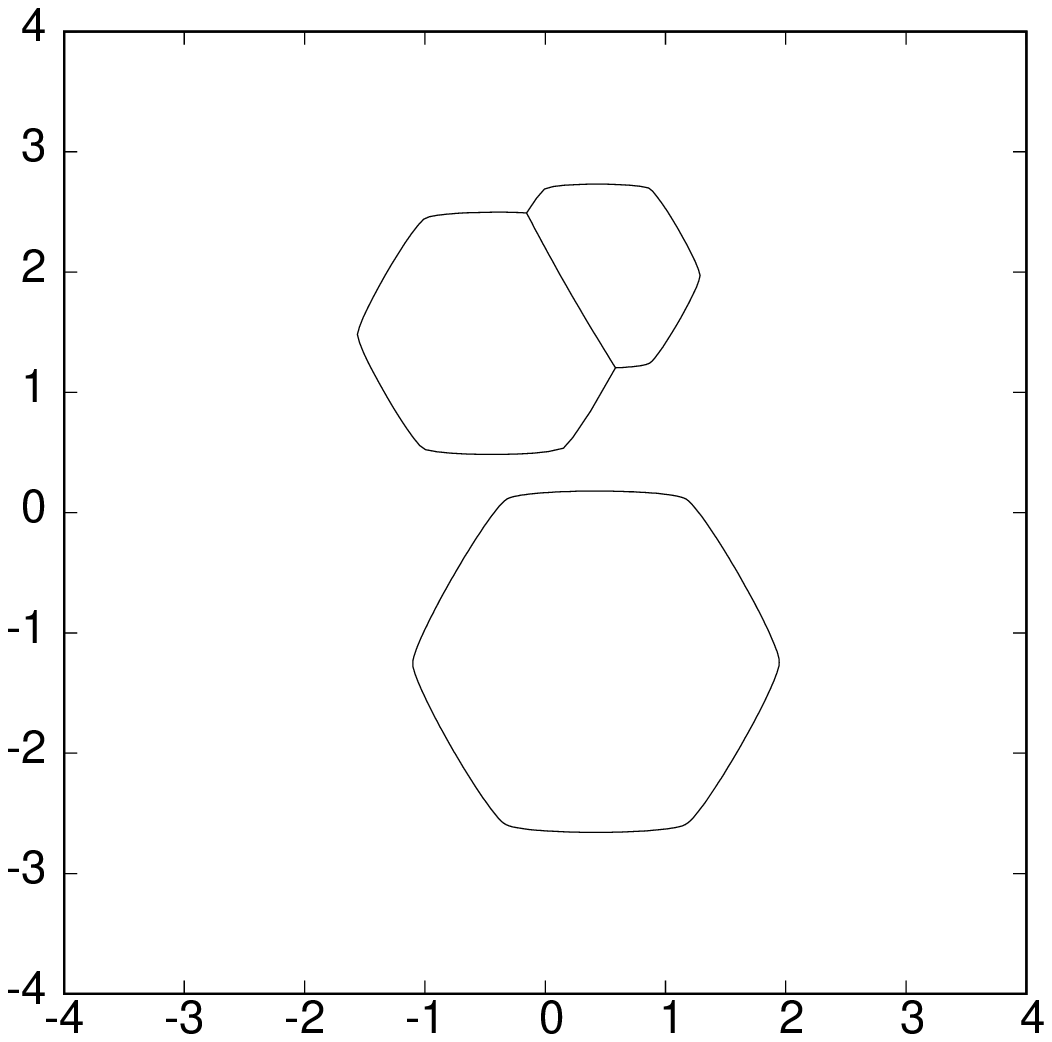}
\includegraphics[angle=-90,width=0.18\textwidth]{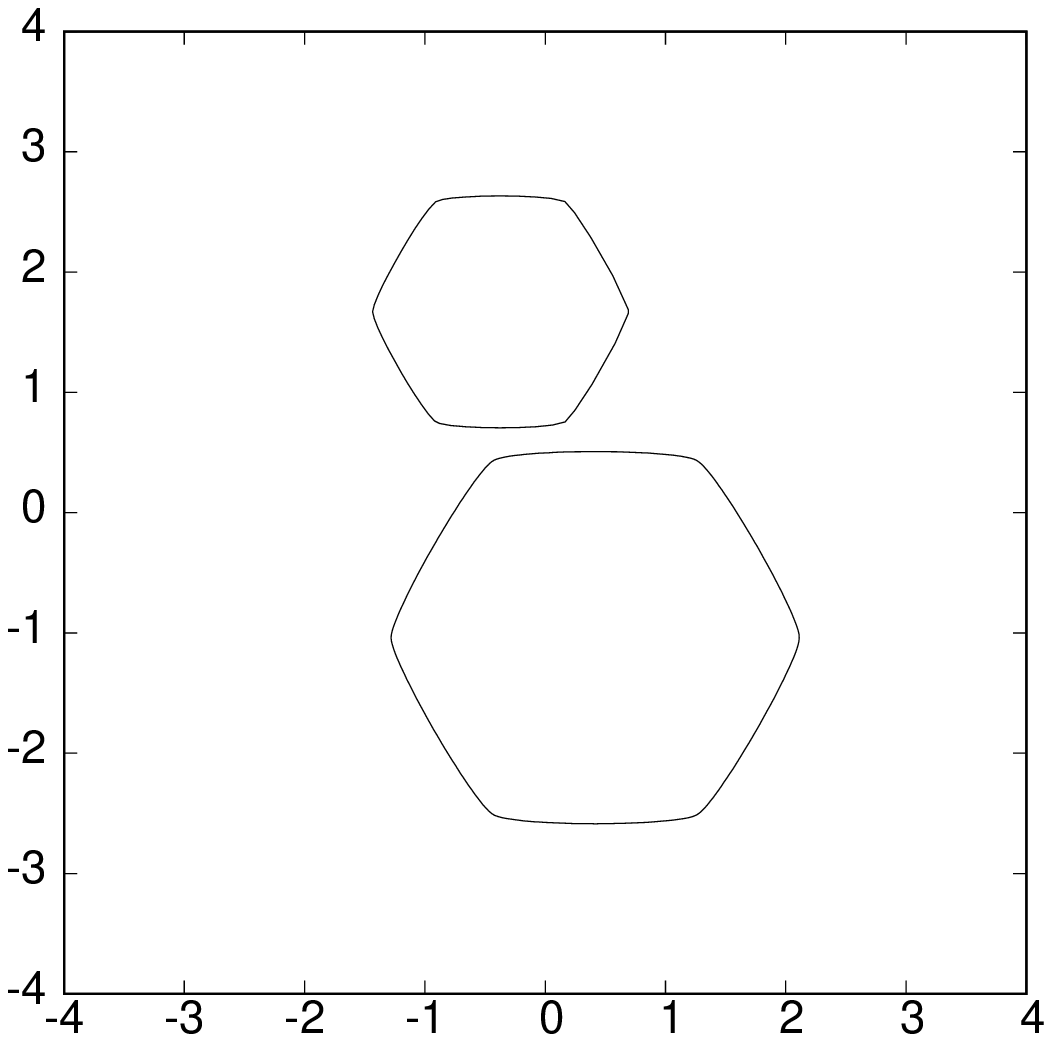}
\includegraphics[angle=-90,width=0.25\textwidth]{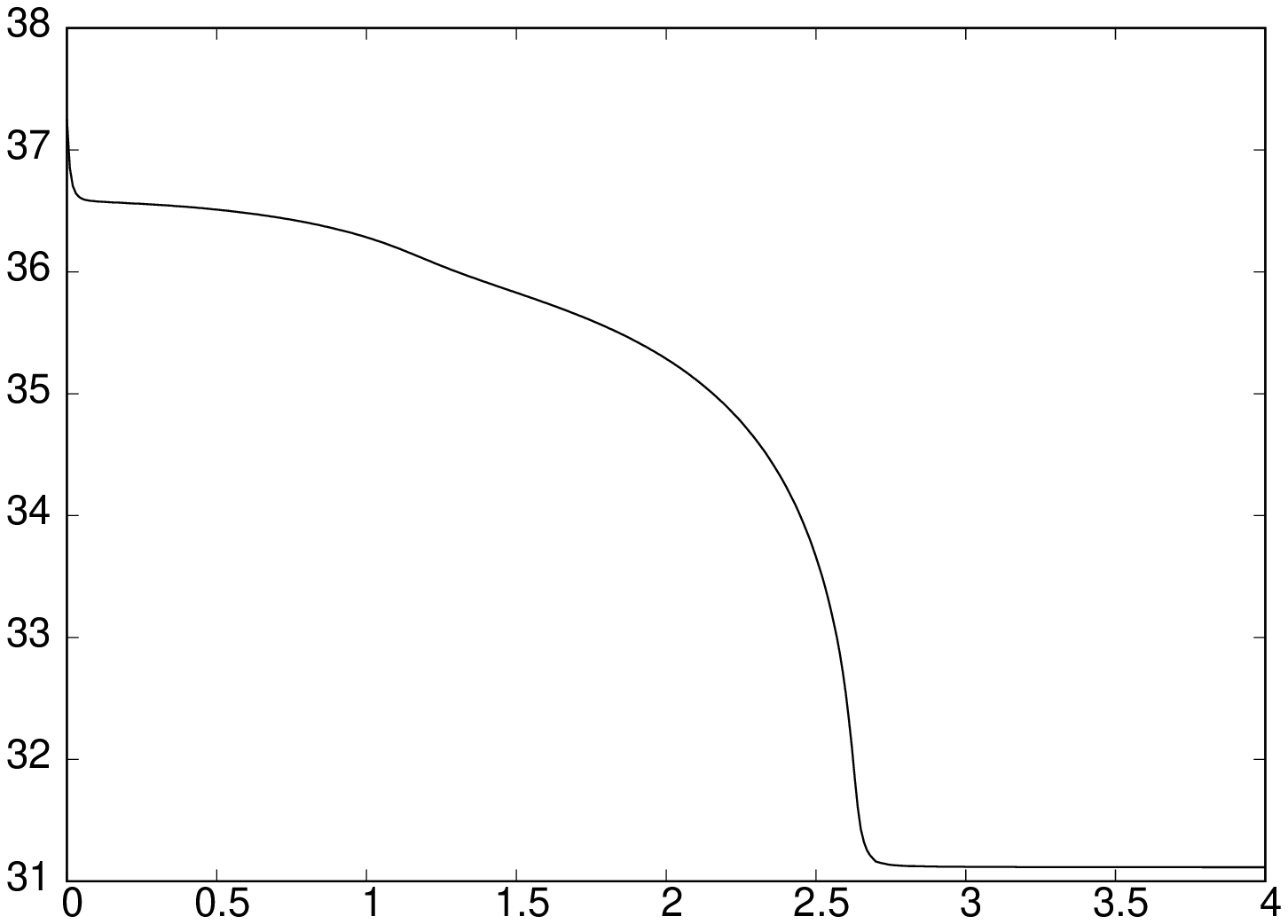}
\caption{The solution at times $t=0, 1, 2, 4$, and a plot of the discrete 
energy over time.
}
\label{fig:2dhex_db_plus_bigone}
\end{figure}%

\vspace{0.3cm}
\noindent
{\bf Example 2}:
We consider the evolution of two double bubbles. In particular,
we have $I_S = 6$, $I_R = 3$, $I_T = 4$, 
$(\curveindex{1}{1},\curveindex{1}{2},\curveindex{1}{3}) = 
(\curveindex{2}{1},\curveindex{2}{2},\curveindex{2}{3}) = (1,2,3)$,
$(\curveindex{3}{1},\curveindex{3}{2},\curveindex{3}{3}) = 
(\curveindex{4}{1},\curveindex{4}{2},\curveindex{4}{3}) = (4,5,6)$
and
\begin{equation*}
\dcmap = \begin{pmatrix} 
0 & -1 & 1 & 0 & -1 & 1\\
1 & 0 & -1 & 1 & 0 & -1\\
-1 & 1 & 0 & -1 & 1 & 0
\end{pmatrix}.
\end{equation*}
The first bubble is chosen with enclosing areas $3.14$ and $6.48$, 
while the second double bubbles encloses two areas of size $3.64$. In each
case, the left bubble is assigned to phase 1, while the right bubbles are
assigned to phase 2. In this way, the lower double bubble holds the larger
portion of phase 1, while the upper double bubble holds the larger portion of
phase 2. Consequently, each double bubble evolves to a single disk that
contains just one phase. See Figure~\ref{fig:2dhex_db_times2}.
\begin{figure}
\center
\includegraphics[angle=-90,width=0.18\textwidth]{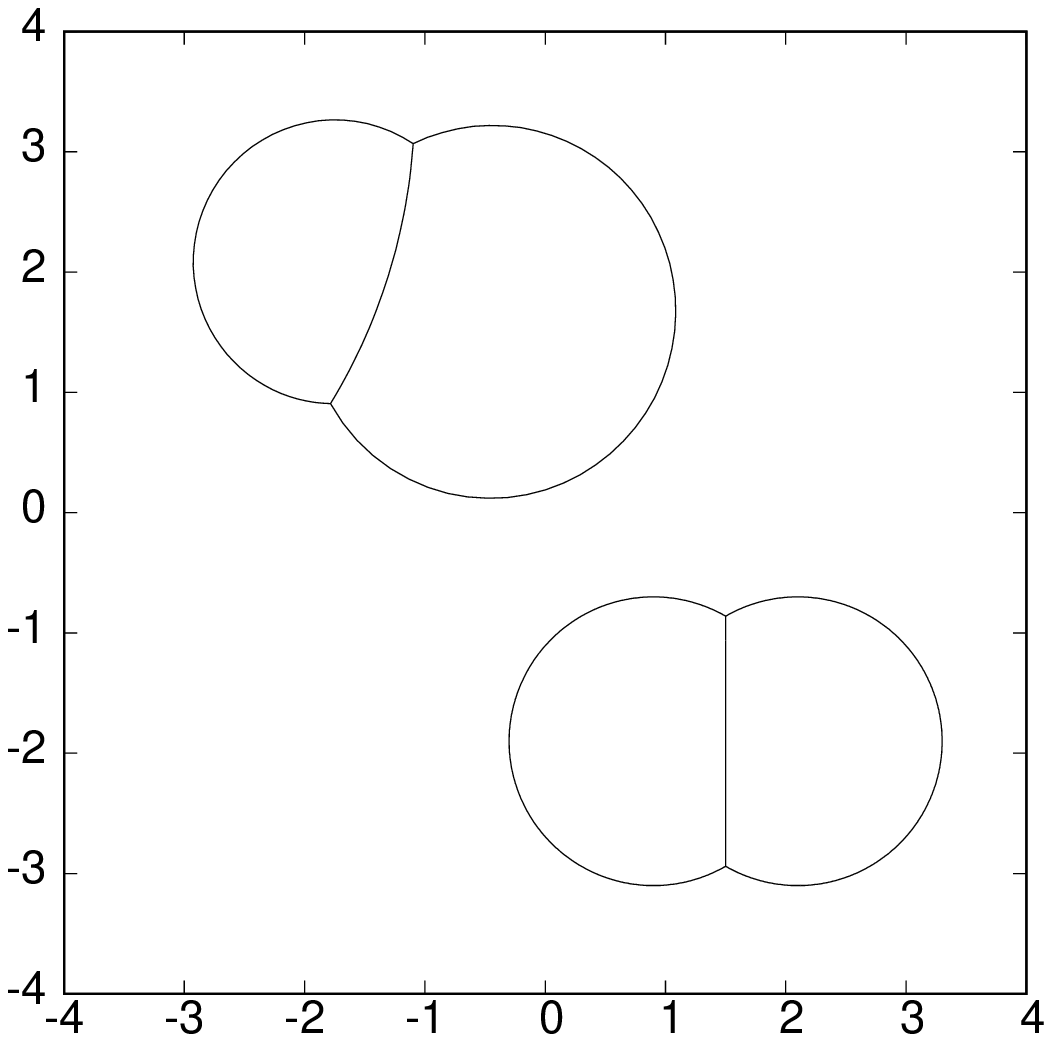}
\includegraphics[angle=-90,width=0.18\textwidth]{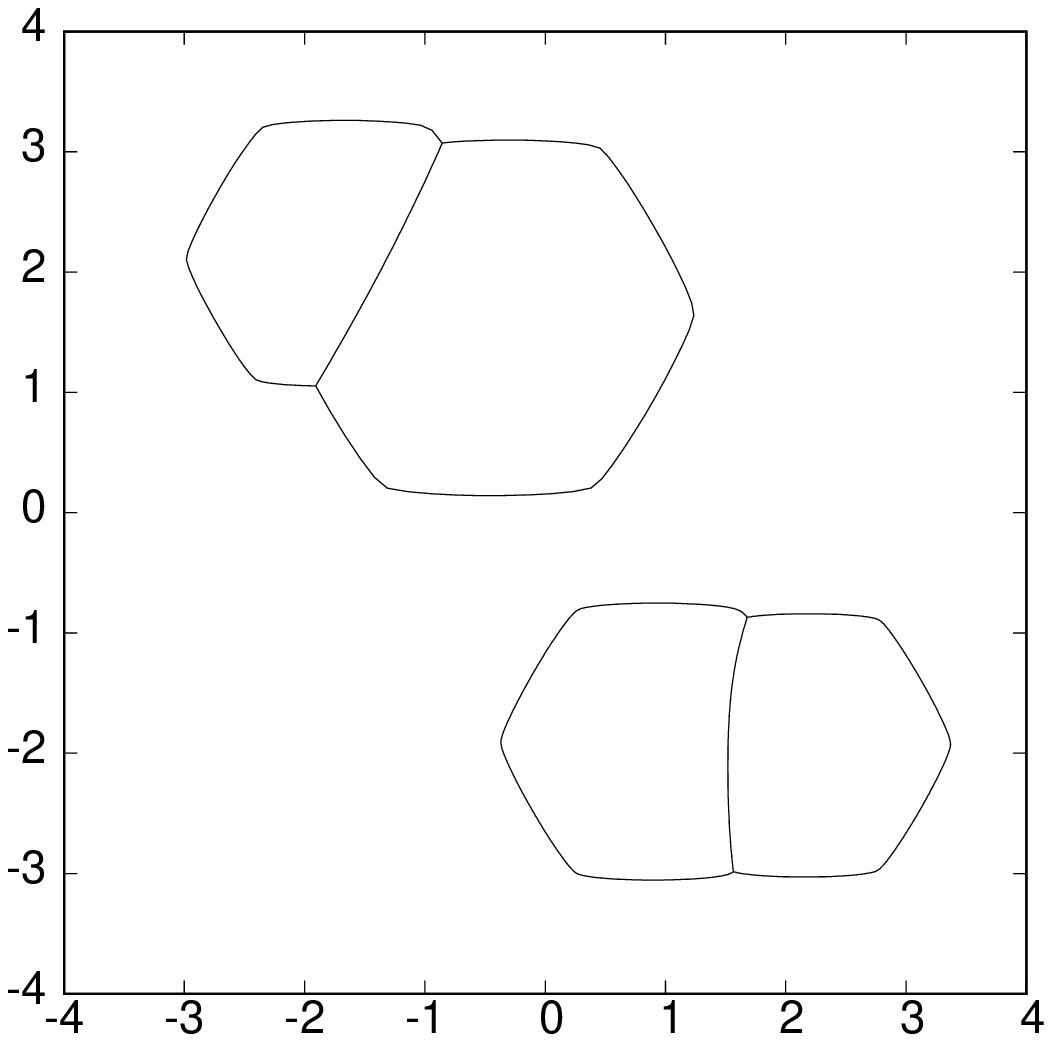}
\includegraphics[angle=-90,width=0.18\textwidth]{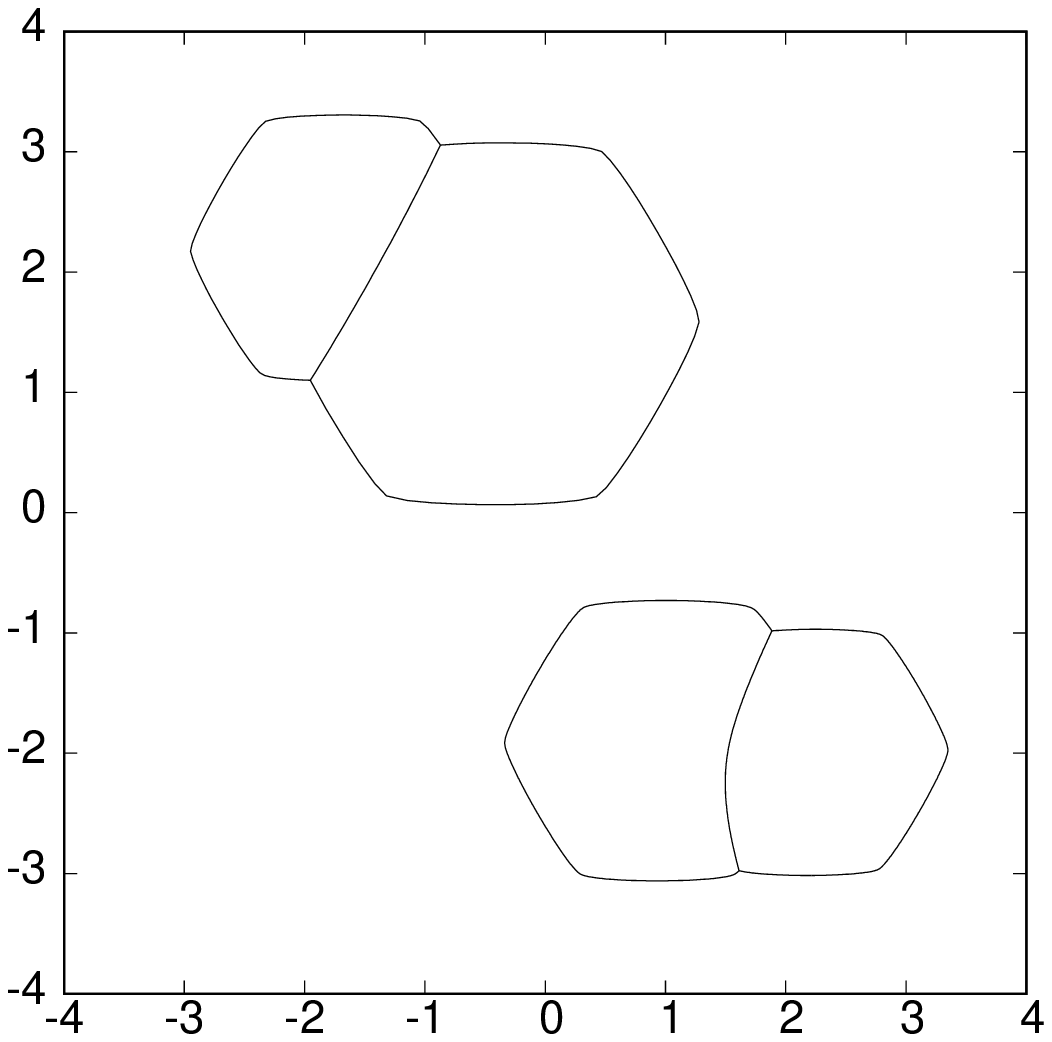}
\includegraphics[angle=-90,width=0.18\textwidth]{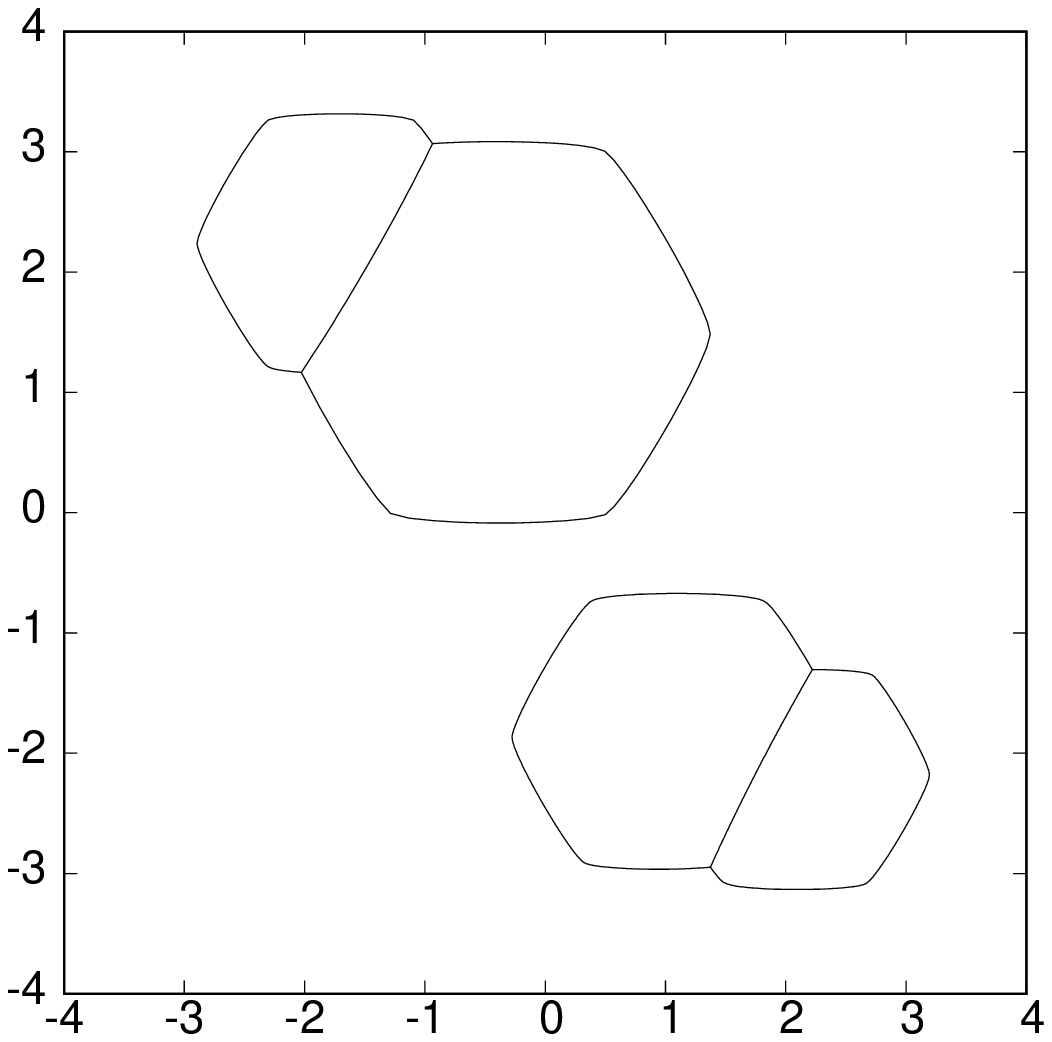}
\includegraphics[angle=-90,width=0.18\textwidth]{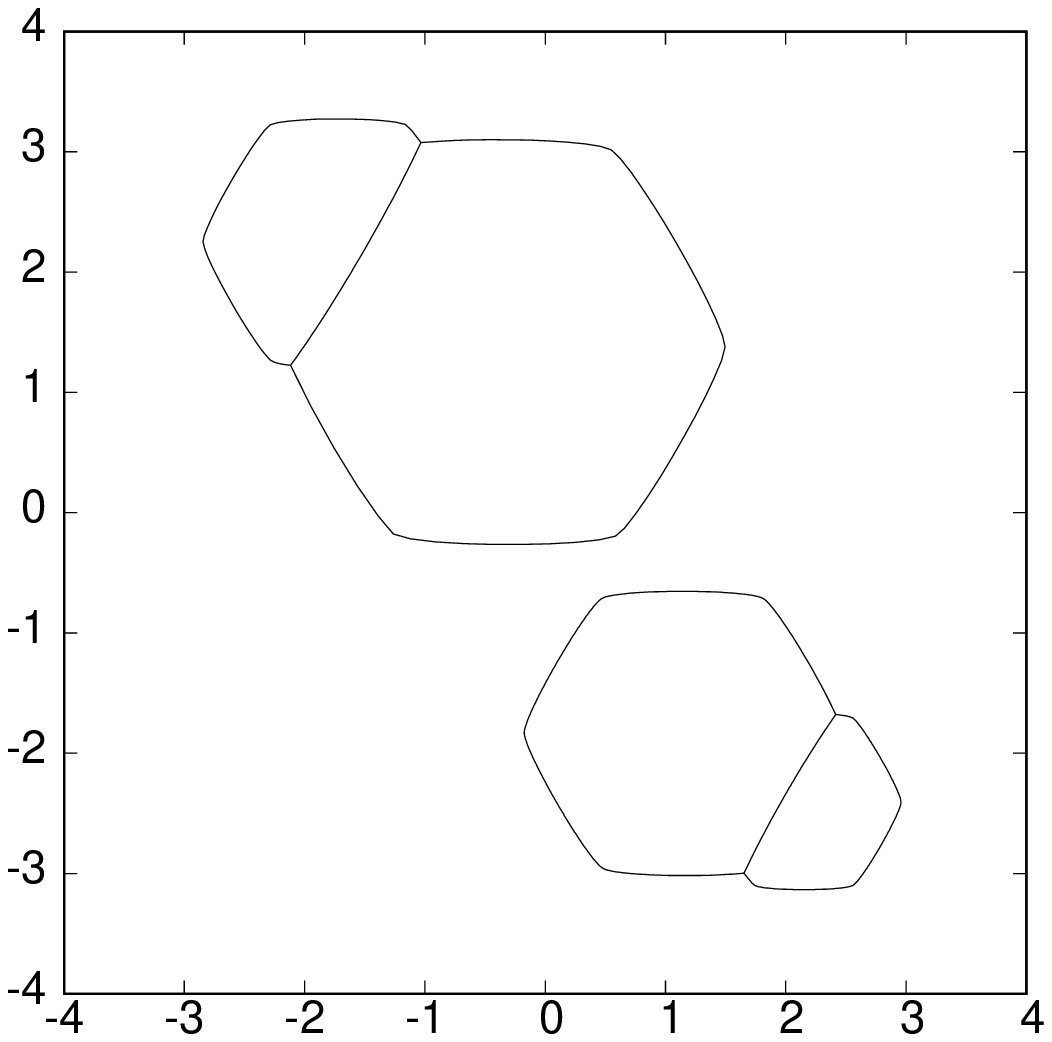}
\includegraphics[angle=-90,width=0.18\textwidth]{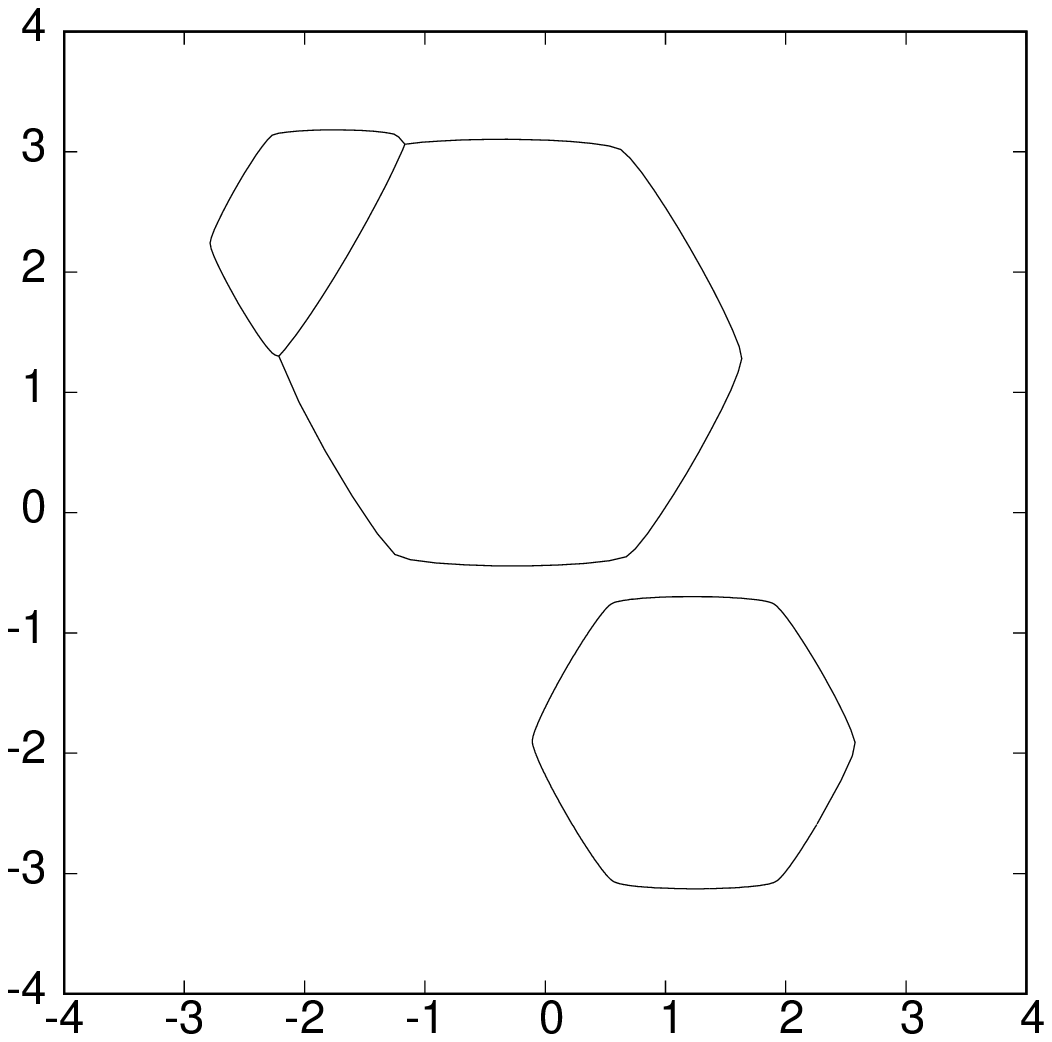}
\includegraphics[angle=-90,width=0.18\textwidth]{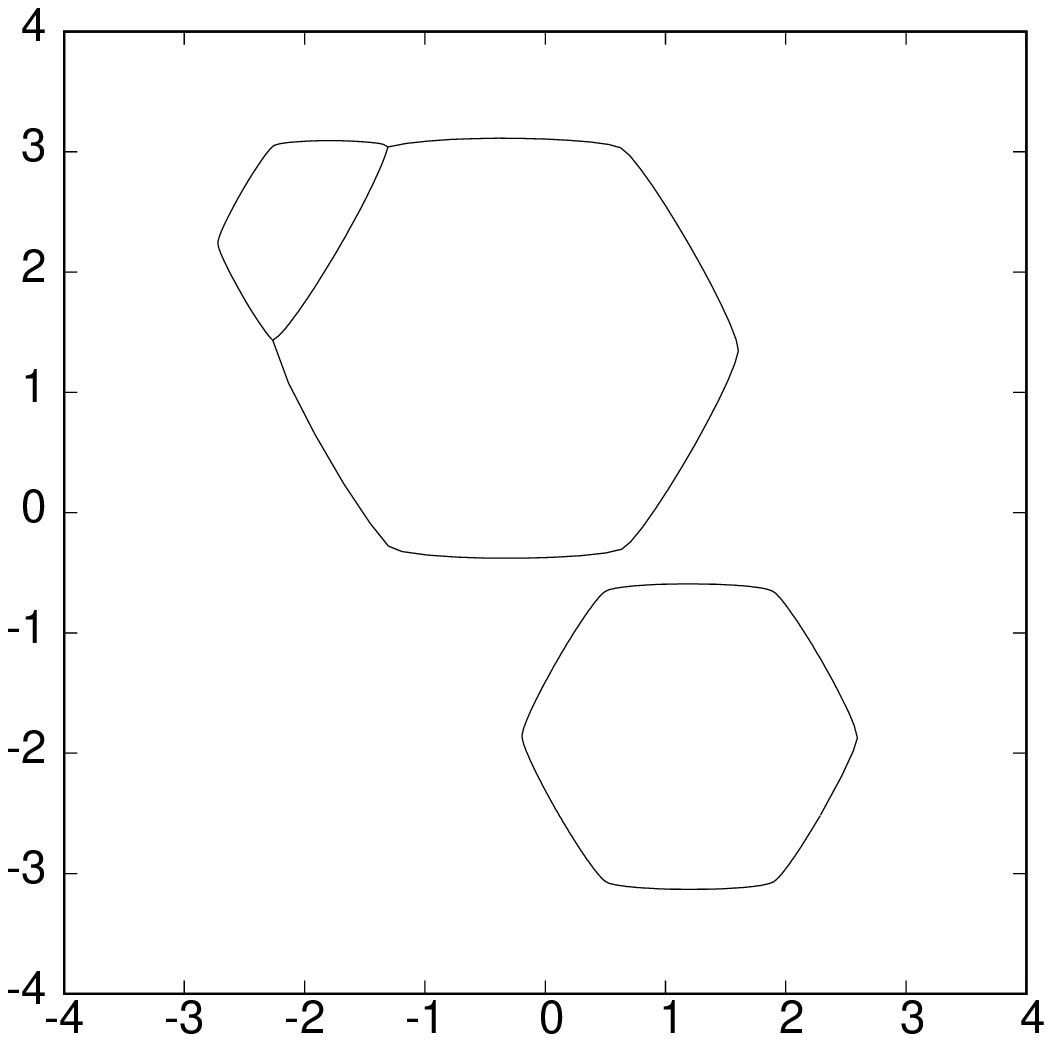}
\includegraphics[angle=-90,width=0.18\textwidth]{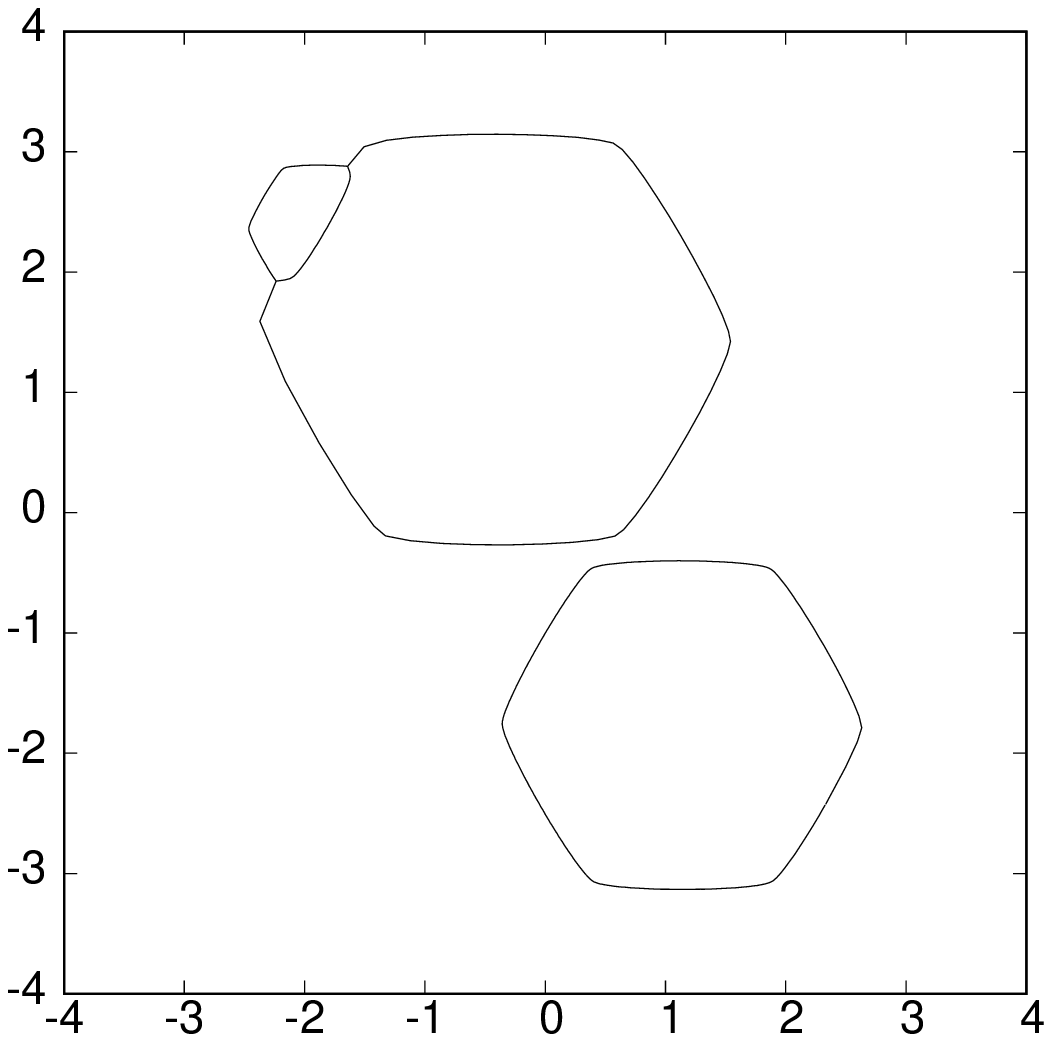}
\includegraphics[angle=-90,width=0.18\textwidth]{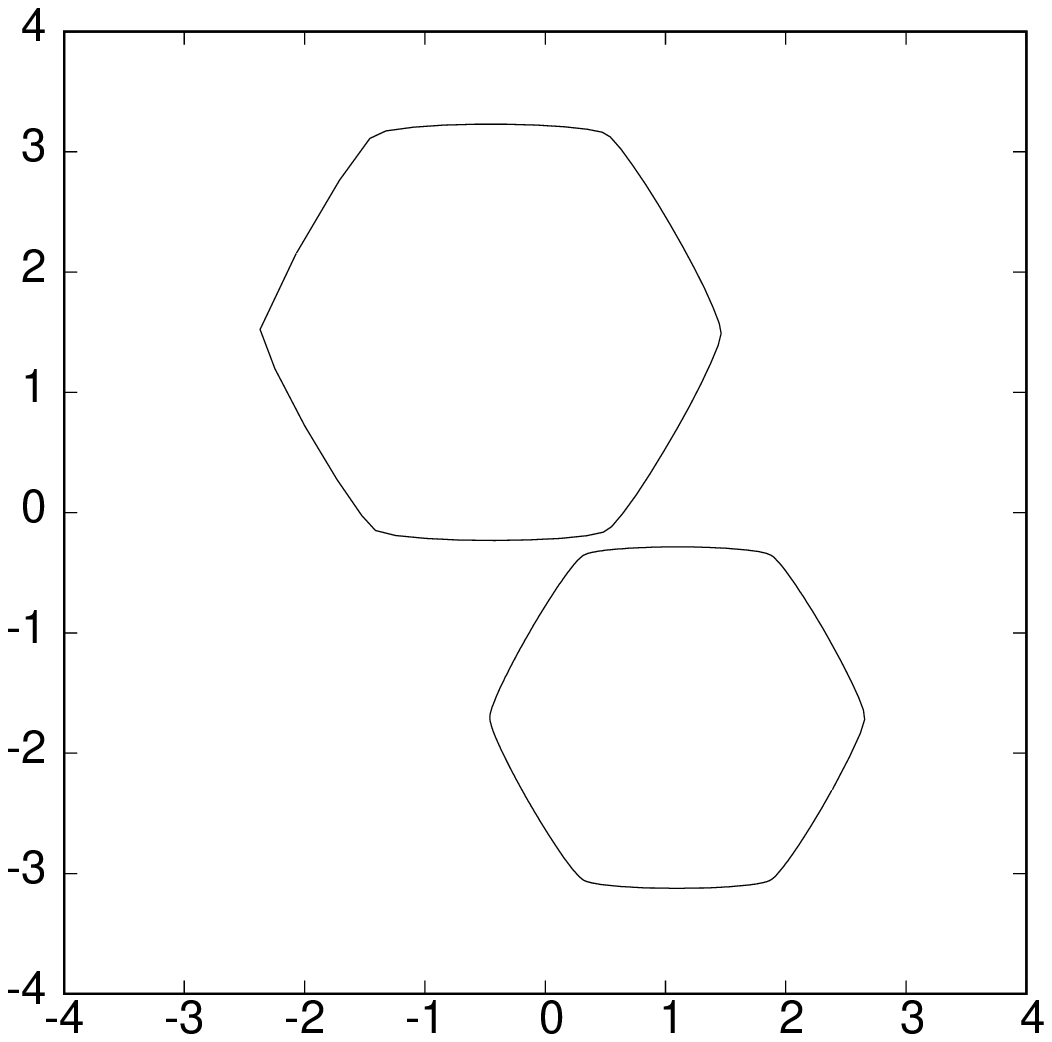}
\includegraphics[angle=-90,width=0.25\textwidth]{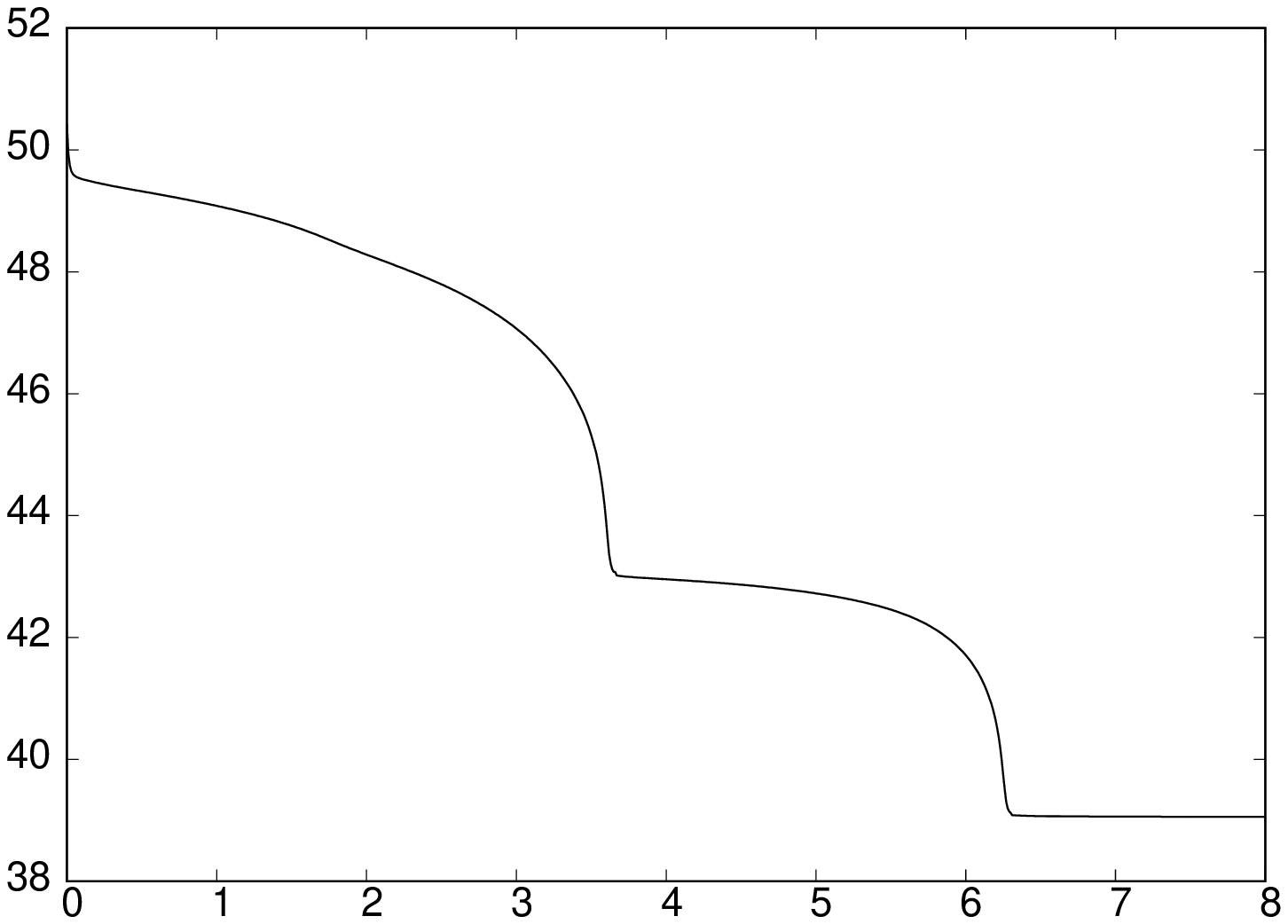}
\caption{The solution at times $t=0, 0.5, 1, 2, 3, 4, 5, 6, 7, 8$, 
and a plot of the discrete energy over time.
}
\label{fig:2dhex_db_times2}
\end{figure}%

\vspace{0.3cm}
\noindent
{\bf Example 3}:
As an example for non-equal surface energy densities for the various curves, we
repeat the simulation from Example~1 in 
Figure~\ref{fig:2dhex_db_plus_one}, but now choose
$(\gamma_1,\gamma_2,\gamma_3,\gamma_4) = 
(2\gamma_{\rm hex}, \gamma_{\rm hex}, 2\gamma_{\rm hex}, \gamma_{\rm hex})$.
That is, the curves 1 and 3 in the double bubble have twice the surface energy
densities of the curves 2 and 4. This now means that in contrast to
Figure~\ref{fig:2dhex_db_plus_one}, it makes energetically more sense to increase
the size of the single bubble, while shrinking the bubble that is surrounded by
the more expensive interfaces. See Figure~\ref{fig:2dhex_2121_db_plus_one} 
for the observed evolution.
\begin{figure}
\center
\includegraphics[angle=-90,width=0.18\textwidth]{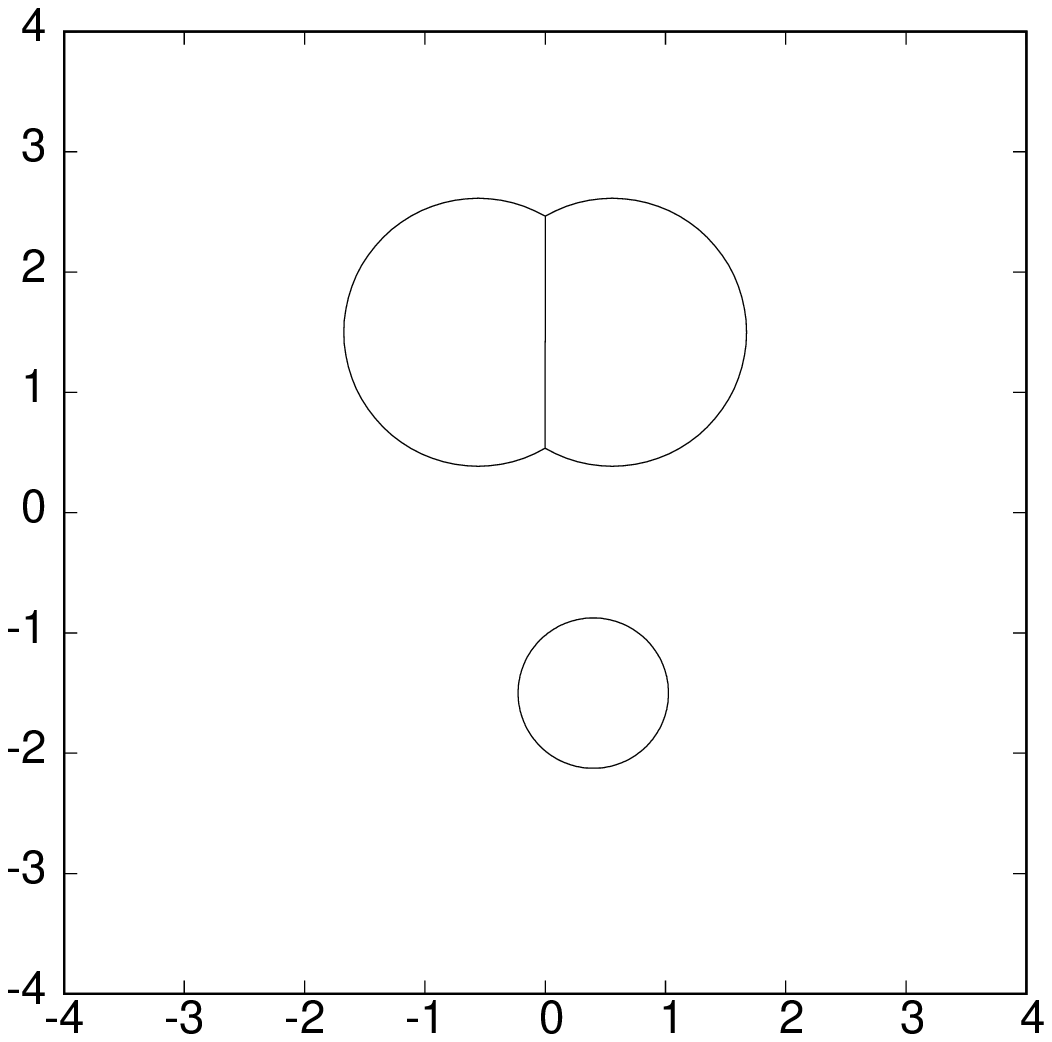}
\includegraphics[angle=-90,width=0.18\textwidth]{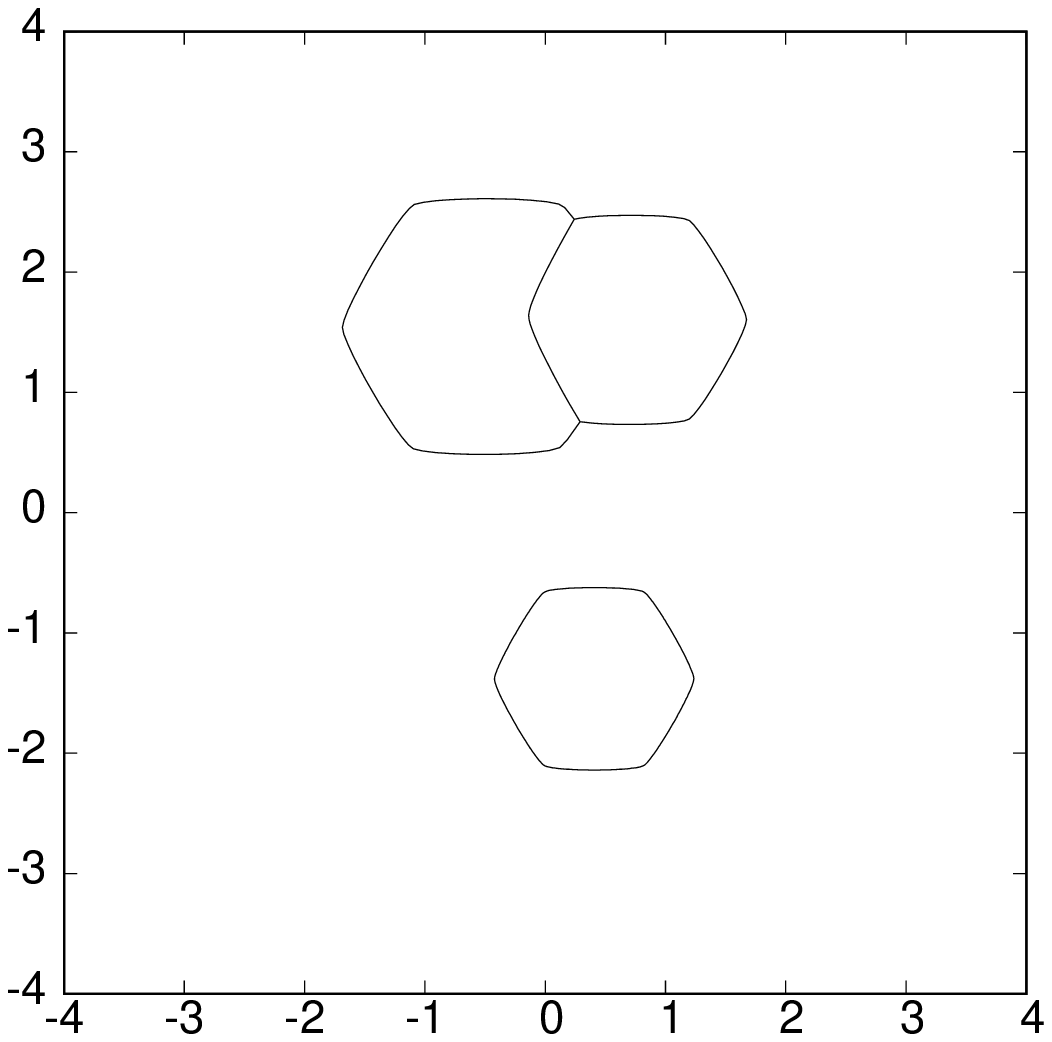}
\includegraphics[angle=-90,width=0.18\textwidth]{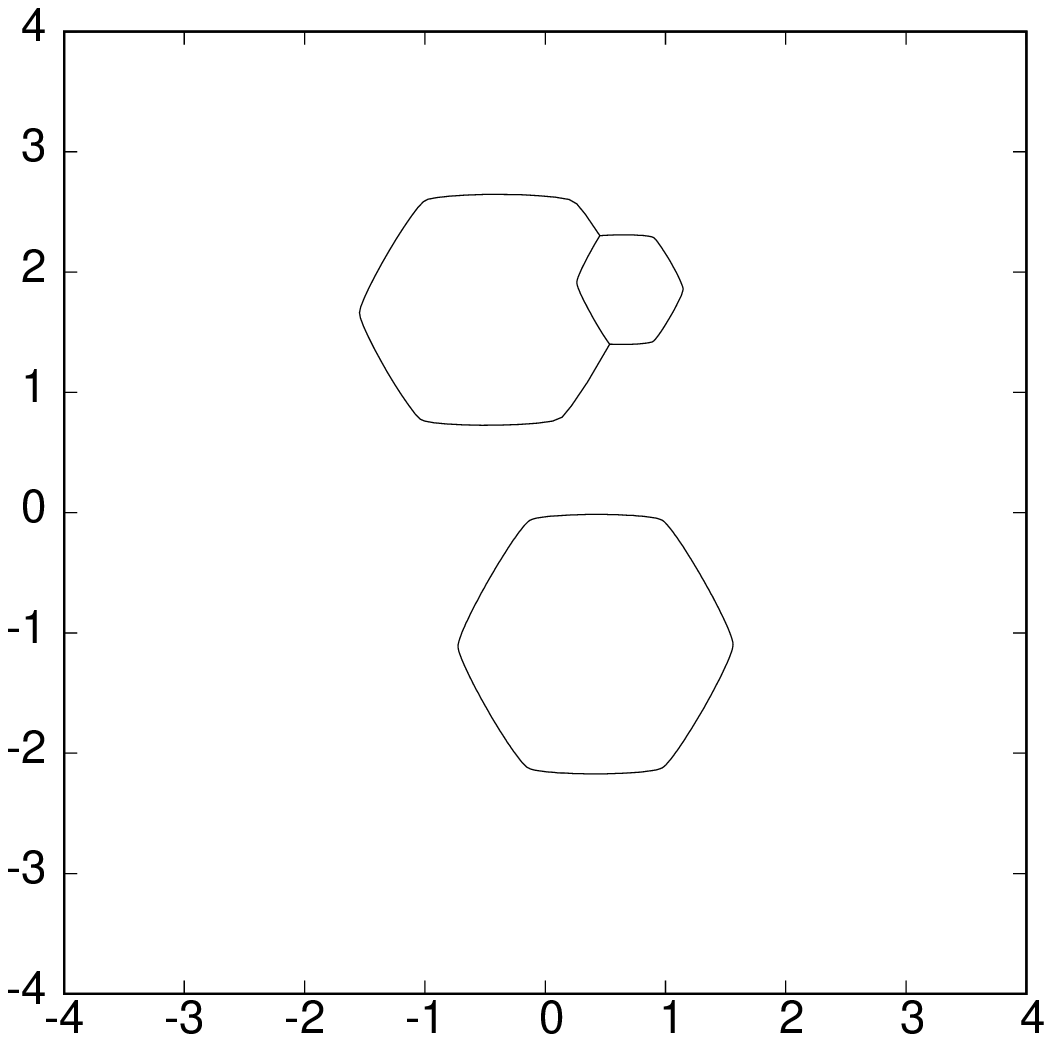}
\includegraphics[angle=-90,width=0.18\textwidth]{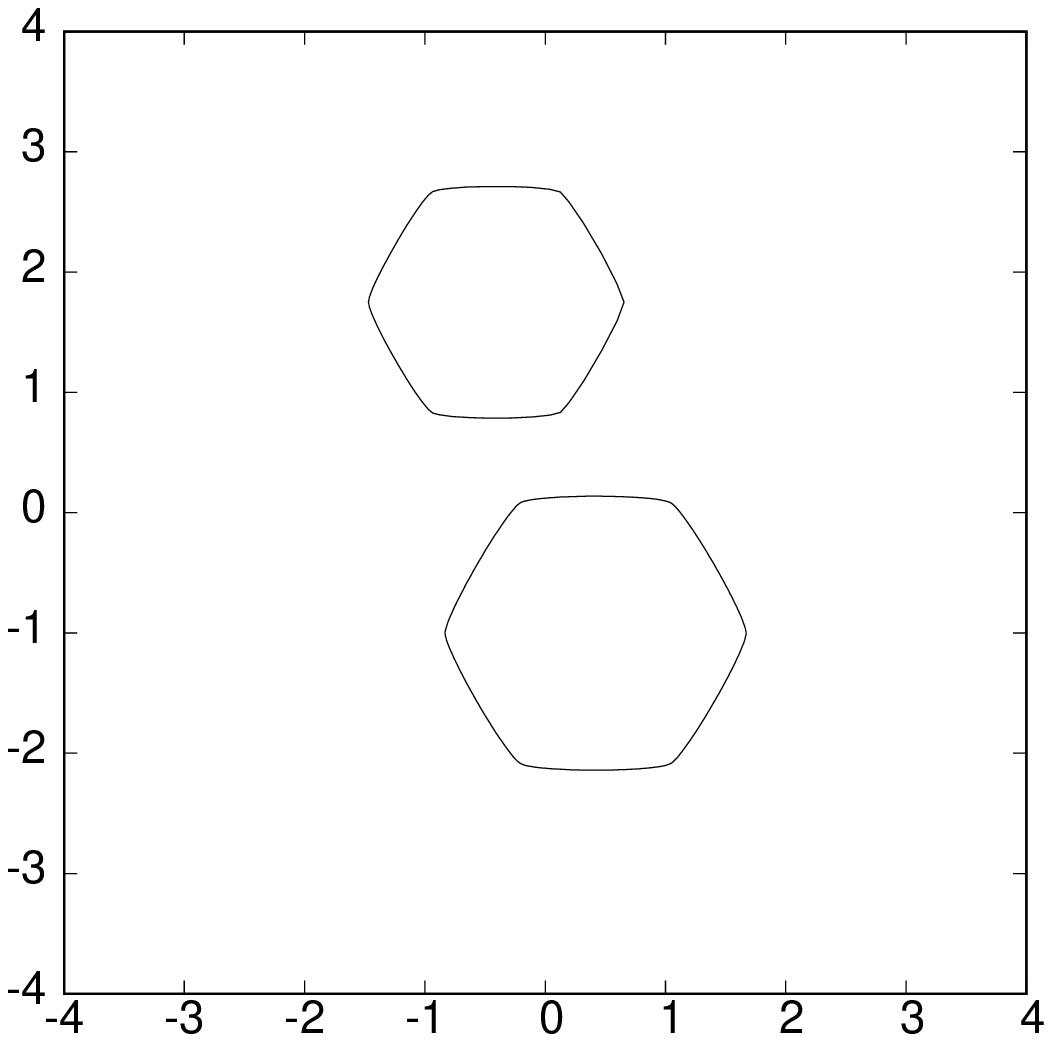}
\includegraphics[angle=-90,width=0.25\textwidth]{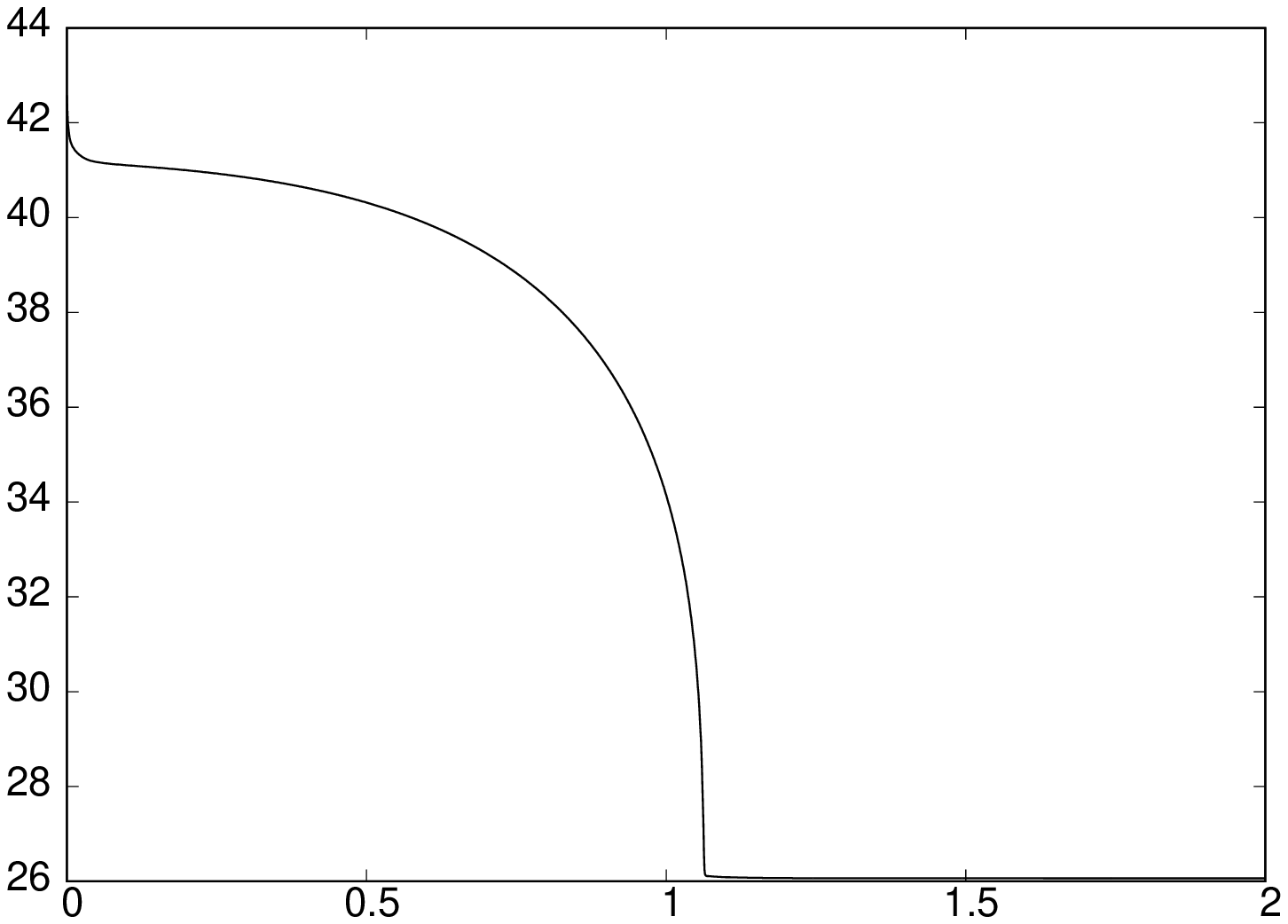}
\caption{The solution at times $t=0, 0.5, 1, 2$, and a plot of the discrete 
energy over time.
}
\label{fig:2dhex_2121_db_plus_one}
\end{figure}%

\subsection{Simulations in 2D with undercooling on $\pOmega$}

In this subsection we consider some numerical simulations for $d=2$ and
$\pOmega_D \not= \emptyset$.  
For the anisotropy we define $\gamma_{\rm hex}$ as in \eqref{eq:hex2d} but now 
with $\delta=0.01$. This leads to sharper corners in the Wulff shape.

\vspace{0.3cm}
\noindent
{\bf Example 4}:
On the boundary $\pOmega=\pOmega_D$ we choose the undercooling parameters
$\bv w_D = (20, 10, -30)^\top$, and start with a very small seed consisting of a
standard double bubble. 
In fact, the two bubbles of the double bubble enclose an 
area of about $0.031$ each. We also let $\rho=1$.
Moreover, we have $I_S = 3$, $I_R = 3$, $I_T = 2$, 
$(\curveindex{1}{1},\curveindex{1}{2},\curveindex{1}{3}) = (\curveindex{2}{1},\curveindex{2}{2},\curveindex{2}{3}) = (1,2,3)$
and
$\dcmap = \begin{pmatrix}0 &-1&1 \\ 1 & 0 &-1 \\ -1 &1&0 \end{pmatrix}$.
The evolution is shown in Figure~\ref{fig:2dDbc21hex_db_rho_small}.
\begin{figure}
\center
\includegraphics[angle=-90,width=0.18\textwidth]{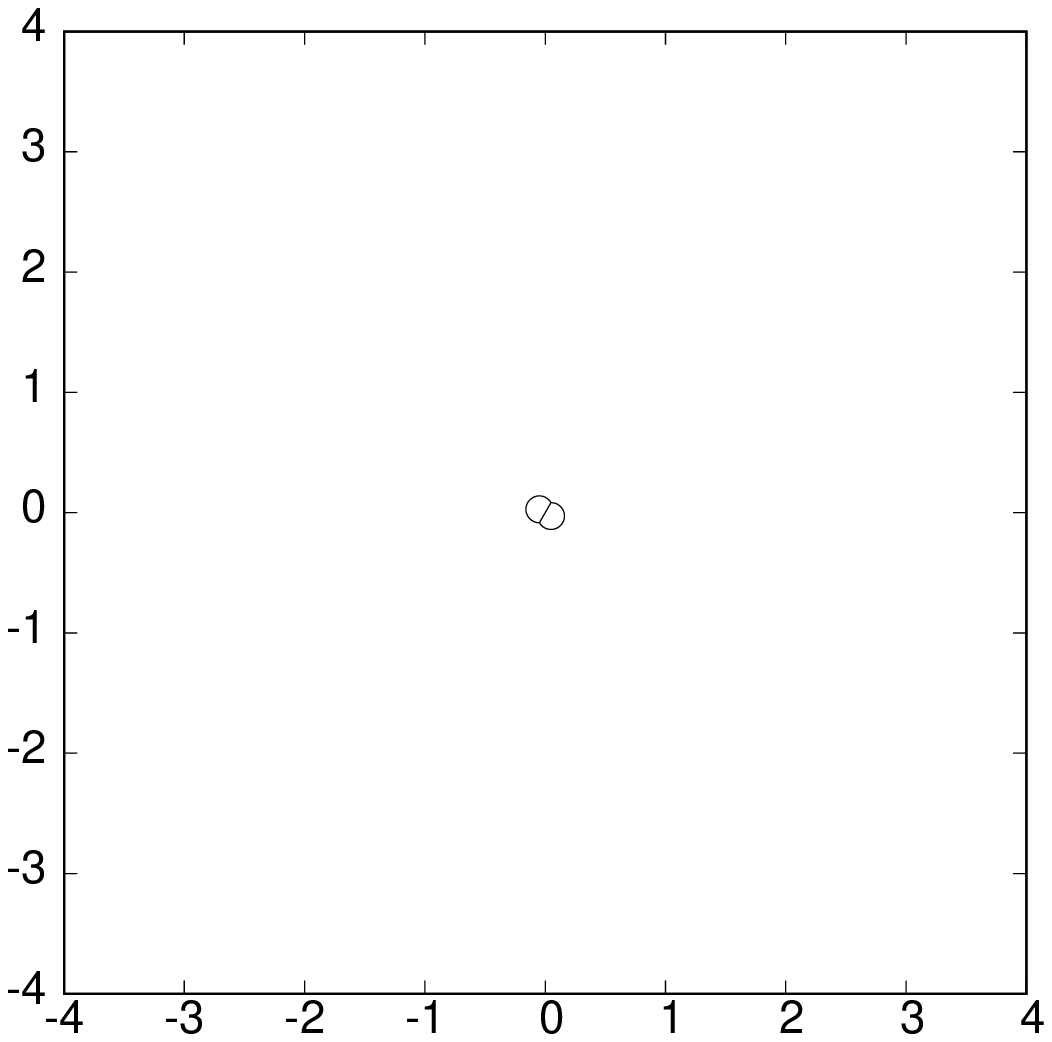}
\includegraphics[angle=-90,width=0.18\textwidth]{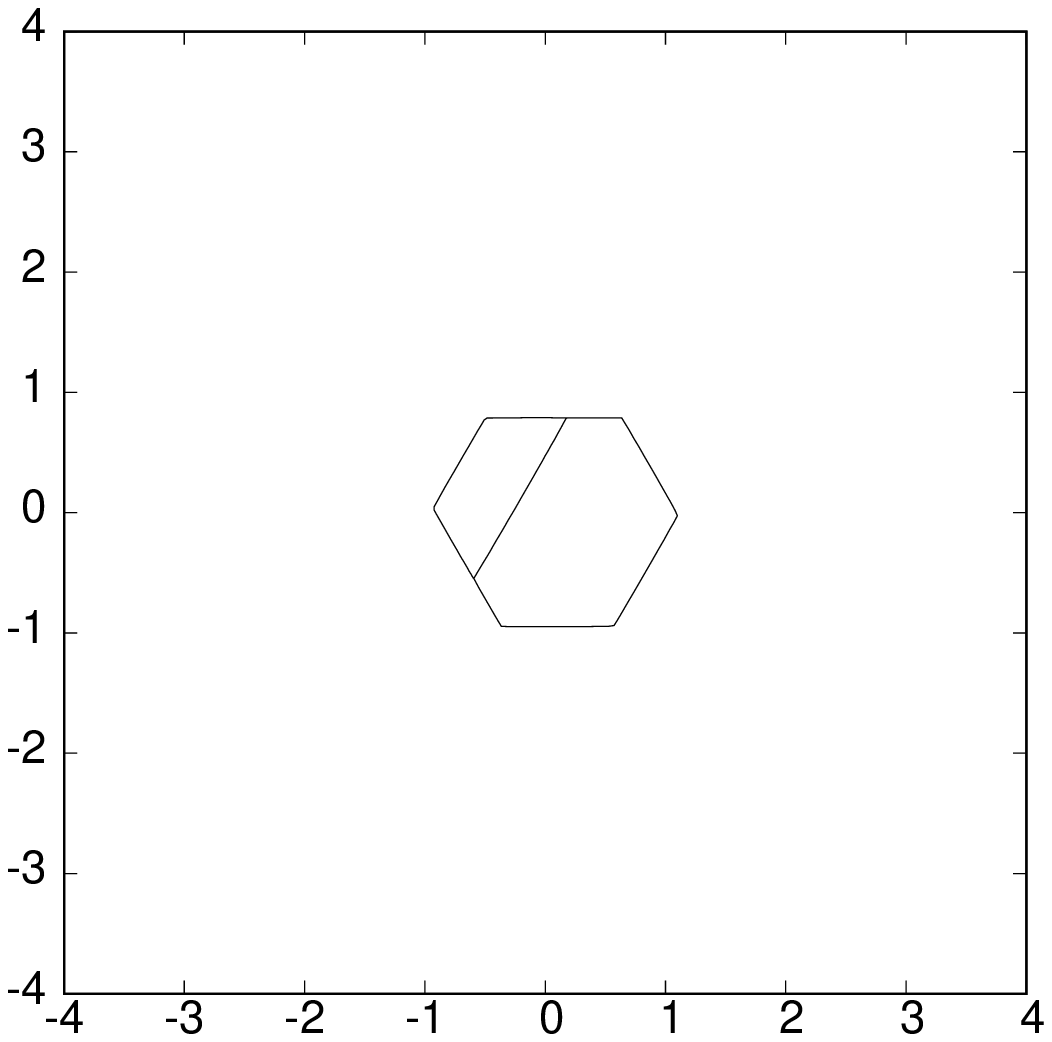}
\includegraphics[angle=-90,width=0.18\textwidth]{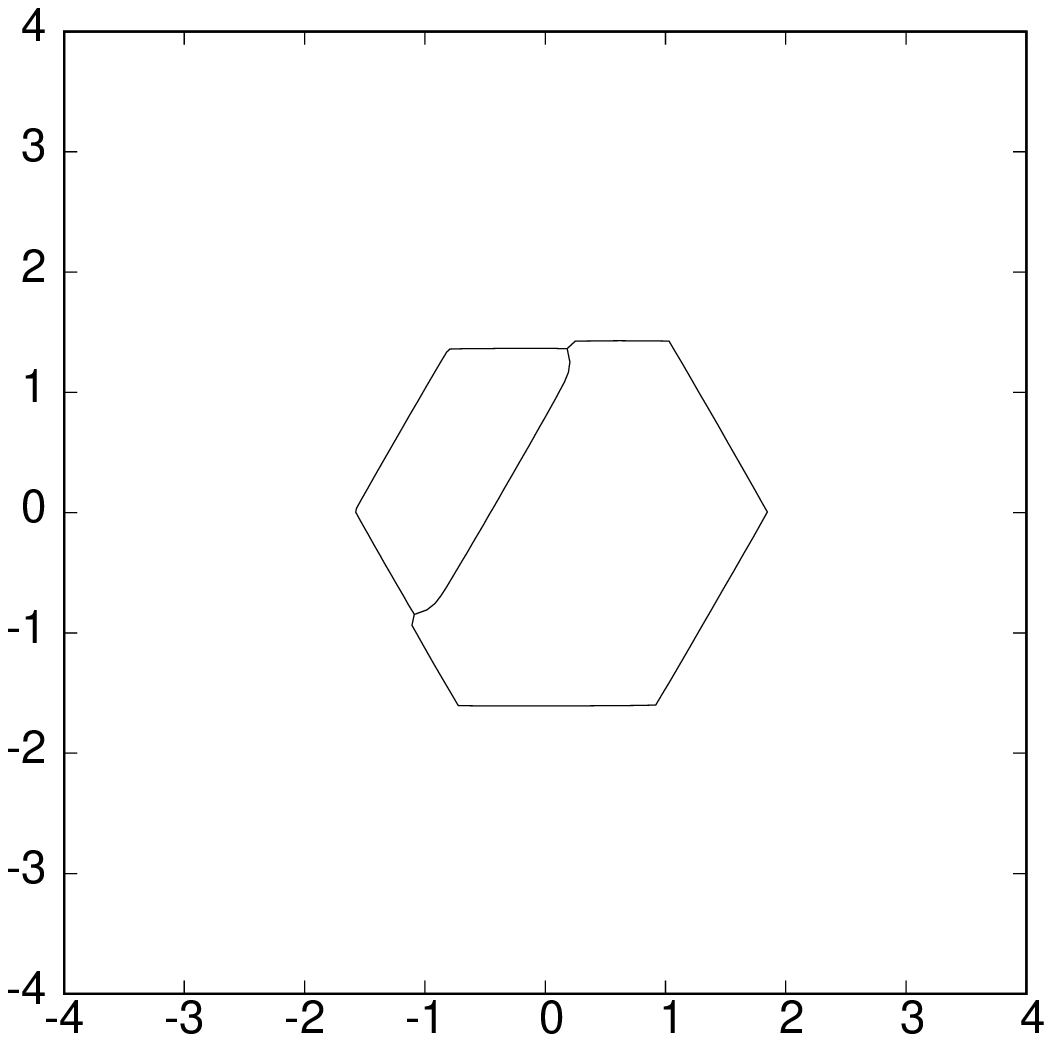}
\includegraphics[angle=-90,width=0.18\textwidth]{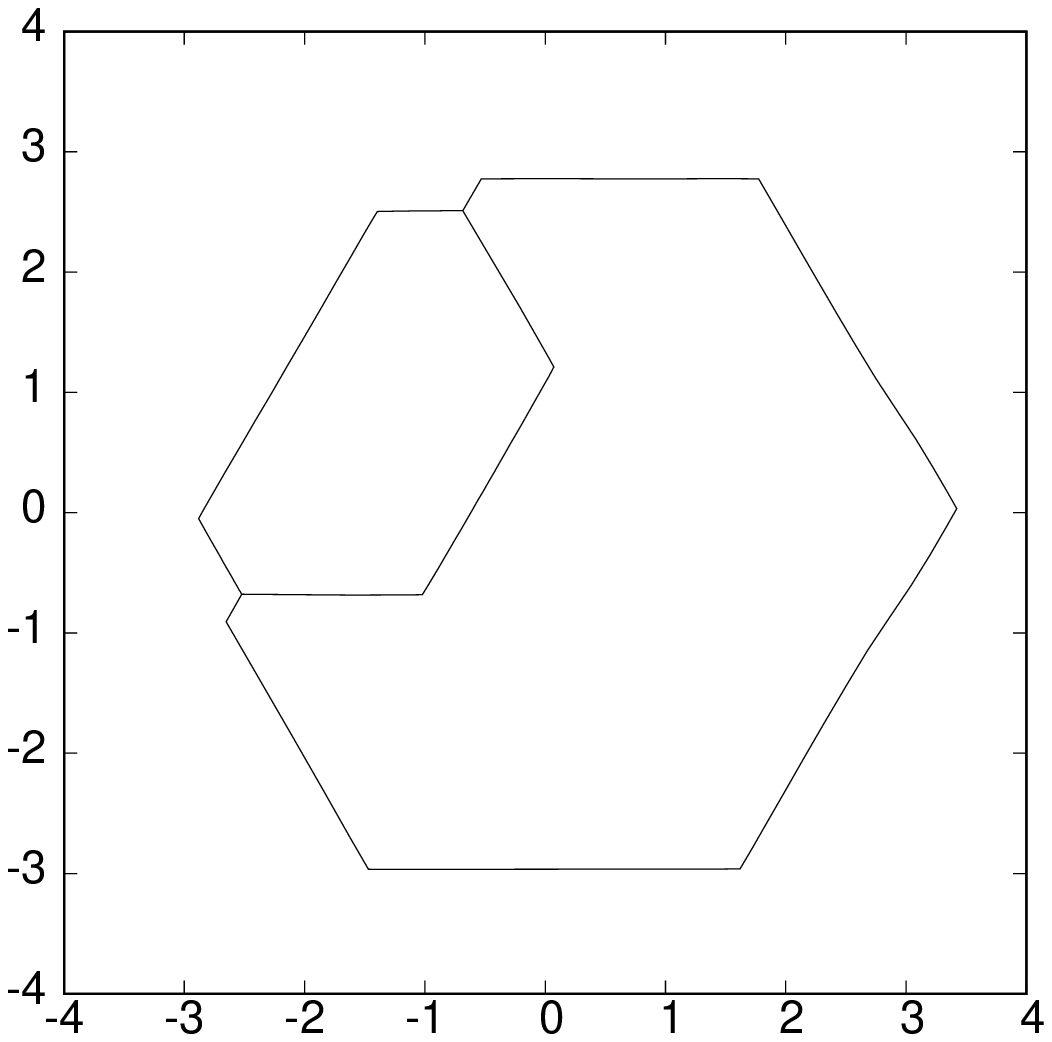}
\includegraphics[angle=-90,width=0.25\textwidth]{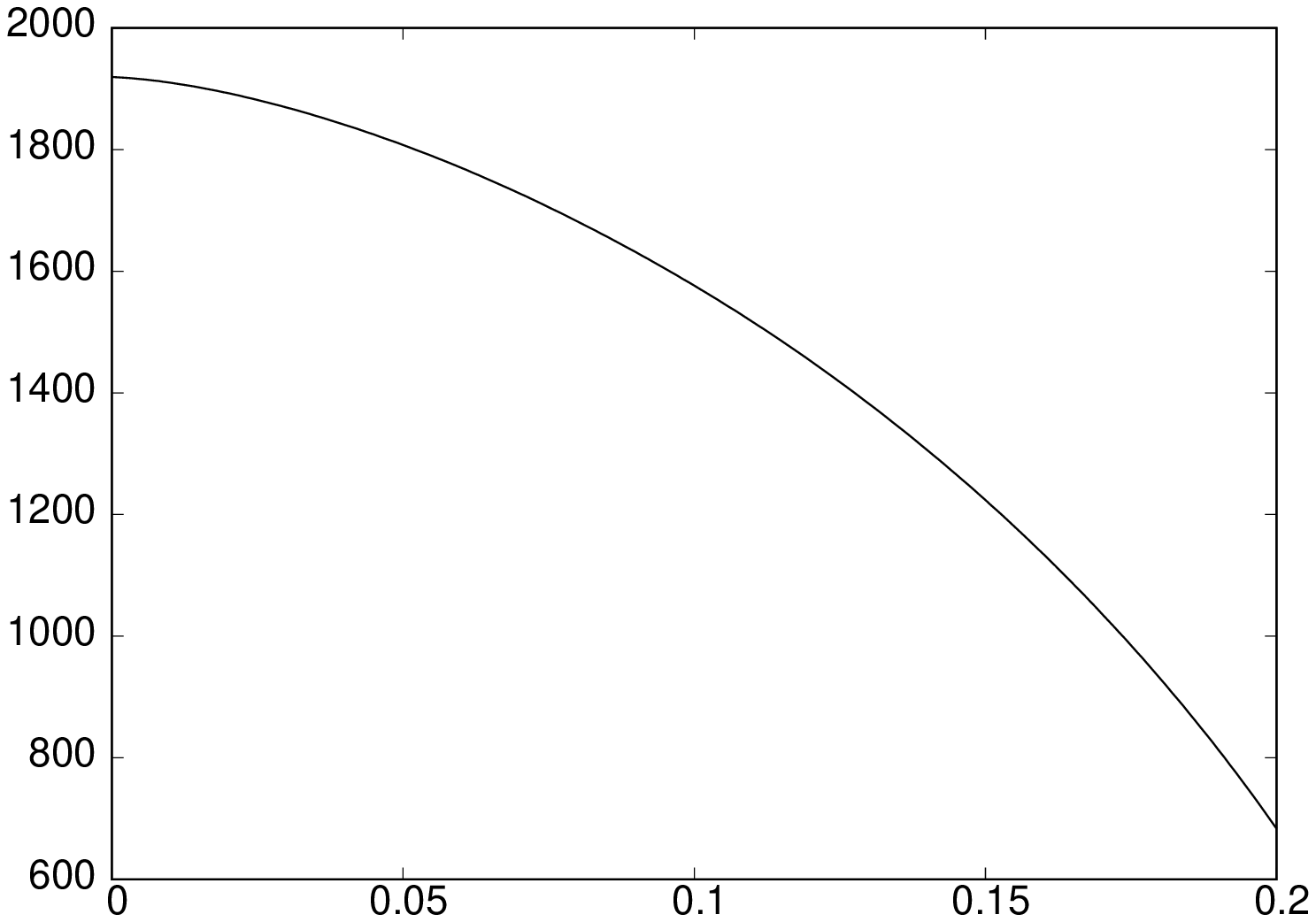}
\caption{($\bv w_D = (20, 10, -30)^\top$, $\rho=1$)
The solution at times $t=0, 0.05, 0.1, 0.2$, and a plot of the discrete 
energy over time.
}
\label{fig:2dDbc21hex_db_rho_small}
\end{figure}%

\vspace{0.3cm}
\noindent
{\bf Example 5}:
In order to provoke some more unstable growths, we use the initial data from
Example~4 and now set $\bv w_D = (12, 11, -23)^\top$,
$\rho=0.05$, as well as $(\gamma_1,\gamma_2,\gamma_3) = \alpha
(\gamma_{\rm hex},\gamma_{\rm hex},\gamma_{\rm hex})$ where $\alpha=0.05$.
The evolution is shown in
Figure~\ref{fig:2dDbc1211hex_db_rho_005}.
\begin{figure}
\center
\includegraphics[angle=-90,width=0.18\textwidth]{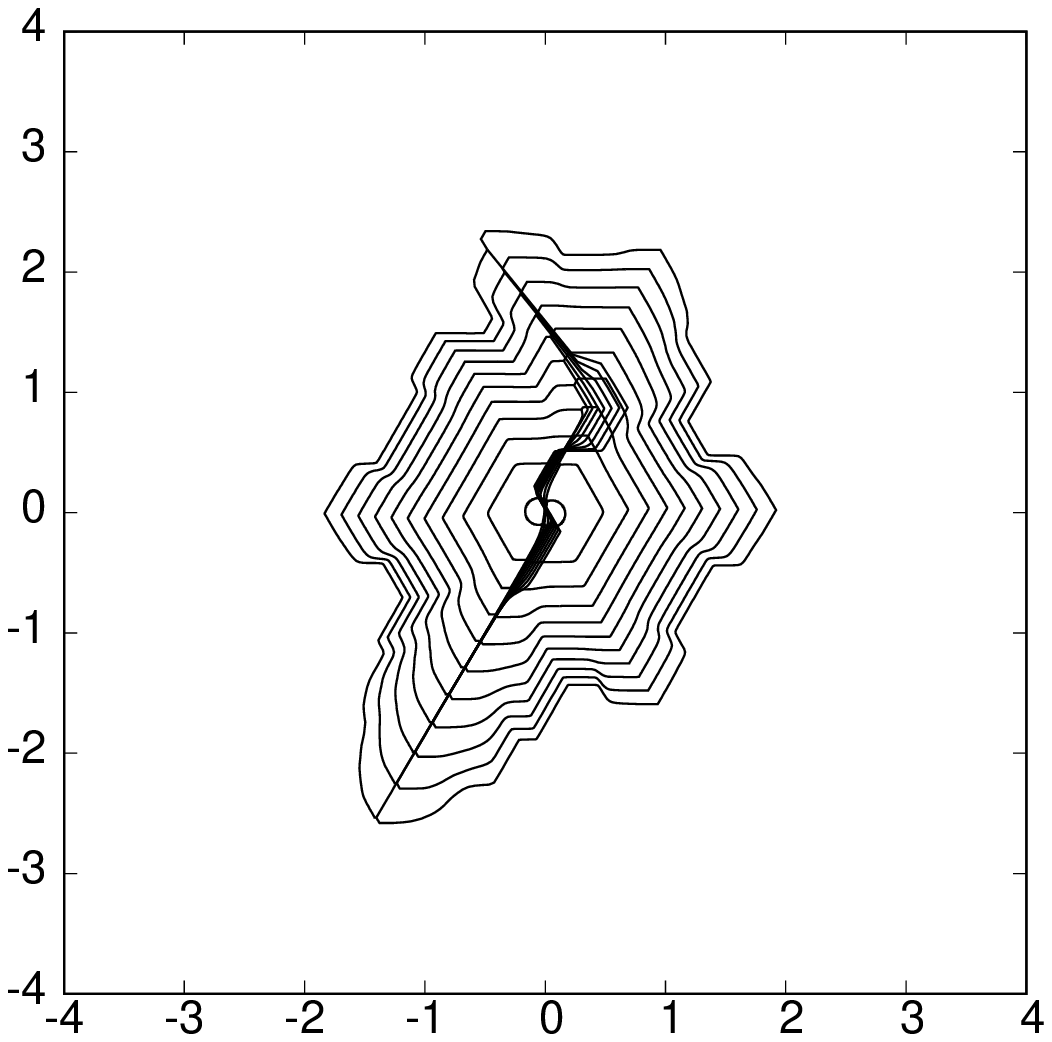}
\includegraphics[angle=-90,width=0.18\textwidth]{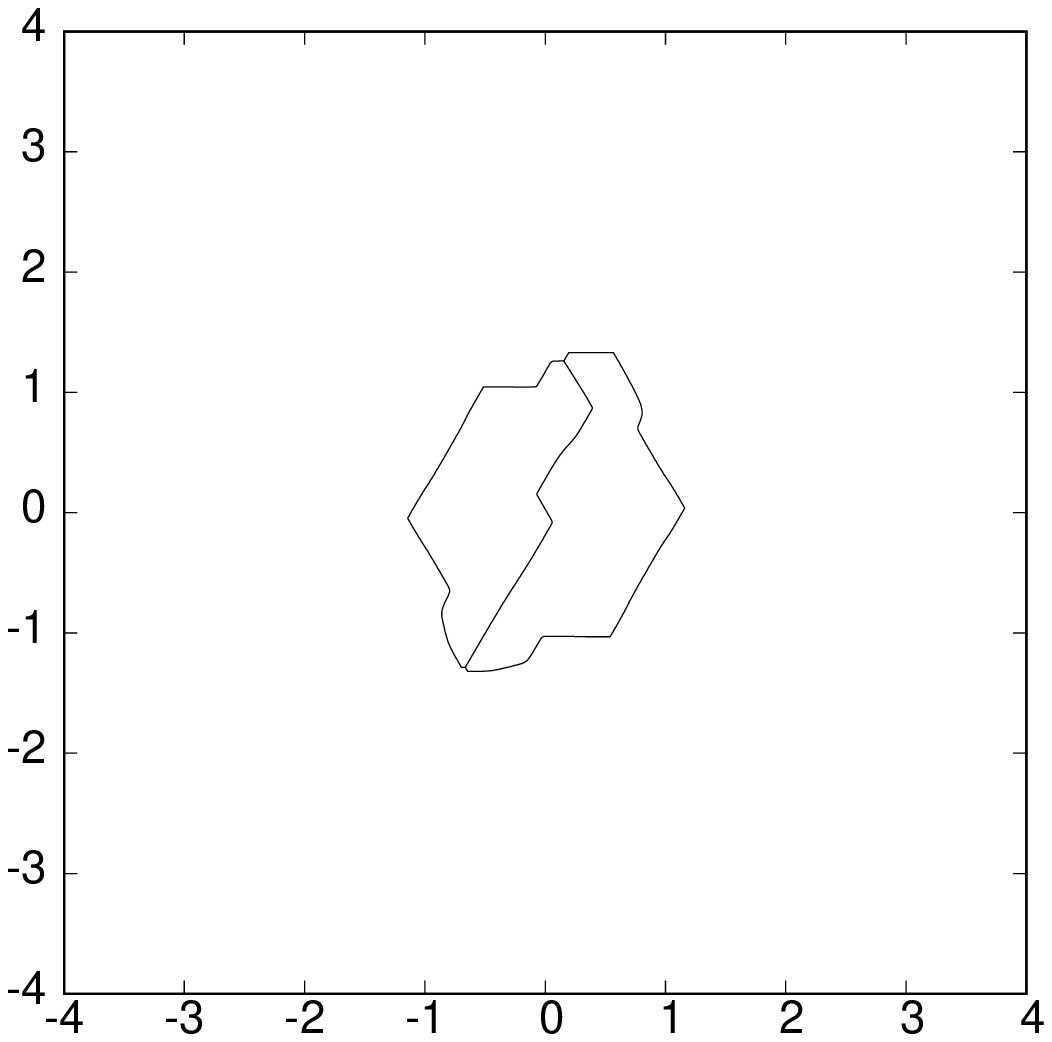}
\includegraphics[angle=-90,width=0.18\textwidth]{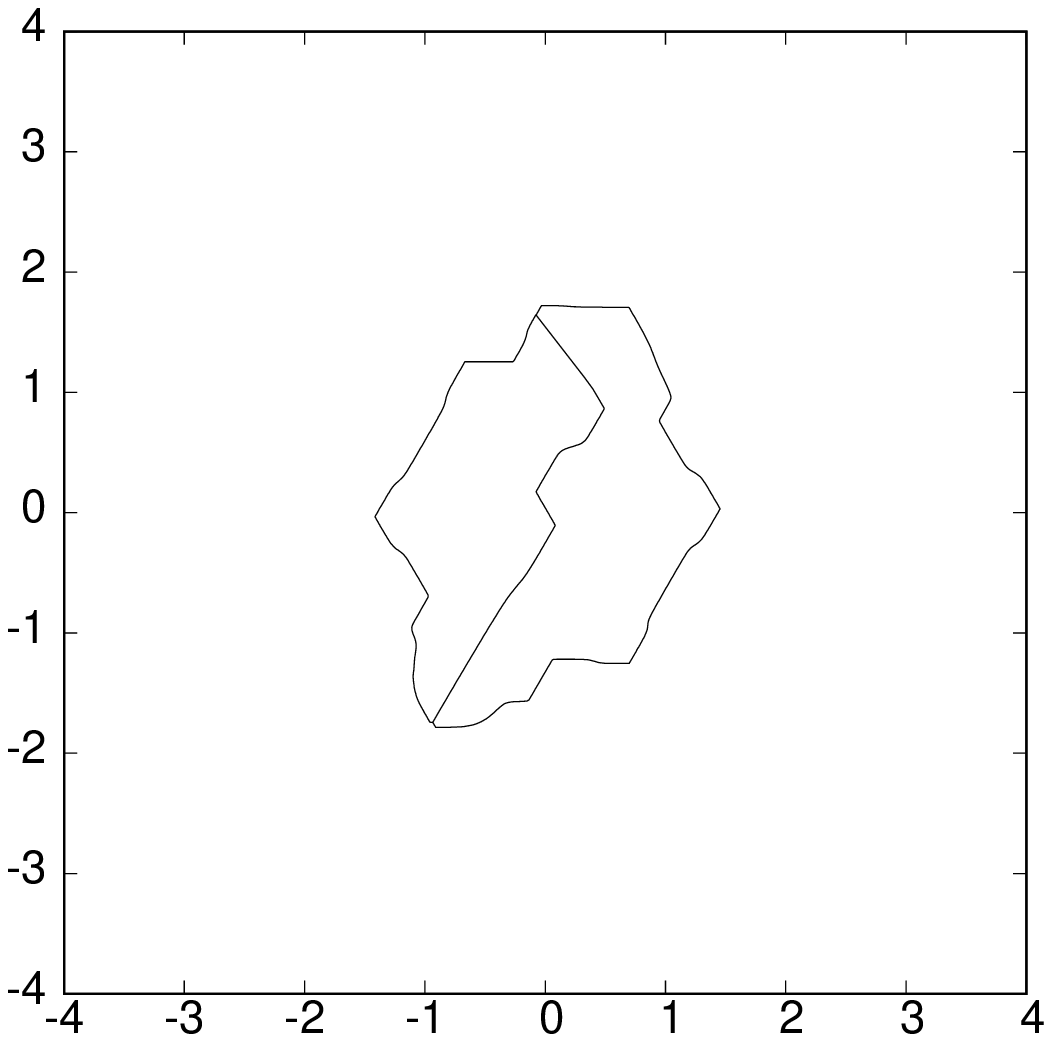}
\includegraphics[angle=-90,width=0.18\textwidth]{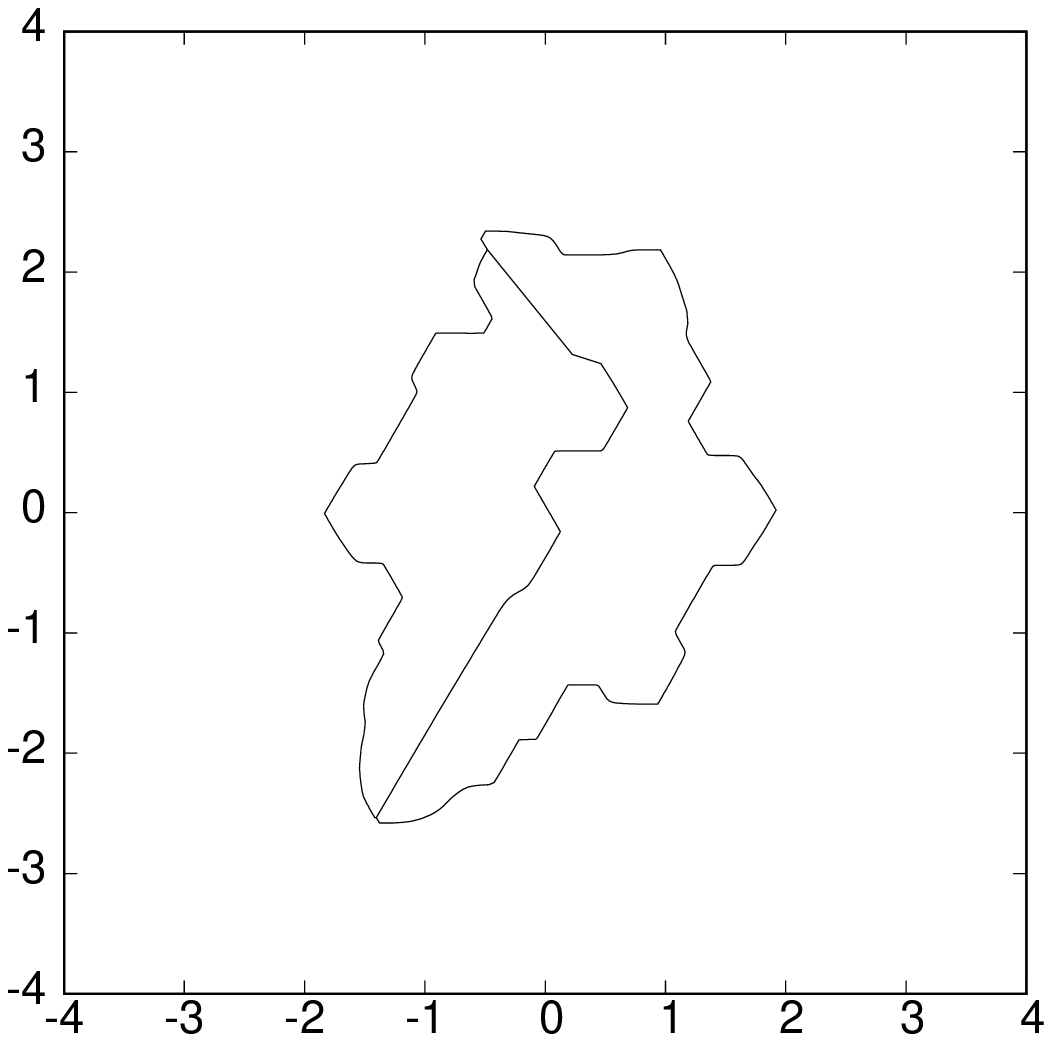}
\includegraphics[angle=-90,width=0.25\textwidth]{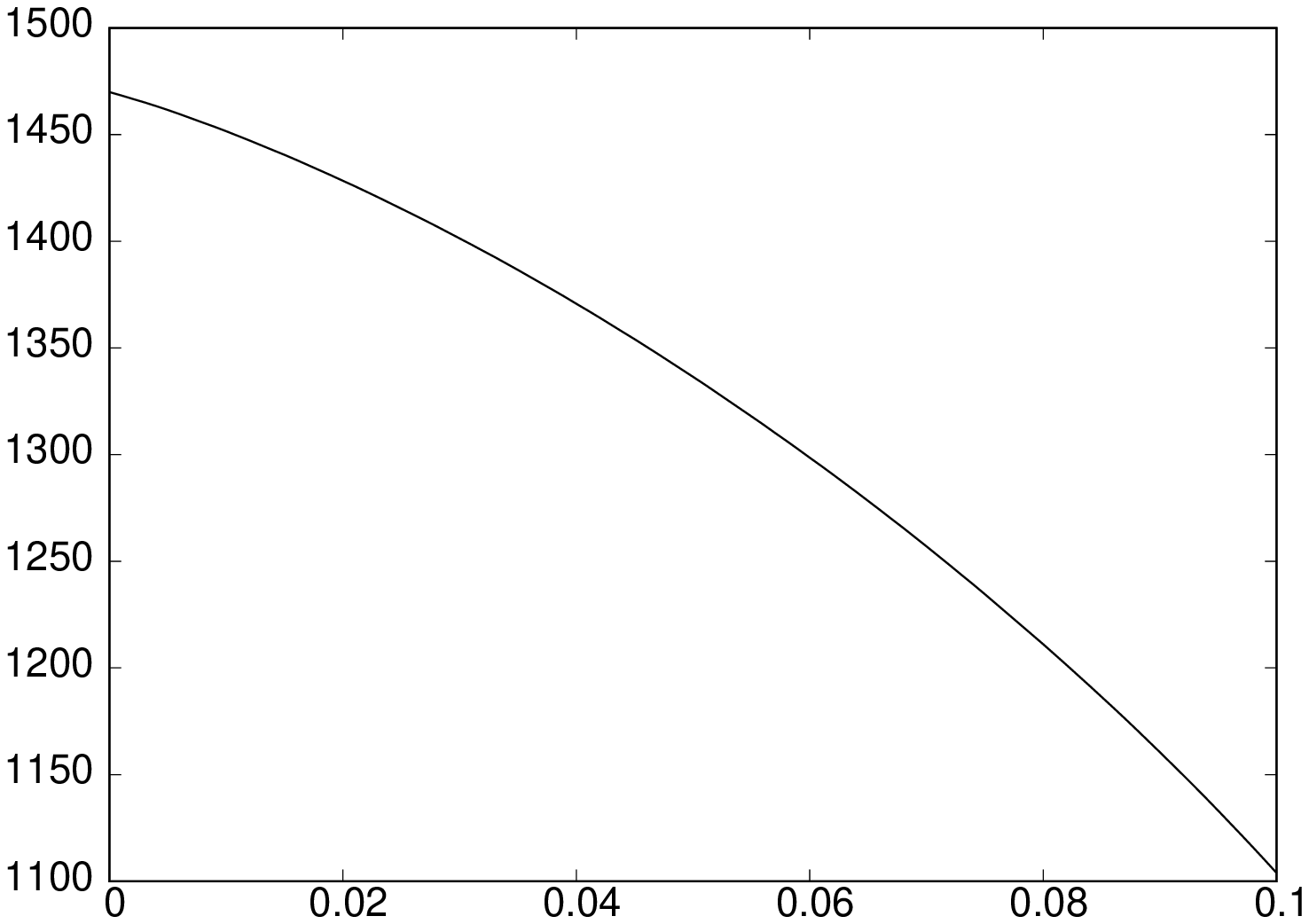} 
\caption{($\bv w_D = (12, 11, -23)^\top$, $\rho=\alpha=0.05$)
The solution at times $t=0, 0.01, \ldots, 0.1$, and separately at times 
$t=0.05$, $t=0.07$ and $t=0.1$,
and a plot of the discrete energy over time.
}
\label{fig:2dDbc1211hex_db_rho_005}
\end{figure}%
The same simulation but with the undercooling only applied to the right
boundary is shown in Figure~\ref{fig:2dNDbc1211hex_db_rho_005}. That is,
here $\pOmega_D = \{4\}\times(-4,4)$, and we also move the initial seed further
to the left to allow it more space to grown into.
In fact, we can observe dendritic growth towards the undercooled part of the
external boundary during the evolution.
\begin{figure}
\center
\includegraphics[angle=-90,width=0.18\textwidth]{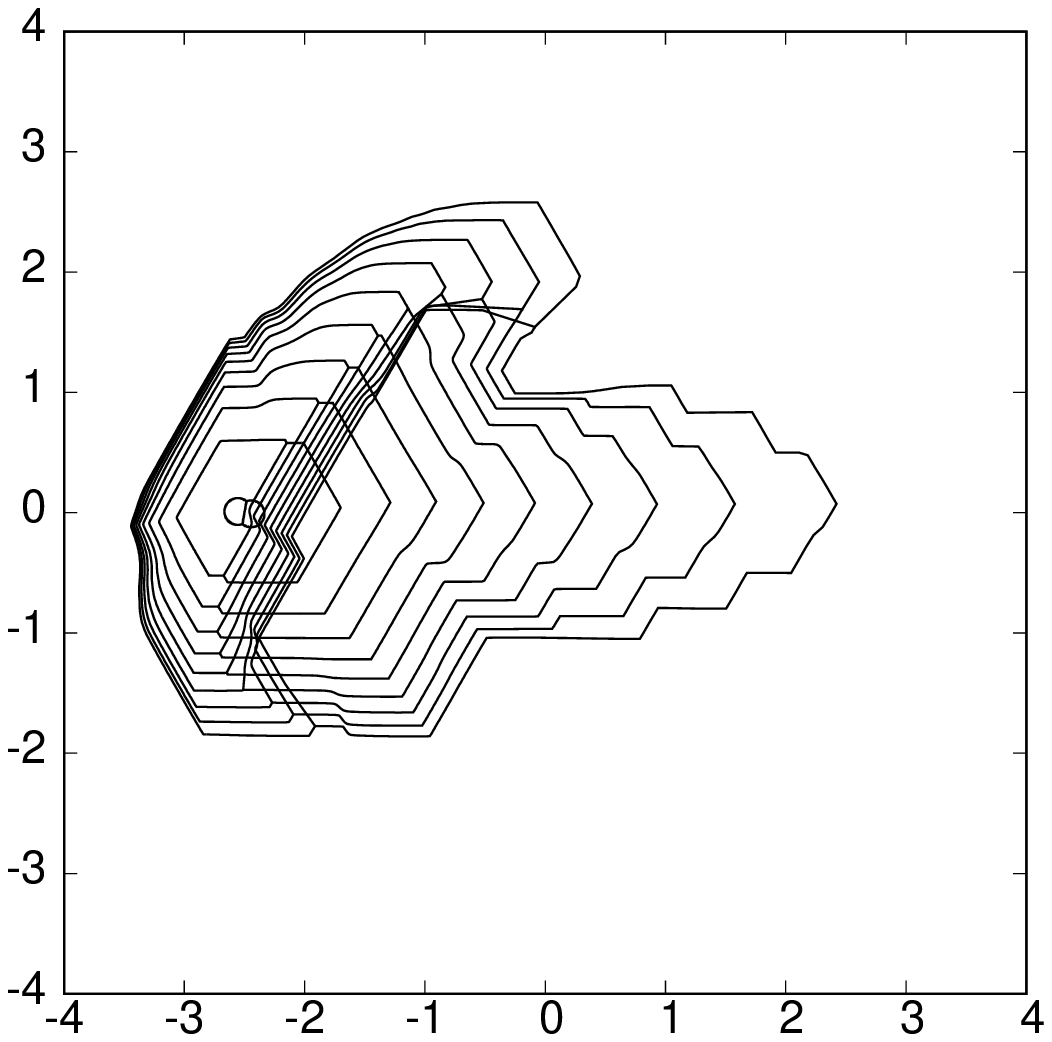}
\includegraphics[angle=-90,width=0.18\textwidth]{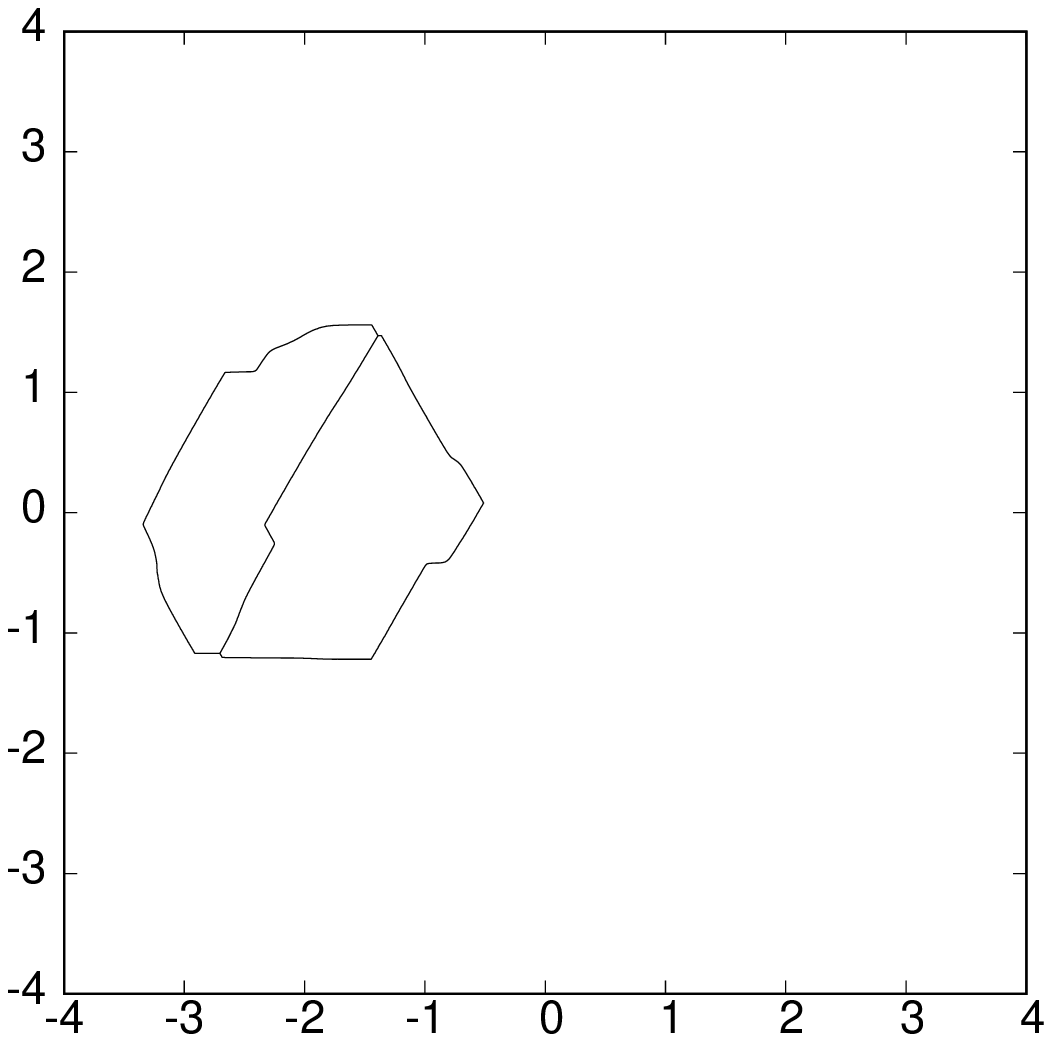}
\includegraphics[angle=-90,width=0.18\textwidth]{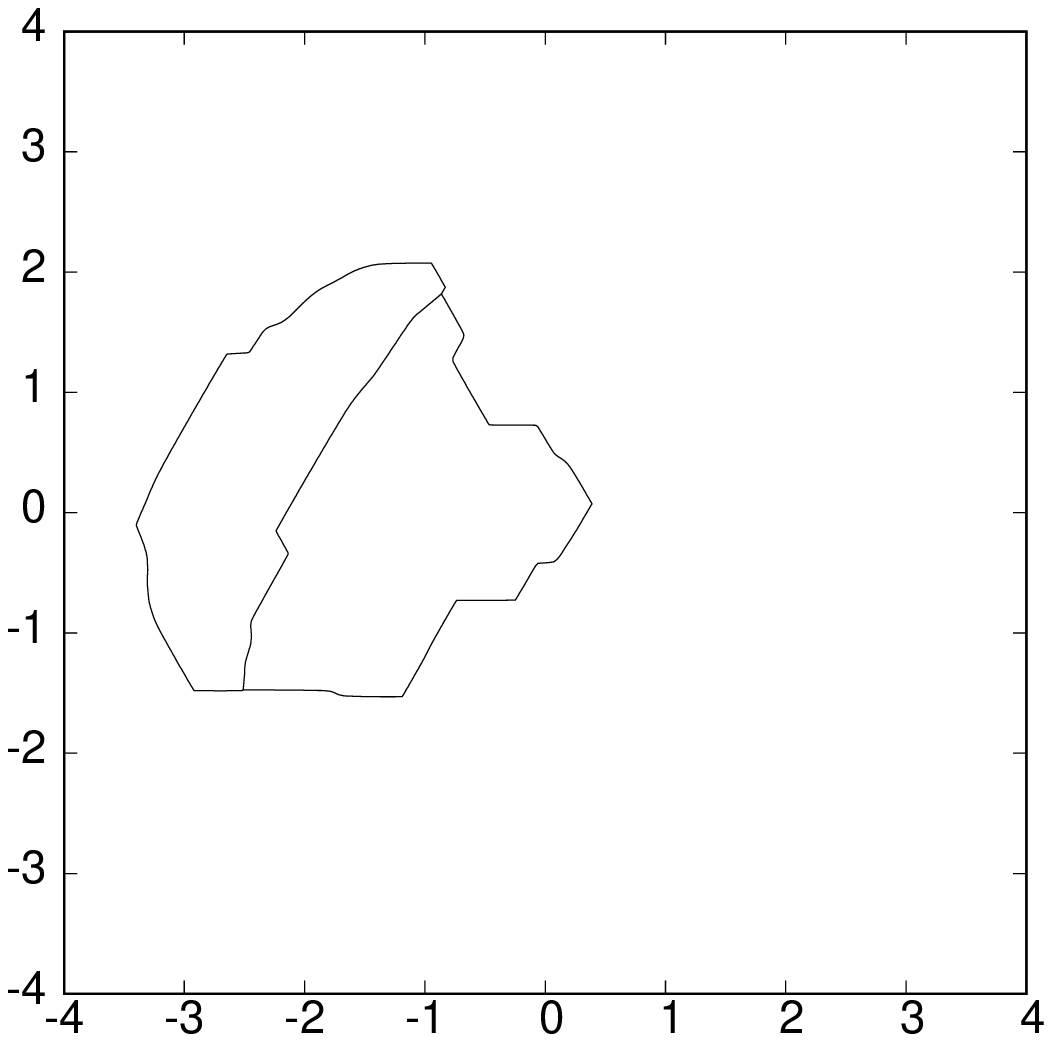}
\includegraphics[angle=-90,width=0.18\textwidth]{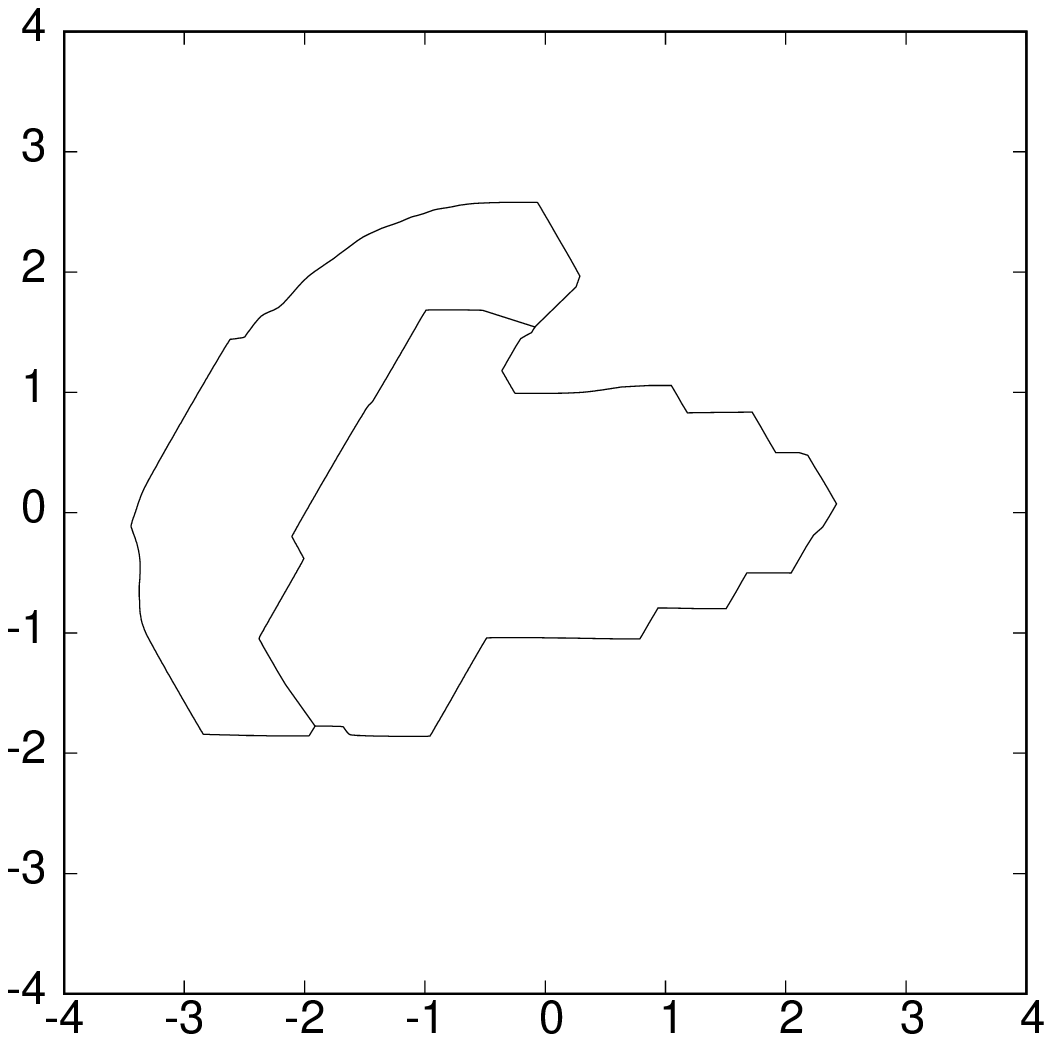}
\includegraphics[angle=-90,width=0.25\textwidth]{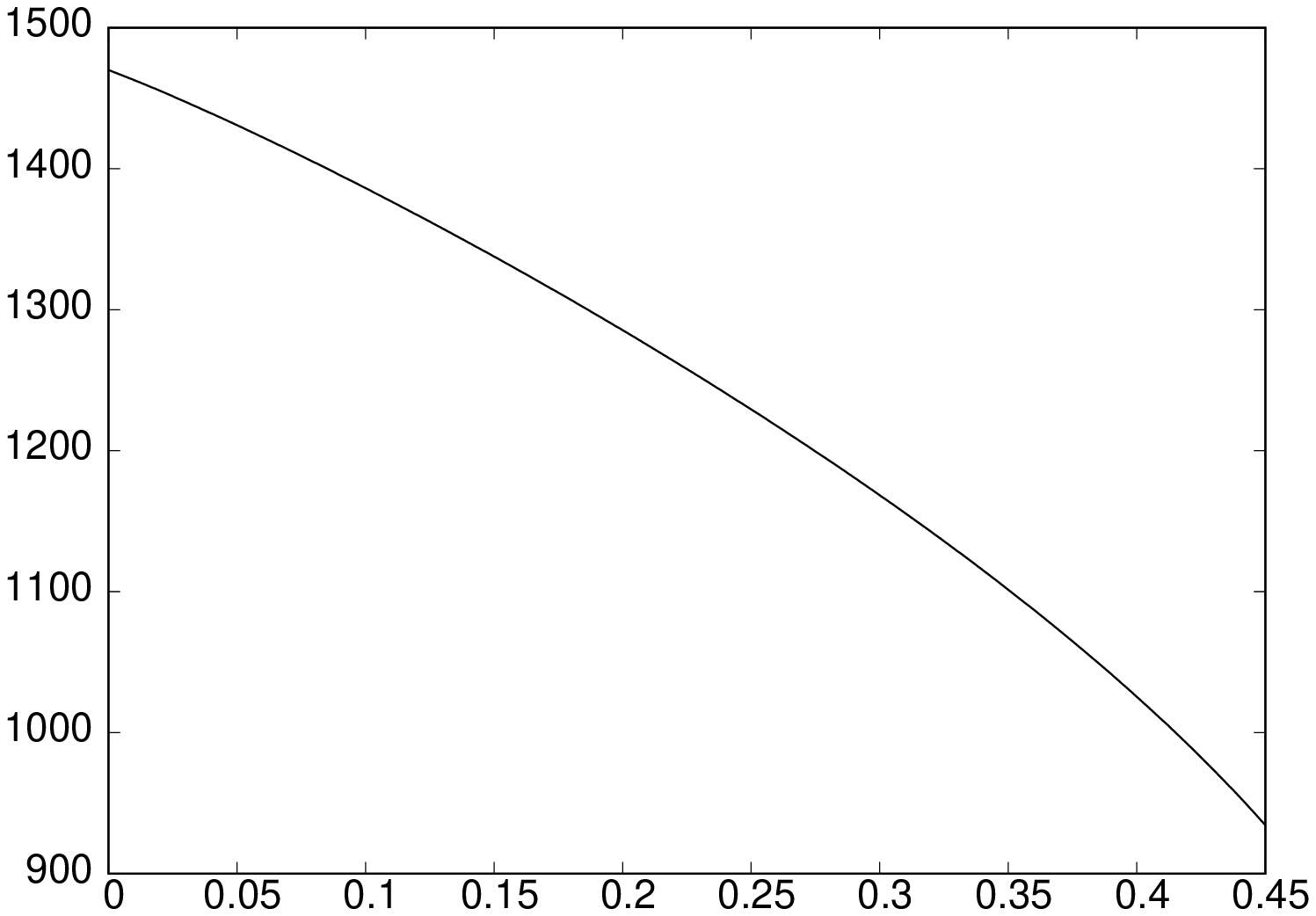} 
\caption{($\bv w_D = (12, 11, -23)^\top$, $\rho=\alpha=0.05$)
The solution at times $t=0, 0.05, \ldots, 0.45$, and separately at times 
$t=0.2$, $t=0.3$ and $t=0.45$,
and a plot of the discrete energy over time.
}
\label{fig:2dNDbc1211hex_db_rho_005}
\end{figure}%

\vspace{0.3cm}
\noindent
{\bf Example 6}:
We use the initial data from
Example~4 and now set $\bv w_D = (5, 5 -10)^\top$,
$\rho=0.05$, as well as $(\gamma_1,\gamma_2,\gamma_3) = \alpha
(\gamma_{\rm hex},\gamma_{\rm hex},\gamma_{\rm hex})$ where $\alpha=0.005$.
The simulation is shown in Figure~\ref{fig:2dDbc_hex_db_a0005_5_to_5}, with the
two crystals growing symmetrically.
\begin{figure}
\center
\includegraphics[angle=-90,width=0.18\textwidth]{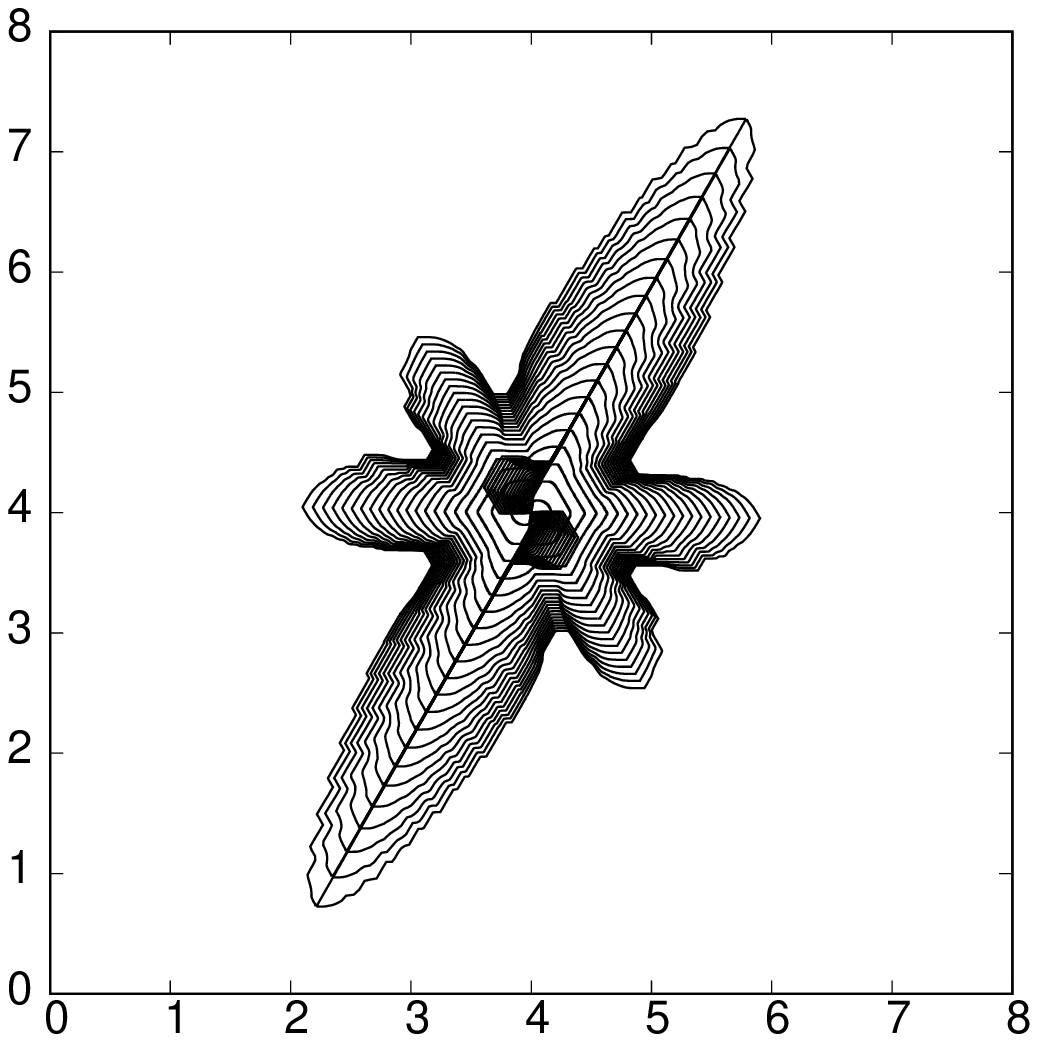}
\includegraphics[angle=-90,width=0.18\textwidth]{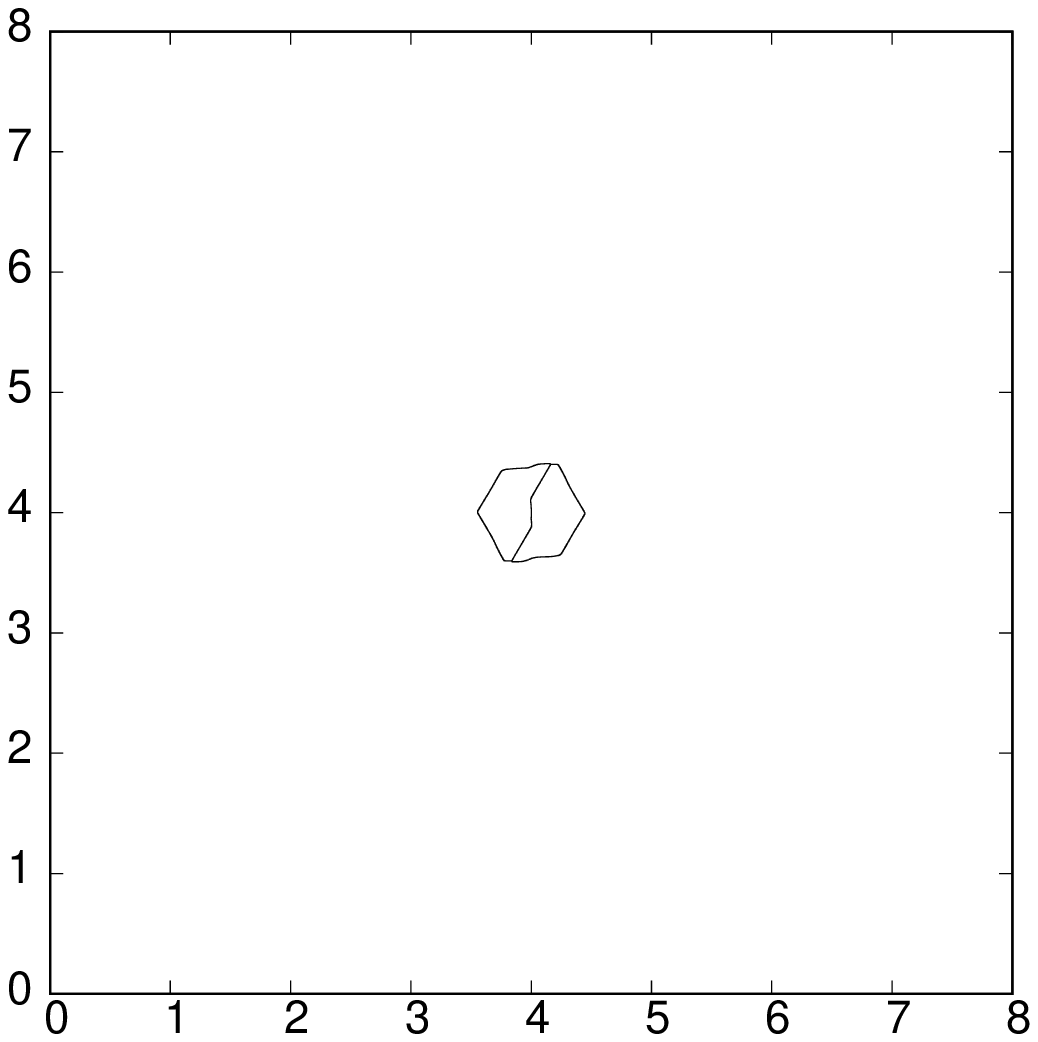}
\includegraphics[angle=-90,width=0.18\textwidth]{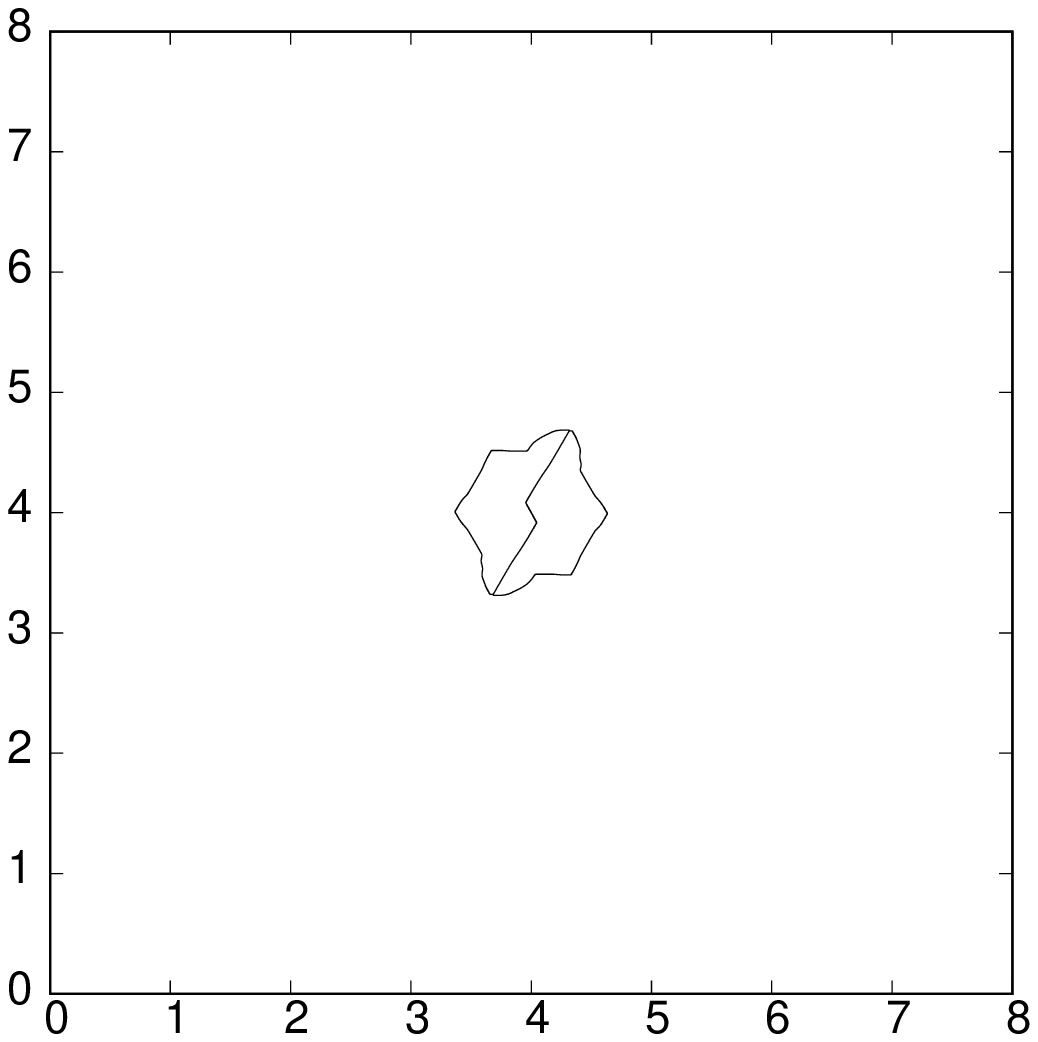}
\includegraphics[angle=-90,width=0.18\textwidth]{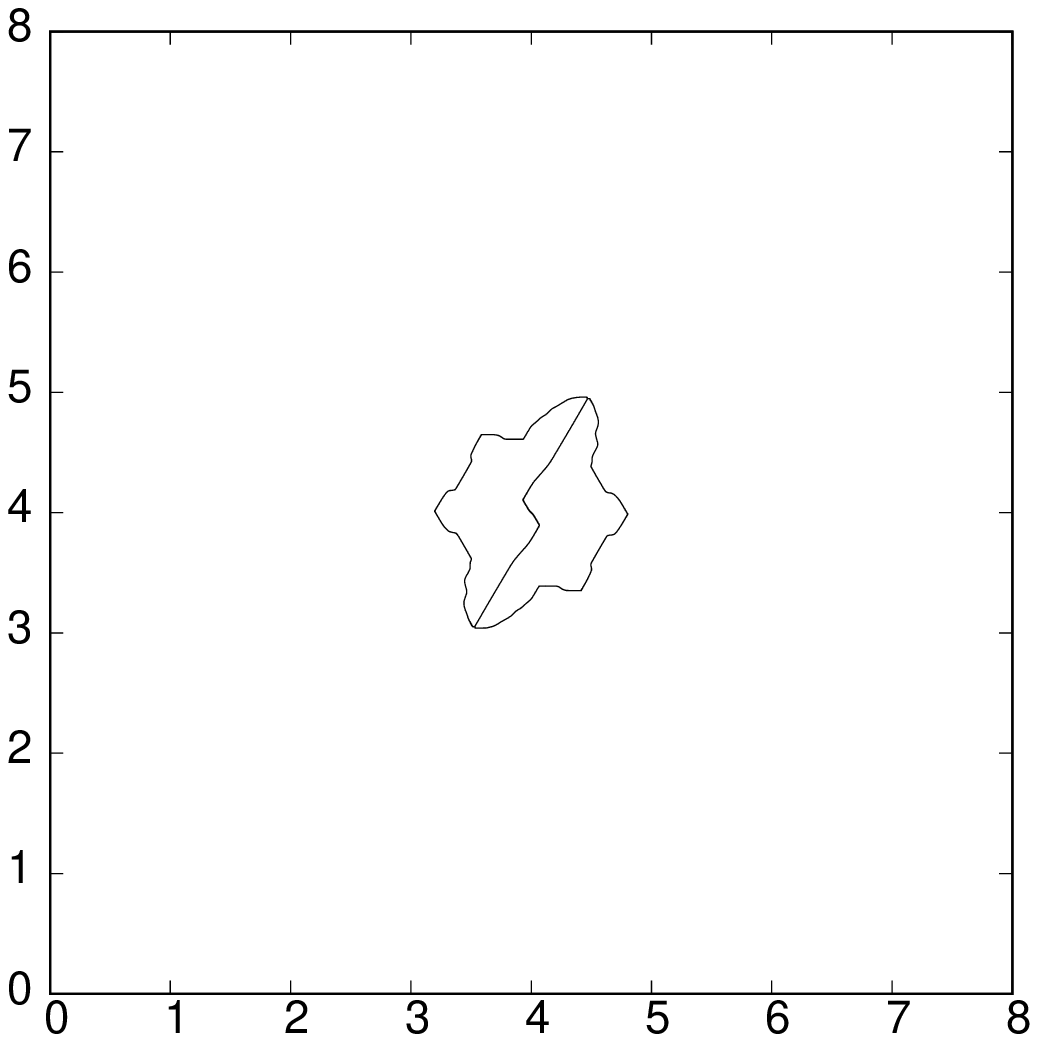}
\includegraphics[angle=-90,width=0.18\textwidth]{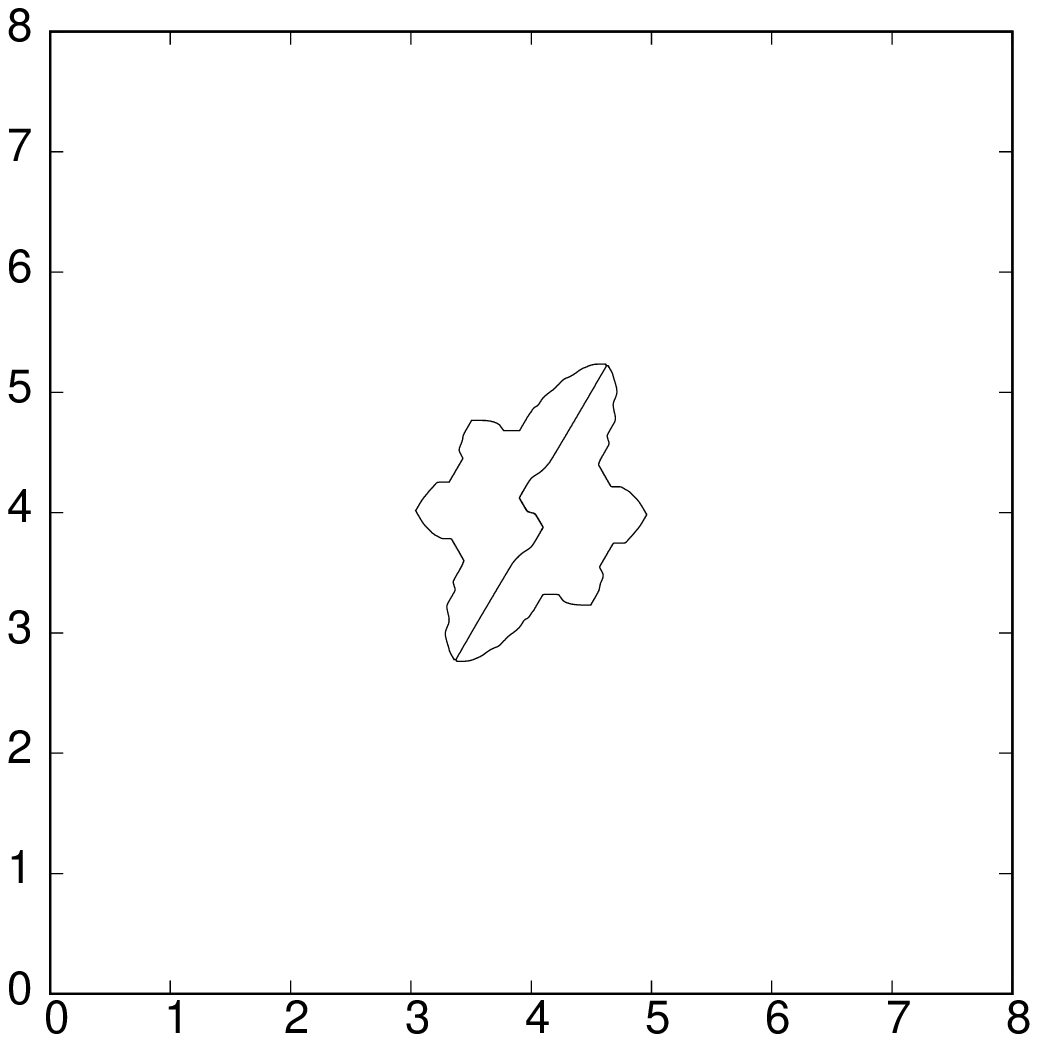}
\includegraphics[angle=-90,width=0.18\textwidth]{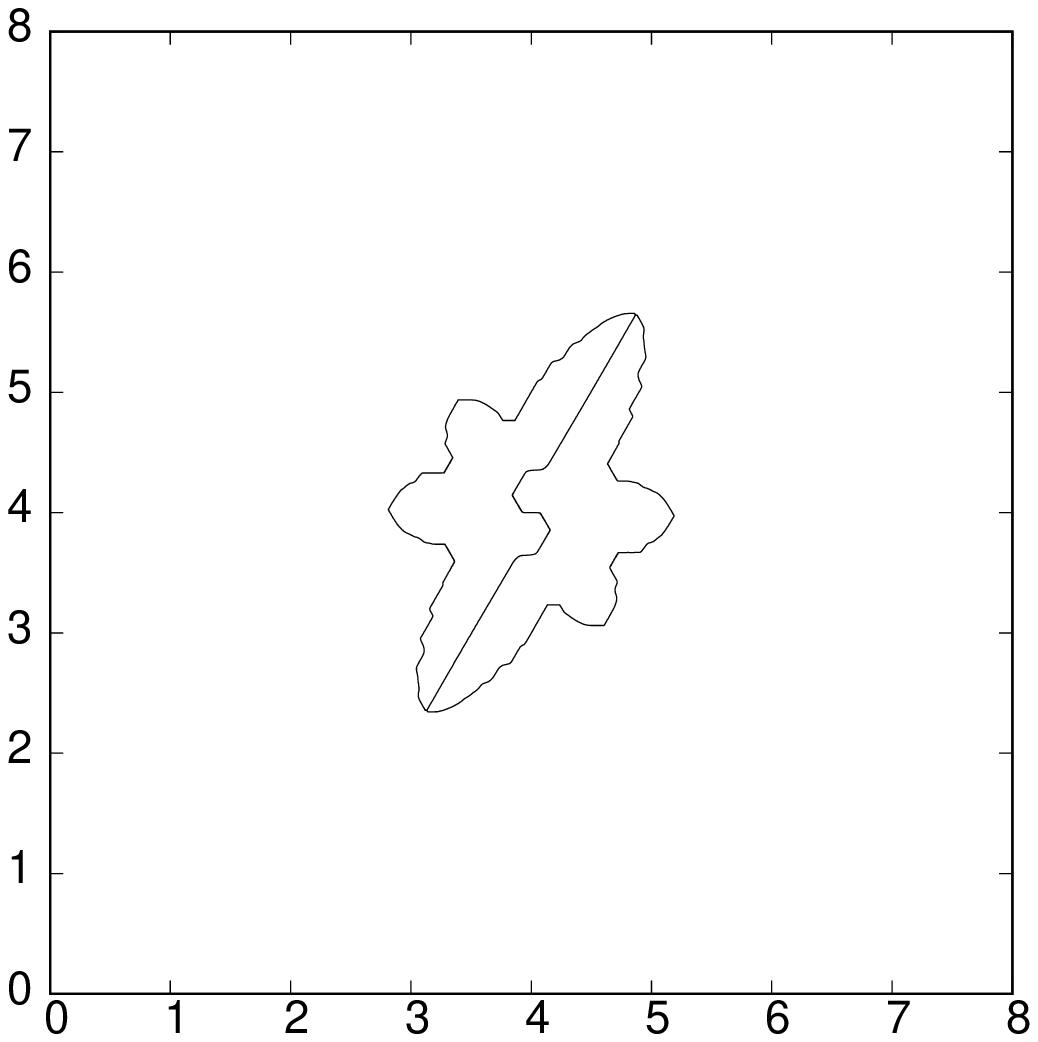}
\includegraphics[angle=-90,width=0.18\textwidth]{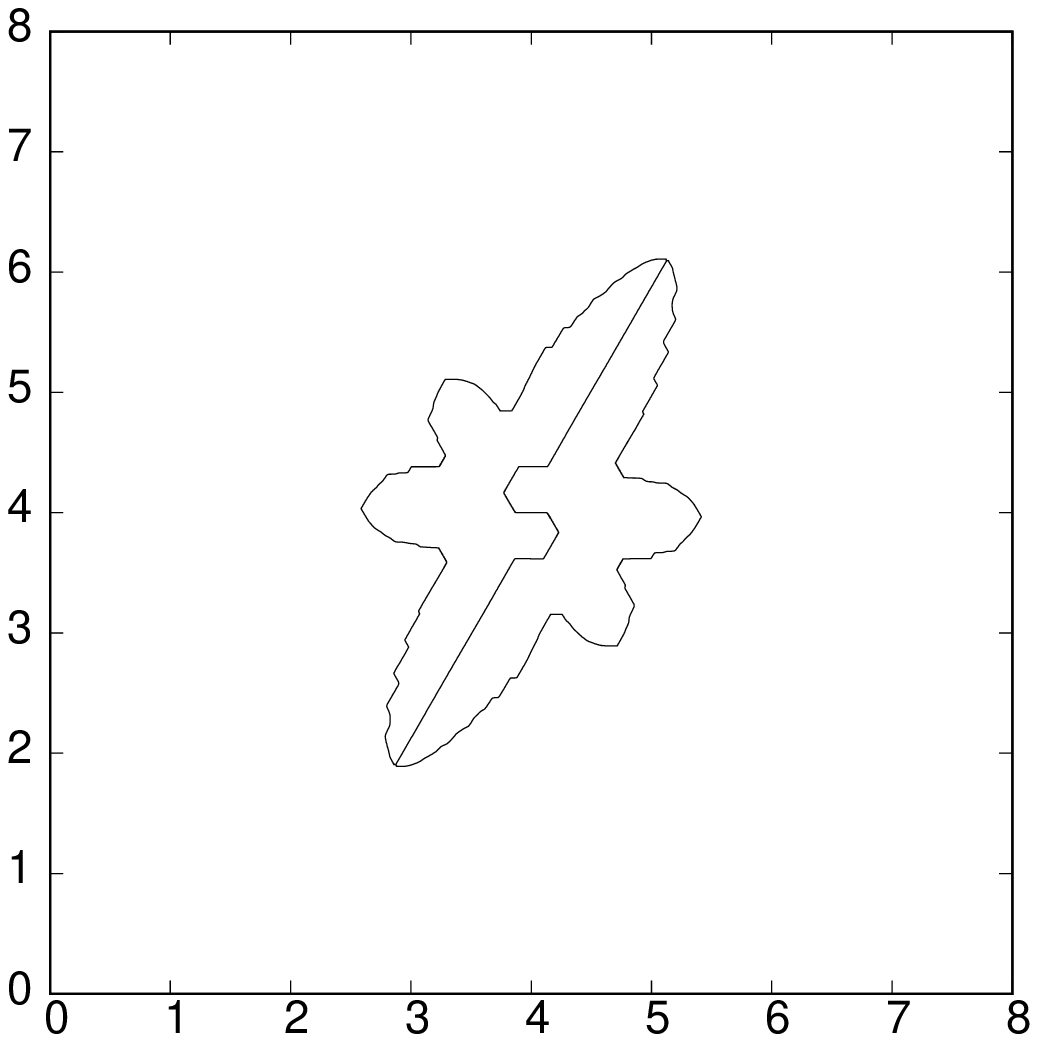}
\includegraphics[angle=-90,width=0.18\textwidth]{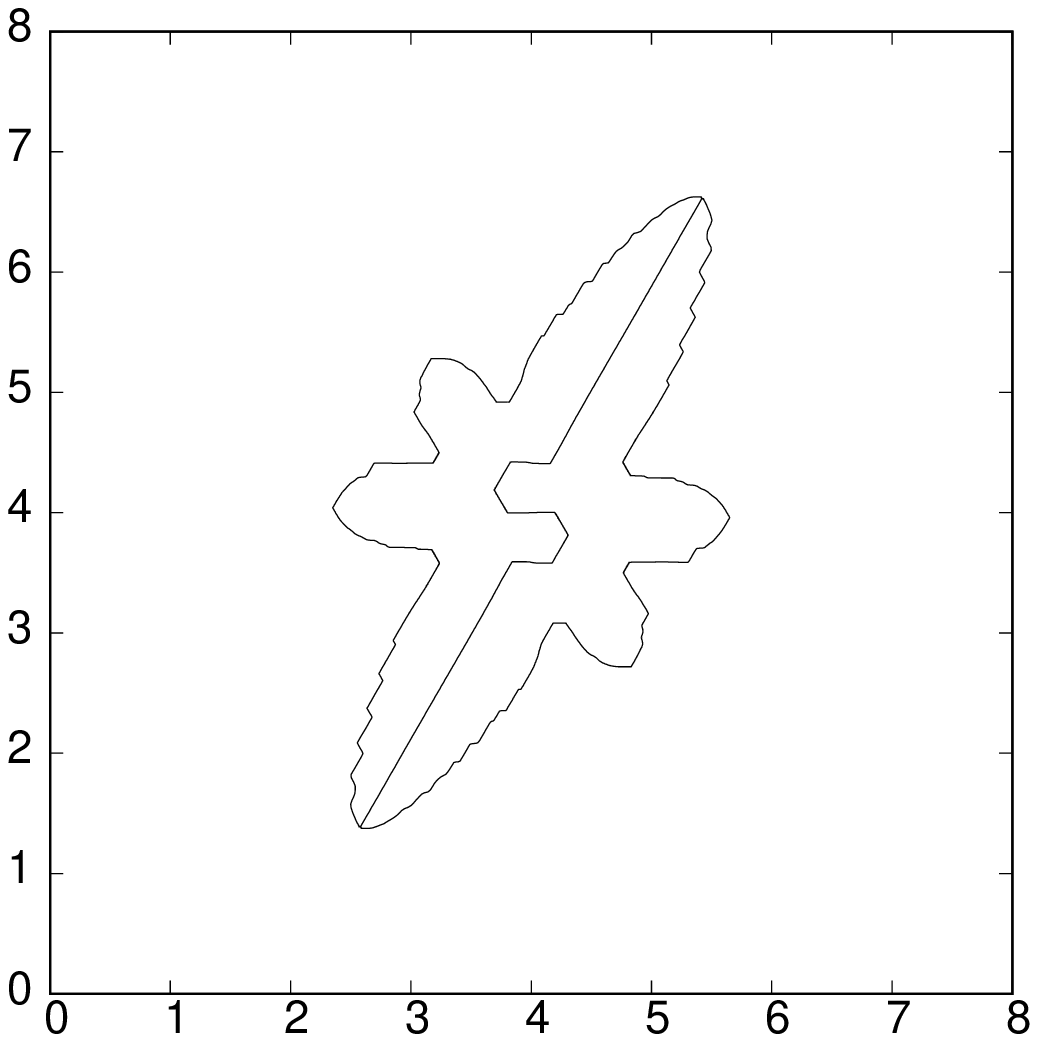}
\includegraphics[angle=-90,width=0.18\textwidth]{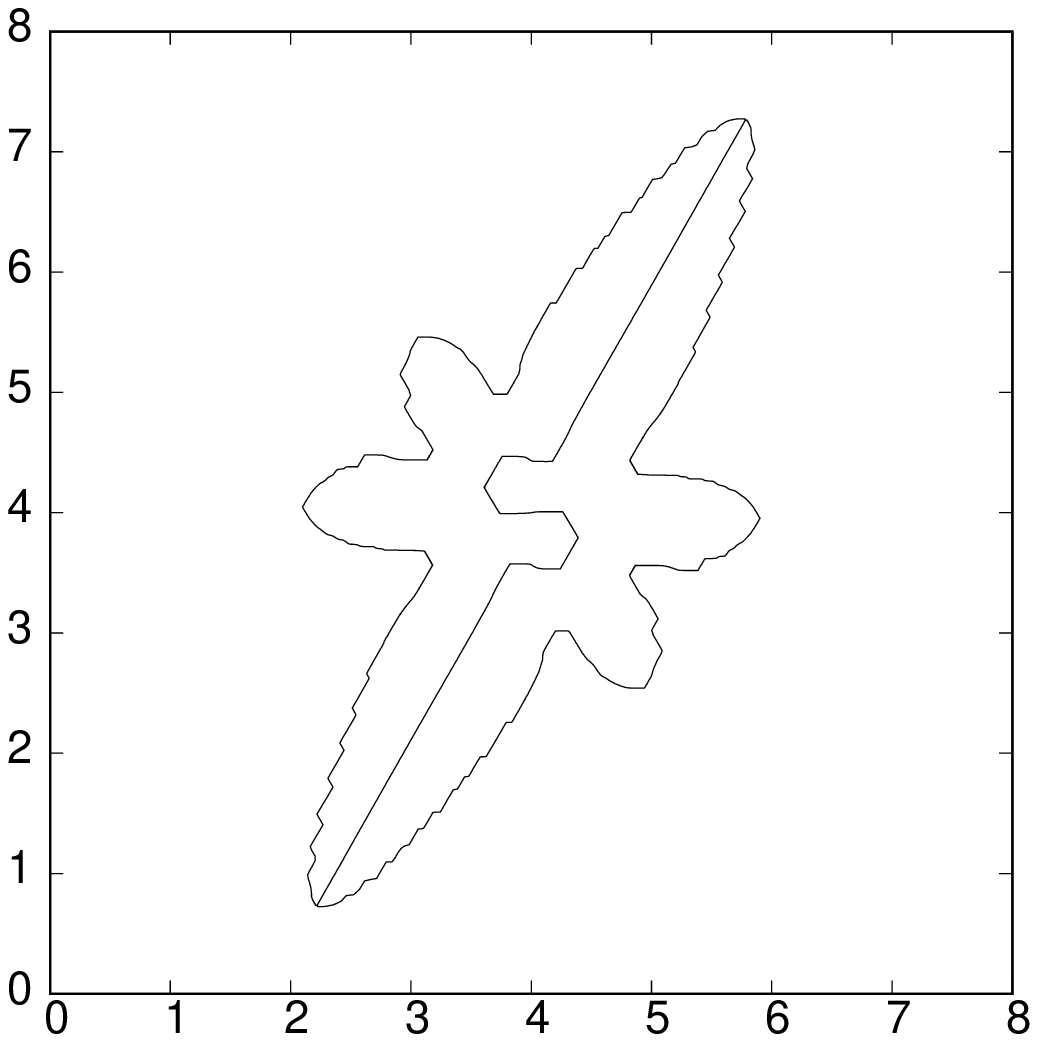}
\includegraphics[angle=-90,width=0.25\textwidth]{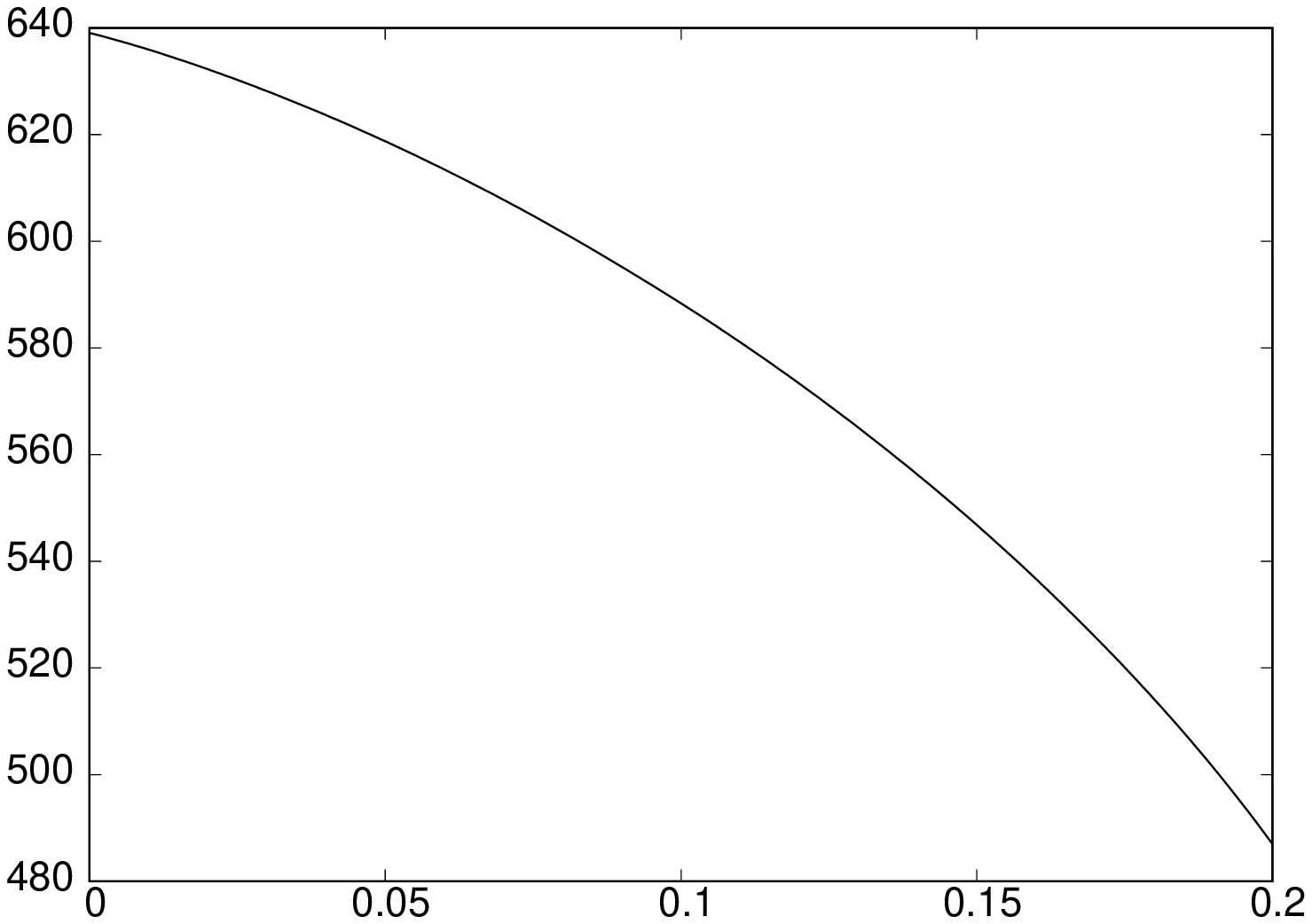} 
\caption{($\bv w_D = (5, 5, -10)^\top$, $\rho=0.05$, $\alpha=0.005$)
The solution at times $t=0, 0.01, \ldots, 0.2$, and separately at times 
$t=0.02$, $t=0.04$, $t=0.06$, $t=0.08$, $t=0.11$, $t=0.14$, $t=0.17$ and $t=0.2$,
and a plot of the discrete energy over time.
}
\label{fig:2dDbc_hex_db_a0005_5_to_5}
\end{figure}%
To break this symmetry, we next set
$\bv w_D = (10, 1, -11)^\top$, and let
$\rho=0.05$, as well as $(\gamma_1,\gamma_2,\gamma_3) = \alpha
(\gamma_{\rm hex},\gamma_{\rm hex},\gamma_{\rm hex})$ where $\alpha=0.05$.
The new evolution is far less symmetric, with the right crystal nearly
enveloping the left one. See Figure~\ref{fig:2dDbc_hex_db_rho_005_10_to_1}.
\begin{figure}
\center
\includegraphics[angle=-90,width=0.18\textwidth]{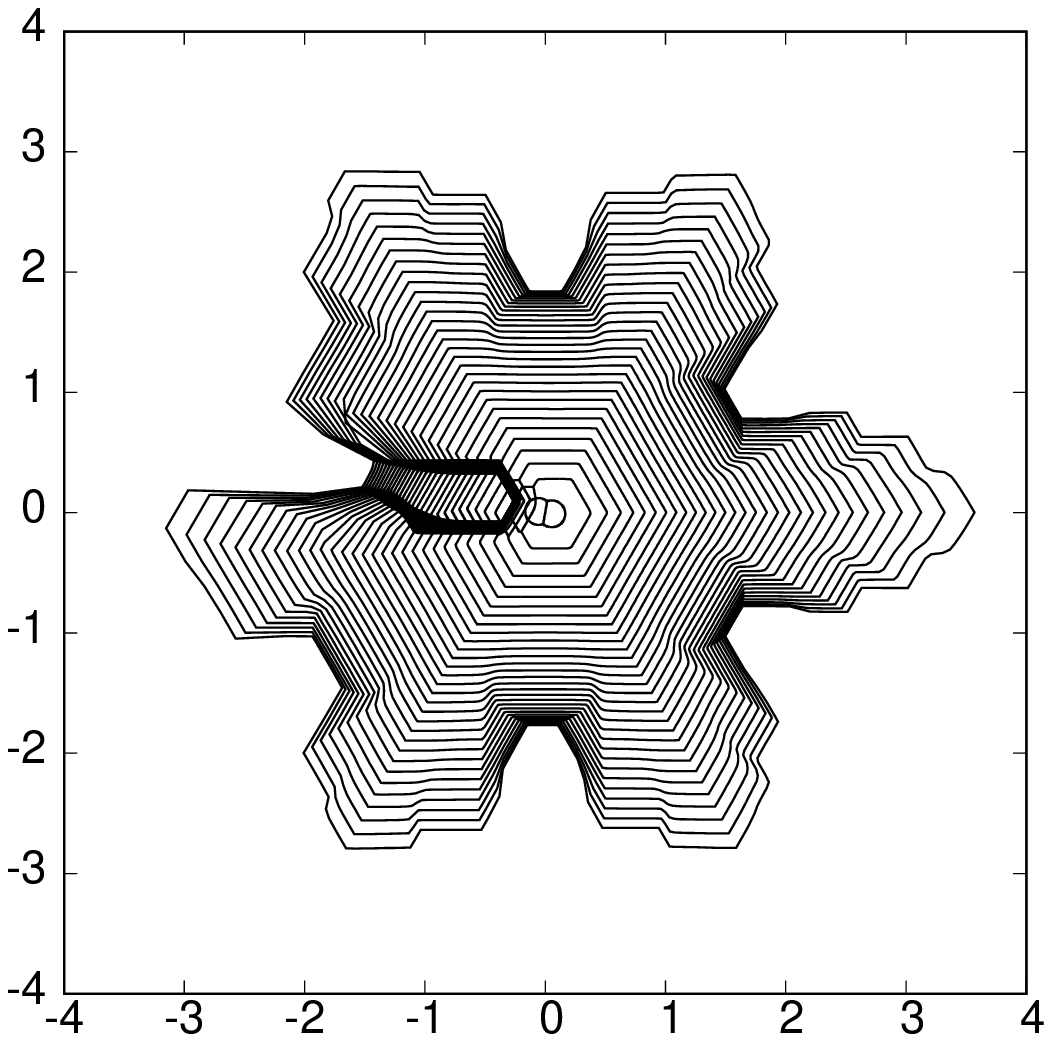}
\includegraphics[angle=-90,width=0.18\textwidth]{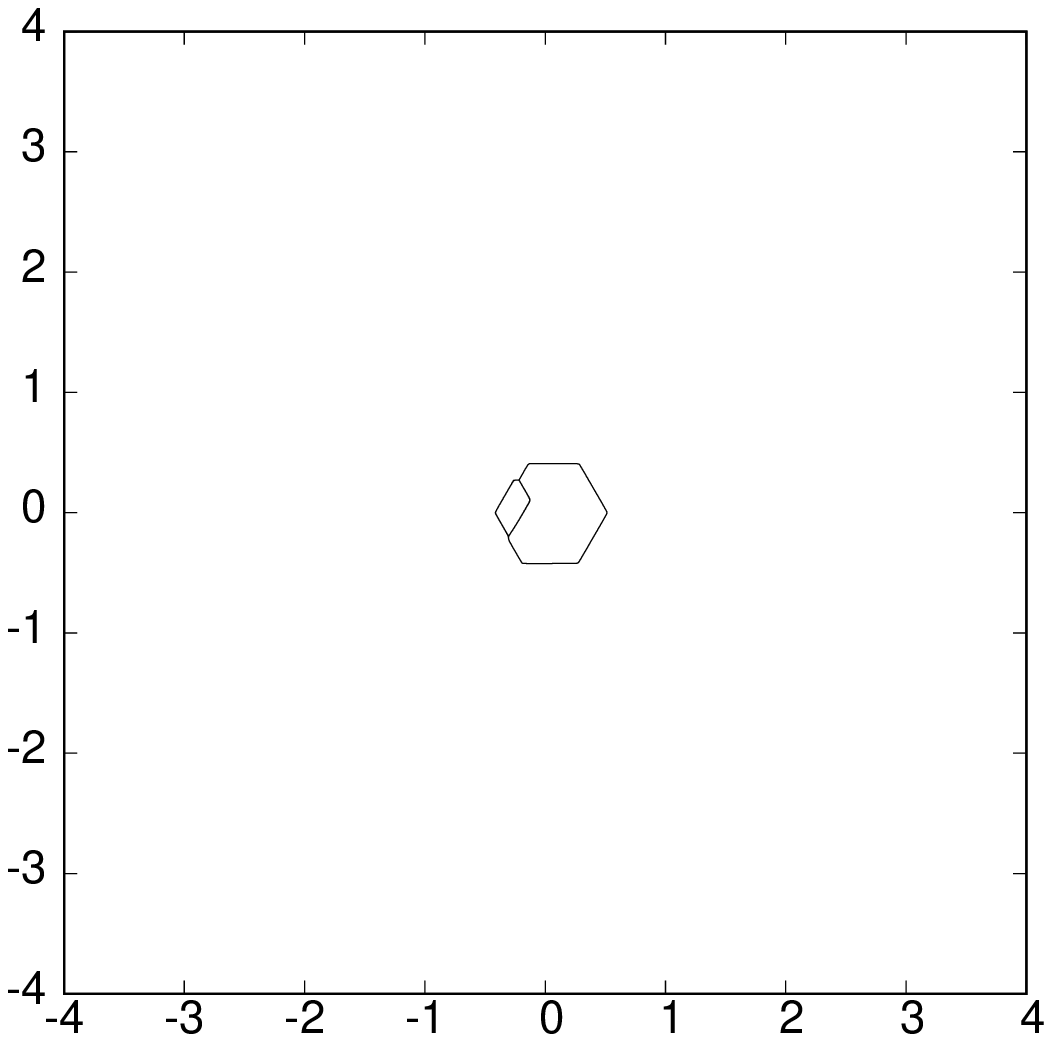}
\includegraphics[angle=-90,width=0.18\textwidth]{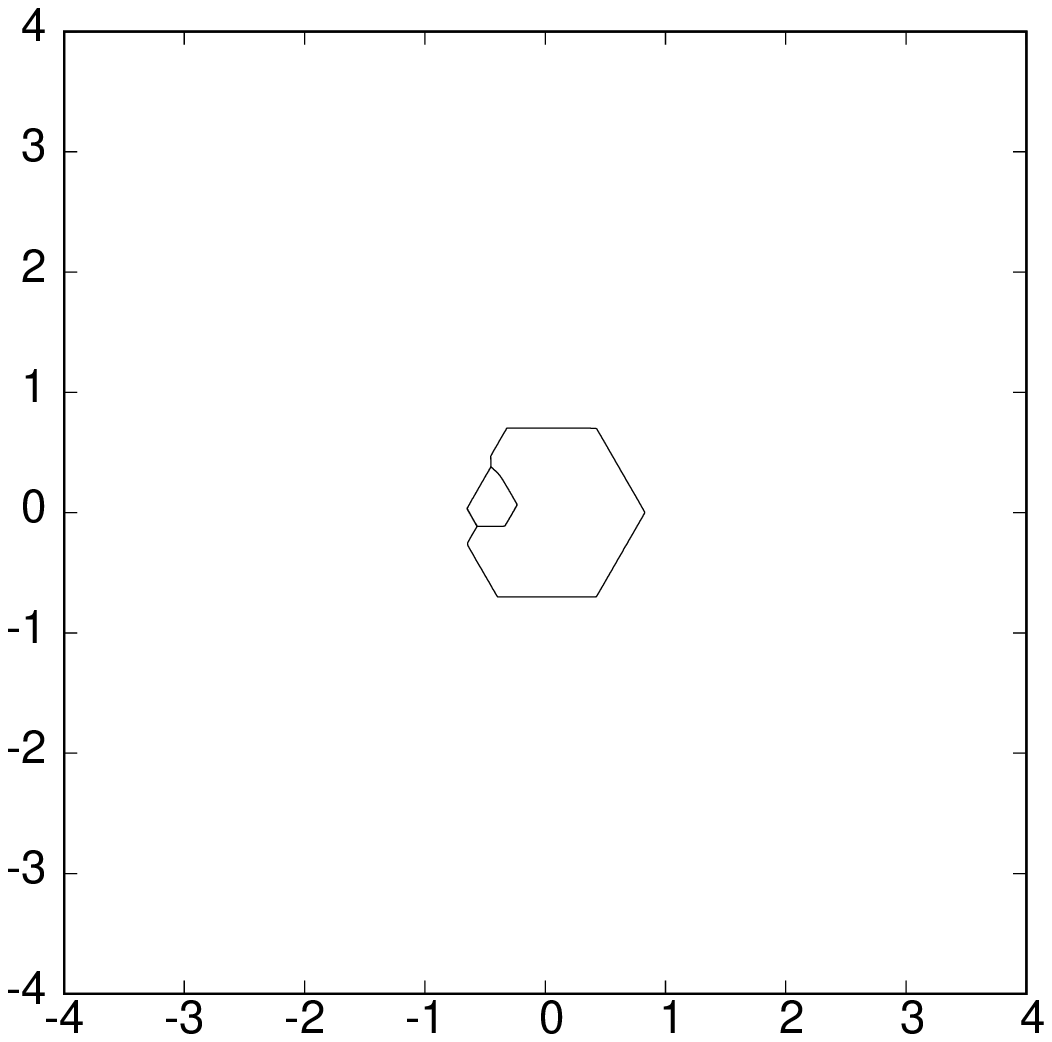}
\includegraphics[angle=-90,width=0.18\textwidth]{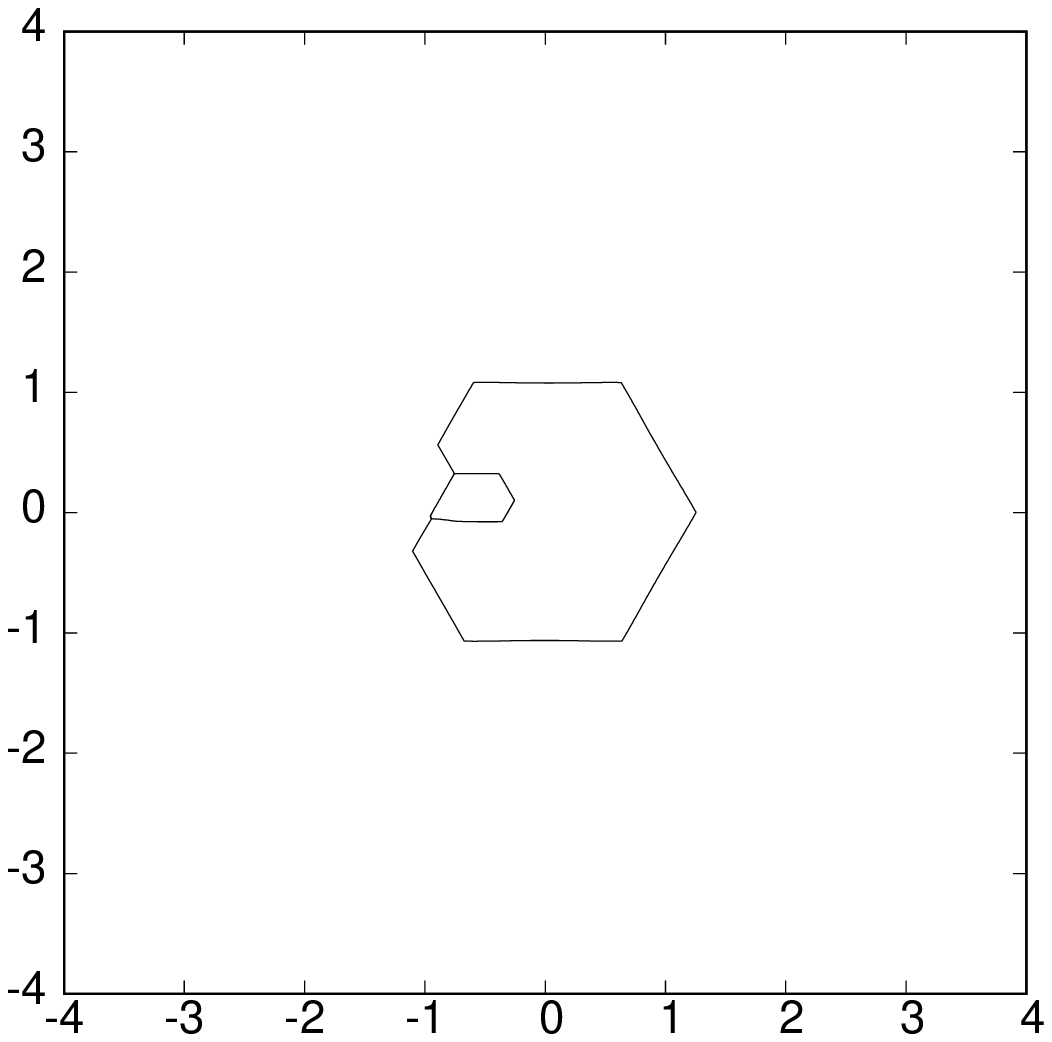}
\includegraphics[angle=-90,width=0.18\textwidth]{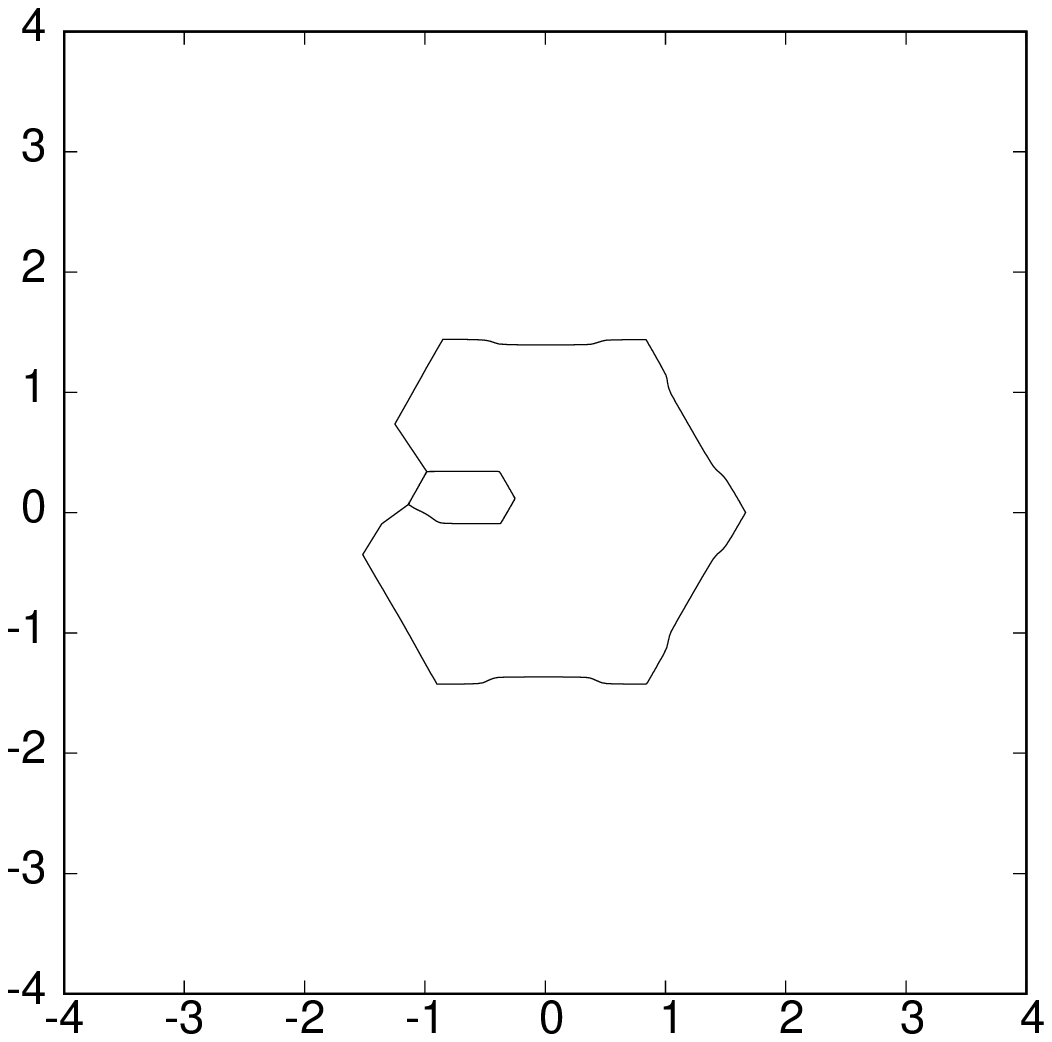}
\includegraphics[angle=-90,width=0.18\textwidth]{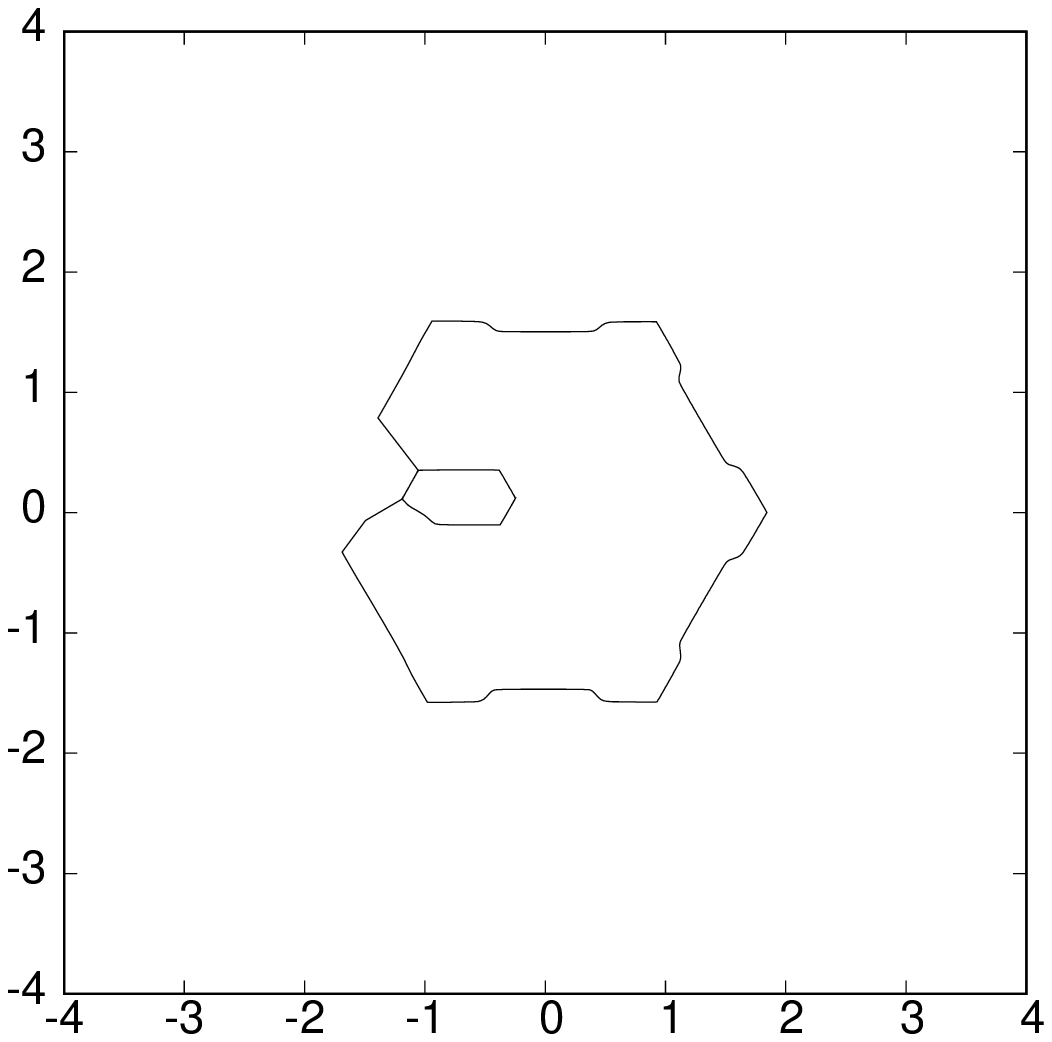}
\includegraphics[angle=-90,width=0.18\textwidth]{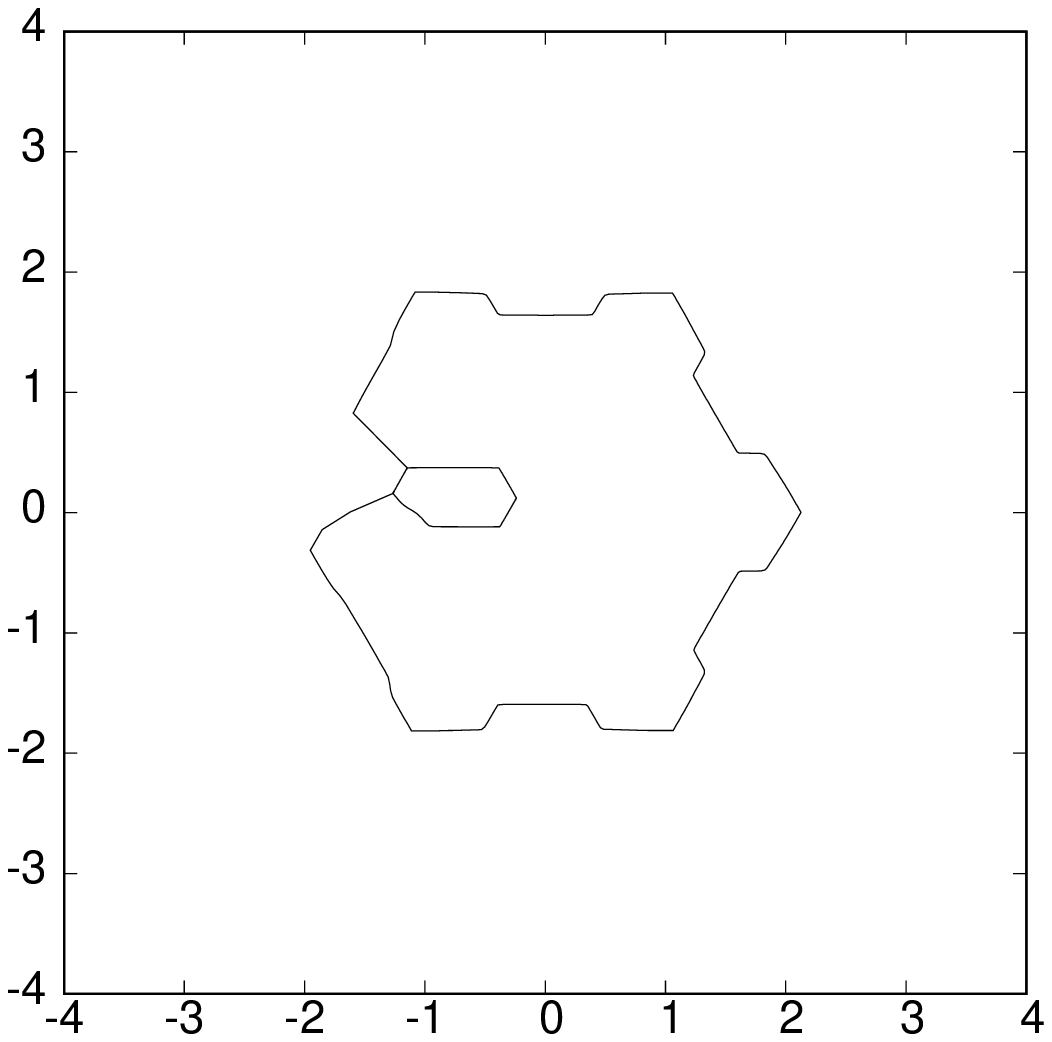}
\includegraphics[angle=-90,width=0.18\textwidth]{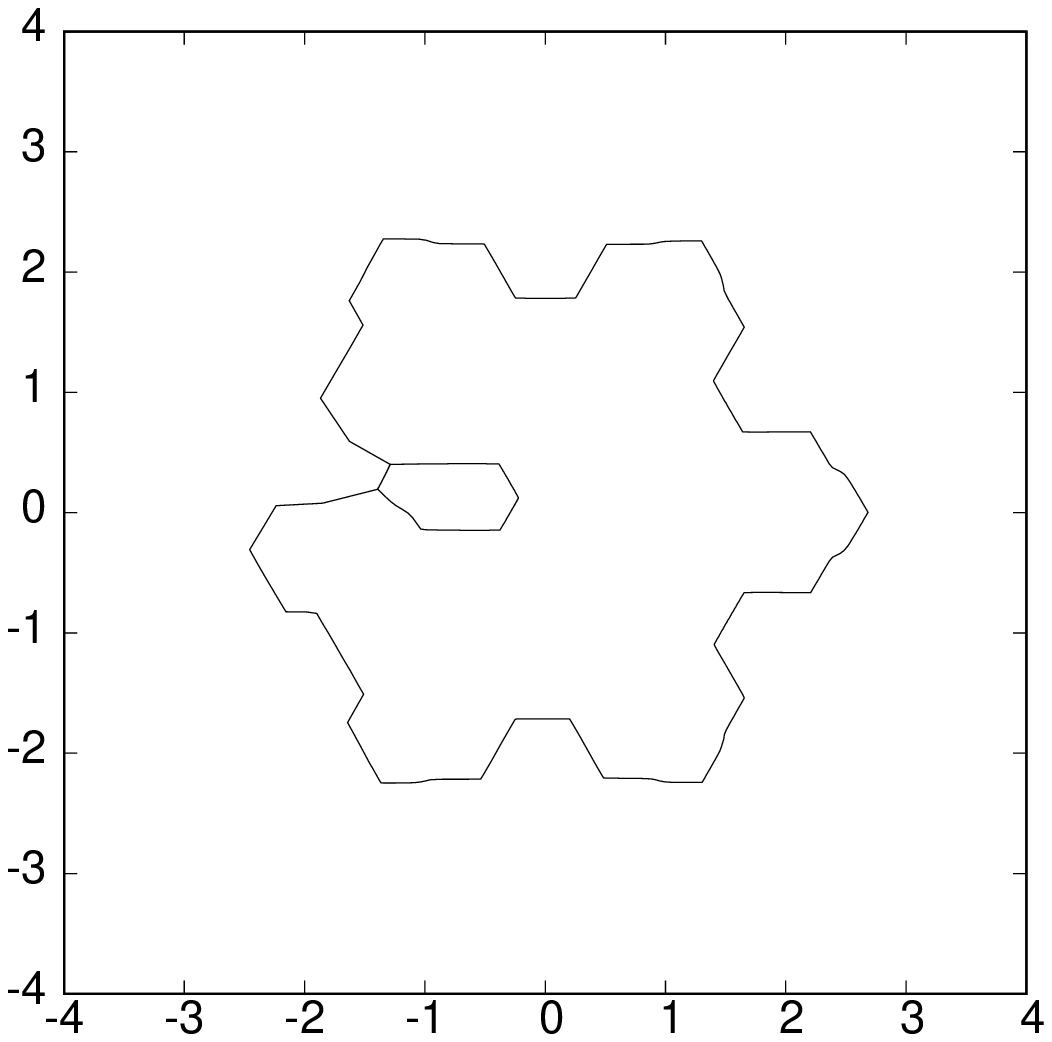}
\includegraphics[angle=-90,width=0.18\textwidth]{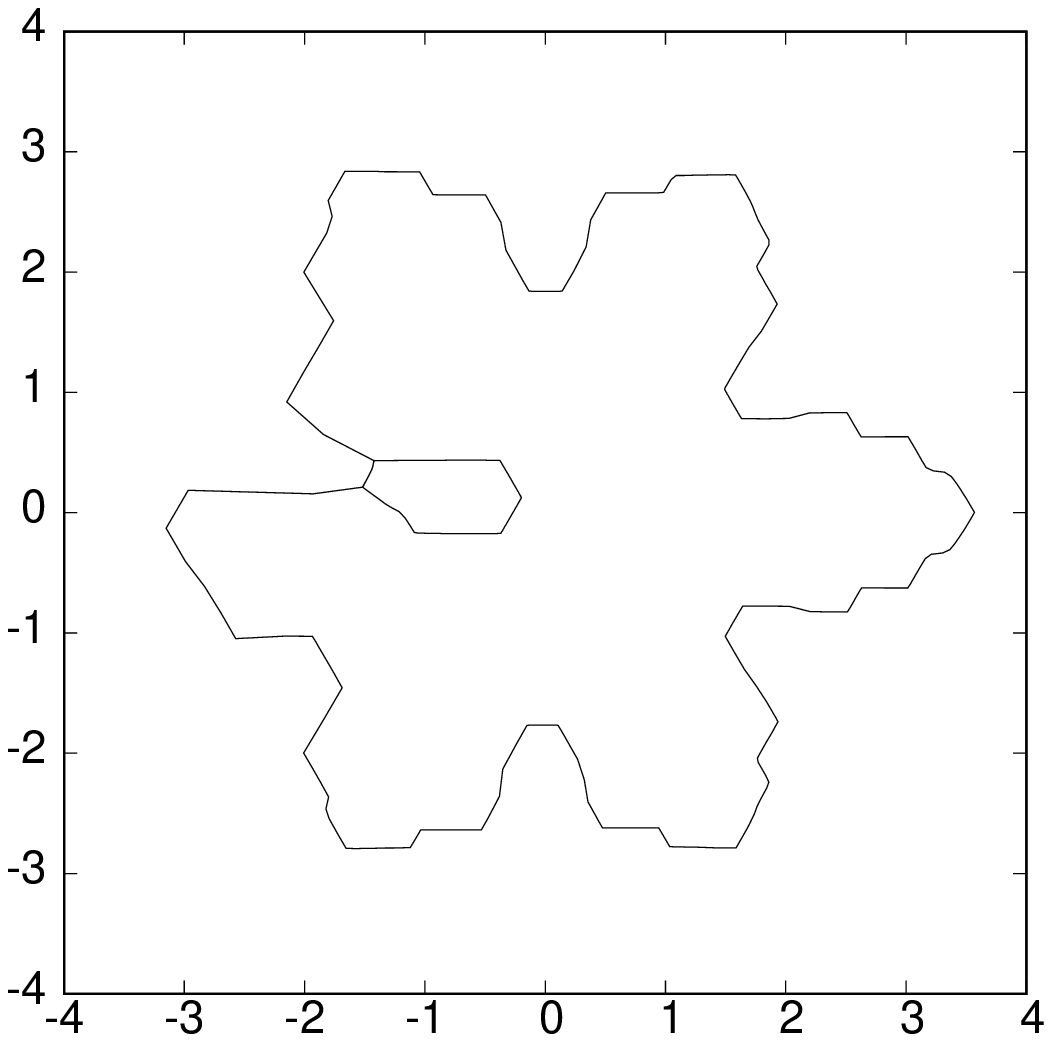}
\includegraphics[angle=-90,width=0.25\textwidth]{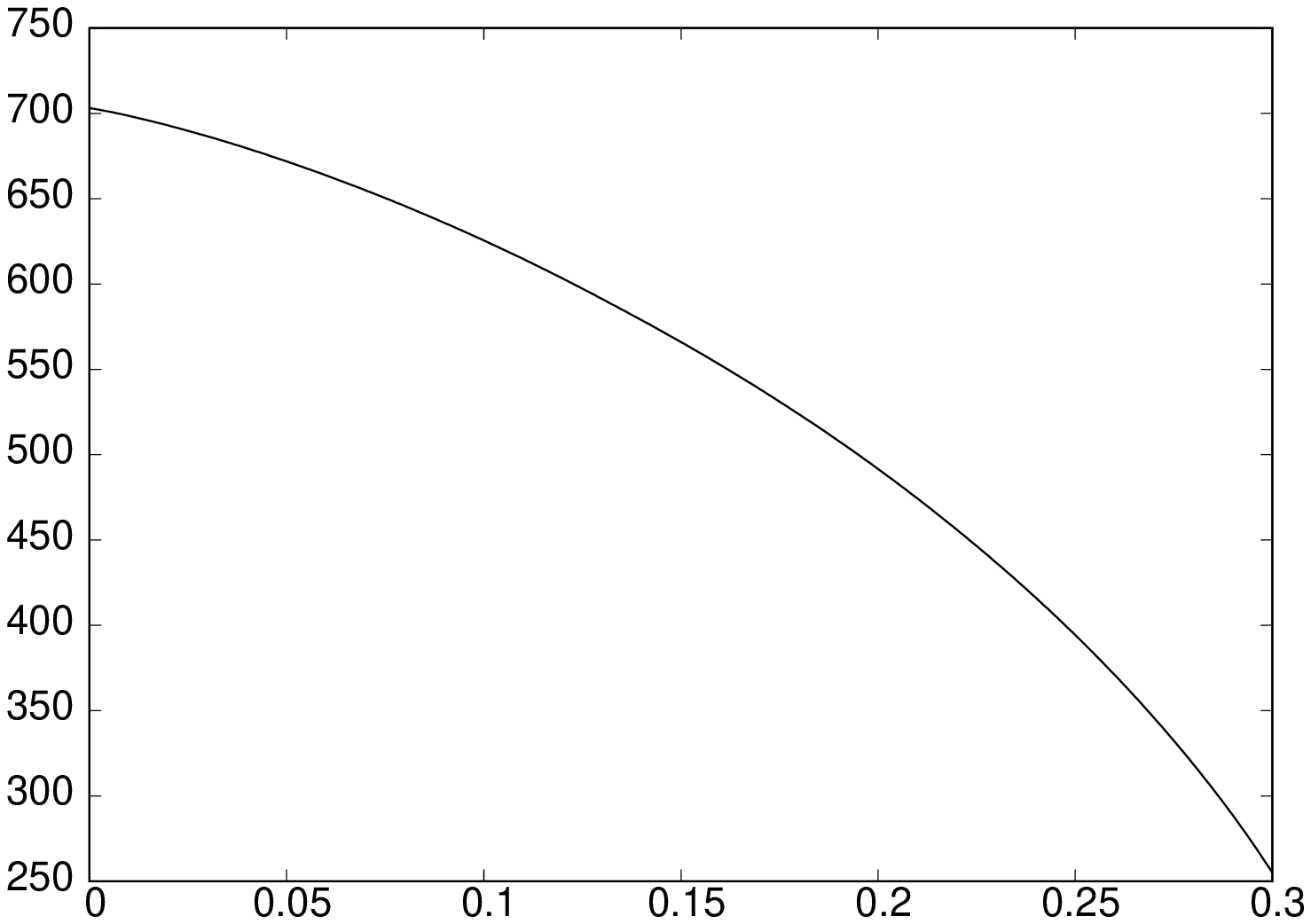} 
\caption{($\bv w_D = (10, 1, -11)^\top$, $\rho=\alpha=0.05$)
The solution at times $t=0, 0.01, \ldots, 0.3$, and separately at times 
$t=0.02$, $t=0.05$, $t=0.1$, $t=0.15$, $t=0.17$, $t=0.2$, $t=0.25$ and $t=0.3$,
and a plot of the discrete energy over time.
}
\label{fig:2dDbc_hex_db_rho_005_10_to_1}
\end{figure}%
A further nonsymmetric example with $\bv w_D = (1, 10, -11)^\top$,
$\rho=0.05$, as well as $(\gamma_1,\gamma_2,\gamma_3) = \alpha
(\gamma_{\rm hex},\gamma_{\rm hex},\gamma_{\rm hex})$ where $\alpha=0.02$
is shown in Figure~\ref{fig:2dDbc_hex_db_rho_005_1_to_10}.
\begin{figure}
\center
\includegraphics[angle=-90,width=0.18\textwidth]{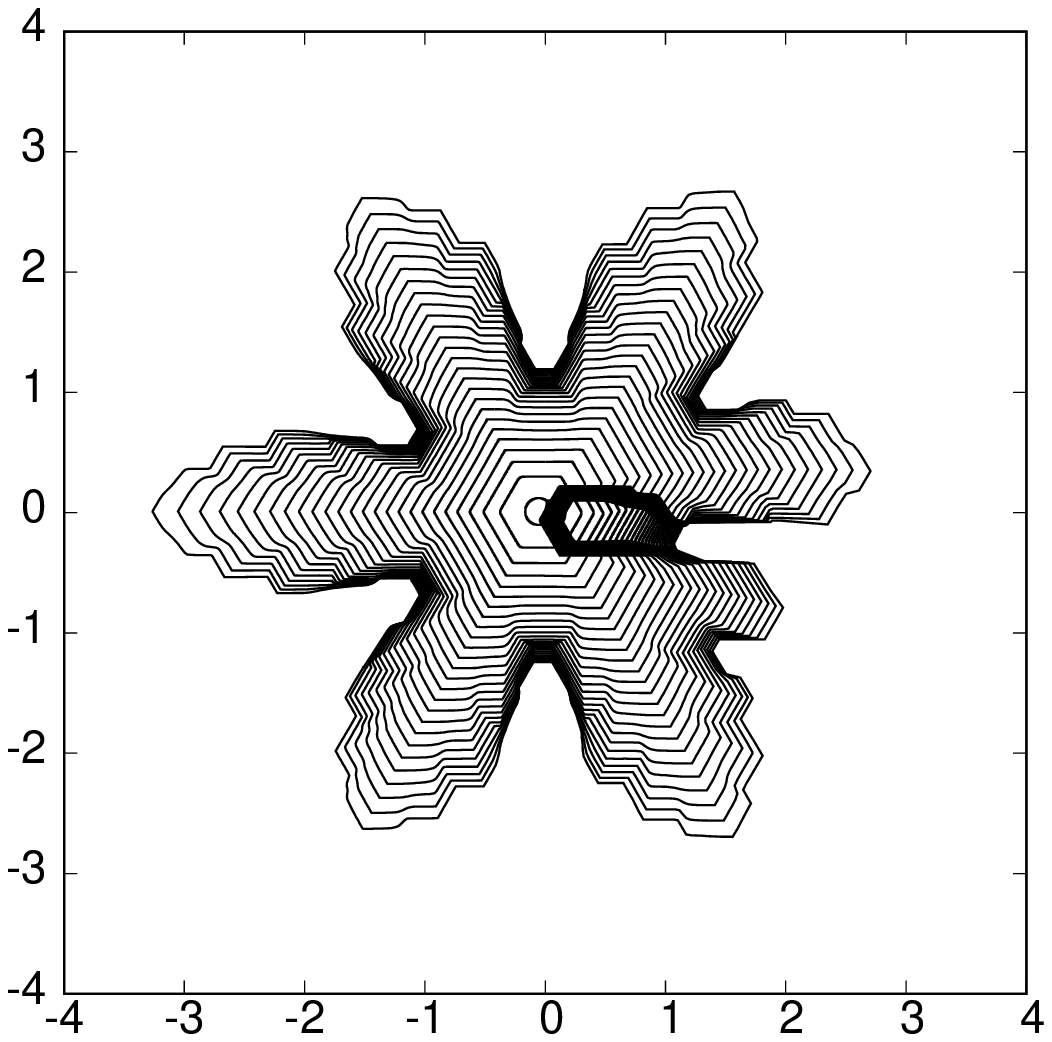}
\includegraphics[angle=-90,width=0.18\textwidth]{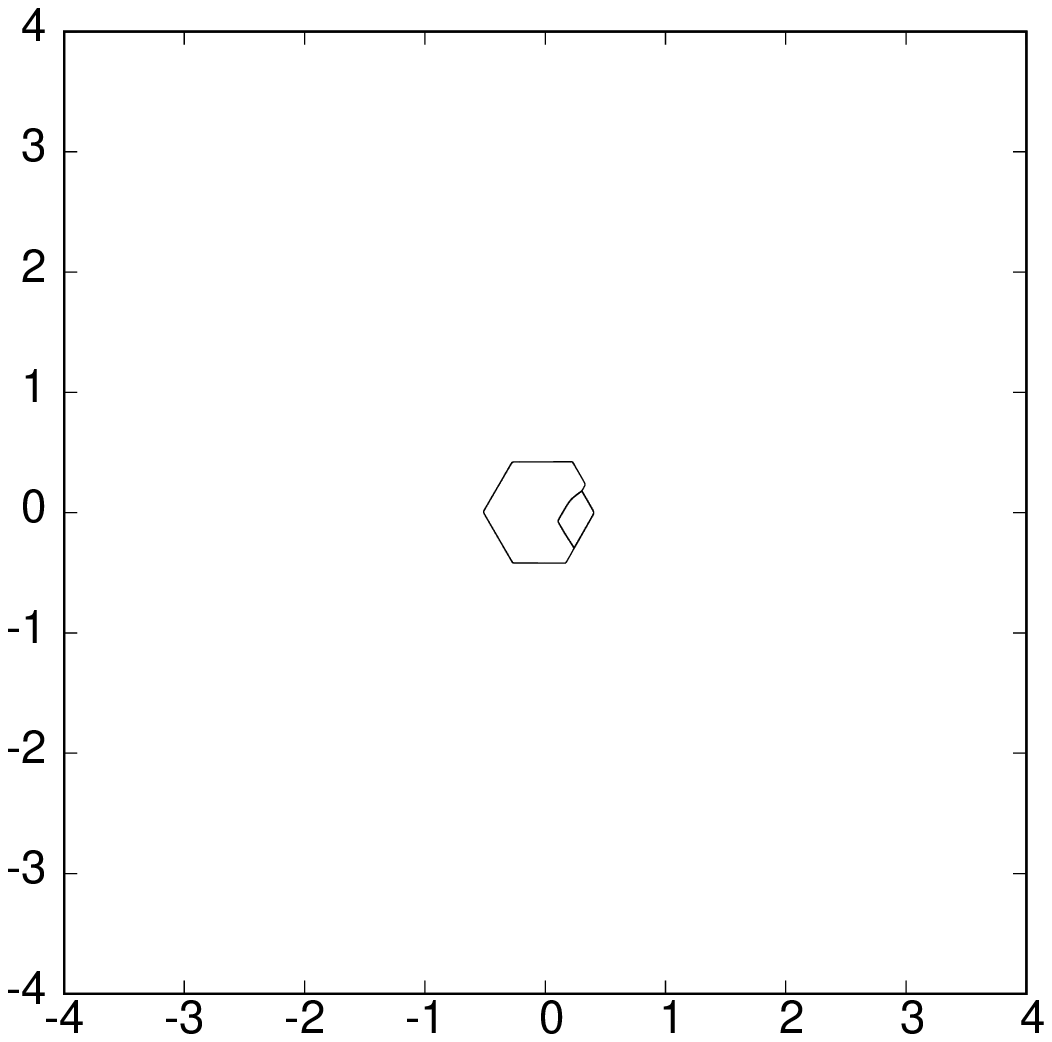}
\includegraphics[angle=-90,width=0.18\textwidth]{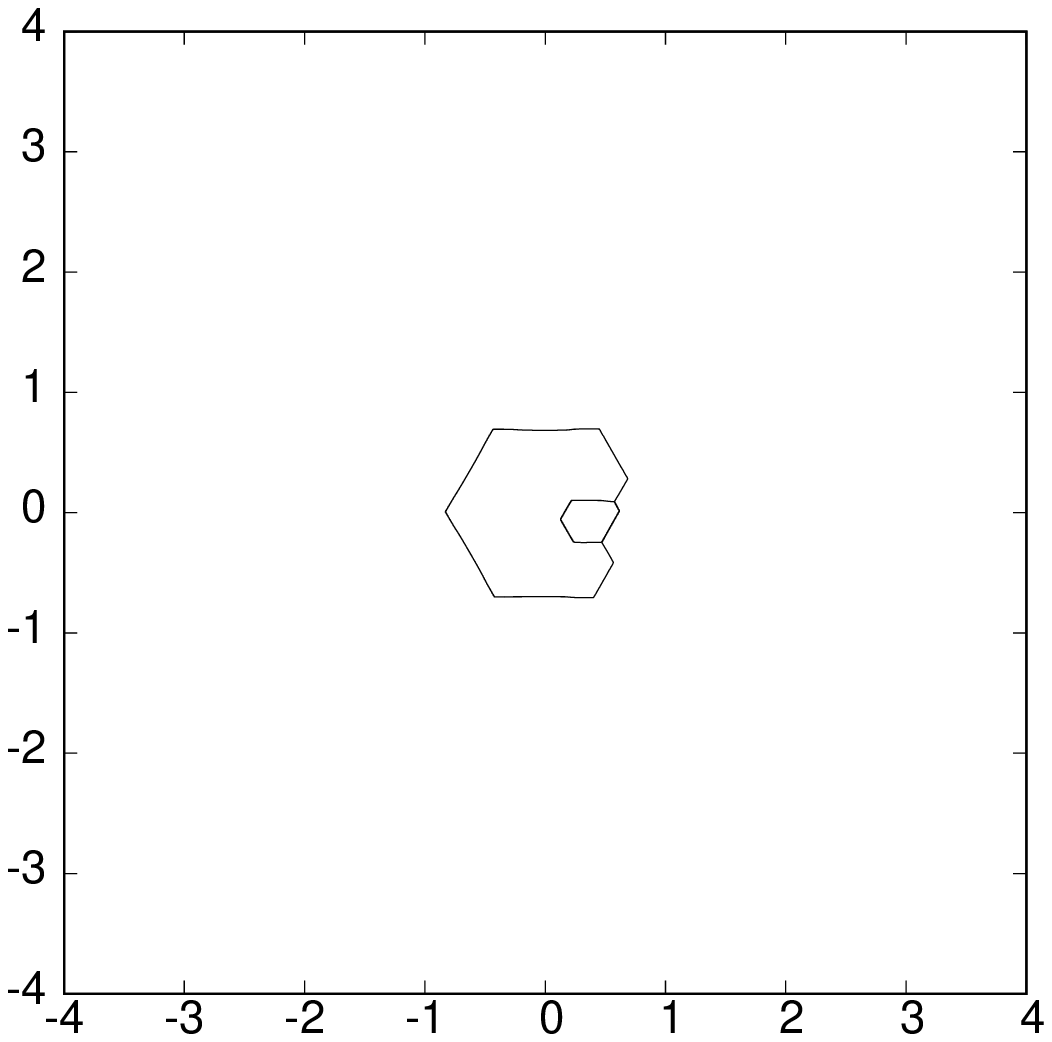}
\includegraphics[angle=-90,width=0.18\textwidth]{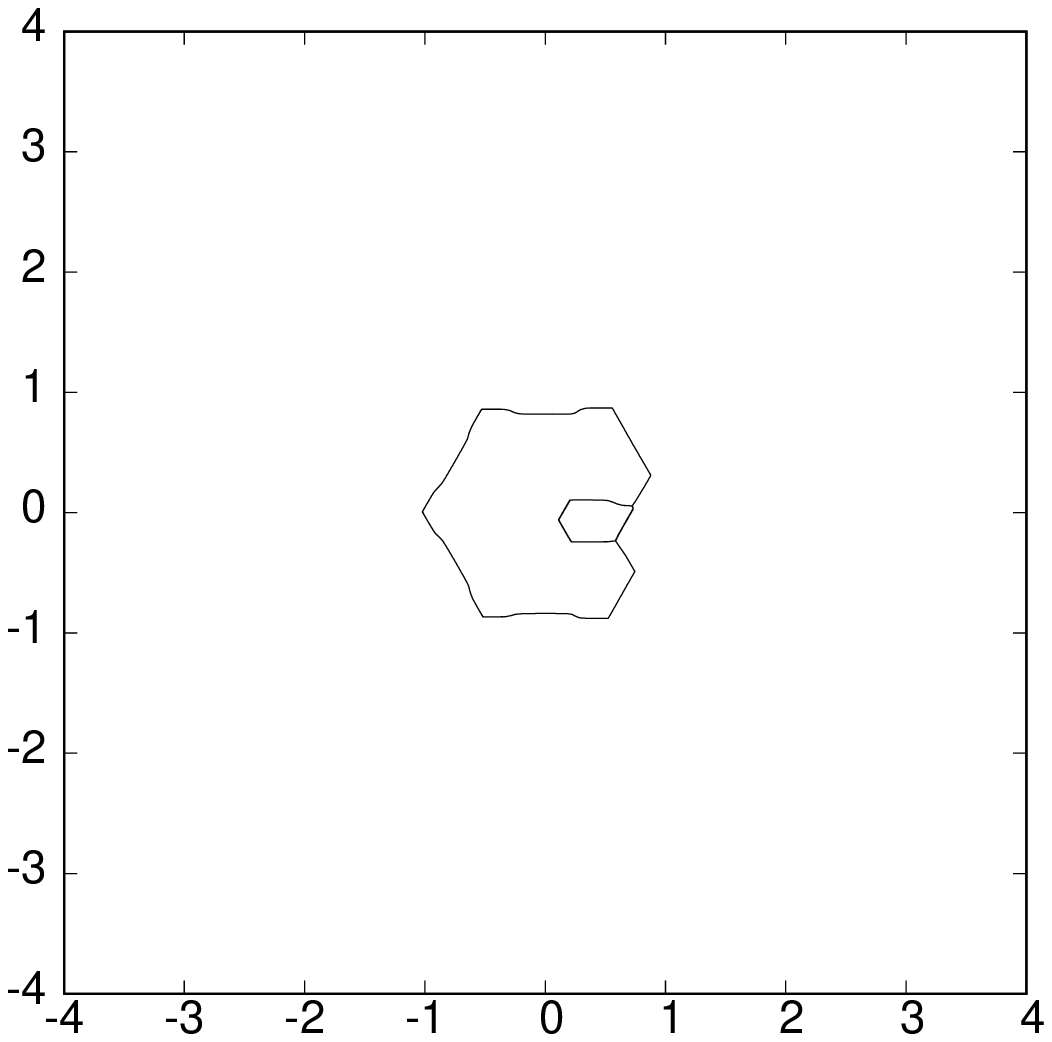}
\includegraphics[angle=-90,width=0.18\textwidth]{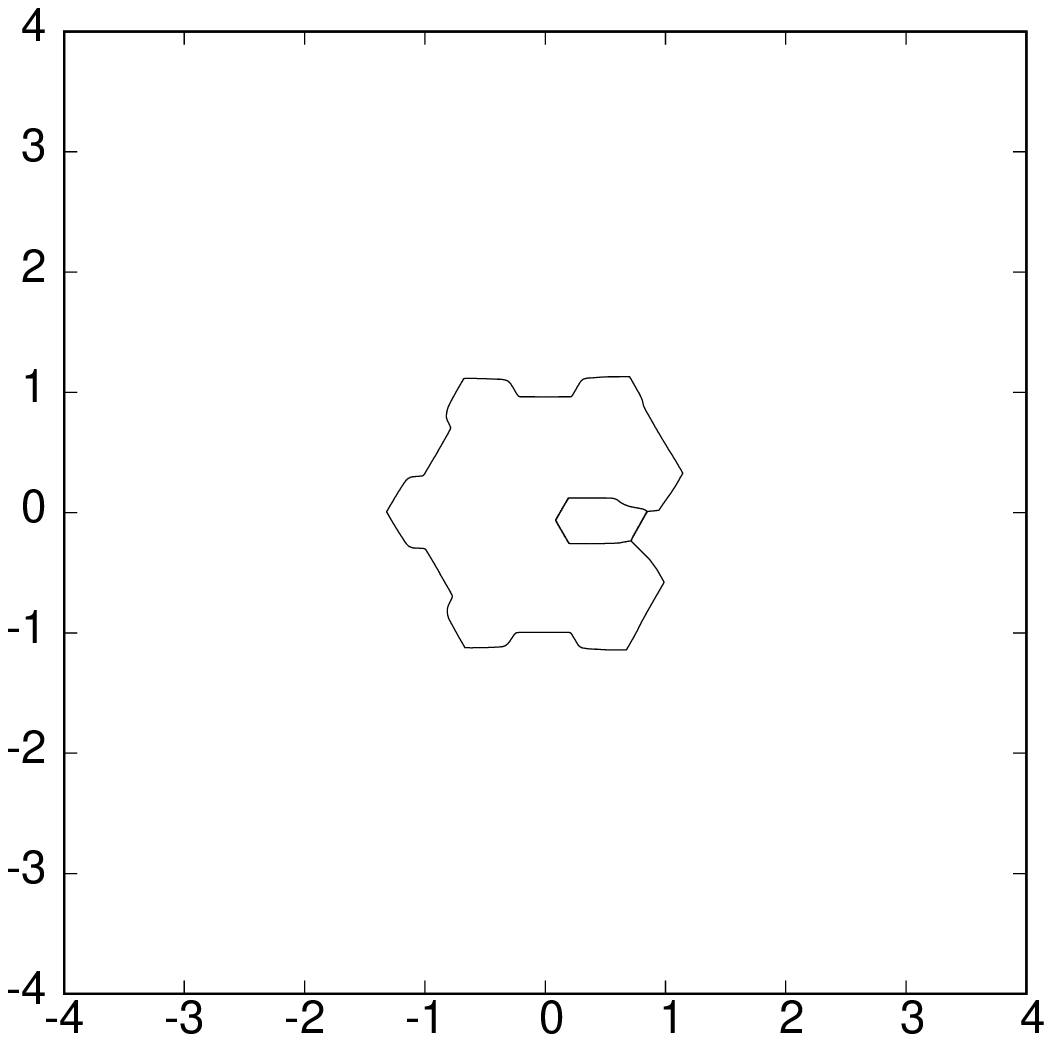}
\includegraphics[angle=-90,width=0.18\textwidth]{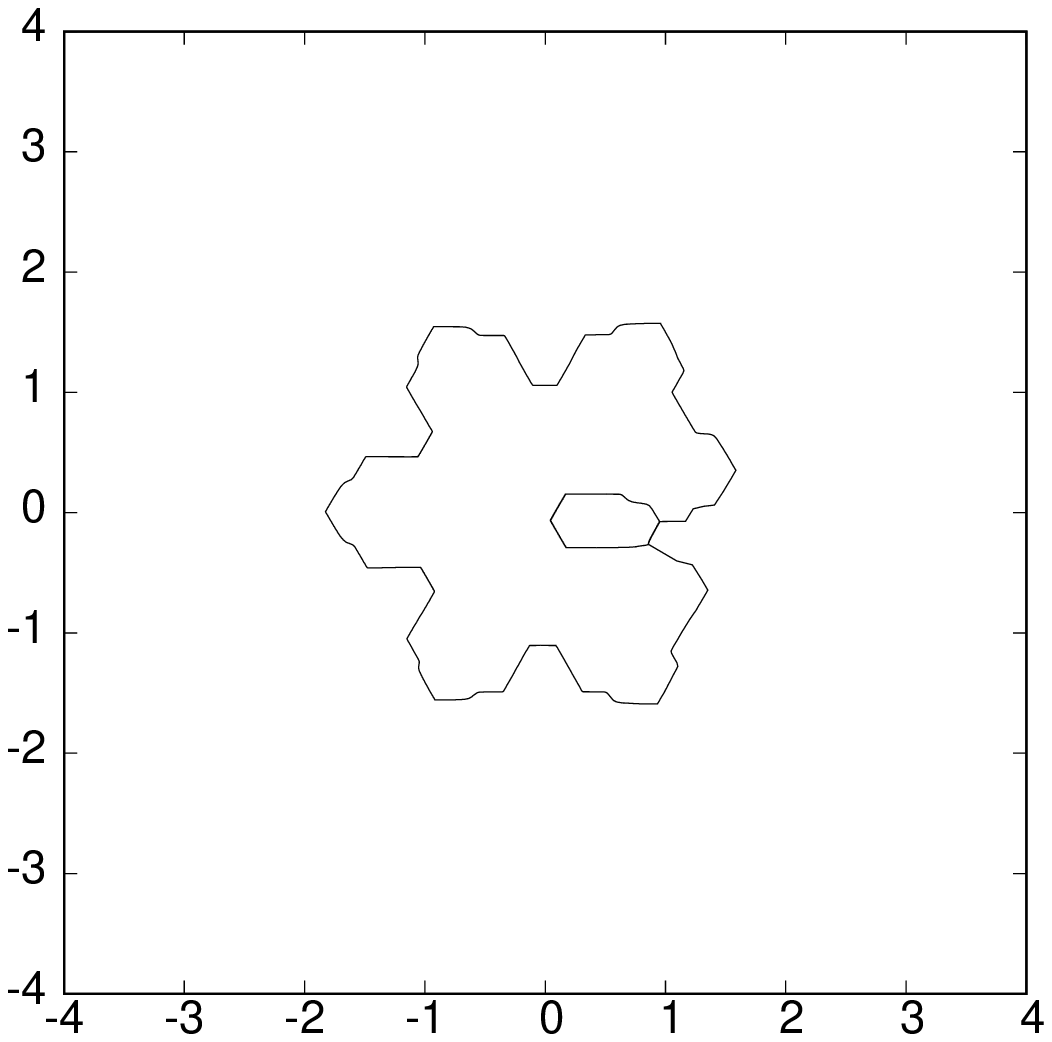}
\includegraphics[angle=-90,width=0.18\textwidth]{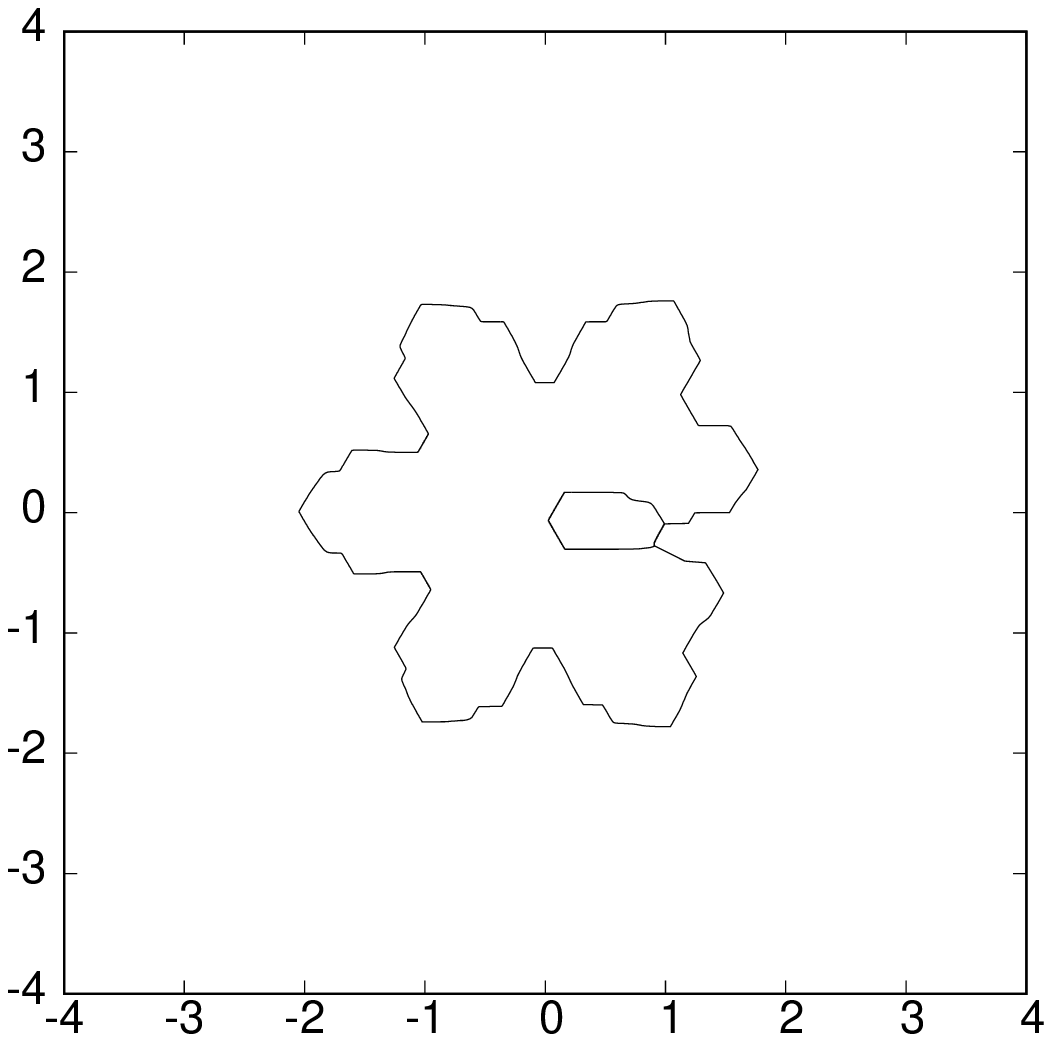}
\includegraphics[angle=-90,width=0.18\textwidth]{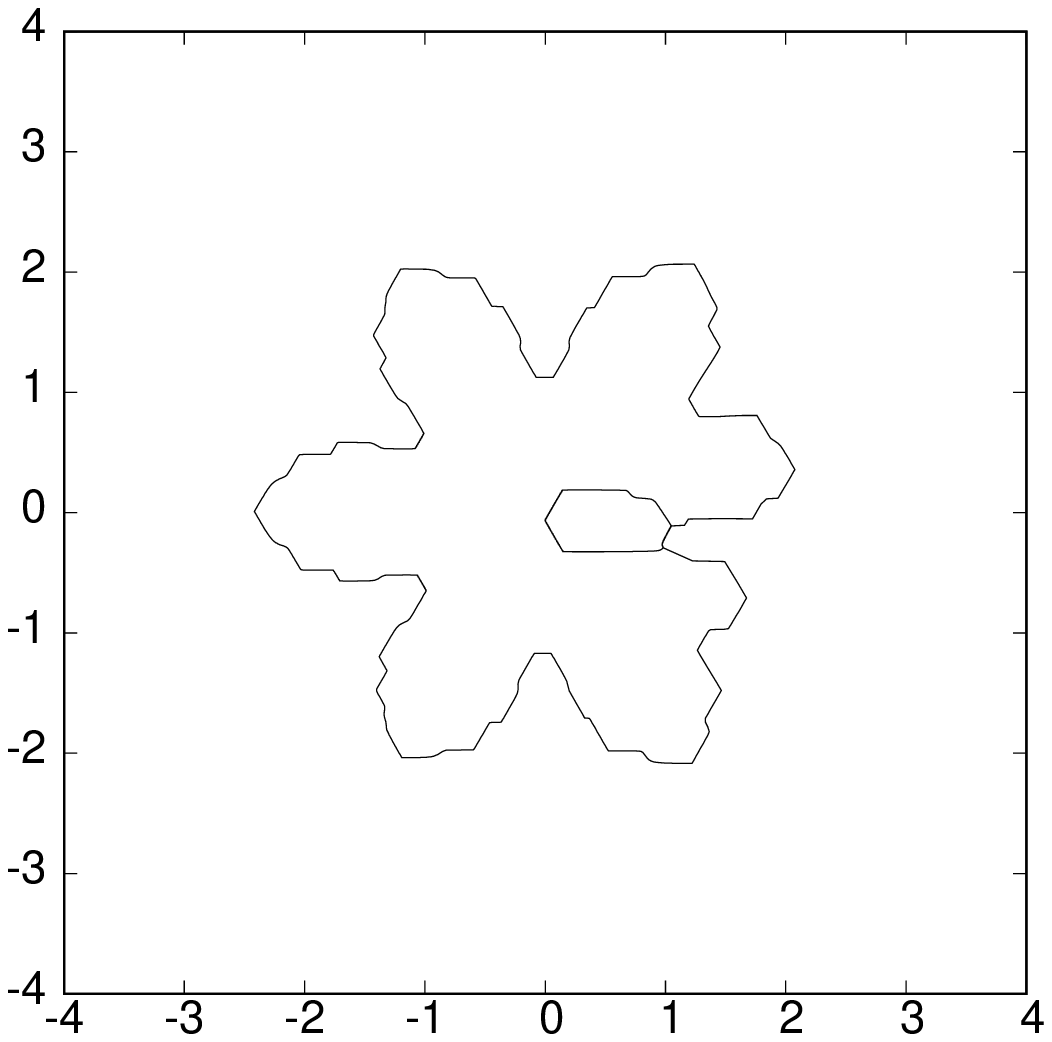}
\includegraphics[angle=-90,width=0.18\textwidth]{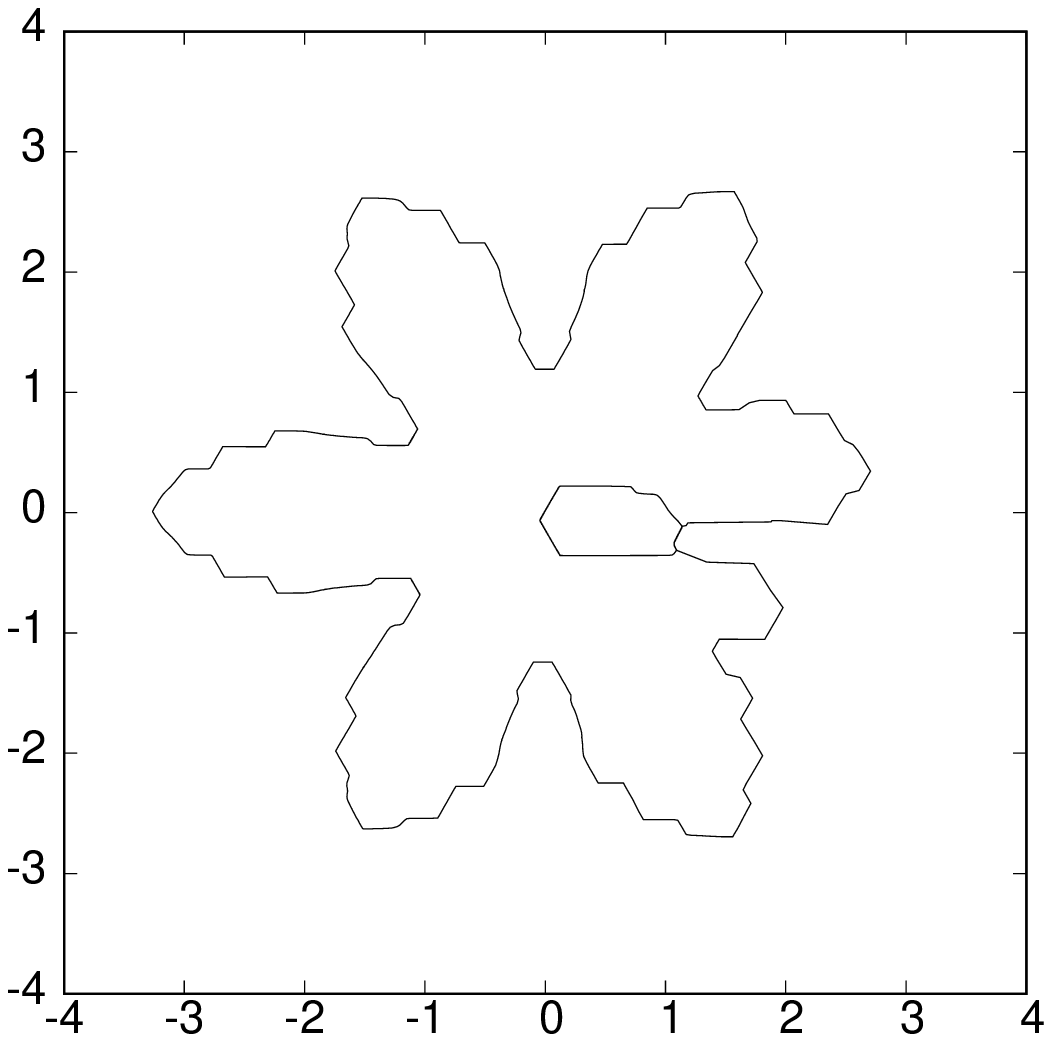}
\includegraphics[angle=-90,width=0.25\textwidth]{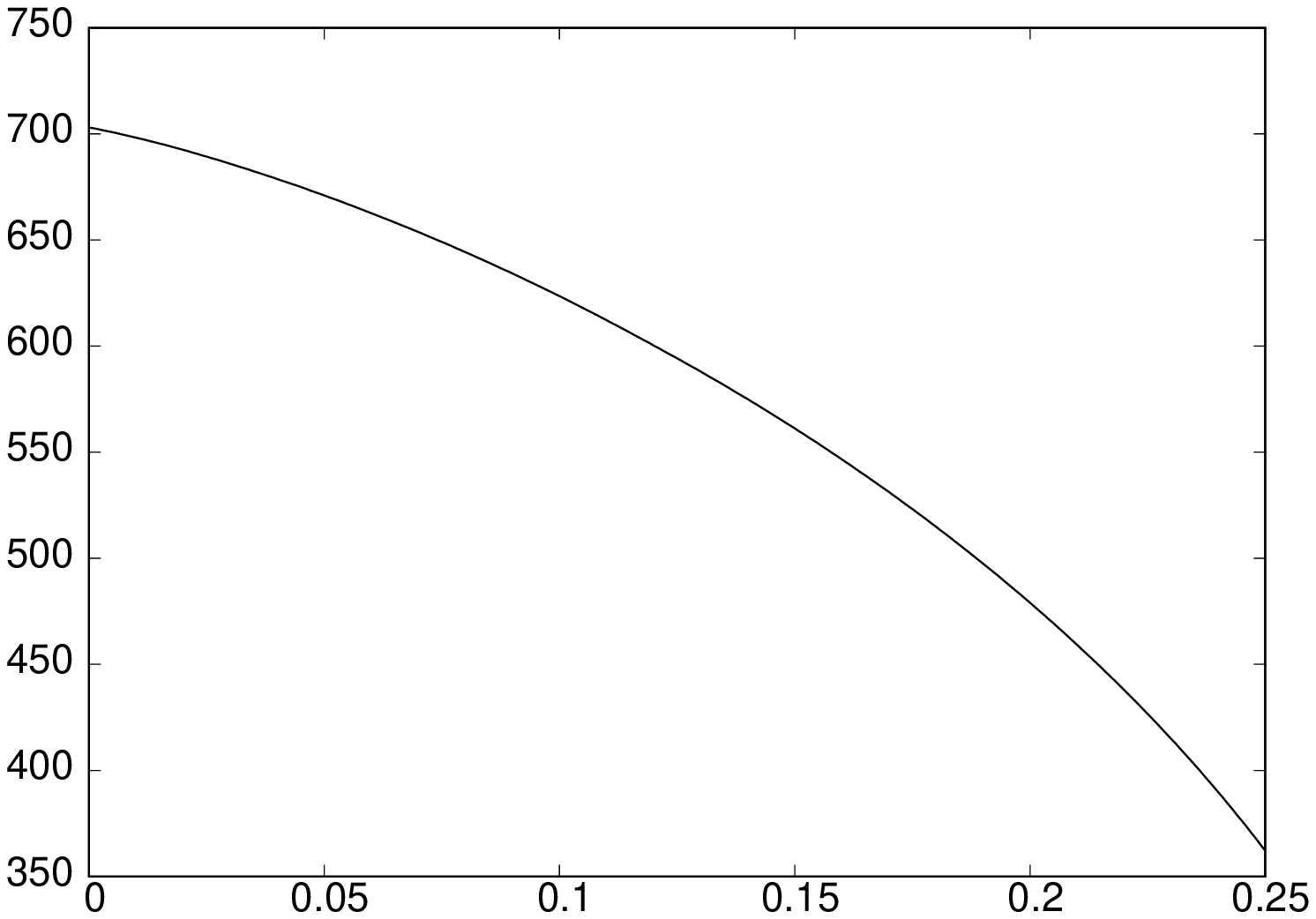} 
\caption{($\bv w_D = (1, 10, -11)^\top$, $\rho=0.05$, $\alpha=0.02$)
The solution at times $t=0, 0.01, \ldots, 0.3$, and separately at times 
$t=0.02$, $t=0.05$, $t=0.07$, $t=0.1$, $t=0.15$, $t=0.17$, $t=0.2$ and $t=0.25$,
and a plot of the discrete energy over time.
}
\label{fig:2dDbc_hex_db_rho_005_1_to_10}
\end{figure}%

\subsection{Numerical results in 3D}\label{sec:NumericalResults3d}

In this subsection we consider some numerical simulations for $d=3$ and
$\pOmega_D = \pOmega$. For the possible anisotropies we define first of all
\begin{equation} \label{eq:l1normL3}
\gamma_{\ell^1}(\vec{p}) = \sum_{i=1}^3 \sqrt{ \delta^2|\vec{p}|^2 +
p_i^2(1-\delta^2)},\quad \delta = 0.01,
\end{equation}
which approximates the $\ell^1$--norm of $\vec p$, see
\cite{BarrettGarckeNurnberg2008NM}. In addition, a 3D analogue of
\eqref{eq:hex2d}, compare with \cite[(13)]{jcg}, is defined by
\begin{equation} \label{eq:L44}
\gamma_{\rm hex}(\vec{p})
:= l_\delta(R_2(\tfrac{\pi}2)\,\vec{p}) + \tfrac{1}{\sqrt{3}}
\sum_{\ell = 1}^3
l_\delta(R_1(\theta_0 + \ell\,\tfrac{\pi}3)\,\vec{p}),\quad \delta = 0.01,
\end{equation}
where $R_{1}(\theta):=\left(\!\!\!\scriptsize
\begin{array}{rrr} \cos\theta & \sin\theta&0 \\
-\sin\theta & \cos\theta & 0 \\ 0 & 0 & 1 \end{array}\!\! \right)$ and 
$R_{2}(\theta):=\left(\!\!\!\scriptsize
\begin{array}{rrr} \cos\theta & 0 & \sin\theta \\
0 & 1 & 0 \\ -\sin\theta & 0 & \cos\theta \end{array}\!\! \right)$
are rotation matrices, and where
$l_\delta(\vec{p}) := 
\left[ \delta^2\,|\vec{p}|^2  + p_1^2\,(1-\delta^2) \right]^{\frac12}$.
The Wulff shape of the anisotropy \eqref{eq:L44} is given by a
smoothed hexagonal prism, see e.g.\ \cite[Fig.~3]{jcg}. 

In order to be able to vary the kinetic coefficient $\beta$ for the simulations
in this subsection, we define
\begin{equation} \label{eq:betaflat}
\beta_{\rm flat}(\vec{p}) = \beta_{\rm flat,\ell}(\vec{p}) 
:= \sqrt{p_1^2 + p_2^2 + 10^{-2\ell}\,p_3^2}
\end{equation}
with $\ell\in\mathbb{N}$. 
For the surface clusters we always choose a double bubble, so
that $I_S = 3$, $I_R = 3$, $I_T = 1$, 
$(\curveindex{1}{1},\curveindex{1}{2},\curveindex{1}{3}) = (1,2,3)$
and
$\dcmap = \begin{pmatrix}0 &-1&1 \\ 1 & 0 &-1 \\ -1 &1&0 \end{pmatrix}$.

\vspace{0.3cm}
\noindent
{\bf Example 7}:
On the boundary $\pOmega=\pOmega_D$ we choose the undercooling parameters
$\bv w_D = (2, 1, -3)^\top$, and start with a seed consisting of a
standard double bubble. The two bubbles of the double bubble enclose a
volume of about $2.1$ each. We also let $\rho=1$.
For the anisotropies we choose $\gamma_i = \gamma_{\ell^1}$, 
$i\in\naturalset{I_S}$, recall \eqref{eq:l1normL3}. 
The evolution is shown in Figure~\ref{fig:3dL3_db}.
\begin{figure}
\center
\includegraphics[angle=-0,width=0.18\textwidth,valign=b]{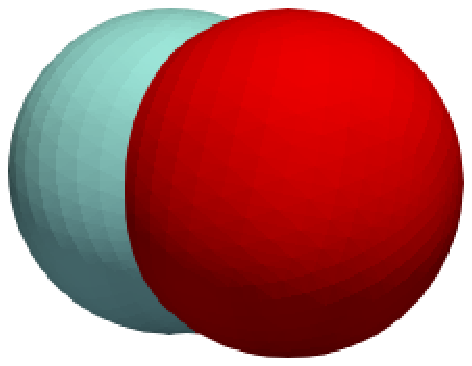}
\includegraphics[angle=-0,width=0.18\textwidth,valign=b]{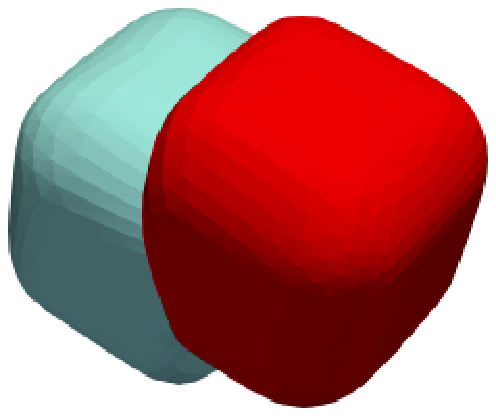}
\includegraphics[angle=-0,width=0.18\textwidth,valign=b]{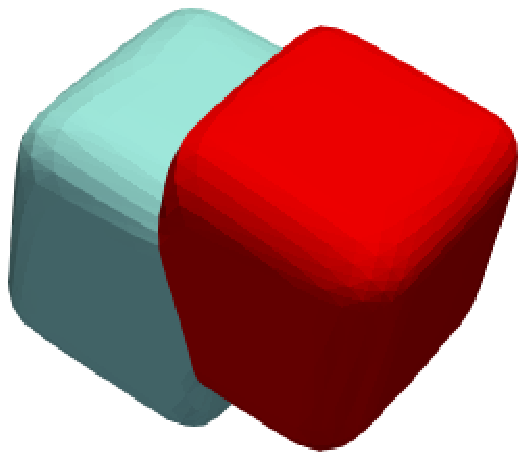}
\includegraphics[angle=-0,width=0.18\textwidth,valign=b]{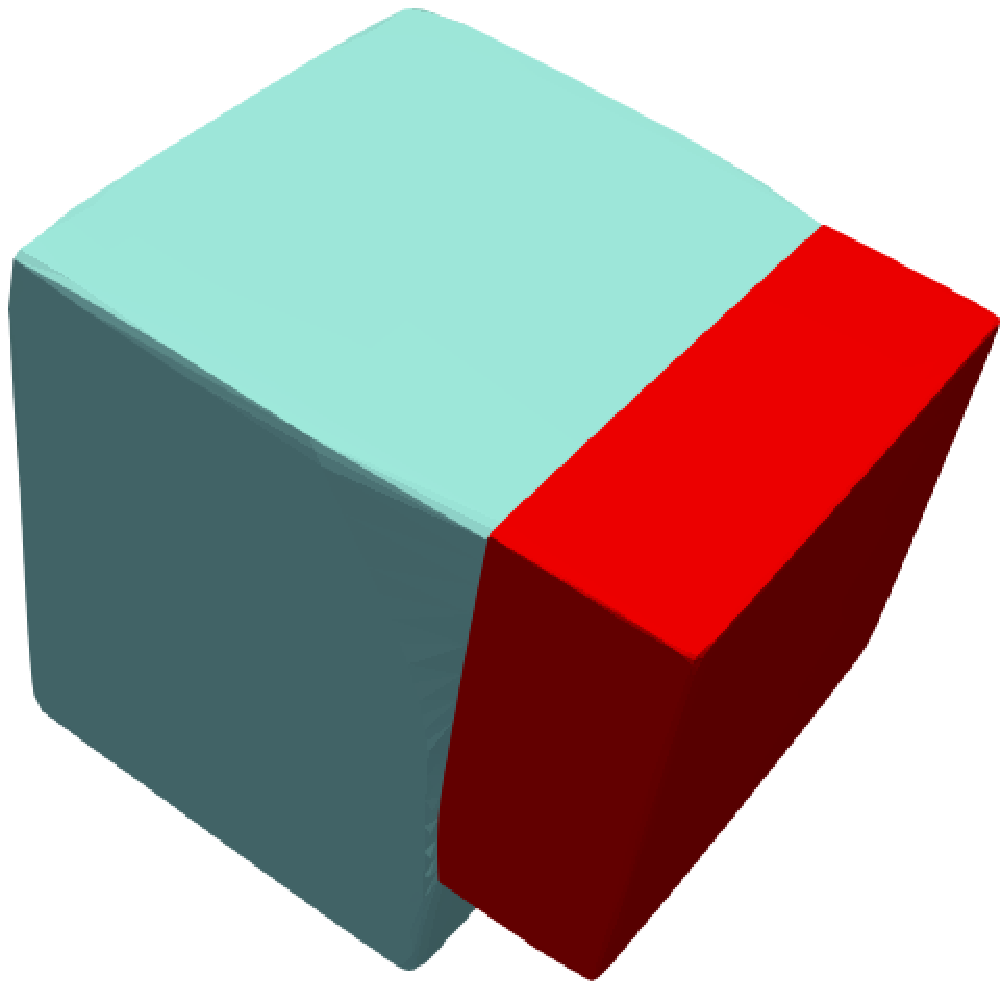}
\includegraphics[angle=-90,width=0.25\textwidth,valign=b]{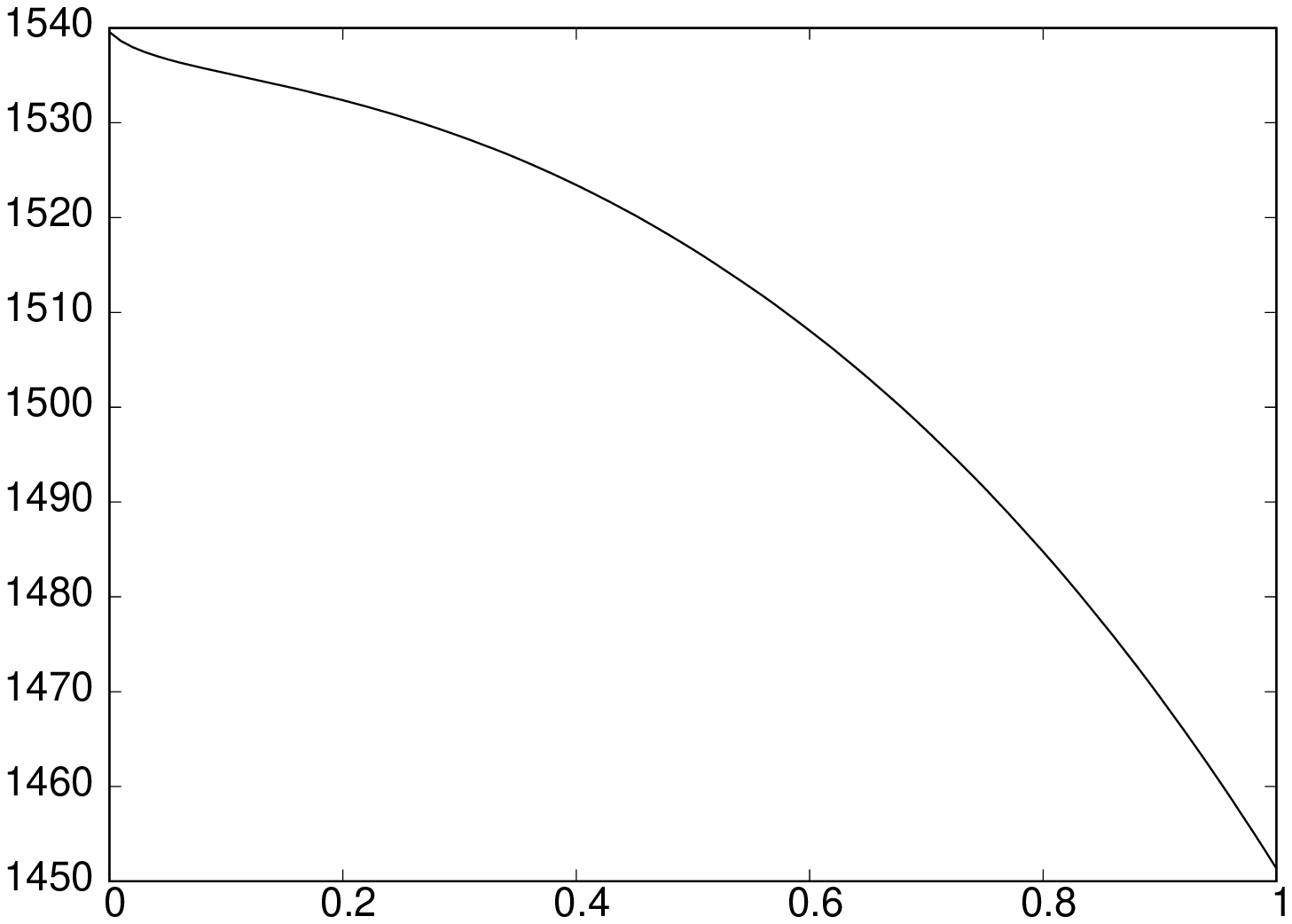}
\caption{($\bv w_D = (2, 1, -3)^\top$, $\rho=1$)
The solution at times $t=0, 0.05, 0.1, 1$,
and a plot of the discrete energy over time. 
}
\label{fig:3dL3_db}
\end{figure}%
When we choose $\gamma_i = \gamma_{\rm hex}$, 
$i\in\naturalset{I_S}$, recall \eqref{eq:L44}, we obtain
the evolution shown in Figure~\ref{fig:3dL4_db}.
\begin{figure}
\center
\includegraphics[angle=-0,width=0.18\textwidth,valign=b]{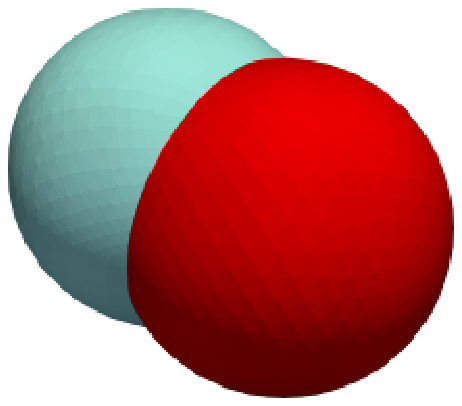}
\includegraphics[angle=-0,width=0.18\textwidth,valign=b]{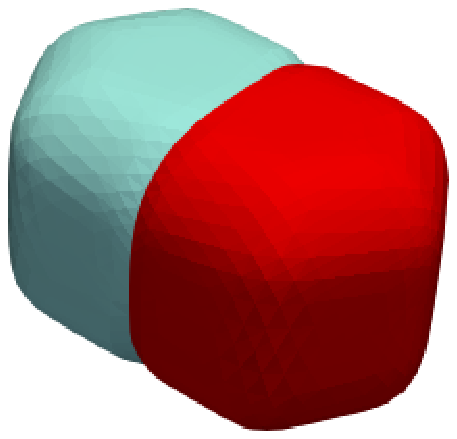}
\includegraphics[angle=-0,width=0.18\textwidth,valign=b]{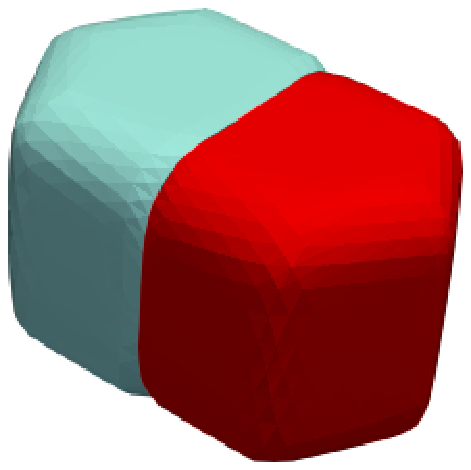}
\includegraphics[angle=-0,width=0.18\textwidth,valign=b]{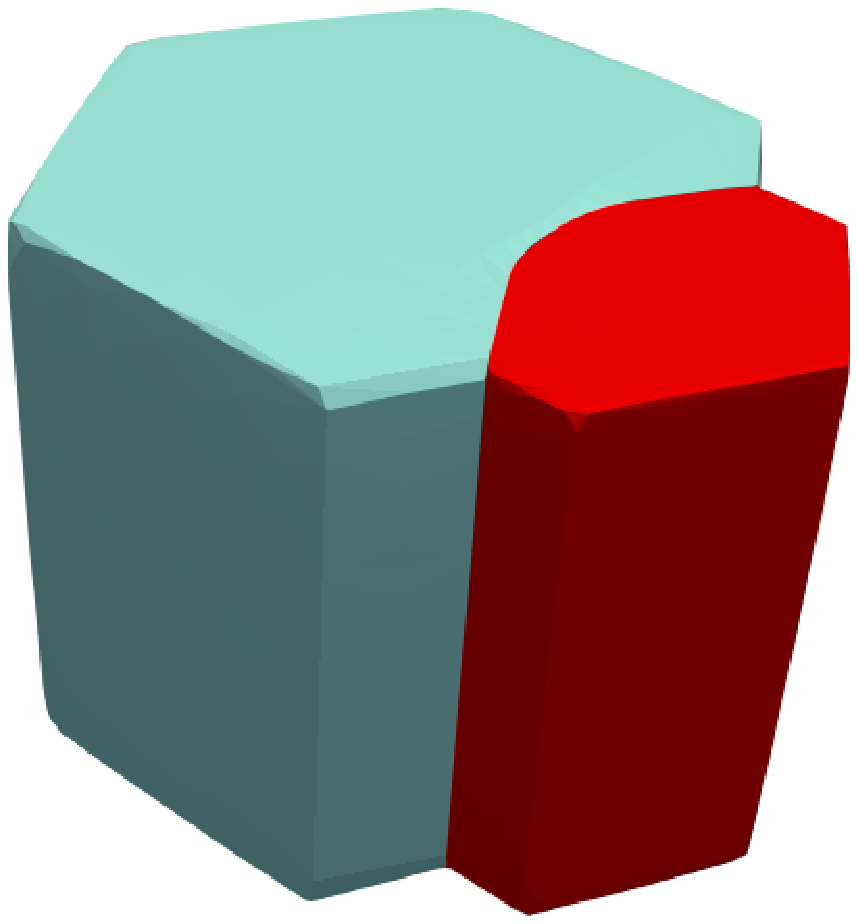}
\includegraphics[angle=-90,width=0.25\textwidth,valign=b]{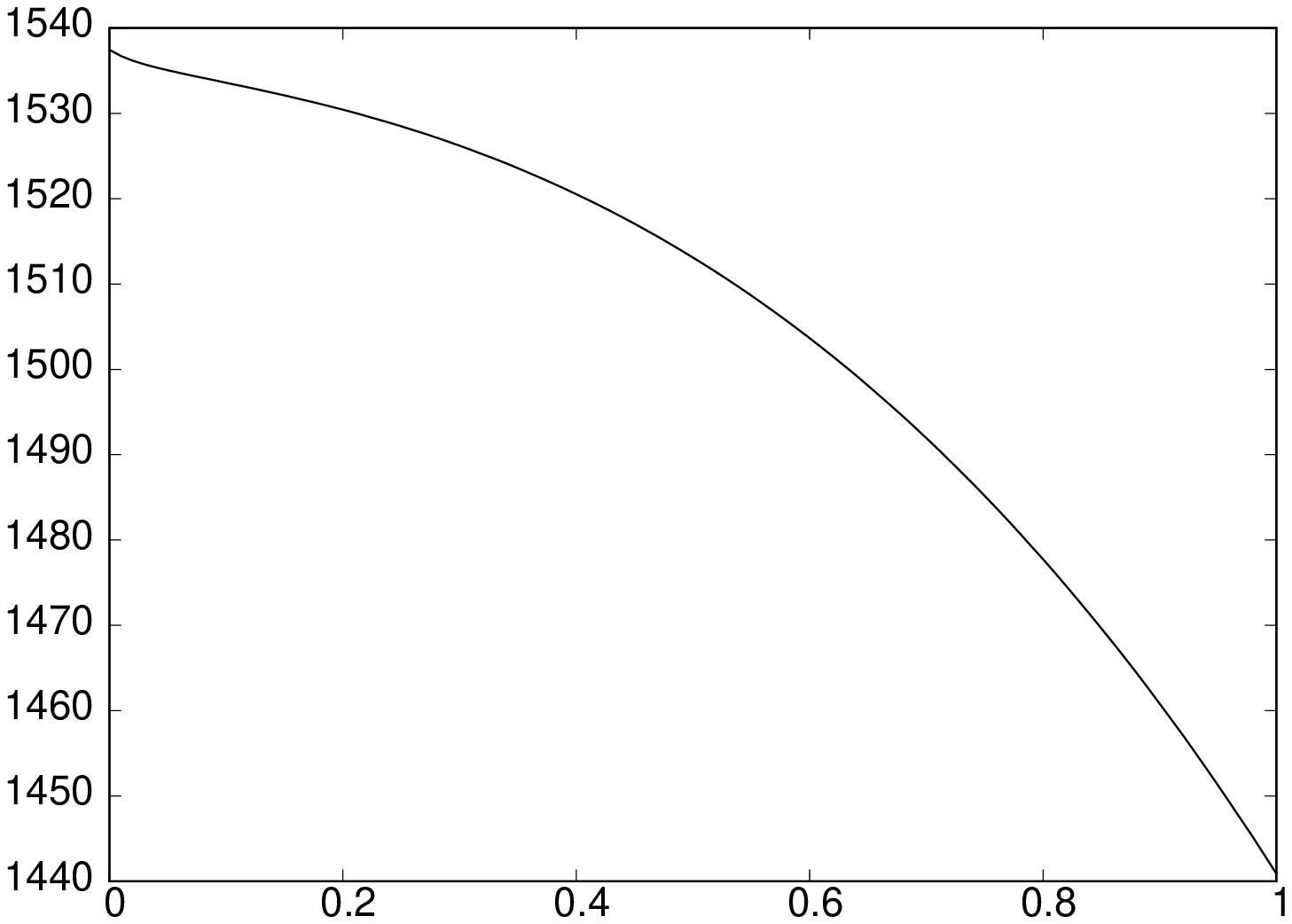}
\caption{($\bv w_D = (2, 1, -3)^\top$, $\rho=1$)
The solution at times $t=0, 0.05, 0.1, 1$,
and a plot of the discrete energy over time. 
}
\label{fig:3dL4_db}
\end{figure}%

\vspace{0.3cm}
\noindent
{\bf Example 8}:
We use the same setup as in Example~7, but now let $\rho=0.05$.
In addition, we choose $(\gamma_1,\gamma_2,\gamma_3) = 
\alpha (\gamma_{\rm hex},\gamma_{\rm hex},\gamma_{\rm hex})$, 
$i\in\naturalset{I_S}$, recall \eqref{eq:L44}, with $\alpha=0.05$.
Moroever, we choose either $\beta_i \equiv 1$ or
$\beta_i = \beta_{\rm flat,3}$, $i\in\naturalset{I_S}$, 
recall \eqref{eq:betaflat}.
The evolutions are shown in Figures~\ref{fig:3dL4_db_noflat} and
\ref{fig:3dL4_db_flat}, respectively.
\begin{figure}
\center
\includegraphics[angle=-0,width=0.18\textwidth,valign=b]{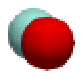}
\includegraphics[angle=-0,width=0.18\textwidth,valign=b]{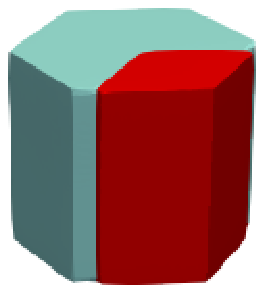}
\includegraphics[angle=-0,width=0.18\textwidth,valign=b]{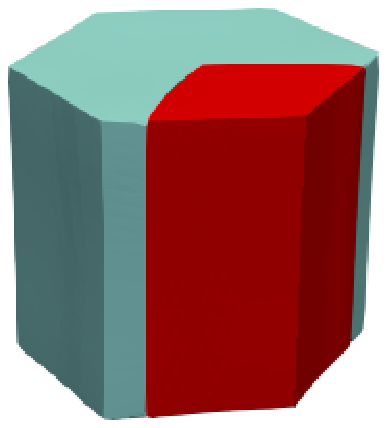}
\includegraphics[angle=-0,width=0.18\textwidth,valign=b]{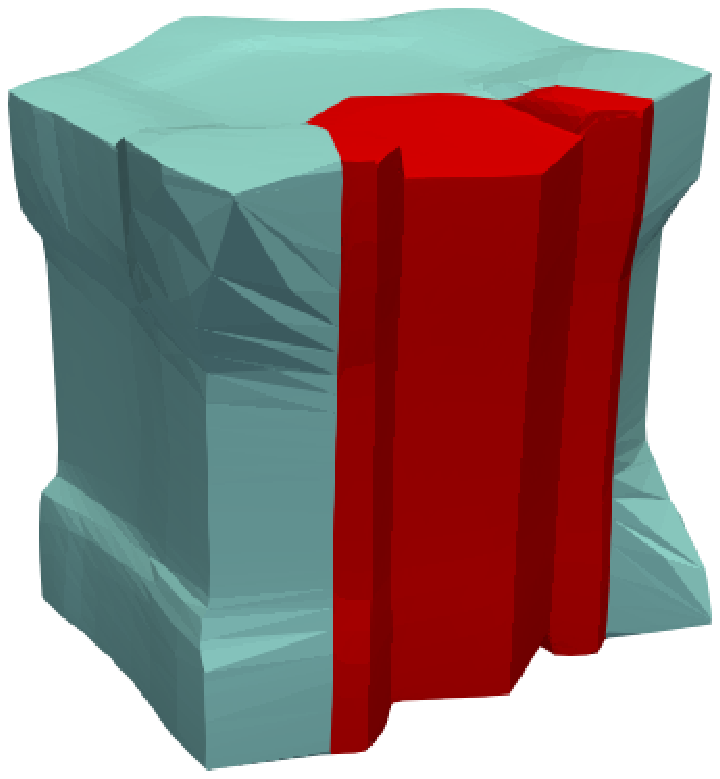}
\includegraphics[angle=-90,width=0.25\textwidth,valign=b]{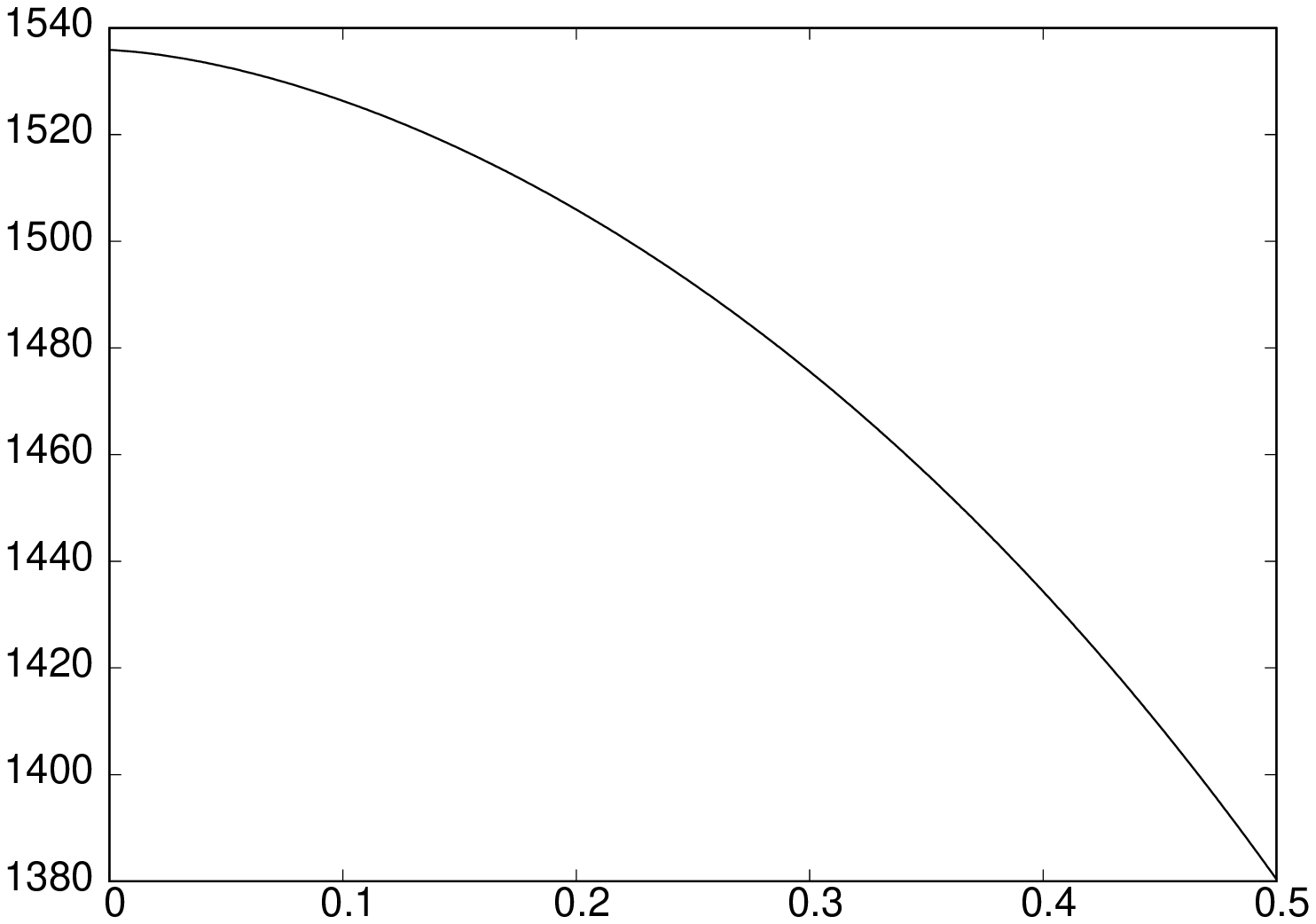}
\caption{($\bv w_D = (2, 1, -3)^\top$, $\rho=\alpha=0.05$)
The solution at times $t=0, 0.1, 0.2, 0.5$,
and a plot of the discrete energy over time. 
}
\label{fig:3dL4_db_noflat}
\end{figure}%
\begin{figure}
\center
\includegraphics[angle=-0,width=0.18\textwidth,valign=b]{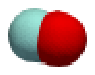}
\includegraphics[angle=-0,width=0.18\textwidth,valign=b]{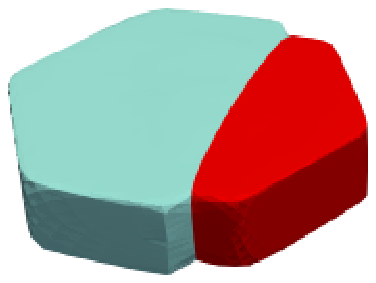}
\includegraphics[angle=-0,width=0.18\textwidth,valign=b]{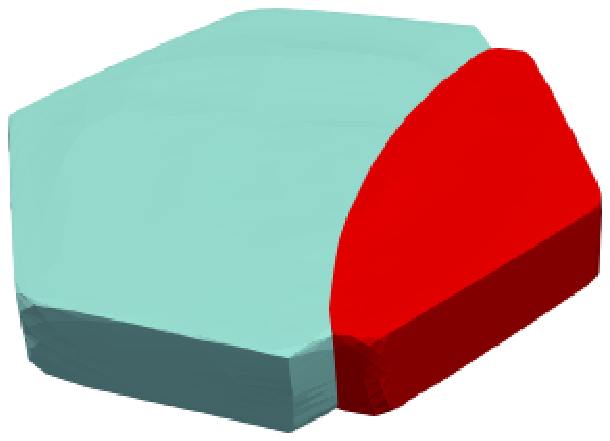}
\includegraphics[angle=-0,width=0.18\textwidth,valign=b]{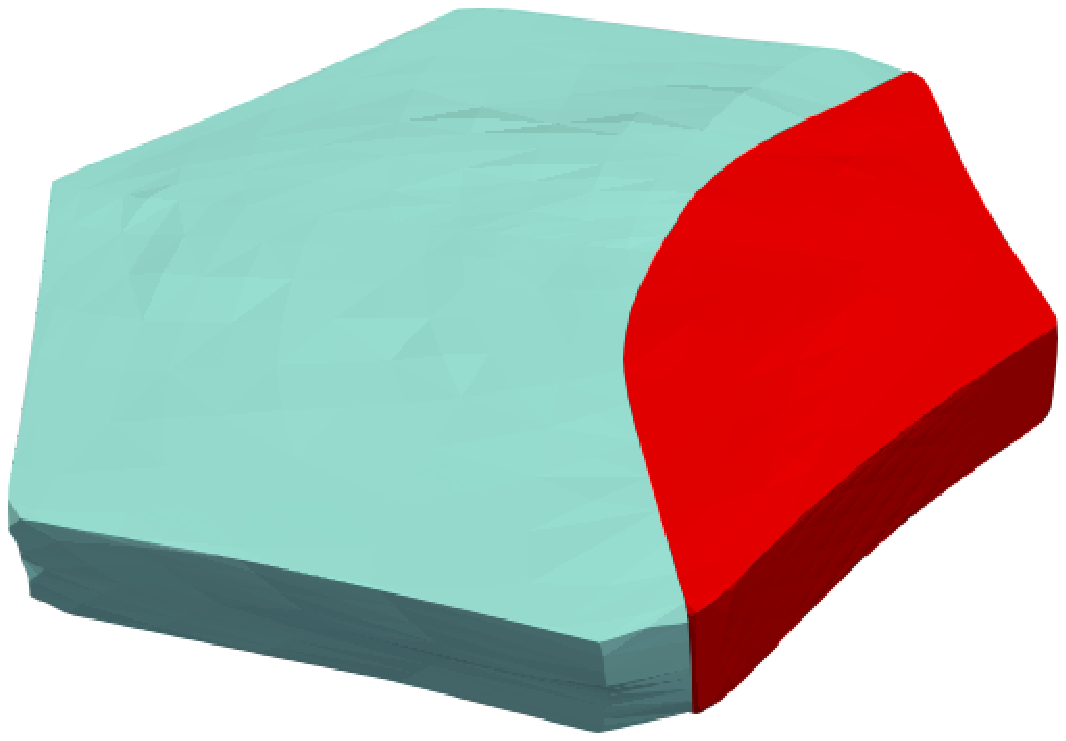}
\includegraphics[angle=-90,width=0.25\textwidth,valign=b]{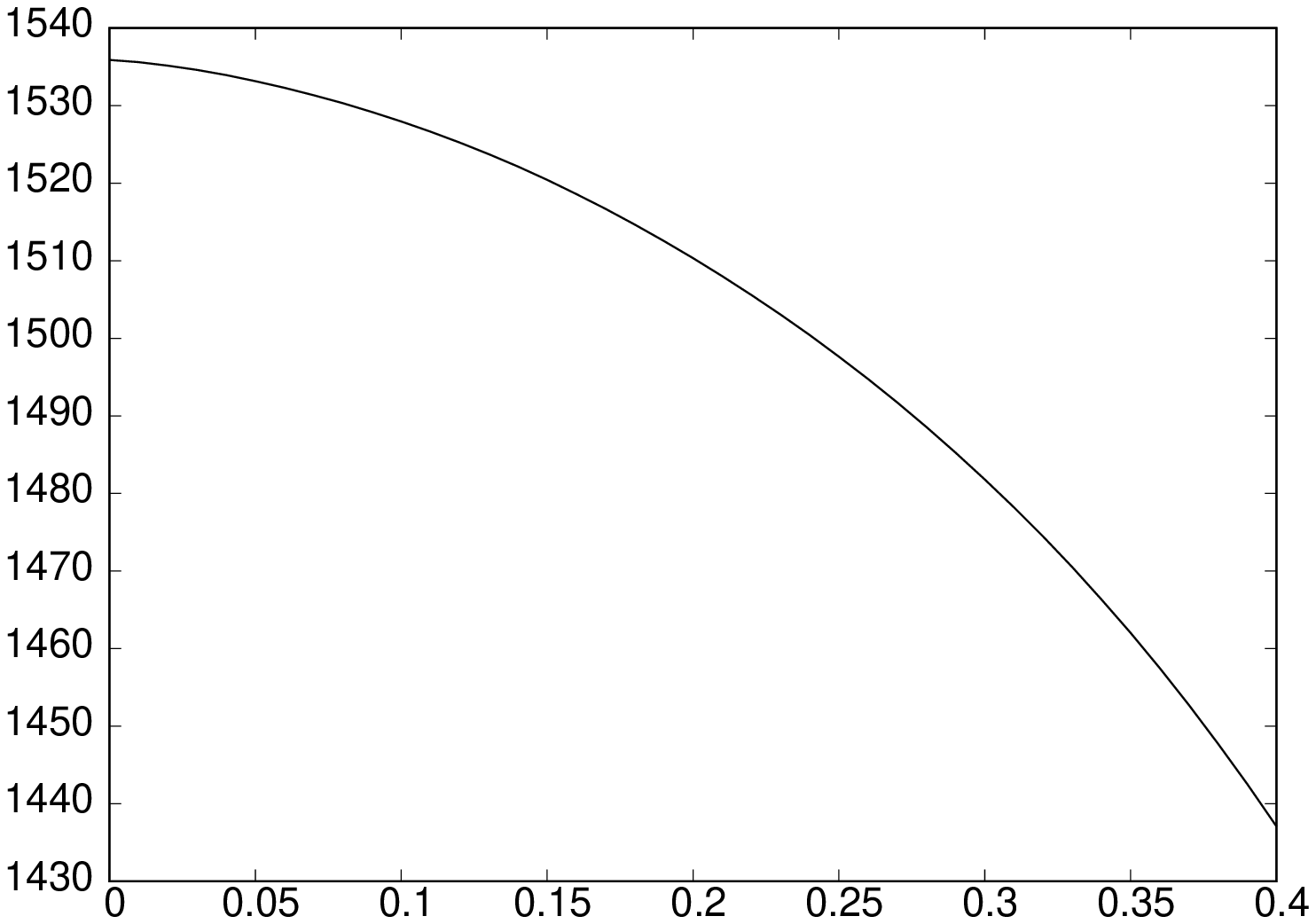}
\caption{($\bv w_D = (2, 1, -3)^\top$, $\rho=\alpha=0.05$, 
$\beta = \beta_{\rm flat,3}$)
The solution at times $t=0, 0.1, 0.2, 0.4$,
and a plot of the discrete energy over time. 
}
\label{fig:3dL4_db_flat}
\end{figure}%

\vspace{0.3cm}
\noindent
{\bf Example 9}:
We use a similar setup to Example~7, but now let $\rho=0.05$
and set $\bv w_D = (2, 0.2, -2.2)^\top$. We also start with a smaller seed. In
fact, the two bubbles of the initial double bubble both enclose a volume of
about $0.017$.
In addition, we choose choose $(\gamma_1,\gamma_2,\gamma_3) = 
\alpha (\gamma_{\rm hex},\gamma_{\rm hex},\gamma_{\rm hex})$, 
$i\in\naturalset{I_S}$, recall \eqref{eq:L44}, with $\alpha=0.05$.
The evolution is shown in Figure~\ref{fig:3dL4_db_202}.
\begin{figure}
\center
\includegraphics[angle=-0,width=0.18\textwidth,valign=b]{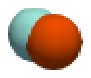}
\includegraphics[angle=-0,width=0.18\textwidth,valign=b]{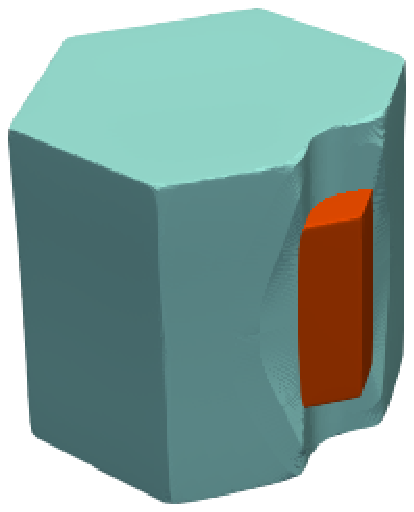}
\includegraphics[angle=-0,width=0.18\textwidth,valign=b]{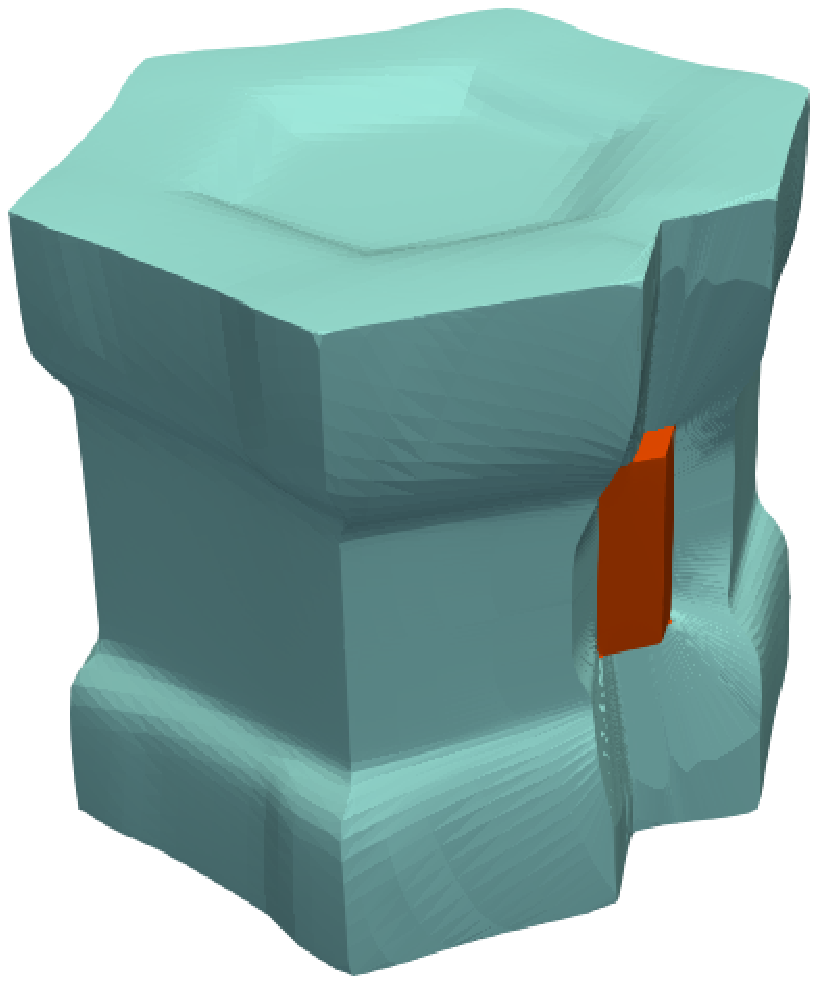}
\includegraphics[angle=-0,width=0.18\textwidth,valign=b]{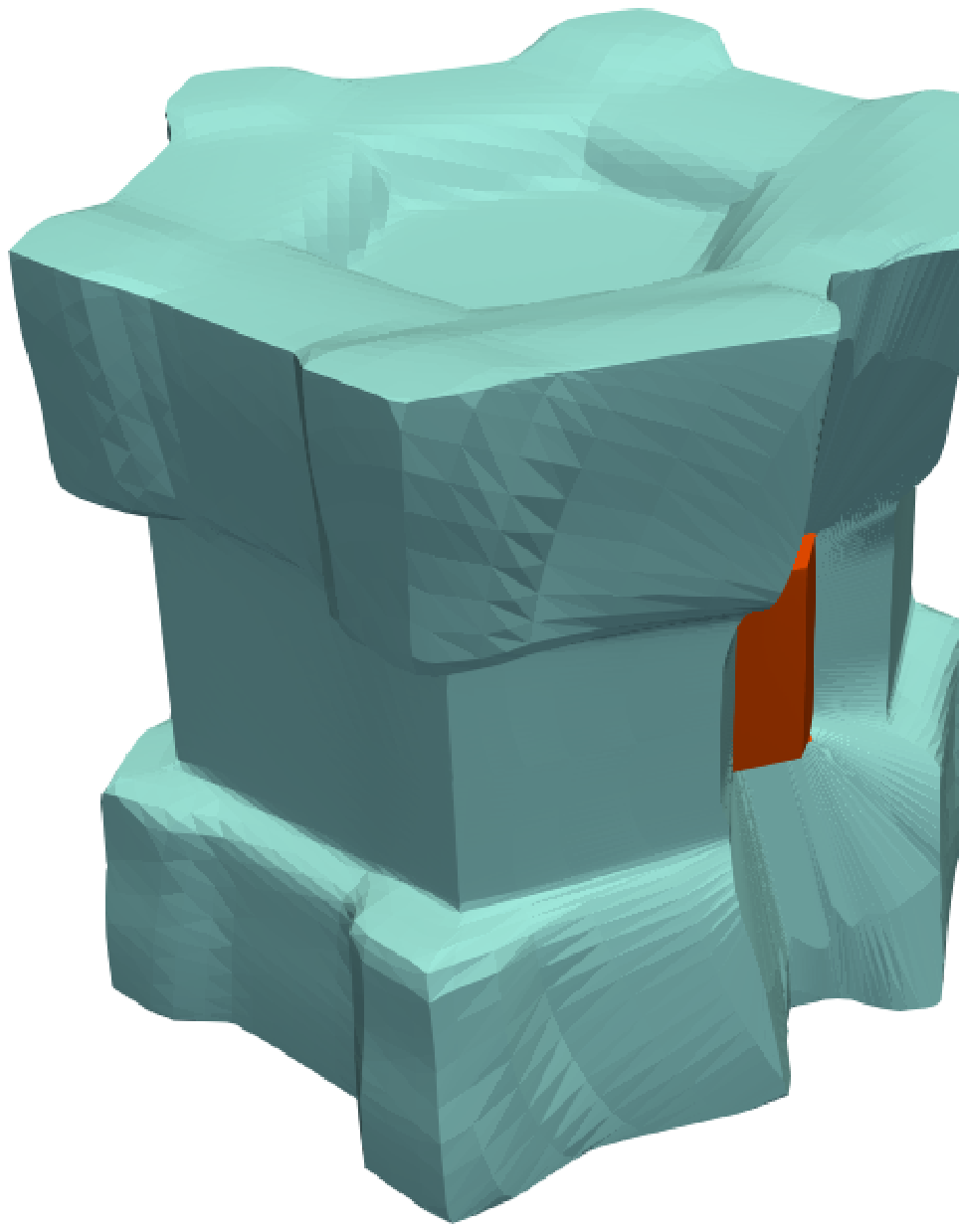}
\includegraphics[angle=-90,width=0.25\textwidth,valign=b]{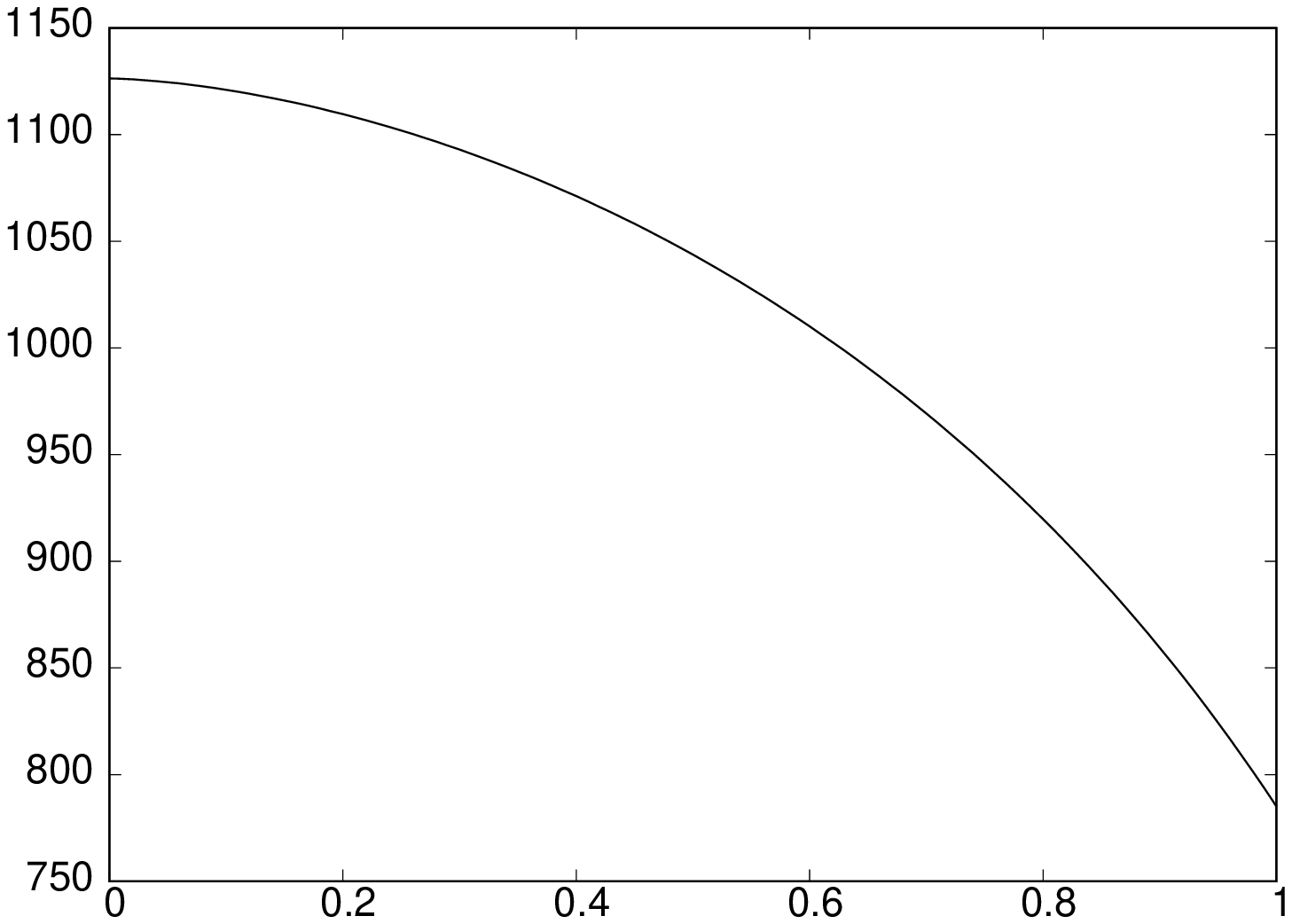}
\caption{($\bv w_D = (2, 0.3, -2.2)^\top$, $\rho=\alpha=0.05$)
The solution at times $t=0, 0.2, 0.8, 1$,
and a plot of the discrete energy over time. 
}
\label{fig:3dL4_db_202}
\end{figure}%

\bibliographystyle{siam}
\bibliography{cite}

\end{document}